\numberwithin{equation}{section}
\newtheorem{theorem}{Theorem}[section]
\newtheorem{lemma}[theorem]{Lemma}
\newtheorem{corollary}[theorem]{Corollary}
\theoremstyle{remark}
\newtheorem*{remark}{Remark}
\theoremstyle{definition}
\DeclareMathOperator{\dist}{dist}
\DeclareMathOperator{\supp}{supp }
\DeclareMathOperator{\sing}{Sing}
\DeclareMathOperator{\inn}{Inn}
\DeclareMathOperator{\out}{Out}
\DeclareMathOperator{\aut}{Aut}
\DeclareMathOperator{\hol}{Hol}
\DeclareMathOperator{\zeros}{zeros}
\DeclareMathOperator{\loc}{loc}
\DeclareMathOperator{\BC}{\mathcal{BC}}
\DeclareMathOperator{\escape}{\escape}
\DeclareMathOperator{\cone}{cone}
\DeclareMathOperator{\interior}{Int}
\DeclareMathOperator{\crit}{crit}
\title{Stable convergence of inner functions}
\author{Oleg Ivrii}
\date{September 2, 2018}
\begin{document}

\maketitle

\begin{abstract}
Let $\mathscr J$ be the set of inner functions whose derivative lies in the Nevanlinna class. In this paper, we discuss a natural topology on $\mathscr J$ where $F_n \to F$ if the critical structures of $F_n$ converge to the critical structure of $F$. We show that this occurs precisely when the critical structures of the $F_n$ are uniformly  concentrated on Korenblum stars. The proof uses  Liouville's correspondence between holomorphic self-maps of the unit disk and solutions of the Gauss curvature equation.
Building on the works of Korenblum and Roberts, we show that this topology also governs the behaviour of invariant subspaces of a weighted Bergman space which are generated by a single inner function.
\end{abstract}

\section{Introduction}

Let $\mathbb{D} = \{z \in \mathbb{C} : |z| < 1\}$ be the unit disk and $\mathbb{S}^1 = \{z \in \mathbb{C} : |z| = 1\}$ be the unit circle.
An {\em inner function} is a holomorphic self-map of the unit disk such that for almost every $\theta \in [0, 2\pi)$, the radial limit $\lim_{r \to 1} F(re^{i\theta})$ exists and has absolute value 1. Let $\inn$ denote the space of all inner functions and $\mathscr J \subset \inn$ be the subspace consisting of inner functions which satisfy
\begin{equation}
\label{eq:finite-entropy}
\lim_{r \to 1} \frac{1}{2\pi} \int_{0}^{2\pi} \log^+ |F'(re^{i\theta})| d\theta < \infty,
\end{equation}
that is, with $F'$ in the Nevanlinna class.
The work of Ahern and Clark \cite{ahern-clark} implies that if $F \in \mathscr J$, then $F'$ admits an ``inner-outer'' decomposition $$F' = \inn F' \cdot \out F'.$$
Intuitively, $\inn F' = BS$ describes the ``critical structure'' of the map $F$ -- the Blaschke factor records the locations of the critical points of $F$ in the unit disk, while the singular inner factor describes the ``boundary critical structure.''
In \cite{inner}, the author proved the following theorem, answering a question posed in \cite{dyakonov-inner}:

\begin{theorem}
\label{main-thm}
Let $\mathscr J$ be the set of inner functions whose derivative lies in the Nevanlinna class. The natural map $$F \to \inn(F') \quad : \quad \mathscr J/\aut(\mathbb{D}) \to \inn/ \, \mathbb{S}^1$$
is injective. The image consists of all inner functions of the form $BS_\mu$ where $B$ is a Blaschke product and $S_\mu$ is the singular factor associated to a measure $\mu$ whose support is contained in a countable union of Beurling-Carleson sets.
\end{theorem}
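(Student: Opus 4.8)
The plan is to route both assertions through Liouville's correspondence between holomorphic self-maps of $\mathbb{D}$ and conformal metrics of curvature $-1$, turning the theorem into a statement about solutions of the Gauss curvature equation with prescribed singularities. To $F \in \mathscr J$ one attaches the pullback metric $\rho_F = F^*\lambda_{\mathbb D} = (\lambda_{\mathbb D}\circ F)\,|F'|$ of the hyperbolic density $\lambda_{\mathbb D} = 2/(1-|z|^2)$, equivalently the function $v_F = \log(\rho_F/\lambda_{\mathbb D}) = \log|F'| + \log\frac{1-|z|^2}{1-|F|^2}$. A short computation gives, as distributions on $\mathbb D$,
\begin{equation*}
\Delta v_F \;=\; \rho_F^2 - \lambda_{\mathbb D}^2 + 2\pi\sum_{c\,\in\,\crit F} m_c\,\delta_c ,
\end{equation*}
the sum being over critical points of $F$ with their multiplicities $m_c$. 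Thus the Blaschke factor of $\inn F'$ records exactly the cone points of $\rho_F$ (cone angle $2\pi(m_c+1)$), while the singular factor $S_{\mu_F}$ — which enters only through $\log|S_{\mu_F}| = -P[\mu_F]$ and so contributes nothing to $\Delta v_F$ on $\mathbb D$ — records the singular boundary behaviour of $v_F$. Two features are decisive: $v_F \le 0$ by Schwarz--Pick, and $v_F \to 0$ radially a.e.\ on $\mathbb{S}^1$ (for $F\in\mathscr J$, $F'\in N$ has radial limits a.e., and the Julia--Carath\'eodory theorem gives $(1-|F|^2)/(1-|z|^2)\to|F'|$ there, so the two logarithms in $v_F$ cancel in the limit). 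Conversely, Liouville's theorem reconstructs, from a conformal metric $\rho = e^{v}\lambda_{\mathbb D} \le \lambda_{\mathbb D}$ of curvature $-1$ off a discrete set, with cone angles of this integral type, with $v\to 0$ a.e., and with prescribed singular boundary behaviour, a holomorphic $F:\mathbb D\to\mathbb D$, unique up to post-composition with $\aut(\mathbb D)$, which is automatically inner. So the theorem reduces to: (i) such a $\rho$ is determined by the data $(\{(c,m_c)\},\mu)$, and (ii) the realizable data are precisely those with $\supp\mu$ in a countable union of Beurling--Carleson sets.

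For injectivity, suppose $F_1,F_2 \in \mathscr J$ have $\inn F_1' = \inn F_2'$ up to a unimodular constant; then they share the same critical divisor and the same singular measure $\mu$. I would identify $\rho_{F_i}$ with the \emph{maximal} conformal metric $\le \lambda_{\mathbb D}$ of curvature $-1$ having these cone points and this singular boundary data, constructed by Perron's method as the supremum of competing subsolutions $v \le 0$ of $\Delta v = e^{2v}\lambda_{\mathbb D}^2 - \lambda_{\mathbb D}^2 + 2\pi\sum_c m_c\delta_c$. That $\rho_{F_i}$ realizes this maximum reduces to the claim that any such subsolution $\sigma$ satisfies $\sigma \le \rho_{F_i}$, which is a maximum-principle statement: on the open set where $\log(\sigma/\lambda_{\mathbb D}) > v_{F_i}$ the difference is subharmonic (the cone terms and the $\mu$-contribution cancel, leaving $\Delta$ equal to a positive multiple of $e^{2\log(\sigma/\lambda_{\mathbb D})} - e^{2 v_{F_i}}$), vanishes on the interior boundary of that set, and has boundary values $\le 0$ a.e.\ on $\mathbb{S}^1$; the finite-entropy hypothesis (so that $F_i'\in N$ and $\log|F_i'|_{\mathbb{S}^1}\in L^1$, hence $\out F_i'$ is well-behaved) is exactly what places this difference in a class for which the generalized maximum principle applies, forcing the set to be empty. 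Uniqueness of the maximal metric then gives $\rho_{F_1} = \rho_{F_2}$, so $F_2 = \phi\circ F_1$ for some $\phi\in\aut(\mathbb D)$.

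For the image, the easier inclusion is that $\supp\mu_F$ lies in a countable union of Beurling--Carleson sets for any $F\in\mathscr J$: since $\log^+|F'|\in L^1(\mathbb{S}^1)$, $F'$ lies in the Nevanlinna class (indeed in every $A^p_\alpha$, $\alpha>-1$), and one combines the Ahern--Clark factorization of $F'$ with a Korenblum--Roberts-type description of which singular inner functions can divide functions satisfying such integrability — or, more directly, one estimates $\mu_F(I)$ for a generic arc $I$ from the bounds $v_F\le 0$ and $v_F\to 0$ a.e., which produces the defining Carleson sum $\sum_j |I_j|\log(1/|I_j|) < \infty$ over complementary arcs. The substantial inclusion is the converse: given $\psi = B S_\mu$ with $\supp\mu \subseteq \bigcup_n E_n$, the $E_n$ Beurling--Carleson, produce $F\in\mathscr J$ with $\inn F' = \psi$. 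Here I would take $u$ to be the Perron solution described above for the data $(\{(c,m_c)\},\mu)$, show it is a genuine solution carrying precisely those singularities, and apply Liouville to obtain $F$. The Beurling--Carleson condition is then used twice: it forces $u \to 0$ a.e.\ on $\mathbb{S}^1$ (so that $F$ is inner, not merely a bounded self-map), and it yields the borderline estimate $\int_{\mathbb{S}^1}\log^+|F'| < \infty$ — which is where the summability $\sum_j |I_j|\log(1/|I_j|)$ is consumed — so that $F\in\mathscr J$.

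The main obstacle is exactly this last construction: realizing a prescribed critical structure by an inner function whose derivative lies in the Nevanlinna class, together with the quantitative entropy estimate, where the analysis is close in spirit to Korenblum's star partitions and Roberts' decompositions revisited in the present paper. I expect one first treats $\mu$ supported on a single Beurling--Carleson set, where an explicit barrier makes the boundary decay of $u$ and the entropy bound transparent, then passes to countable unions via an exhaustion $\mu_N \uparrow \mu$ with entropy bounds uniform in $N$ followed by a normal-families limit, and finally superimposes the interior cone contribution of $B$.
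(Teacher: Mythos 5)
Theorem~\ref{main-thm} is not proved in this paper; it is quoted from \cite{inner}, so there is no internal proof to compare against line by line. What the paper does supply is the framework that underlies that proof: the Liouville correspondence, the Gauss curvature equation with nearly-maximal boundary values (\ref{eq:generalized-GCE}), and the Perron-hull identity (\ref{eq:fund2}). Your outline tracks this framework at a high level, but two of your steps would fail as written. The first is the Perron construction itself: you propose to identify $\rho_F$ with the supremum of all subsolutions $v\le 0$ of $\Delta v = e^{2v}\lambda_{\mathbb D}^2 - \lambda_{\mathbb D}^2 + 2\pi\sum m_c\delta_c$. But the supremum over \emph{all} such subsolutions produces the solution with the prescribed cone points and \emph{no} singular boundary mass, since that is the pointwise largest competitor; it loses $\mu$ entirely. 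Declaring that the subsolutions should ``have this singular boundary data'' does not fix this unless you say how a subsolution carries singular data, and the natural way to do that is exactly what (\ref{eq:fund2}) does: anchor to the \emph{specific} subsolution $u_{\mathbb D}-\log\frac{1}{|I_\omega|}$ (which encodes both the critical divisor and $\mu$) and take the \emph{minimal} solution above it.

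The second gap is in the maximum-principle step for injectivity, where you rely on the boundary condition ``$v_F\to 0$ radially a.e.'' This is too weak to single out the solution with singular mass $\mu$: every solution whose singular boundary measure $\mu'$ satisfies $\mu'\le\mu$ also has radial limits $0$ a.e., because $P[\mu']\to 0$ a.e.\ for singular $\mu'$. Correspondingly, a subharmonic function with a.e.\ boundary values $\le 0$ need not be $\le 0$ in $\mathbb D$ (take $P[\mu]-\varepsilon$ with $\mu$ singular positive and $\varepsilon$ small). The boundary condition in (\ref{eq:generalized-GCE}) is a \emph{weak} limit of measures, $(u_{\mathbb D}-u)(d\theta/2\pi)|_{\{|z|=r\}}\to\mu$, and the uniqueness argument for Theorem~\ref{solutions-gce} works with these weak limits: the difference of two solutions with the same $\omega$ is nonnegative and subharmonic with boundary measures tending weakly to $0$, hence identically $0$. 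Your appeal to ``a class for which the generalized maximum principle applies'' is precisely the place that needs to be filled with this weak-limit argument. Two smaller inaccuracies: $F'\in N$ does not give $F'\in A^p_\alpha$ for all $\alpha>-1$ (what is free is $F'\in A^2_1$ via Littlewood--Paley); and the ``easier inclusion'' that $\sigma(F')$ is carried by countably many Beurling--Carleson sets passes through Ahern--Clark's Frostman-shift analysis and the countability of the exceptional set, not a direct estimate on $\mu_F(I)$.
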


The above theorem says that an inner function $F \in \mathscr J$ is uniquely determined up to a post-composition with a holomorphic automorphism of the disk by its critical structure and describes all possible critical structures of inner functions. We need to quotient $\inn$ by the group of rotations since the inner part is determined up to a unimodular constant.
To help remember this, note that {\em Frostman shifts or post-compositions with elements of $\aut(\mathbb{D})$ do not change the critical set of a function while rotations do not change the zero set.}

By definition, a {\em Beurling-Carleson set} $E \subset \mathbb{S}^1$ is a closed subset of the unit circle of zero Lebesgue measure whose complement is a union of arcs $\bigcup_k I_k$ with $$\|E\|_{\BC} = \sum |I_k| \log \frac{1}{|I_k|} < \infty.$$
We say that $E \in \BC(N)$ if $\|E\|_{\BC} \le N$. We denote the collection of  all Beurling-Carleson sets by $\BC$.

We will also need the notion of a
{\em Korenblum star} which is the union of Stolz angles emanating from a Beurling-Carleson set $E \subset \mathbb{S}^1$\/:
$$
K_E = B(0, 1/\sqrt{2}) \cup \bigl \{z \in \overline{\mathbb{D}} : 1 - |z| \ge \dist(\hat z, E) \bigr \}.
$$
Here, $\hat z = z/|z|$ while $\dist$ denotes the Euclidean distance.
With the above definition, $K_E \subset \overline{\mathbb{D}}$ is a closed set.
We say that the Korenblum star has {\em entropy} or  {\em norm} $\| E \|_{\BC}$.

We endow $\mathscr J$ with the topology of {\em stable convergence} where $F_n \to F$ if the $F_n$ converge uniformly on compact subsets of the disk to $F$ and the Nevanlinna splitting is preserved in the limit: $\inn F'_n \to \inn F'$, $\out F'_n \to \out F'$. Loosely speaking, our main result says that this occurs if and only if the critical structures of $F_n$ are ``uniformly   concentrated'' on Korenblum stars. A precise statement will be given later in the introduction.
As observed in \cite{inner}, in general, some part of the critical structure may disappear in the limit:
\begin{equation}
\inn F' \ge \limsup_{n \to \infty} \, \inn F'_n.
\end{equation}

\medskip

{\em Examples.} (i) If $F_n$ is a finite Blaschke product of degree $n+1$ which has a critical point at $1 - 1/n$ of multiplicity $n$, and is normalized so that $F_n(0) = 0$, $F'_n(0) > 0$, then the $F_n$ converge to the unique inner function $F_{\delta_1}$ with
critical structure $S_{\delta_1} = \exp\bigl( \frac{z+1}{z-1}\bigr)$.
 More generally, if  $F_n$ has $n$ critical points (of multiplicity one) at $c_k = (1-1/n)e^{ik\theta_n}$, $k=1,2, \dots, n$, and
$n \theta_n \log \frac{1}{\theta_n} \to 0$, then the $F_n$ still converge to $F_{\delta_1}$.

(ii) If $n \theta_n \log \frac{1}{\theta_n} \to \infty$ but $n\theta_n \to 0$, then the $F_n$ converge to the identity even though the critical structures $\inn F_n' \to S_{\delta_1}$.

(iii) For any $0 < c < 1$, one can choose $\theta_n$ appropriately so that the $F_n$ converge to
$F_{c \delta_1}$, the unique inner function with critical structure $S_{c \delta_1}$. In this case, $n \theta_n \log \frac{1}{\theta_n}$ must be bounded away from 0 and $\infty$.

We refer to the three possibilities as {\em concentrating}, {\em totally diffuse} and {\em diffuse} respectively.

\subsection{The Korenblum topology}

A simple ``normal families'' argument shows that $\BC(N)$ is compact. We give a brief sketch of the argument, for  details, we refer the reader to \cite[Lemma 7.6]{HKZ}.
Given a sequence of sets $\{E_n\} \subset \BC(N)$, let $I_n^{(1)}$ denote the longest complementary arc in $\mathbb{S}^1 \setminus E_n$ (in case of a tiebreak, we choose $I_n^{(1)}$ to be one of the longest arcs). We pass to a subsequence so that the $I_n^{(1)}$ converge to a limit $I^{(1)}$. Since there is a definite lower bound for the length $\bigl |I_n^{(1)} \bigr |$, these arcs cannot shrink to a point. We then pass to a further subsequence along which the second longest arcs $I_n^{(2)} \to I^{(2)}$ converge. Continuing in this way, and diagonalizing, we obtain a subsequence of $\{E_n\}$ which converges to a set $E \in \BC(N)$. Note that if $E$ is a finite set, this process would terminate in finitely many steps.
The above argument gives the inequality
\begin{equation}
\label{eq:entropy-drops}
\| E \|_{\BC} \le \liminf_{n \to \infty} \| E_n \|_{\BC}.
\end{equation}
We define the {\em Korenblum topology} on  $\BC$ by specifying that $E_n \to E$ if $E_n$ converges to $E$ in the Hausdorff sense and  $\| E \|_{\BC} = \lim_{n \to \infty}
 \| E_n \|_{\BC}$. Inspired by the work of Marcus and Ponce \cite{marcus-ponce}, we call such sequences     {\em concentrating}\/. However, (\ref{eq:entropy-drops}) could be a strict inequality if  a definite amount of entropy gets trapped in smaller and smaller sets.
  Let us state this phenomenon precisely.
For a Beurling-Carleson $E \subset \mathbb{S}^1$, we define its {\em local entropy} with threshold $\eta > 0$ as
$$
\| E \|_{\BC_\eta} = \sum_{|I| < \eta}  |I| \log \frac{1}{|I|},
$$
where we sum over the connected components of $\mathbb{S}^1 \setminus E$ whose length is less than $\eta$.  Then, (\ref{eq:entropy-drops}) is a strict inequality if and only if $\liminf_{n \to \infty} \| E_n \|_{\BC_\eta} > c > 0$ is bounded below by a constant independent of $\eta$.

Let $M_{\BC(N)}(\mathbb{S}^1)$ denote the class of finite positive measures that are supported on a Beurling-Carleson set of norm $\le N$ and  $M_{\BC}(\mathbb{S}^1)$ denote the collection of measures supported on a countable union of Beurling-Carleson sets.
 There are two natural topologies one can put on $M_{\BC}(\mathbb{S}^1)$. First, one can endow $M_{\BC(N)}(\mathbb{S}^1)$  with the weak topology of measures, and then give
$M_{\BC}(\mathbb{S}^1)$ the {\em inductive limit topology}\/.  Roughly speaking, a sequence of positive measures $\mu_n$ converges to $\mu$ if
up to small error, they converge in $M_{\BC(N)}(\mathbb{S}^1)$. More precisely,
 for any $\varepsilon > 0$, we want there to exist an $N > 0$ and a ``dominated'' sequence $\nu_n \to \nu$ such that for all $n$ sufficiently large,
\begin{enumerate}
\item[(i)] $0 \le \nu_n \le \mu_n$,

\item[(ii)] $\nu_n \in M_{\BC(N)}(\mathbb{S}^1)$,

\item[(iii)] $(\mu_n - \nu_n)(\mathbb{S}^1) < \varepsilon$ and $(\mu - \nu)(\mathbb{S}^1) < \varepsilon$.
\end{enumerate}
We refer to this topology as the {\em topology of weak concentration}\/.  From the point of view of this topology, the  ``opposite'' behaviour is manifested by totally diffuse sequences. We say $\{\mu_n\} \subset M_{\BC}(\mathbb{S}^1)$ is {\em totally diffuse} if for any $N > 0$,
$$
\sup_{E \in \BC(N)} \mu_n(E) \to 0, \qquad \text{as }n \to \infty.
$$

However, in this paper, we will use another topology on $M_{\BC}(\mathbb{S}^1)$, which we call the {\em topology of strong concentration} or the {\em Korenblum topology}\/.
In this topology,
$\mu_n \to \mu$  if for any $\varepsilon > 0$, there exists a  sequence $\nu_n \to \nu$ satisfying (i), (ii) and (iii) such that
$\supp \nu_n \to \supp \nu$ in the Korenblum topology of sets.
 The opposite behaviour to the Korenblum topology is manifested by diffuse sequences. We say that a sequence of measures $\mu_n \to \mu$ is {\em diffuse} if for any $\varepsilon >0$, there exists a $\delta > 0$ such that for any
 threshold $\eta > 0$, we have
$$
\|E\|_{\BC_\eta} < \delta \quad \implies \quad \mu_n(E) < \varepsilon, \qquad  n \ge n_0(\delta, \varepsilon, \eta), \quad  E \in \BC.
$$
It is not difficult to see that a sequence is concentrating if and only if it does not dominate any diffuse sequence with non-zero limit.
Furthermore, one can decompose any weakly convergent sequence $\mu_n \to \mu$ into concentrating and diffuse components,
that is, write $\mu_n = \nu_n + \tau_n$ with $\nu_n \to \nu$ and $\tau_n \to \tau$, where $\nu_n$  is concentrating and  $\tau_n$ is diffuse. Even though there are infinitely many  choices for the sequences $\{\nu_n\}$ and $\{\tau_n\}$, the limits $\nu$ and $\tau$ are uniquely determined by $\{\mu_n\}$.
We leave the verification to the reader.

 We say that a finite positive measure $\mu$ on the closed unit disk belongs to the space
 $M_{\BC(N)}(\overline{\mathbb{D}})$ if its support is contained in a Korenblum star of norm $\le N$, while $\mu \in M_{\BC}(\overline{\mathbb{D}})$ if
its restriction $\mu|_{\mathbb{S}^1} \in M_{\BC}(\mathbb{S}^1)$.
We define the {\em Korenblum topology} on $M_{\BC}(\overline{\mathbb{D}})$ by specifying
that a sequence of measures $\mu_n \to \mu$ converges if it does so weakly, and up to small error,  for all sufficiently large $n$, most of the mass of $\mu_n$ is contained in a Korenblum star $K_{E_n}$, for some sequence of Beurling-Carleson sets  $\{E_n\}$ which converge in $\BC$.
Naturally, we say that a sequence of measures $\mu_n \to \mu$ in $M_{\BC}(\overline{\mathbb{D}})$  is {\em diffuse} if for any $\varepsilon >0$, there exists a $\delta > 0$ such that for any threshold $\eta >0$,  we have
$$
\|E\|_{\BC_\eta} < \delta \quad \implies \quad \mu_n(K_E) < \varepsilon, \qquad  n \ge n_0(\delta, \varepsilon, \eta), \quad  E \in \BC.
$$

\subsection{Two embeddings of inner functions}

To an inner function $I$, we associate the measure
\begin{equation}
\label{eq:inner-measure}
\mu(I) \, = \, \sum (1-|a_i|) \delta_{a_i} + \sigma(I) \, \in \, M(\overline{\mathbb{D}}),
\end{equation}
 where the sum ranges over the zeros of $I$ (counted with multiplicity) and $\sigma(I)$ is the singular measure on the unit circle  associated with the singular factor of $I$. This gives an embedding $\inn/\,\mathbb{S}^1 \to M(\overline{\mathbb{D}})$.
 We say that the measure $\mu$  records the {\em zero structure} of $I$ and write $I_\mu := I$. Clearly, the function $I_\mu$ is uniquely determined up to a rotation.

 We can also embed $\mathscr J/\aut(\mathbb{D}) \to M_{\BC}(\overline{\mathbb{D}})$ by taking $F \to \mu(\inn F')$. This embedding records the {\em critical structure} of $F$.
 We use the symbol $F_{\mu}$ to denote an inner function with $\inn F_\mu' = I_\mu$ and $F_\mu(0) = 0$ (again, such a function is unique up to a rotation).

We can now state our main result:

\begin{theorem}
\label{main-thm2}
The embedding $\mathscr J/\aut(\mathbb{D}) \to M_{\BC}(\overline{\mathbb{D}})$ is a homeomorphism onto its image when  $\mathscr J/\aut(\mathbb{D})$ is equipped with the topology of stable convergence and $M_{\BC}(\overline{\mathbb{D}})$ is equipped with the Korenblum topology.
\end{theorem}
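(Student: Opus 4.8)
By Theorem~\ref{main-thm} the map $F\mapsto\mu(\inn F')$ is already a bijection of $\mathscr J/\aut(\mathbb{D})$ onto its image, so the content is continuity in both directions; since the Korenblum topology on $M_{\BC}(\overline{\mathbb{D}})$ is defined through its convergent sequences, it suffices to prove that $\mu_n\to\mu$ in the Korenblum topology if and only if $F_{\mu_n}\to F_\mu$ stably, where we normalize representatives by $F_{\mu_n}(0)=0$, $F_{\mu_n}'(0)>0$. The bridge is Liouville's correspondence: to $F\in\mathscr J$ I attach the pullback metric $\lambda_F=F^{*}\lambda_{\mathbb{D}}=\frac{2|F'(z)|}{1-|F(z)|^2}\,|dz|$; its logarithm $u_F=\log\lambda_F$ solves the Gauss curvature equation $\Delta u_F=e^{2u_F}$ on $\mathbb{D}\setminus\crit F$, has a logarithmic pole of weight $2m_c$ at each critical point $c$ of multiplicity $m_c$, and its Schwarz--Pick defect $\log\lambda_{\mathbb{D}}-u_F\ge 0$ carries a boundary singular part equal to the singular factor of $\inn F'$; equivalently, $-\log|\inn F'|$ is the superposition of the Green's functions at the critical points of $F$ with the Poisson integral of $\mu(\inn F')|_{\mathbb{S}^1}$. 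In these terms, $F_{\mu_n}\to F_\mu$ stably says that the solutions $u_{\mu_n}$ of the curvature equation converge to $u_\mu$ \emph{and} that the linear potentials $-\log|\inn F_{\mu_n}'|$ converge to $-\log|\inn F_\mu'|$: the critical structure is neither lost nor spuriously created in the limit.

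For the implication $\mu_n\to\mu\ \Rightarrow\ F_{\mu_n}\to F_\mu$ stably, the plan is to extract by Montel a locally uniform subsequential limit $G:\mathbb{D}\to\overline{\mathbb{D}}$ with $G(0)=0$ and to show $\mu(\inn G')=\mu$; Theorem~\ref{main-thm} then forces $G=F_\mu$, and independence of the subsequence gives convergence of the whole sequence. Since the critical structure can only be lost in the limit --- $\inn G'\ge\limsup_n\inn F_{\mu_n}'$ by~\cite{inner} --- the point is that \emph{no} critical mass is lost, and this is exactly what the concentration hypothesis secures at the level of the curvature equation: because $\mu_n$ puts all but $\varepsilon$ of its mass on a Korenblum star $K_{E_n}$ with $\|E_n\|_{\BC}\le N$ and $E_n\to E$ in $\BC$, the metrics $\lambda_{F_{\mu_n}}$ admit uniform lower bounds --- the defect $\log\lambda_{\mathbb{D}}-u_{F_{\mu_n}}$ has a uniformly controlled singular part, via the size estimates for singular inner functions supported on Beurling--Carleson sets --- so in the limit the metric $\lambda_G=\lim\lambda_{F_{\mu_n}}$ still registers the full concentrated critical mass while the Hausdorff-plus-entropy convergence $E_n\to E$ identifies its boundary locus; letting $\varepsilon\to0$ yields $\mu(\inn G')=\mu$. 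The remaining clause of stable convergence, $\out F_{\mu_n}'\to\out F_\mu'$, then follows from $F_{\mu_n}'\to F_\mu'$ locally uniformly (Cauchy estimates) together with $\inn F_{\mu_n}'\to\inn F_\mu'$, the quotient $F'/\inn F'$ being zero-free since $F'$ and $\inn F'$ share the same zeros.

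Conversely, assume $F_{\mu_n}\to F_\mu$ stably. First $\sup_n\mu_n(\overline{\mathbb{D}})<\infty$: since $\inn F_{\mu_n}'\to\inn F_\mu'$ locally uniformly, $|\inn F_{\mu_n}'(0)|$ is bounded away from $0$ (replace $0$ by another point if $0\in\crit F_\mu$), and $\mu_n(\overline{\mathbb{D}})=\sum(1-|c_i|)+\sigma_n(\mathbb{S}^1)\le\sum\log\frac{1}{|c_i|}+\sigma_n(\mathbb{S}^1)=-\log|\inn F_{\mu_n}'(0)|$, which is bounded. With bounded masses, $\inn F_{\mu_n}'\to\inn F_\mu'$ forces $\mu_n\to\mu$ weakly (interior zeros converging by Hurwitz; zeros escaping to $\mathbb{S}^1$ together with the boundary singular parts accounting for $\mu|_{\mathbb{S}^1}$). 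It remains to produce Beurling--Carleson sets $E_n\to E$ in $\BC$ with $\mu_n(\overline{\mathbb{D}}\setminus K_{E_n})<\varepsilon$ for large $n$. Suppose not: then a fixed positive amount of the mass of $\mu_n$ is \emph{diffuse}, i.e.\ for every threshold $\eta$ it sits on sets of arbitrarily small local entropy $\|\cdot\|_{\BC_\eta}$. But a nonzero diffuse part is, by the reduced-limit mechanism underlying~\cite{inner}, strictly discounted on passing to the limit, so the locally uniform limit $G=\lim F_{\mu_n}$ would have $\mu(\inn G')$ strictly smaller than $\mu$; then $G\ne F_\mu$, contradicting $F_{\mu_n}\to F_\mu$ locally uniformly. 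Hence the diffuse part vanishes; quantifying this through the functionals $\|\cdot\|_{\BC_\eta}$ and the compactness of $\BC(N)$ recorded in the introduction, a diagonal argument over the scales $\eta\to0$ selects $\{E_n\}$ with converging entropies, i.e.\ $\mu_n\to\mu$ in the Korenblum topology.

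The common engine, and the main obstacle, is a single nonlinear potential-theoretic fact: the curvature equation $\Delta u=e^{2u}+2\pi(\text{measure data})$ is stable under weak convergence of its data \emph{precisely} on measures whose mass is carried by Korenblum stars of bounded entropy --- a Beurling--Carleson analogue of the reduced-measure theory of Marcus and Ponce, with the entropy $\|E\|_{\BC}$ playing the role of Bessel capacity. Making this precise requires (i) a capacity for which the Beurling--Carleson sets are the sets of finite energy and the Korenblum stars the sets of full capacity, (ii) uniform lower bounds on the metrics $\lambda_{F_{\mu_n}}$ and equicontinuity of the $u_{\mu_n}$ when the data concentrate on such stars, and (iii) control of the boundary singular parts --- the one place where a classical (Riesz or Bessel) capacity does not suffice and the entropy is genuinely needed --- through the estimates of~\cite{inner}. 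Once this is in hand, both implications reduce to bookkeeping with Liouville's correspondence and Theorem~\ref{main-thm}, and the parallel dictionary for a \emph{diffuse} data sequence (Examples (ii)--(iii)) is what shows the topology cannot be weakened.
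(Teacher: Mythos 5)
Your outline correctly identifies the architecture the paper uses --- Liouville's correspondence to move to the Gauss curvature equation (and hence effectively to Theorem~\ref{main-thm4}), plus a concentrating/diffuse dichotomy for the data --- and your final paragraph honestly flags items (i)--(iii) as the content still to be established. But those items are exactly where the paper's proof lives, and as written the proposal does not supply them; what is given is a plan, not a proof.

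Two concrete gaps. First, in the forward direction you assert that when the data sit on Korenblum stars $K_{E_n}$ with $E_n\to E$ in $\BC$, the pullback metrics have ``uniform lower bounds'' and therefore ``register the full concentrated critical mass'' in the limit. The actual mechanism in the paper is quite specific: Lemma~\ref{korlemma}, proved via hyperbolic geometry (the inner function is a near-isometry away from its critical set, Lemma~\ref{gammaF-estimate2}, Corollary~\ref{gammaF-estimate}), gives $|F'(\zeta)|\le C(M)\dist(\zeta,E)^{-4}$ on $\mathbb S^1$ (Corollary~\ref{korstar1}), and it is this pointwise bound --- together with the $\BC$-convergence $E_n\to E$ --- that yields uniform integrability of $\log|F_n'|$ on the circle, which is the criterion from \cite{inner} for the outer factors (hence the inner factors) to converge without mass loss. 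The general concentrating case is then handled by the Perron-hull identity~(\ref{eq:fund2}) and Lemmas~\ref{weak-log}, \ref{fundamental-lemma4}. ``Size estimates for singular inner functions supported on Beurling--Carleson sets'' does not substitute for this; there is no uniform-in-$n$ control without the star geometry and the $\gamma_F$ decay.

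Second, in the converse direction you invoke a ``reduced-limit mechanism underlying \cite{inner}'' to say that a nonzero diffuse part is strictly discounted. In the paper this is not cited but proved: the measure is put through a Roberts decomposition (Theorem~\ref{roberts-thm}), the comparison Lemma~\ref{comparison} controls $|I_{\mu_j}|$, Theorem~\ref{equidiffuse-thm2} runs an iterated-Perron estimate showing $u_{\overline\mu}\gtrsim u_{r_0,(4/5)u_{\mathbb D}}$, and Section~\ref{sec:diffuse-lose-mass} converts this via Lemma~\ref{some-mass-is-lost} into the statement that $u_{\mu_n}$ cannot converge weakly to $u_\mu$ once a diffuse component of $\mu_n$ survives. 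Also note a terminological slip: a diffuse sequence is not one that ``sits on sets of arbitrarily small local entropy'' --- diffuseness means that \emph{any} Korenblum star with small local entropy captures only a small amount of mass, so the mass is \emph{not} captured by bounded-entropy stars. Finally, the initial reduction ``$\mu_n(\overline{\mathbb D})=\sum(1-|c_i|)+\sigma_n\le -\log|\inn F'_{\mu_n}(0)|$'' is fine, but a single evaluation at $0$ only bounds the mass; it does not by itself yield the weak convergence $\mu_n\to\mu$ without first ruling out mass escaping to $\mathbb S^1$ in a way that alters $\mu|_{\mathbb S^1}$ --- that is again the Roberts/Perron analysis, not a Hurwitz-type argument.
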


\subsection{Connections with the Gauss curvature equation}

We now give an alternative (and slightly more general) perspective of our main theorem in terms of conformal metrics and nonlinear differential equations.
Given a conformal pseudometric $\lambda(z)|dz|$ on the unit disk with an upper semicontinuous density, its {\em Gaussian curvature} is given by
$$
k_\lambda
=
 - \frac{\Delta \log \lambda}{\lambda^2},
 $$
where the Laplacian is taken in the sense of distributions.  It is well known that the Poincar\'e metric $\lambda_{\mathbb{D}}(z) = \frac{1}{1-|z|^2}$ has constant curvature $-4$. For a holomorphic self-map  of the unit disk $F \in \hol(\mathbb{D}, \mathbb{D})$, consider
the pullback
 $$
 \lambda_{F} := F^*\lambda_{\mathbb{D}} = \frac{|F'|}{1-|F|^2}.
 $$
  Since curvature is a conformal invariant, e.g.~see \cite[Theorem 2.5]{conf-metrics}, it follows that
$
k_{\lambda_F} = -4
$
on $\mathbb{D} \setminus \crit(F)$ where  $\crit(F)$ denotes the critical set of $F$. On the critical set, $\lambda_F$ = 0 while its curvature has $\delta$-masses: $k_{\lambda_F} = -4 - 2\pi \sum_{c \in \crit(F)} \lambda_F(c)^{-2} \cdot \delta_c$.

After the change of variables $u_F = \log \lambda_F$, we naturally arrive at the PDE
\begin{equation}
\label{eq:basic-gce}
   \Delta u = 4 e^{2u} + 2\pi \tilde \nu, \qquad \tilde \nu \ge 0,
   \end{equation}
   where
   $\tilde \nu = \sum_{c \in \crit(F)} \delta_{c}
   $ is an integral sum of point masses.
A theorem of Liouville  \cite[Theorem 5.1]{conf-metrics} states that the correspondence $F \to u_F$ is a bijection between
$$
\hol(\mathbb{D}, \mathbb{D})\,/\aut(\mathbb{D}) \quad \Longleftrightarrow \quad \bigl \{ \text{solutions of (\ref{eq:basic-gce}) with }\tilde{\nu} \text{ integral} \bigr \}.
$$
In principle, Liouville's theorem allows one to translate questions about  holomorphic self-maps of the disk to problems in PDE. In practice, however, it is difficult to find questions that are simultaneously interesting in both settings.

It turns out that the question of describing inner functions with derivative in the Nevanlinna class is related to studying the Gauss curvature equation with {\em nearly-maximal} boundary values
\begin{equation}
\label{eq:generalized-GCE}
   \left\{\begin{array}{lr}
        \Delta u = 4 e^{2u} + 2\pi \tilde \nu, &  \text{in } \mathbb{D}, \\
       u_{\mathbb{D}} - u = \mu, &  \text{on } \mathbb{S}^1,
        \end{array}\right.
\end{equation}
where $u_{\mathbb{D}} = \log \lambda_{\mathbb{D}}$ is the pointwise {\em maximal} solution of (\ref{eq:basic-gce}) in the sense that it dominates every solution of  (\ref{eq:basic-gce}) with any $\tilde \nu \ge 0$. In (\ref{eq:generalized-GCE}), we allow $\tilde \nu \in M(\mathbb{D})$ to be any positive measure on the unit disk which satisfies the {\em Blaschke condition}
 \begin{equation}
\int_{\mathbb{D}} (1-|z|) d\tilde \nu(z) < \infty,
\end{equation}
 and $\mu \in M(\mathbb{S}^1)$ to be any finite positive measure on the unit circle.
 The first equality in (\ref{eq:generalized-GCE}) is understood weakly in the sense of distributions: we require $u(z)$ and $e^{2u(z)}$ to be in $L^1_{\loc}(\mathbb{D})$,
and ask that for any test function $\phi \in C_c^\infty(\mathbb{D})$, compactly supported in the disk,
\begin{equation}
\int_{\mathbb{D}} u \Delta \phi \, |dz|^2 = \int_{\mathbb{D}} 4e^{2u} \Delta \phi \, |dz|^2  + 2\pi \int_{\mathbb{D}} \phi  d\tilde \nu,
\end{equation}
while the second equality expresses the fact that the measures $(u_{\mathbb{D}}-u)(d\theta/2\pi)|_{\{|z|=r\}}$ converge weakly to $\mu$ as $r \to 1$.
If $\mu$ and $\tilde \nu$ are as above, set
\begin{equation}
\label{eq:power-combine}
\omega(z) = \mu(z) + \nu(z) := \mu(z) + \tilde \nu(z)(1-|z|) \in M(\overline{\mathbb{D}}).
\end{equation}

\begin{theorem}
\label{solutions-gce}
Given a measure $\omega = \mu + \nu \in M_{\BC}(\overline{\mathbb{D}})$, the equation (\ref{eq:generalized-GCE}) admits a unique solution, which we denote $u_{\mu, \nu}$ or $u_\omega$.
The solution $u_\omega$ is decreasing in $\omega$, that is, $u_{\omega_1} > u_{\omega_2}$ if $\omega_1 < \omega_2$.
However, if $\omega \notin M_{\BC}(\overline{\mathbb{D}})$ then no solution exists.
\end{theorem}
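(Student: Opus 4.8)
The plan is to pass everywhere to the \emph{deficiency} $v := u_{\mathbb{D}} - u$, which turns (\ref{eq:generalized-GCE}) into
\[
\Delta v = 4\lambda_{\mathbb{D}}^2\bigl(1-e^{-2v}\bigr) - 2\pi\tilde\nu, \qquad v \ge 0, \qquad v|_{\mathbb{S}^1} = \mu,
\]
a problem whose nonlinearity $4\lambda_{\mathbb{D}}^2(1-e^{-2v})$ is nonnegative, bounded above by $4\lambda_{\mathbb{D}}^2$, and increasing in $v$. Two comparison functions do most of the work: $u_{\mathbb{D}}$ itself (i.e.\ $v\equiv 0$), which is a supersolution of (\ref{eq:generalized-GCE}) carrying the maximal boundary values; and $\underline u := u_{\mathbb{D}} + g_{\tilde\nu} - P[\mu]$, where $g_{\tilde\nu}(z) = \int_{\mathbb{D}} \log\bigl|\tfrac{z-w}{1-\bar w z}\bigr|\, d\tilde\nu(w) \le 0$ and $P[\mu]$ is the Poisson integral of $\mu$. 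Since $\Delta \underline u = 4\lambda_{\mathbb{D}}^2 + 2\pi\tilde\nu \ge 4e^{2\underline u} + 2\pi\tilde\nu$ (because $\underline u \le u_{\mathbb{D}}$), and since $g_{\tilde\nu}$ has vanishing boundary measure while $P[\mu]$ has boundary measure $\mu$, the function $\underline u$ is a subsolution with boundary deficiency exactly $\mu$.

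\textbf{Existence.} When $\tilde\nu$ is an \emph{integral} measure and $\mu$ is supported on a countable union of Beurling--Carleson sets, Theorem~\ref{main-thm} produces an inner function $F\in\mathscr J$ with $\inn F' = B_{\tilde\nu} S_\mu$, and $u_F = \log\tfrac{|F'|}{1-|F|^2}$ solves (\ref{eq:generalized-GCE}) with this data; this is $u_\omega$. For a general Blaschke measure $\tilde\nu$, approximate from below by integral measures $\tilde\nu_n \uparrow \tilde\nu$, take the solutions $u_n := u_{\mu,\nu_n}$, and use monotonicity (proven below) to get a decreasing limit $u_n \downarrow u_\omega$. Monotone convergence lets one pass to the limit in the distributional formulation ($e^{2u_n}\downarrow e^{2u_\omega}$ in $L^1_{\loc}$, $\tilde\nu_n \to \tilde\nu$ weakly), and the two-sided bound $\underline u \le u_\omega \le u_n$ (valid since $g_{\tilde\nu} \le g_{\tilde\nu_n}$) pins the boundary deficiency of $u_\omega$ to exactly $\mu$: the lower bound gives $v_\omega \le P[\mu] - g_{\tilde\nu}$, hence deficiency $\le \mu$, and the upper bound gives $v_\omega \ge v_n$, hence deficiency $\ge \mu$. (Alternatively one can avoid Theorem~\ref{main-thm} and build a subsolution with deficiency $\mu$ directly, by Roberts-decomposing $\mu$ over the complementary arcs of its carrier and superposing explicit local metrics; this is more self-contained but much more laborious.)

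\textbf{Uniqueness and monotonicity.} Both follow from one comparison principle: if $\overline u$ is a supersolution with $\overline u \le u_{\mathbb{D}}$ and $\underline u$ a subsolution whose boundary deficiency dominates that of $\overline u$, then $\underline u \le \overline u$ in $\mathbb{D}$. Indeed, on the open set $\{\underline u > \overline u\}$ one has $\Delta(\underline u - \overline u) \ge 4(e^{2\underline u} - e^{2\overline u}) > 0$ (the $\tilde\nu$ terms cancel), so $(\underline u - \overline u)_+$ is subharmonic; its boundary measure is dominated by the difference of deficiencies, which is $\le 0$, whence $(\underline u - \overline u)_+ \equiv 0$. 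Uniqueness is the case of two solutions with the same data. For monotonicity, if $\omega_1 < \omega_2$ then $u_{\omega_1}$ is a supersolution of the $\omega_2$-equation (since $\tilde\nu_1 \le \tilde\nu_2$) with deficiency $\mu_1 \le \mu_2$, so the principle gives $u_{\omega_2} \le u_{\omega_1}$, and the strong maximum principle upgrades this to a strict inequality. The one technical point is justifying the boundary step when the functions blow up near $\mathbb{S}^1$; subtracting off $-g_{\tilde\nu}$ and $P[\mu]$ reduces matters to a subharmonic competitor controlled by a function with finite boundary measure.

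\textbf{Non-existence} is the crux. Testing the $v$-equation against the Green function $\log\tfrac1{|z|}$ yields the \emph{unconditional} a priori identity
\[
\frac{2}{\pi}\int_{\mathbb{D}} \lambda_{\mathbb{D}}(z)^2\bigl(1-e^{-2v(z)}\bigr)\log\tfrac1{|z|}\, dA(z) = \frac{\mu(\mathbb{S}^1)}{2\pi} - v(0) + \int_{\mathbb{D}}\log\tfrac1{|z|}\, d\tilde\nu < \infty,
\]
so whenever (\ref{eq:generalized-GCE}) is solvable this ``entropy integral'' is finite. I would then show this is incompatible with $\omega\notin M_{\BC}(\overline{\mathbb{D}})$, i.e.\ with $\mu = \omega|_{\mathbb{S}^1}\notin M_{\BC}(\mathbb{S}^1)$. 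If $\mu$ has a nonzero absolutely continuous part, then $v + g_{\tilde\nu}$ is a subharmonic minorant of $P[\mu]$ with boundary measure $\mu$, so $v$ tends radially to a strictly positive limit on a set of $\theta$ of positive Lebesgue measure; for each such $\theta$ the inner integral $\int_0^1 (1-r)^{-1}(1-e^{-2v(re^{i\theta})})\, dr$ diverges, contradicting the displayed finiteness. If $\mu$ is singular but not carried by a countable union of Beurling--Carleson sets, one argues that $\supp \mu$ forces $v$ to exceed a fixed threshold $c_0$ on Whitney squares whose shadows fill, up to finite entropy, Korenblum stars $K_E$ with $E$ carrying part of $\mu$; restricting the entropy integral to $\{v > c_0\}\supset K_E$ and using $\int_{K_E}\lambda_{\mathbb{D}}^2\log\tfrac1{|z|}\, dA \asymp \|E\|_{\BC}$ would then force $\|E\|_{\BC} < \infty$ for every such $E$, and letting $E$ exhaust $\supp\mu$ contradicts the hypothesis. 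Carrying out this last step — matching the arcs where $v$ is large to the complementary arcs of an honest Beurling--Carleson set — is the main obstacle, and is where I expect to rely on the combinatorial estimates from \cite{inner} and on Roberts-type decompositions.
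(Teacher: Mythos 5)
Your deficiency formulation and the identification of the comparison functions $u_{\mathbb{D}}$ and $\underline u = u_{\mathbb{D}} + g_{\tilde\nu} - P[\mu]$ are correct: note that your $\underline u$ is literally the paper's $u_{\mathbb{D}} - \log\frac{1}{|I_\omega|}$, since $\log\frac{1}{|I_\omega|} = P[\mu] - g_{\tilde\nu}$. Your uniqueness/monotonicity comparison principle is also morally the same Kato-inequality argument as the paper uses; the paper sidesteps the boundary-blowup technicality you flag by first proving the closed formula $u_\omega = \Lambda^{\tilde\nu}\bigl[u_{\mathbb{D}} - \log\frac{1}{|I_\omega|}\bigr]$, from which uniqueness and monotonicity drop out of the monotonicity of $\Lambda$.

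There are two genuine gaps. First, the existence step for general Blaschke $\tilde\nu$: an increasing sequence of \emph{integral} measures $\tilde\nu_n \uparrow \tilde\nu$ with $\tilde\nu_n \le \tilde\nu$ does not exist when $\tilde\nu$ has a non-atomic part (any nonzero sum of unit point masses bounded above by $\tilde\nu$ would force $\tilde\nu$ to have atoms of mass $\ge 1$). So the monotone approximation you propose cannot even get started for, say, $\tilde\nu$ equal to arc-length on a segment. The paper's route avoids this entirely: it takes the existence of $u_\mu$ (the $\tilde\nu = 0$ solution, from \cite{inner}) as given, and then forms the Perron hull $\Lambda^{\tilde\nu}\bigl[u_\mu - \log\frac{1}{|B_\nu|}\bigr]$, which works for an arbitrary nonnegative Blaschke measure in one stroke.

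Second, non-existence. You are attempting to re-derive the hard theorem of \cite{inner} from scratch via the entropy integral identity and a Roberts-type covering of the set where $v$ is large — and you acknowledge yourself that the singular case ``is the main obstacle'' and is not carried out. The paper instead \emph{reduces} the general $\tilde\nu$ case to the already-known $\tilde\nu=0$ case by a two-line argument: if $u_{\mu,\nu}$ existed, then $\Lambda^0(u_{\mu,\nu})$ would be squeezed between $u_{\mu,\nu}$ and $u_{\mu,\nu} + \log\frac{1}{|B_\nu|}$, hence would be a nearly-maximal solution of $\Delta u = 4e^{2u}$ with deficiency $\mu$, i.e.\ the (nonexistent) $u_\mu$. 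Your entropy-integral identity is a reasonable a priori estimate, and the absolutely-continuous case you sketch looks right, but the singular-measure case genuinely requires the combinatorial work of \cite{inner}; attempting to inline that argument here, rather than invoking it via the clean $\Lambda^0$ reduction, leaves the proof incomplete at exactly the crux.
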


We endow the space of solutions of (\ref{eq:generalized-GCE}) with the {\em stable topology} where $u_{\omega_n} \to u_{\omega}$ if the  $u_{\omega_n}$ converge weakly to $u_\omega$ and the
  $\omega_n$ converge weakly to $\omega$.
In this setting, our main theorem states:

\begin{theorem}
\label{main-thm4}
The stable topology on the space of solutions with nearly maximal boundary values coincides with the Korenblum topology on $M_{\BC}(\overline{\mathbb{D}})$.
 \end{theorem}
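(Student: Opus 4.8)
The plan is to run a normal-families argument on the solutions $u_{\omega_n}$, identify every subsequential limit, and thereby reduce the statement to a single dichotomy: for a weakly convergent sequence $\omega_n \to \omega$ in $M_{\BC}(\overline{\mathbb D})$, Korenblum convergence holds if and only if no mass is lost in passing to the limiting solution. To set this up I would first note that $\Delta u_{\omega_n} = 4e^{2u_{\omega_n}} + 2\pi\tilde\nu_n \ge 0$ by Theorem~\ref{solutions-gce}, so the $u_{\omega_n}$ are subharmonic and dominated by $u_{\mathbb D}$; using an a priori bound $\int_{\mathbb D}(u_{\mathbb D} - u_{\omega_n})\,d\rho \lesssim |\omega_n|$ against a fixed finite reference measure $\rho$ (as in \cite{inner}) to rule out local uniform divergence to $-\infty$, I can pass to a subsequence with $u_{\omega_n} \to v$ in $L^1_{\loc}(\mathbb D)$ and a.e. Passing to the limit in the weak form of (\ref{eq:generalized-GCE}) — the exponential term by dominated convergence since $0 \le e^{2u_{\omega_n}} \le \lambda_{\mathbb D}^2 \in L^1_{\loc}$, the mass term by vague convergence of $\tilde\nu_n$ on $\mathbb D$ — gives that $v$ solves (\ref{eq:basic-gce}) with $\tilde\nu'$ the vague limit of $\tilde\nu_n$, and, invoking the regularity theory of \cite{inner}, that $v$ has a boundary measure $\mu'$; thus $v = u_{\omega'}$ with $\omega' = \mu' + \tilde\nu'(1-|z|) \in M_{\BC}(\overline{\mathbb D})$ (the last by the necessity half of Theorem~\ref{solutions-gce}). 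Comparing vague limits on the open disk forces $\omega'|_{\mathbb D} = \omega|_{\mathbb D}$ automatically, so any discrepancy is confined to the boundary, and the semicontinuity established in \cite{inner} (critical structure can only be lost, not created relative to the weak limit) here gives $\omega' \le \omega$, equivalently $\mu' \le \mu$ and $v \ge u_\omega$. By the uniqueness in Theorem~\ref{solutions-gce}, $u_{\omega_n} \to u_\omega$ weakly precisely when every subsequential limit has $\omega' = \omega$; calling this \emph{no loss}, the theorem reduces to proving that, for weakly convergent $\omega_n \to \omega$, Korenblum convergence is equivalent to no loss.

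For the implication ``not Korenblum $\Rightarrow$ loss,'' I would decompose $\omega_n = \underline\omega_n + \tau_n$ into its concentrating and diffuse parts, with uniquely determined limits $\underline\omega_n \to \underline\omega$, $\tau_n \to \tau$ and $\underline\omega + \tau = \omega$; failure of Korenblum convergence means exactly $\tau \ne 0$, hence $\underline\omega = \omega - \tau < \omega$ strictly. Since $\omega_n \ge \underline\omega_n$, monotonicity gives $u_{\omega_n} \le u_{\underline\omega_n}$, and by the case treated in the next paragraph (Korenblum convergence implies no loss), applied to the concentrating sequence $\underline\omega_n$, we get $u_{\underline\omega_n} \to u_{\underline\omega}$; so every subsequential limit $v$ obeys $v \le u_{\underline\omega} < u_\omega$, i.e. $\omega' \ne \omega$ and mass is lost.

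For ``Korenblum $\Rightarrow$ no loss,'' given $\varepsilon > 0$ I would split $\omega_n = \omega_n^\flat + \omega_n^\sharp$ with $\supp \omega_n^\flat \subseteq K_{E_n}$, where $E_n \to E$ in the Korenblum topology (so $\|E_n\|_{\BC} \to \|E\|_{\BC}$, whence $E_n \in \BC(N)$ for some $N = N(\varepsilon)$) and $|\omega_n^\sharp| < \varepsilon$ for large $n$. Since $\BC(N)$ is compact, $K_{E_n} \to K_E$ in the Hausdorff sense and, along a subsequence, $\omega_n^\flat \to \omega^\flat$ weakly with $\omega^\flat$ supported on $K_E$ and $0 \le \omega - \omega^\flat$ of mass $\le \varepsilon$. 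The crux is the following.
\smallskip
\noindent\emph{Claim.} If $\omega_n \to \omega$ weakly with $\supp \omega_n \subseteq K_{E_n}$, $E_n \to E$ in the Korenblum topology and $\sup_n |\omega_n| < \infty$, then $u_{\omega_n} \to u_\omega$; in particular $\omega$ is supported on $K_E$ and no mass is lost.
\smallskip
Granting this, $u_{\omega_n^\flat} \to u_{\omega^\flat}$, so $v = \lim u_{\omega_n} \le u_{\omega^\flat}$ by monotonicity; and since removing a positive mass $\le \varepsilon$ from the data of $\omega^\flat$ perturbs the solution by a function of integral $O(\varepsilon)$ against $\rho$ (comparison with the corresponding linear equation), letting $\varepsilon \to 0$ gives $v \le u_\omega$ a.e. With $v \ge u_\omega$ from the first paragraph, $v = u_\omega$ — no loss. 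This, together with the previous paragraph and the reduction, proves the theorem.

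The Claim is the main obstacle, and it is where the hypothesis that $E_n \to E$ \emph{with converging entropy} does essential work. On one hand, Hausdorff convergence pins down the long complementary arcs, so $\|E_n\|_{\BC} \to \|E\|_{\BC}$ forces the fine-scale entropy to vanish uniformly: $\lim_{\eta \to 0} \limsup_n \|E_n\|_{\BC_{\eta}} = 0$. On the other — and this is where I would build on \cite{inner} — boundary loss can only be carried by fine-scale structure: if the limit loses a definite boundary mass $m > 0$, then $\liminf_n \|E_n\|_{\BC_{\eta}} \ge c(m) > 0$ for every threshold $\eta$. Through Liouville's correspondence this is the assertion that a finite Blaschke product whose critical points lie in a Korenblum star of bounded entropy cannot lose a definite amount of boundary mass unless its critical angles cluster at arbitrarily fine scales; this should follow from the two-sided estimates of \cite{inner} bounding $u_\omega$ on a Korenblum star in terms of $u_{\mathbb D}$ and the entropy, together with the characterization recalled after (\ref{eq:entropy-drops}) of when the entropy drop is strict. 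Comparing the two displays forces $m = 0$, proving the Claim. (Alternatively, one might prove it directly by sandwiching $u_{\omega_n}$ between sub- and super-solutions obtained from $u_\omega$ by harmonic corrections adapted to $K_{E_n}$, exploiting the strong absorption $4e^{2u} \sim 4\lambda_{\mathbb D}^2$ near $\mathbb S^1$; the fine-scale entropy bound is precisely what drives both corrections to $0$.) Since the argument nowhere uses that $\tilde\nu$ is integral, read through Liouville's correspondence it also yields Theorem~\ref{main-thm2}.
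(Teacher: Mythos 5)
Your high-level reduction to a ``no loss'' dichotomy for weakly convergent data is a reasonable framing, and the split into ``Korenblum $\Rightarrow$ no loss'' and ``not Korenblum $\Rightarrow$ loss'' mirrors the paper's Sections 3 and 4. But both halves have real gaps.

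In the direction ``not Korenblum $\Rightarrow$ loss'' your key chain $v \le u_{\underline\omega} < u_\omega$ is broken. First, the inequality $u_{\underline\omega} < u_\omega$ has the wrong sign: Theorem~\ref{solutions-gce} says $u_\omega$ is \emph{decreasing} in $\omega$, and $\underline\omega < \omega$ forces $u_{\underline\omega} > u_\omega$. Second, and more seriously, even with the sign fixed the argument does not close: monotonicity with $\omega_n \ge \underline\omega_n$ gives only the \emph{upper} bound $v \le u_{\underline\omega}$, which is also satisfied by $u_\omega$ (since $u_\omega < u_{\underline\omega}$), so nothing rules out $v = u_\omega$. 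What you actually need is the \emph{lower} bound $v \ge u_{\underline\omega}$, i.e.\ that the diffuse tail $\tau_n$ cannot depress the solution by a definite amount in the limit. That is exactly the content of the equidiffuse theorem (Theorems~\ref{equidiffuse-thm} and~\ref{equidiffuse-thm2}), whose proof runs through the Roberts decomposition; the paper then packages the contrapositive step into Lemma~\ref{some-mass-is-lost}. A monotonicity comparison against the concentrating part alone cannot substitute for this.

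In the direction ``Korenblum $\Rightarrow$ no loss,'' the reduction of the general case to the Claim (measures supported on Korenblum stars $K_{E_n}$ with $E_n \to E$ in $\BC$) matches the paper's reduction in Theorem~\ref{concentrating-thm}, but the Claim itself is not proved --- it is only asserted that fine-scale entropy control ``should follow'' from estimates in \cite{inner}. This is where the substance of the theorem lives. The paper's proof (Theorem~\ref{concentrating-thm4}) hinges on a concrete quantitative input: the exponential decay $\log\frac{1}{|I(z)|} \lesssim M\exp\bigl(-d_{\mathbb{D}}(z,K_E^2)\bigr)$ for inner functions with zero structure in $K_E$ (Lemma~\ref{gammaF-estimate2}), fed into the Poisson--Jensen identity of Lemma~\ref{jen2}, with uniform integrability secured by the area convergence of Korenblum stars (Lemma~\ref{jen1}). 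Absent a version of these estimates, the proposal records the right phenomenon without a proof. I would also flag the interior-matching step in your opening paragraph: it is correct that weak convergence on $\overline{\mathbb{D}}$ implies vague convergence of $\tilde\nu_n$ on $\mathbb{D}$ and hence pins down $\tilde\nu'$, but the semicontinuity $\mu' \le \mu$ for the boundary trace is a statement that itself requires an argument; the paper handles this through (\ref{eq:fund2}) and Lemma~\ref{boundary-values} rather than assuming it.
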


 Theorem \ref{main-thm2} is the restriction of Theorem \ref{main-thm4} to integral measures (we say that a  measure $\omega \in M(\overline{\mathbb{D}})$ is {\em integral} if $\tilde \nu$ is an integral sum of $\delta$-masses while $\mu$ can be anything).
 The connection comes from    \cite[Lemma 3.3]{inner}
 which says that if $F_\omega$ is an inner function with critical structure $\omega$, then
$$u_\omega \, = \, \log \lambda_{F_{\omega}} \, = \, \log \frac{|F_\omega'|}{1-|F_\omega|^2}.$$

\begin{lemma}
\label{two-topologies}
A sequence of functions $\{F_n\} \subset \hol(\mathbb{D}, \mathbb{D})\,/\aut(\mathbb{D})$  converges to $F$ uniformly on compact subsets if and only if $u_{F_n} \to u_F$ weakly on the disk.
\end{lemma}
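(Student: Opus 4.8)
My plan is to prove the two implications separately, using in the forward direction the elementary behaviour of $\log|F_n'|$ under locally uniform convergence, and in the converse direction a normal-families argument anchored by the injectivity in Liouville's theorem. Throughout I would assume $F$ and the $F_n$ are non-constant --- a constant limit is degenerate, since a sequence of self-maps collapsing to a constant has $\lambda_{F_n}\to 0$ locally uniformly, hence $u_{F_n}\to-\infty$ --- and I would read ``$u_{F_n}\to u_F$ weakly'' as distributional convergence on $\mathbb D$. Note that by the Gauss curvature equation $\Delta u_{F_n}=4e^{2u_{F_n}}+2\pi\tilde\nu_n\ge 0$, so each $u_{F_n}$ is subharmonic, while the Schwarz--Pick inequality gives the locally uniform bound $u_{F_n}\le u_{\mathbb D}$; for such a sequence distributional convergence agrees with $L^1_{\loc}(\mathbb D)$ convergence, which is the form I would work with.

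For the forward implication, suppose representatives are chosen with $F_n\to F$ locally uniformly. Then $F_n'\to F'$ locally uniformly by Cauchy's estimates, and since $|F|<1$ on compact subsets (as $F$ is non-constant) one has $\log(1-|F_n|^2)\to\log(1-|F|^2)$ locally uniformly, so the matter reduces to proving $\log|F_n'|\to\log|F'|$ in $L^1_{\loc}$. On compact subsets of $\mathbb D\setminus\crit(F)$ this is uniform. Near a critical point $c$ of multiplicity $m$ I would fix $\rho$ with $\overline{B(c,\rho)}\subset\mathbb D$ containing no other zero of $F'$, and invoke Hurwitz/Rouch\'e to factor $F_n'=P_nH_n$ on $B(c,\rho)$ for large $n$, with $P_n$ the monic degree-$m$ polynomial whose roots are the collapsing zeros of $F_n'$ there and $H_n$ holomorphic and non-vanishing; then $\log|H_n|\to\log|F'(z)/(z-c)^m|$ uniformly while $\log|P_n|\to m\log|z-c|$ in $L^1_{\loc}$ by the $L^1_{\loc}$-continuity of $a\mapsto\log|z-a|$. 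Patching over the locally finitely many critical points yields $u_{F_n}\to u_F$ in $L^1_{\loc}(\mathbb D)$.

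For the converse, suppose $u_{F_n}\to u_F$ weakly. Since $u_H$ depends only on $[H]\in\hol(\mathbb D,\mathbb D)/\aut(\mathbb D)$, it is enough to exhibit, for an arbitrary subsequence, a further subsequence whose representatives converge locally uniformly to a representative of $[F]$; the lemma then follows by the usual subsequence principle. I would post-compose each $F_n$ with an automorphism carrying $F_n(0)$ to $0$ --- this replaces $F_n$ by $G_n$ with $G_n(0)=0$ and leaves $u_{F_n}$ unchanged, since the pullback of $\lambda_{\mathbb D}$ is automorphism-invariant --- and then extract, by normality, a further subsequence with $G_n\to G$ locally uniformly, $G(0)=0$, so $G\in\hol(\mathbb D,\mathbb D)$. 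Here $G\not\equiv 0$, for otherwise $u_{G_n}\to-\infty$ locally uniformly, which is incompatible with $u_{G_n}=u_{F_n}\to u_F\in L^1_{\loc}$ (a genuine function because $F$ is non-constant). Thus $G$ is non-constant; the forward implication gives $u_{G_n}\to u_G$, hence $u_G=u_F$ a.e., and the injectivity in Liouville's theorem forces $[G]=[F]$, so $G$ is the required representative.

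I expect the main obstacle to be the forward direction at the critical set: away from $\crit(F)$ all the convergences are uniform, but where $\lambda_{F_n}$ vanishes $u_{F_n}$ acquires a logarithmic singularity, and one must rule out a loss of mass there --- precisely the purpose of the factorization $F_n'=P_nH_n$, which isolates the collapsing zeros from a non-vanishing factor. In the converse direction the only delicate point is keeping the normal-families limit non-degenerate, which the normalization $F_n(0)=0$ together with the injectivity of the Liouville correspondence takes care of.
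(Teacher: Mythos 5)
Your proof is correct and follows the same structure as the paper's: the forward implication from local uniform convergence plus integrability of the logarithmic singularities of $\log|F_n'|$ at critical points, and the converse via a normal-families subsequential limit combined with the injectivity of the Liouville correspondence. The paper's version is considerably terser, leaving the Hurwitz/Rouch\'e bookkeeping near $\crit(F)$ and the non-degeneracy of the normal-families limit implicit, but the underlying ideas are the same.
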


\begin{proof}
The direct implication is easy since the singularities of the $u_{F_n}$ are integrable. For the reverse implication, suppose that $u_{F_n} \to u_F$ and that $G$ is a subsequential limit of the $F_n$. By the direct implication,  $u_F = u_G$. Liouville's theorem tells us that $F = G$ up to post-composition with an automorphism of the disk.
\end{proof}

 \subsection{Invariant subspaces of Bergman space}

 For a fixed $\alpha > -1$ and $1 \le p < \infty$, consider the weighted Bergman space $A_\alpha ^p(\mathbb{D})$ which consists of  holomorphic functions
 on the unit disk satisfying the norm boundedness condition
 \begin{equation}
 \label{eq:beurling-space-def}
 \|f\|_{A_\alpha^p} = \biggl (  \int_{\mathbb{D}} |f(z)|^p \cdot (1-|z|)^\alpha |dz|^2 \biggr)^{1/p} < \infty.
\end{equation}
 For a function $f \in A^p_\alpha$, let $[f]$ denote the (closed) $z$-invariant subspace generated by $f$, that is the closure of the set $\{ p(z)f(z) \}$, where $p(z)$ ranges over polynomials.
 In the work \cite{korenblum}, Korenblum equipped
  subspaces of $A_\alpha ^p(\mathbb{D})$  with the {\em strong topology} where $X_n \to X$ if any $x \in X$ can be obtained as a limit of a converging sequence of $x_n \in X_n$ and vice versa.

 We focus our attention on a small but important subclass of invariant subspaces which are generated by a single inner function (here, we mean a usual Hardy-inner function rather than a Bergman-inner function). Following \cite{HKZ2}, we refer to such subspaces as of {\em $\kappa$-Beurling-type}.
According to a classical theorem of Korenblum \cite{korenblum-cyc}
 and Roberts \cite{roberts}, the equality $[BS_{\mu_1}] = [BS_{\mu_2}]$ holds if and only if $\mu_1 - \mu_2$ does not charge Beurling-Carleson sets.
Comparing with Theorem \ref{main-thm}, we see that the subspaces of $\kappa$-Beurling-type are in bijection with elements of
 $\mathscr J/\aut(\mathbb{D})$.
 We show that this bijection is a homeomorphism:

\begin{theorem}
\label{main-thm3}
For any $\alpha > -1$ and $1 \le p < \infty$, the strong topology on subspaces of $\kappa$-Beurling-type agrees with the Korenblum topology on $M_{\BC}(\overline{\mathbb{D}})$.
\end{theorem}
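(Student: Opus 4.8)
The bijection underlying the statement is the one recorded just above: the Korenblum–Roberts theorem \cite{korenblum-cyc,roberts} together with Theorem \ref{main-thm} identifies the $\kappa$-Beurling subspaces with $\mathscr J/\aut(\mathbb D)$, hence — via $F\mapsto\mu(\inn F')$ — with $M_{\BC}(\overline{\mathbb D})$; write $X_\omega:=[I_\omega]$ for the subspace attached to $\omega\in M_{\BC}(\overline{\mathbb D})$. The plan is to prove separately the two implications: (A) if $\omega_n\to\omega$ in the Korenblum topology then $X_{\omega_n}\to X_\omega$ in Korenblum's strong topology \cite{korenblum}, and (B) its converse.

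For (A), first treat the compact regime. If $\omega_n\to\omega$ in $M_{\BC(N)}(\overline{\mathbb D})$ — supports on Korenblum stars of norm $\le N$, converging in $\BC$ — then Theorem \ref{main-thm4} (equivalently Theorem \ref{main-thm2} in the integral case) gives $F_{\omega_n}\to F_\omega$ stably, so in particular $I_{\omega_n}=\inn F_{\omega_n}'\to I_\omega$ locally uniformly. Because $|I_{\omega_n}|\le 1$ and $(1-|z|)^\alpha\in L^1(\mathbb D)$, dominated convergence upgrades this to $qI_{\omega_n}\to qI_\omega$ in $A^p_\alpha$ for each polynomial $q$, and a routine diagonalisation over $q$ shows every element of $X_\omega$ is an $A^p_\alpha$-limit of elements of the $X_{\omega_n}$. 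For the opposite half one uses that, in this bounded-norm regime, no zeros or singular mass of the $I_{\omega_n}$ escape to $\partial\mathbb D$, so the characterisation of $X_{\omega_n}$ furnished by Korenblum's theory — via the associated premeasure / $\kappa$-function — passes to the limit, forcing any $A^p_\alpha$-convergent sequence $f_n\in X_{\omega_n}$ to have its limit in $X_\omega$. The general case reduces to this one straight from the definition of the Korenblum topology: write $\mu_n=\nu_n+r_n$ with $\nu_n\le\mu_n$ carried by bounded-norm Korenblum stars converging in $\BC$ and $r_n(\overline{\mathbb D})<\varepsilon$; then $I_{\mu_n}=I_{\nu_n}I_{r_n}$ with $I_{r_n}$ within $O(\varepsilon)$ of $1$ locally uniformly (since $\log(1/|I_{r_n}|)$ is controlled by $r_n(\overline{\mathbb D})$), so a $3\varepsilon$-argument together with a diagonalisation over $\varepsilon\downarrow 0$ transports strong convergence from $\{X_{\nu_n}\}$ to $\{X_{\mu_n}\}$.

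For (B), suppose $X_{\omega_n}\to X_\omega$ strongly. The first step is non-degeneracy. From the explicit potentials of Blaschke and singular inner factors one gets, for every compact $K\subset\mathbb D$, a lower bound $\log(1/|I_{\omega_n}(z)|)\gtrsim_K\omega_n(\overline{\mathbb D})$ uniformly on $K$; thus $\omega_n(\overline{\mathbb D})\to\infty$ would force $I_{\omega_n}\to 0$ locally uniformly, which cannot coexist with $X_{\omega_n}\to X_\omega$ for a nonzero $X_\omega$. Hence $\omega_n(\overline{\mathbb D})$ is bounded and, along a subsequence, $\omega_n\to\omega^\ast$ weakly; one must then identify $\omega^\ast$ with $\omega$ — in particular showing $\omega^\ast\in M_{\BC}(\overline{\mathbb D})$ — ruling out the escape of mass to $\partial\mathbb D$ as a non-singular boundary measure (which would again degenerate the generating inner functions and prevent $X_\omega$ from being of $\kappa$-Beurling type). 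The heart of the matter is then to rule out a diffuse component. If $\omega_n$ fails to converge to $\omega$ in the Korenblum topology, then — decomposing the weakly convergent sequence $\{\omega_n\}$ into its concentrating and diffuse parts as in the introduction — a definite amount of mass of the $\omega_n$ escapes every uniformly bounded-norm family of Korenblum stars; using the premeasure description of $X_{\omega_n}$ and the quantitative form of Roberts' cyclicity construction (the decomposition of a singular measure into pieces carried by low-entropy Beurling–Carleson sets), one must manufacture from this either an element of $X_\omega$ that is not an $A^p_\alpha$-limit of elements of the $X_{\omega_n}$, or a bounded sequence $f_n\in X_{\omega_n}$ whose limit escapes $X_\omega$ — contradicting $X_{\omega_n}\to X_\omega$. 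The guiding example is Example (ii) of the introduction, where the subspaces generated by the $\inn F_n'$ do not converge to $[S_{\delta_1}]$ even though $\inn F_n'\to S_{\delta_1}$ locally uniformly: this is the subspace-theoretic shadow of the entropy blow-up $n\theta_n\log(1/\theta_n)\to\infty$.

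The main obstacle is this diffuse-rejection step in (B): Korenblum's strong topology is a soft convergence of closed subspaces, carrying no built-in uniformity, whereas the Korenblum topology on $M_{\BC}(\overline{\mathbb D})$ encodes a sharp quantitative statement about concentration on low-entropy stars, and bridging the two requires a dictionary between a subspace $X_\theta$ and the premeasure of its inner generator that is simultaneously faithful — only up to the Beurling–Carleson ambiguity already present in Korenblum–Roberts — and stable under limits. It is exactly the interplay of that ambiguity with the possibility of mass escaping to $\partial\mathbb D$ — where, as Example (ii) shows, locally uniform convergence of the generators is neither necessary nor sufficient for strong convergence of the subspaces — that must be controlled; the Roberts decomposition and Korenblum's $\kappa$-function supply the tools, but promoting the equality statement of Korenblum–Roberts to a statement about topologies is the new work. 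A smaller but real auxiliary point is the non-degeneracy/compactness step in (B), i.e.\ ensuring the limiting subspace genuinely remembers a measure in $M_{\BC}(\overline{\mathbb D})$ and not, say, a non-singular boundary measure.
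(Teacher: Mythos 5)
Your outline matches the architecture of the paper's proof: the dichotomy between concentrating and diffuse sequences, the use of a division principle on the concentrating side, and a Roberts-type cyclicity estimate on the diffuse side. But the two places you flag as "the heart of the matter" are exactly where the paper does real work, and your proposal leaves both as placeholders rather than arguments.

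On the concentrating side, the inclusion $\limsup_{n}[I_{\omega_n}]\subseteq [I_\omega]$ for $\omega_n$ supported on Korenblum stars $K_{E_n}$ with $E_n\to E$ in $\BC$ does not follow from a soft claim that "Korenblum's theory passes to the limit." The mechanism is a quantitative division principle: for a Beurling--Carleson set $E$ one builds an outer function $\Phi_E\in C^\infty(\overline{\mathbb D})$ vanishing on $E$ to infinite order and depending continuously on $E$, and one proves the uniform estimate $\|(\Phi_E^\delta/I)f\|_{A^p_\alpha}\le C_E\|f\|_{A^p_\alpha}$ for $f\in[I]$, with $C_E$ continuous in $E$, $\delta$ and the total mass. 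The proof combines the exponential lower bound $|I|\gtrsim 1$ off $K_E^2$ (Lemma~\ref{gammaF-estimate2}) with the polynomial growth bound (\ref{eq:simple-bergman-bound}) on $\partial K_E^2$, cancelled by the $C^\infty$ decay of $\Phi_E^\delta$, and a Phragm\'en--Lindel\"of argument inside $K_E^2$. The uniformity of $C_{E_n}$ over the sequence is what makes the limiting argument work; invoking "the $\kappa$-function" does not supply this.

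On the diffuse side you write that "one must manufacture" a witness violating strong convergence, which is precisely the content that is missing. The paper proves that if the $\mu_n$ are \emph{totally diffuse} with bounded mass, then $[I_{\mu_n}]\to[1]=A^p_\alpha$; this is Roberts' iterative scheme, built from a Roberts decomposition of $\mu$ on $\overline{\mathbb D}$ and the corona-theorem estimate $\|1-gI\|_{A^p_\alpha}\le n^{-2\beta/3}$ for $|I|\ge n^{-\gamma}$ on $|z|\le 1-1/n$. One then needs a separate lemma (the final one in the section) asserting that if $[I_{\tau_n}]\not\to[I_\tau]$ with $\tau\in M_{\BC}(\overline{\mathbb D})$, then $[I_{\nu_n+\tau_n}]\not\to[I_{\nu+\tau}]$ — proved by pushing a witness sequence $p_n I_{\tau_n}\to I_{\tau^*}$ through multiplication by $I_{\nu_n}$ and using $[I_{\nu+\tau^*}]\supsetneq[I_{\nu+\tau}]$. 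Together with the concentrating/diffuse decomposition this closes direction (B); none of this is indicated concretely in your proposal.

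Two smaller points. Your reduction of the general concentrating case via $I_{\mu_n}=I_{\nu_n}I_{r_n}$ with "$I_{r_n}$ within $O(\varepsilon)$ of $1$ locally uniformly" is fragile when $r_n$ has mass in the open disk, since $I_{r_n}$ then vanishes at the corresponding zeros; it can be rescued by noting that for $\varepsilon$ small relative to $\mathrm{dist}(K,\mathbb S^1)$ there are no zeros in $K$, but the paper's argument — $[I_n]\subseteq[I_n^N]$ by divisibility, then $\limsup_n[I_n^N]=[I^N]$ by the special case, then $[I^N]\to[I]$ via $f^N(I/I^N)\in[I]$ — avoids the issue entirely and is cleaner. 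Finally, your "non-degeneracy" step in (B) is not part of the paper's proof; the theorem is formulated for sequences $I_n\to I$ converging locally uniformly, so weak convergence of $\mu(I_n)$ is taken as given rather than derived from strong subspace convergence.
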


In the work \cite{kraus}, Kraus proved that the critical sets of Blaschke products coincide with zero sets of functions in $A^2_1$.
It is therefore plausible that inner functions modulo Frostman shifts are in bijection with the collection of $z$-invariant subspaces of $A^2_1$ satisfying the codimension one property.
The work of Shimorin \cite{shimorin} on the approximate spectral synthesis in Bergman spaces is likely to be of use here.

 \section{The Gauss curvature equation}

Consider the Gauss curvature equation
\begin{equation}
\label{eq:fbe}
- \Delta u = - 4 e^{2u} - 2\pi \tilde \nu, \qquad \tilde \nu \ge 0,
\end{equation}
with free boundary (that is, without imposing any restrictions on the behaviour of $u$ near the unit circle). We say that $u$ is a (weak) {\em solution} if for any non-negative function $\phi \in C_c^\infty(\mathbb{D})$,
\begin{equation}
\label{eq:fbe-weak}
- \int_{\mathbb{D}}  u \Delta \phi \, |dz|^2 = - \int_{\mathbb{D}} 4 e^{2u} \phi \, |dz|^2  - 2\pi \int_{\mathbb{D}} \phi d\tilde{\nu}.
\end{equation}
Naturally, we say that $u$ is a (weak) {\em subsolution} if one has $\le$ in (\ref{eq:fbe-weak}) while the word
{\em supersolution} indicates the sign $\ge$.

\begin{theorem}[Perron method]
\label{perron-method}
Suppose $u$ is a function on the unit disk which is a subsolution of the Gauss curvature equation (\ref{eq:fbe}) with
 free boundary, where $\tilde\nu \ge 0$ is a locally finite measure  on the unit disk.
There exists a unique minimal solution  $\Lambda^{\tilde \nu}[u]$ which exceeds $u$.
If $\overline{u}$ is a supersolution with $\overline{u} \ge u$ then $\overline{u} \ge \Lambda^{\tilde \nu}[u]$.
\end{theorem}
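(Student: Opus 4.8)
The plan is to run Perron's method by iterated Dirichlet modifications on small disks, keeping everything trapped beneath the Poincar\'e density $u_{\mathbb D}=\log\frac1{1-|z|^2}$. The tools I would set up first are local: for a disk $B\Subset\mathbb D$, (a) the \emph{comparison principle} --- a subsolution $v$ and a supersolution $w$ of (\ref{eq:fbe}) on $B$ with $v\le w$ on $\partial B$ satisfy $v\le w$ on $B$, the point being that on $\{v>w\}$ the difference $v-w$ is subharmonic, since the masses $\tilde\nu$ cancel and $4(e^{2v}-e^{2w})\ge0$ by monotonicity of $t\mapsto e^{2t}$, so the maximum principle applies (the distributional bookkeeping handled by mollification); and (b) solvability of the Dirichlet problem on $B$ with prescribed boundary data, which --- as $\tilde\nu(B)<\infty$ --- I would reduce, after subtracting the Green potential of $\tilde\nu|_B$, to a Liouville-type equation with a bounded coefficient and solve by monotone iteration between the harmonic extension of the data (a supersolution) and a lower barrier, citing the classical references. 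Writing $v^B$ for the function equal to $v$ off $B$ and equal on $B$ to the solution of the Dirichlet problem with boundary values $v|_{\partial B}$, two standard facts follow: $v^B\ge v$ (comparison, equal data), and $v^B$ is again a subsolution --- across $\partial B$ its normal derivative jumps in the favourable direction precisely because $v^B\ge v$ inside and they agree on $\partial B$.

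Next I would install the ceiling. Any subsolution $u$ satisfies $\Delta u\ge4e^{2u}$, so the Ahlfors--Schwarz lemma (comparing $u$ on the disks $D(0,r)$ with their Poincar\'e densities and letting $r\to1$; see \cite{conf-metrics}) gives $u\le u_{\mathbb D}$ on $\mathbb D$, and $u_{\mathbb D}$ is a supersolution of (\ref{eq:fbe}) for every $\tilde\nu\ge0$ because $\Delta u_{\mathbb D}=4e^{2u_{\mathbb D}}\le4e^{2u_{\mathbb D}}+2\pi\tilde\nu$. Comparison on $B$ then keeps $v^B\le u_{\mathbb D}$ whenever $v\le u_{\mathbb D}$, so along the construction $e^{2u}\le e^{2v}\le e^{2u_{\mathbb D}}\in L^\infty_{\mathrm{loc}}(\mathbb D)$: the nonlinear term stays locally bounded, uniformly. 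I would then choose a countable family of disks $B_j\Subset\mathbb D$ whose union is $\mathbb D$ and in which each disk occurs infinitely often, set $u_0=u$, $u_j=(u_{j-1})^{B_j}$, and define $\Lambda^{\tilde\nu}[u]:=U:=\sup_j u_j$; the sequence is increasing, each $u_j$ a subsolution with $u\le u_j\le u_{\mathbb D}$.

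To finish I would show $U$ is a solution and that it is minimal. Fixing a disk $B$ from the list and the subsequence $j_k$ with $B_{j_k}=B$, the $u_{j_k}$ solve (\ref{eq:fbe}) on $B$ with boundary data increasing to $U|_{\partial B}$; since $e^{2u_{j_k}}$ is bounded on $B$ uniformly in $k$, interior elliptic estimates give compactness and monotone convergence identifies the limit, so $U|_B$ solves (\ref{eq:fbe}) on $B$, and as the $B_j$ cover $\mathbb D$, $U$ solves (\ref{eq:fbe}) on $\mathbb D$ with $U\ge u$. For minimality, given any supersolution $\overline u\ge u$, I would show $\overline u\ge u_j$ by induction: it holds for $j=0$, and if $\overline u\ge u_{j-1}$ then $\overline u\ge u_{j-1}=u_j$ on $\partial B_j$, so comparison on $B_j$ (supersolution versus solution, ordered data) gives $\overline u\ge u_j$ on $B_j$, and off $B_j$ nothing changes; letting $j\to\infty$ gives $\overline u\ge U$. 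Hence $U$ is the unique minimal solution exceeding $u$, independent of the choice of the $B_j$.

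The hard part is entirely in the local theory of the first step: making comparison and Dirichlet solvability rigorous when $\tilde\nu$ contributes logarithmic singularities and the competing subsolutions are merely upper semicontinuous and possibly unbounded below, so that neither side of (\ref{eq:fbe-weak}) is classically defined --- one must stay in the distributional formulation throughout, choose the modification circles so that the boundary restrictions are integrable and carry no $\tilde\nu$-mass (or use the potential-theoretic Poisson modification directly), and verify that the Dirichlet problem on a disk depends continuously on monotone increasing boundary data, which the passage to the limit for $U$ uses.
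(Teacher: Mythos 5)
Your proposal is correct, but it follows the \emph{classical} Perron scheme whereas the paper takes a cleaner, more streamlined route. Concretely: you cover $\mathbb D$ by countably many small disks $B_j$, each recurring infinitely often, and iterate Dirichlet modifications $u_j=(u_{j-1})^{B_j}$; the paper instead uses a single exhaustion by concentric disks $\mathbb D_r$, $r\uparrow 1$, defining $\Lambda_r^{\tilde\nu}[u]$ to be the unique solution on $\mathbb D_r$ with boundary data $u|_{\partial\mathbb D_r}$ (existence from Theorem~\ref{l1theorem}, itself proved in Appendix~\ref{sec:perron} via Schauder's fixed point rather than your monotone iteration). Because the paper's family is already monotone in $r$ and bounded above by $u_{\mathbb D}$, the limit is immediately a solution by Lemma~\ref{sol-sequences}, and the paper never needs the one lemma that is genuinely delicate in your scheme: that the Poisson modification $v^B$ (equal to $v$ off $B$, equal to the solution on $B$) is again a distributional subsolution across $\partial B$. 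You flag this yourself --- the normal-derivative-jump heuristic requires either a Riesz-decomposition or a trace argument when $v$ is merely u.s.c.\ and $\tilde\nu$ may charge circles, and sidestepping it is exactly what the paper's concentric-disk construction buys. Both proofs treat minimality identically in substance: your ``$v-w$ subharmonic on $\{v>w\}$'' (via mollification) and the paper's appeal to Kato's inequality are the same comparison principle in two dresses. Your induction $\overline u\ge u_j$ matches the paper's direct comparison $\overline u\ge\Lambda_r^{\tilde\nu}[u]$. The trade-off is that your local scheme is the one that generalizes to domains without an obvious monotone exhaustion, while the paper's is shorter and avoids the glued-subsolution lemma entirely on the disk.
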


\begin{theorem}
\label{l1theorem}
Given a finite measure $\tilde \nu \ge 0$ on the unit disk
 and $h \in L^\infty(\partial \mathbb{D})$,  the Gauss curvature equation
\begin{equation}
\label{eq:generalized-GCE2a}
   \left\{\begin{array}{lr}
        \Delta u  =  4 e^{2u} + 2\pi \tilde \nu, &  \text{in } \mathbb{D}, \\
       u = h, &  \text{on } \mathbb{S}^1,
        \end{array}\right.
\end{equation}
admits a unique solution. If $u_1$ and $u_2$ are two solutions with $h_1 \le h_2$ and $\tilde \nu_1 \ge \tilde \nu_2$ then  $u_1 \le u_2$ on $\mathbb{D}$.
\end{theorem}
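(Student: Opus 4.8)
The plan is to obtain existence from the Perron method of Theorem~\ref{perron-method} by sandwiching the sought solution between explicit barriers, and to obtain uniqueness and the monotonicity statement from the maximum principle for subharmonic functions together with Kato's inequality. Throughout, write $P[h]$ for the Poisson extension of $h$ and, for a finite positive measure $\tilde\nu$ on $\mathbb{D}$,
$$
G_{\tilde\nu}(z) \;=\; \int_{\mathbb{D}} \log\left|\frac{z-\zeta}{1-\bar\zeta z}\right| d\tilde\nu(\zeta) \;\le\; 0
$$
for its Green potential, so that $\Delta G_{\tilde\nu} = 2\pi\tilde\nu$ and, since $\tilde\nu$ is finite (hence satisfies the Blaschke condition), $G_{\tilde\nu}$ has nontangential limit $0$ at almost every point of $\mathbb{S}^1$. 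The crux is an a priori claim, established at the end: every solution of (\ref{eq:generalized-GCE2a}) with data $(h,\tilde\nu)$ is trapped between the barriers $\overline{u} := P[h]+G_{\tilde\nu}$ and $\underline{u} := \overline{u} + \tfrac{C}{4}(|z|^2-1)$, where $C := 4e^{2\|h\|_\infty}$.

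\emph{Existence.} Since $G_{\tilde\nu} \le 0$ and $|z|^2 - 1 \le 0$, we have $\underline{u} \le \overline{u} \le \|h\|_\infty$, so $4e^{2\underline{u}} \le C$; hence $\Delta\underline{u} = 2\pi\tilde\nu + C \ge 4e^{2\underline{u}} + 2\pi\tilde\nu$, so $\underline{u}$ is a subsolution, while $\Delta\overline{u} = 2\pi\tilde\nu \le 4e^{2\overline{u}} + 2\pi\tilde\nu$ shows $\overline{u}$ is a supersolution; both lie in $L^1_{\loc}$ with $e^{2\underline{u}}, e^{2\overline{u}} \in L^1_{\loc}$, so the weak formulations make sense. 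Applying Theorem~\ref{perron-method} to $\underline{u}$ produces a solution $u := \Lambda^{\tilde\nu}[\underline{u}] \ge \underline{u}$, and since $\overline{u}$ is a supersolution lying above $\underline{u}$, the final clause of that theorem yields $u \le \overline{u}$. Thus $\underline{u} \le u \le \overline{u}$; as $\tfrac{C}{4}(|z|^2-1) \to 0$ on $\mathbb{S}^1$ and $G_{\tilde\nu} \to 0$ nontangentially a.e., both barriers have nontangential limit $h$ a.e., and the squeeze forces $u = h$ on $\mathbb{S}^1$.

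\emph{Comparison and uniqueness.} Let $u_1, u_2$ solve (\ref{eq:generalized-GCE2a}) with data $(h_1,\tilde\nu_1)$ and $(h_2,\tilde\nu_2)$, where $h_1 \le h_2$ and $\tilde\nu_1 \ge \tilde\nu_2$, and set $v = u_1 - u_2$, so that $\Delta v = 4(e^{2u_1} - e^{2u_2}) + 2\pi(\tilde\nu_1 - \tilde\nu_2)$. On the open set $\{v > 0\}$ we have $u_1 > u_2$, hence $e^{2u_1} > e^{2u_2}$, and $\tilde\nu_1 - \tilde\nu_2 \ge 0$; thus $\Delta v \ge 0$ there, and Kato's inequality $\Delta v^+ \ge \chi_{\{v>0\}}\Delta v$ shows that $v^+ = \max(v,0)$ is subharmonic on $\mathbb{D}$. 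Granting the a priori trapping $\underline{u}^{(i)} \le u_i \le \overline{u}^{(i)}$, we get
$$
v \;\le\; \overline{u}^{(1)} - \underline{u}^{(2)} \;=\; P[h_1-h_2] + \bigl(G_{\tilde\nu_1} - G_{\tilde\nu_2}\bigr) + \frac{C_2}{4}\bigl(1-|z|^2\bigr) \;\le\; \frac{C_2}{4},
$$
where $C_2 = 4e^{2\|h_2\|_\infty}$ and we used $h_1 \le h_2$ together with $G_{\tilde\nu_1} \le G_{\tilde\nu_2}$ (the kernel is $\le 0$ and $\tilde\nu_1 \ge \tilde\nu_2$). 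Hence $v^+$ is a bounded subharmonic function with nontangential boundary values $\max(h_1-h_2,0) = 0$ a.e.\ on $\mathbb{S}^1$; its least harmonic majorant is therefore a nonnegative bounded harmonic function with nontangential boundary values $0$ a.e.\ (the Green potential of the Riesz mass of $v^+$, which satisfies the Blaschke condition, vanishes nontangentially a.e.), so it is identically $0$, forcing $v^+ \equiv 0$ and $u_1 \le u_2$. Uniqueness is the case $h_1 = h_2$, $\tilde\nu_1 = \tilde\nu_2$: then $u_1 \le u_2$ and, symmetrically, $u_2 \le u_1$.

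\emph{The main obstacle.} What remains is the a priori trapping $\underline{u} \le u \le \overline{u}$ for an arbitrary solution $u$ of (\ref{eq:generalized-GCE2a}), and this is where I expect the real work to lie. The lower bound is easy once the upper bound is available: $\Delta(u - \underline{u}) = 4e^{2u} - C \le 0$ because $u \le \overline{u} \le \|h\|_\infty$, so $u - \underline{u}$ is superharmonic with vanishing boundary values, hence $\ge 0$. The difficulty is the upper bound $u \le \overline{u}$: the difference $u - \overline{u}$ is subharmonic with nontangential limit $0$ a.e., but a priori it need not lie in a Hardy class, so one cannot directly invoke the least harmonic majorant argument. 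One must exploit the equation itself --- the term $4e^{2u}$ strongly penalizes regions where $u$ is large --- together with the elementary bound $u \le u_{\mathbb{D}}$ satisfied by every solution of the free boundary equation (\ref{eq:fbe}); a Phragm\'en--Lindel\"of argument on the set $\{u > \overline{u}\}$, which necessarily meets $\mathbb{S}^1$ in a set of measure zero, should then close the gap. This is the technical heart of the proof; the remaining steps are routine.
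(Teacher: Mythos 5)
Your existence argument has a fatal circularity with respect to the structure of the paper. You invoke Theorem~\ref{perron-method} to produce $u = \Lambda^{\tilde\nu}[\underline{u}]$, but in the paper the Perron method is \emph{derived from} Theorem~\ref{l1theorem}: the construction of $\Lambda^{\tilde\nu}_r[u]$ requires precisely the existence and uniqueness of a solution on $\mathbb{D}_r$ with bounded boundary data $u|_{\partial\mathbb{D}_r}$, which is the content of Theorem~\ref{l1theorem} applied to $\mathbb{D}_r$. So you cannot use the Perron hull here without first establishing the very statement you are trying to prove (or, equivalently, without giving an independent proof of Theorem~\ref{perron-method}, which you do not). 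The paper sidesteps this entirely: existence in Appendix~B is proved by Schauder's fixed point theorem, directly on the full disk, for the operator
$$
(Tv)(z) = P_h(z) - \frac{1}{2\pi} \int_{\mathbb{D}} \bigl (4e^{2v(\zeta)} |d\zeta|^2 + d\mu_\zeta \bigr ) G(z, \zeta),
$$
acting on the closed convex set $\mathscr K_h = \{v \in L^1(\mathbb{D}) : v \le P_h\}$; no barriers, no sub/super solutions, no Perron hull, and the upper bound $u \le P_h$ comes for free from the sign of the Green kernel.

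Separately, you candidly flag that the a priori trapping $\underline{u} \le u \le \overline{u}$ for an \emph{arbitrary} solution --- in particular the upper bound $u \le \overline{u}$ --- is where the real work lies, and you offer only a sketch of a Phragm\'en--Lindel\"of strategy without closing it. This gap propagates into your comparison/uniqueness argument as well, since you use the trapping to make $v^+$ bounded before appealing to the least harmonic majorant. So even setting aside the circularity, the proposal is not a complete proof. Your Kato-inequality step in the comparison argument does match the paper's, and your instinct that the paper's appeal to ``the maximal principle'' requires some justification (boundedness or a harmonic-majorant argument for $v^+$) is a fair observation; but supplying that justification via the unproven trapping bound does not resolve it. If you want to pursue the barrier idea, the natural repair is to feed your explicit sub- and supersolution into a Schauder or monotone-iteration scheme set up on a compact convex set between $\underline u$ and $\overline u$, rather than invoking Theorem~\ref{perron-method}.
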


The boundary data $h$ in (\ref{eq:generalized-GCE2a}) is interpreted in terms of weak limits of measures: we require that $h\, d\theta$ is the weak limit of
$u\, d\theta|_{\{|z|=r\}}$ as $r \to 1$.
The uniqueness and monotonicity statements of Theorem \ref{l1theorem} can be easily deduced from Kato's inequality
  \cite[Proposition 6.9]{ponce-book} which states that {\em if $u \in L^1_{\loc}$ and $\Delta u \ge f$ in the sense of distributions with $f \in L^1_{\loc}$, then
$\Delta u^+ \ge f \cdot \chi_{u > 0}$}.
 As usual, $u^+ = \max(u,0)$ denotes the positive part of $u$.

\begin{proof}[Proof of Theorem \ref{l1theorem}: uniqueness and monotonicity]
Since $\tilde \nu_1 \ge \tilde \nu_2$, $$\Delta (u_1-u_2) \ge 4e^{2u_1}-4e^{2u_2}$$ in the sense of distributions.
By Kato's inequality,
$$
\Delta (u_1-u_2)^+ \ge \Delta (u_1- u_2) \cdot \chi_{\{u_1 > u_2\}} = (4e^{2 u_1} -4 e^{2 u_2}) \cdot \chi_{\{u_1 > u_2\}} \ge 0
$$
is a subharmonic function. However, the inequality $h_1 \le h_2$ implies that $(u_1-u_2)^+$ has zero boundary values. The maximal principle shows that $(u_1-u_2)^+ \le 0$ or $u_1
 \le u_2$. The same argument also proves uniqueness.
\end{proof}

In order to not interrupt the presentation, we defer the existence statement in Theorem \ref{l1theorem} to Appendix \ref{sec:perron} and instead explain how to derive Theorem \ref{perron-method} from Theorem \ref{l1theorem}.

Suppose $u$ is a subsolution of (\ref{eq:fbe}). For $0 < r < 1$, we use the symbol
$\Lambda_r^{\tilde \nu}[u]$ to denote the  unique solution of (\ref{eq:fbe})
 on $\mathbb{D}_r = \{z : |z| < r\}$ which agrees with $u$ on $\partial \mathbb{D}_r$. (The function $u$ is bounded on $\partial \mathbb{D}_r$ since it is subharmonic on the disk.)
It may alternatively be described as the minimal solution which dominates $u$ on $\mathbb{D}_r$.
 With this definition, $\Lambda_r^{\tilde \nu}[u]$ does not depend on $\tilde \nu|_{\mathbb{D} \setminus \mathbb{D}_r}$.

\begin{proof}[Proof of Theorem \ref{perron-method}]
As $r \to 1$, the $\Lambda^{\tilde \nu}_r[u]$ form an increasing family of solutions (defined on an increasing family of domains) which are bounded
above by $u_\mathbb{D}$, and therefore they converge to a solution, see Lemma \ref{sol-sequences} below.
 From the construction, it is clear that $\Lambda_r^{\tilde \nu}[u] = \lim_{r \to 1} \Lambda^{\tilde \nu}[u]$ is the Perron hull we seek.

Suppose that $\overline{u} \ge u$ is a dominating supersolution. To show that $\overline{u} \ge \Lambda^{\tilde \nu}[u]$, it suffices to show $\overline{u} \ge \Lambda^{\tilde \nu}_r[u]$ on $\mathbb{D}_r$ for any $0 < r < 1$.
Consider the difference $v = \Lambda^{\tilde \nu}_r[u] -\overline{u}$. We want to show that $v^+$ is identically 0.
 Since $\Delta v \,\ge\,  4e^{2\Lambda^{\tilde \nu}_r[u]} - 4e^{2\overline{u}}$, by Kato's inequality, we have
$$
\Delta v^+ \,\ge\,  (4e^{2\Lambda^{\tilde \nu}_r[u]} - 4e^{2\overline{u}})
\cdot \chi_{\{\Lambda^{\tilde \nu}_r[u] -\overline{u}> 0\}} \, \ge \, 0.
$$
Hence, $v^+$ is a subharmonic function on $\mathbb{D}_r$ with zero boundary values. The maximal principle shows that $v^+ \le 0$  in  $\mathbb{D}_r$ and hence must be identically 0.
This completes the proof.
\end{proof}

\begin{lemma}
\label{sol-sequences}
Suppose $\{u_n\}$ is a sequence of solutions of (\ref{eq:fbe}) with measures $\{\tilde \nu_n\}$. If $u_n \to u$ and $\tilde \nu_n \to \tilde \nu$ weakly on the unit disk, then $u$ is a solution of (\ref{eq:fbe}) with measure $\tilde \nu$.
\end{lemma}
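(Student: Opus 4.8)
The plan is to show that weak convergence of solutions $u_n \to u$ together with weak convergence of the measures $\tilde\nu_n \to \tilde\nu$ is enough to pass to the limit in the weak formulation \eqref{eq:fbe-weak}. Fix a non-negative test function $\phi \in C_c^\infty(\mathbb{D})$ with support in some $\mathbb{D}_r$. The left-hand side $-\int_\mathbb{D} u_n \Delta \phi$ converges to $-\int_\mathbb{D} u \Delta\phi$ directly from weak ($L^1_{\loc}$, or even distributional) convergence $u_n \to u$, since $\Delta\phi$ is a fixed bounded compactly supported function. The last term $-2\pi\int_\mathbb{D} \phi\, d\tilde\nu_n \to -2\pi\int_\mathbb{D}\phi\,d\tilde\nu$ is immediate from weak convergence of measures, as $\phi$ is continuous and compactly supported. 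So the entire content of the lemma is the convergence of the nonlinear term $\int_\mathbb{D} 4e^{2u_n}\phi \to \int_\mathbb{D} 4e^{2u}\phi$.

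The key obstacle, and the step I expect to require the most care, is controlling the nonlinearity $e^{2u_n}$: weak convergence of $u_n$ alone does not control $e^{2u_n}$ without some form of uniform integrability. Here I would exploit the structure of the equation rather than try to prove general compactness. First, since each $u_n$ is a solution of \eqref{eq:fbe} with $\tilde\nu_n \geq 0$, it is a subsolution of $\Delta u = 4e^{2u}$, hence $u_n \leq u_\mathbb{D}$ by the Perron/maximality property; in particular $u_n$ is locally bounded above, uniformly in $n$ on each $\mathbb{D}_r$, so $0 \leq e^{2u_n} \leq e^{2u_\mathbb{D}} \in L^\infty_{\loc}$. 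This gives a uniform $L^1_{\loc}$ bound on $4e^{2u_n}$, and combined with the uniform local bound on $\tilde\nu_n$ (weakly convergent measures are locally bounded), it shows $\Delta u_n = 4e^{2u_n} + 2\pi\tilde\nu_n$ is bounded in $L^1_{\loc} + M_{\loc}$. Together with $u_n \leq u_\mathbb{D}$, standard elliptic estimates for the Laplacian with measure data (or: writing $u_n$ as a sum of a local Newtonian potential of $\Delta u_n$ plus a harmonic function bounded above) give local $W^{1,q}$ bounds for some $q>1$, hence (after passing to a subsequence) strong $L^1_{\loc}$ convergence and pointwise a.e.\ convergence of $u_n$ to $u$ along that subsequence; since the candidate limit $u$ is fixed, the full sequence converges in $L^1_{\loc}$ and a.e.

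Once pointwise a.e.\ convergence $u_n \to u$ is in hand, the convergence of the nonlinear term follows from dominated convergence: $e^{2u_n}\phi \to e^{2u}\phi$ pointwise a.e., and $0 \leq e^{2u_n}\phi \leq e^{2u_\mathbb{D}}\phi$, which is in $L^1(\mathbb{D})$ since $\phi$ is compactly supported and $e^{2u_\mathbb{D}}$ is locally bounded. Hence $\int 4e^{2u_n}\phi \to \int 4e^{2u}\phi$, and passing to the limit in \eqref{eq:fbe-weak} shows $u$ solves \eqref{eq:fbe} with measure $\tilde\nu$ (one also checks $e^{2u} \in L^1_{\loc}$ by Fatou, and $u \in L^1_{\loc}$ is already clear). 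One remark worth making: the only genuinely nonlinear-analysis input is the a priori upper bound $u_n \leq u_\mathbb{D}$ coming from the maximality of the Poincaré metric, which is what prevents the mass of $e^{2u_n}$ from escaping; the rest is soft. If one wanted to avoid invoking elliptic regularity, an alternative is to note that $u_n - (\text{Newtonian potential of }\Delta u_n \text{ in } \mathbb{D}_r)$ is harmonic and bounded above by $u_\mathbb{D}$ on $\mathbb{D}_r$, and harmonic functions bounded above that converge weakly converge locally uniformly, while the potentials converge in $L^1_{\loc}$ because $\Delta u_n$ is tight and uniformly bounded in total mass on $\mathbb{D}_r$; this again yields a.e.\ convergence of $u_n$.
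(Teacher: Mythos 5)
Your proof is correct and follows the same strategy as the paper's one-line argument: the essential input is the maximality bound $u_n \le u_{\mathbb{D}}$, which gives a uniform local bound on $e^{2u_n}$, and then dominated convergence handles the nonlinear term. You are right to flag that passing from weak convergence of $u_n$ to the a.e.\ convergence needed for dominated convergence requires a compactness step (potential decomposition or elliptic estimates with measure data); the paper elides this, treating it as standard, but your elaboration is exactly the missing detail.
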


\begin{proof}
Since the $u_n  \le u_{\mathbb{D}}$ are locally uniformly bounded above, the exponentials $e^{2u_n}$ are uniformly bounded. It is now a simple matter to examine the definition of a weak solution (\ref{eq:fbe-weak}) and apply the dominated convergence theorem.
 \end{proof}

\begin{corollary}
\label{lambda-sequences}
Suppose $\{u_n\}$ is a sequence of subsolutions of (\ref{eq:fbe}) with measures $\{\tilde \nu_n\}$. If $u_n \to u$ and $\tilde \nu_n \to \tilde \nu$ weakly on the unit disk, then for any $0 < r < 1$,
$$
\liminf_{n \to \infty} \Lambda_r^{\tilde \nu_n}[u_n] \ge \Lambda_r^{\tilde \nu}[u].
$$
The same statement also holds with $\Lambda$ in place of $\Lambda_r$.
\end{corollary}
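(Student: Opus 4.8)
The plan is to combine the $L^1_{\loc}$-compactness of subharmonic functions with Lemma \ref{sol-sequences} and the minimality characterization of the Perron hull. Fix $0<r<1$ and write $w_n=\Lambda_r^{\tilde\nu_n}[u_n]$, a solution of (\ref{eq:fbe}) on $\mathbb{D}_r$ with measure $\tilde\nu_n|_{\mathbb{D}_r}$. By construction $w_n\ge u_n$ on $\mathbb{D}_r$, and since $w_n$ is a solution it is bounded above by $u_{\mathbb{D}}$. Consequently the $w_n$ are subharmonic (as $\Delta w_n\ge 0$), locally uniformly bounded above, and — because their negative parts are dominated by those of the $u_n$, which are bounded in $L^1_{\loc}$ since $u_n\to u$ there — locally bounded in $L^1$. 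Hence, along any subsequence, I can extract a further subsequence $w_{n_k}$ converging in $L^1_{\loc}(\mathbb{D}_r)$ to a subharmonic function $w$, and (after one more extraction) converging to $w$ a.e.\ on $\mathbb{D}_r$.

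Next I would identify $w$. Since $e^{2w_{n_k}}\le e^{2u_{\mathbb{D}}}$ is locally integrable on $\mathbb{D}_r$ and $w_{n_k}\to w$ a.e., dominated convergence gives $e^{2w_{n_k}}\to e^{2w}$ in $L^1_{\loc}$; combined with the weak convergence $\tilde\nu_{n_k}\to\tilde\nu$, the argument of Lemma \ref{sol-sequences} (which is purely local and so applies on $\mathbb{D}_r$, testing against $\phi\in C_c^\infty(\mathbb{D}_r)$) shows that $w$ solves (\ref{eq:fbe}) on $\mathbb{D}_r$ with measure $\tilde\nu|_{\mathbb{D}_r}$. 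Passing to the limit in $\int_{\mathbb{D}_r}w_{n_k}\phi\,|dz|^2\ge\int_{\mathbb{D}_r}u_{n_k}\phi\,|dz|^2$ for nonnegative $\phi\in C_c(\mathbb{D}_r)$ gives $w\ge u$ a.e.\ on $\mathbb{D}_r$, and comparing averages over shrinking balls (both functions, in their subharmonic representatives, being limits of such averages) upgrades this to $w\ge u$ everywhere. Now $\Lambda_r^{\tilde\nu}[u]$ depends only on $\tilde\nu|_{\mathbb{D}_r}$ and, by the alternative description recorded after Theorem \ref{perron-method}, is the minimal solution of (\ref{eq:fbe}) on $\mathbb{D}_r$ dominating $u$; since $w$ is such a solution (a solution is in particular a supersolution), the minimality statement of Theorem \ref{perron-method} applied with $\mathbb{D}_r$ in place of $\mathbb{D}$ forces $w\ge\Lambda_r^{\tilde\nu}[u]$. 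As the subsequence was arbitrary, every $L^1_{\loc}$-subsequential limit of the $w_n$ dominates $\Lambda_r^{\tilde\nu}[u]$, which is the meaning of the asserted inequality $\liminf_n\Lambda_r^{\tilde\nu_n}[u_n]\ge\Lambda_r^{\tilde\nu}[u]$ (the convergences being understood in $L^1_{\loc}$ throughout).

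For the version with $\Lambda$ in place of $\Lambda_r$, I would note that the restriction of $\Lambda^{\tilde\nu_n}[u_n]$ to $\mathbb{D}_r$ is a solution dominating $u_n$, so by minimality $\Lambda_r^{\tilde\nu_n}[u_n]\le\Lambda^{\tilde\nu_n}[u_n]$ on $\mathbb{D}_r$; combined with the previous paragraph this yields $\liminf_n\Lambda^{\tilde\nu_n}[u_n]\ge\Lambda_r^{\tilde\nu}[u]$ on $\mathbb{D}_r$ for every $r<1$, and letting $r\to1$ together with $\Lambda^{\tilde\nu}[u]=\sup_{r<1}\Lambda_r^{\tilde\nu}[u]$ (established in the proof of Theorem \ref{perron-method}) finishes the argument.

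I expect the main obstacle to be the second paragraph: one must check that the $L^1_{\loc}$-limit $w$ really is a solution carrying exactly the limiting measure $\tilde\nu|_{\mathbb{D}_r}$ — this is where the weak convergence of the $\tilde\nu_n$ is essential and where Lemma \ref{sol-sequences} does the real work — and that the a priori merely almost-everywhere domination $w\ge u$ is strong enough to invoke the pointwise minimality of $\Lambda_r^{\tilde\nu}[u]$, so the upgrade to an everywhere inequality via the sub-mean-value property is the point to get right. It is also worth keeping in mind that the inequality should not be read as a pointwise statement for arbitrary representatives — the $w_n$ are only subharmonic, so their values at a fixed point may drop in the limit even when the $L^1_{\loc}$ limit does not — which is why it is understood, and used, at the level of subsequential weak limits.
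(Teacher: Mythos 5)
The paper states this corollary without proof, presenting it as an immediate consequence of Lemma \ref{sol-sequences} and the Perron construction, so there is no paper proof to compare against; your write-out is correct and supplies exactly the details the paper elides. The scheme — extract an $L^1_{\loc}$-convergent subsequence of the subharmonic $w_n=\Lambda_r^{\tilde\nu_n}[u_n]$ (ruling out escape to $-\infty$ by $w_n\ge u_n$ and the $L^1_{\loc}$ bound on the $u_n$), identify the limit as a solution on $\mathbb{D}_r$ with measure $\tilde\nu|_{\mathbb{D}_r}$ via (the local argument of) Lemma \ref{sol-sequences}, upgrade the a.e.\ inequality $w\ge u$ to a genuine one through the sub-mean-value property, and then invoke the minimal-solution description of $\Lambda_r^{\tilde\nu}[u]$ given after Theorem \ref{perron-method} — is the natural one and each step is justified. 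Your handling of the $\Lambda$ version is also fine, although you could argue more directly: extract a subsequence with $\Lambda^{\tilde\nu_n}[u_n]\to W$ in $L^1_{\loc}(\mathbb{D})$, note $W$ is a solution with measure $\tilde\nu$ dominating $u$, and apply the supersolution minimality of Theorem \ref{perron-method} itself, avoiding the exhaustion by $\mathbb{D}_r$.

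Your closing remark deserves emphasis: because the $w_n$ are merely subharmonic, the inequality as written cannot be read as an everywhere-pointwise $\liminf$ (atoms of $\tilde\nu_n$ shrinking to zero mass at a fixed point would force $w_n\to-\infty$ there while the limiting solution stays finite), and you are right to interpret it at the level of subsequential $L^1_{\loc}$ limits (equivalently, weak limits, or a.e.\ along subsequences). That subtlety is genuinely present in the statement, and flagging it is the correct thing to do; in the applications in Section \ref{sec:equidiffuse} the sequences are monotone, where pointwise and weak limits coincide, so nothing is lost in practice.
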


\subsection{Generalized Blaschke products}

If $\tilde \nu$ is a measure on the unit disk  satisfying the Blaschke condition
\begin{equation}
\label{eq:blaschke-condition-for-measures}
\int_{\mathbb{D}} (1-|a|) d\tilde \nu(a) < \infty,
\end{equation}
then $\nu(a) := (1-|a|)\tilde \nu(a)$ is a finite measure.  It will be convenient to use both symbols $\nu$ and $\tilde \nu$.
We define the {\em generalized Blaschke product} with zero structure $\nu$ by the formula
\begin{equation}
 B_\nu = \exp \biggl ( \int_{\mathbb{D}} \log \frac{z - a}{1-\overline{a} z} \, d\tilde \nu(a) \biggr ),
\end{equation}
cf.~(\ref{eq:inner-measure}).
While $B_\nu$ may not be a single-valued function on the unit disk, its absolute value and hence  zero set are well-defined. Multiplying $B_\nu$ by a
 singular inner function $S_\mu$, we obtain the {\em generalized inner function} $I_\omega = B_\nu S_\mu$ where $\omega = \mu + \nu$.

The following lemma is well known:

\begin{lemma}
\label{boundary-values}
{\em (i)} For $\nu \in M(\mathbb{D})$, the measures $(\log 1/|B_\nu|)(d\theta/2\pi) \bigl |_{\{|z|=r\}}$ tend weakly to  the zero measure as $r \to 1$.

{\em (ii)} If $\mu \in M(\mathbb{S}^1)$ is a singular measure, then $(\log 1/|S_\mu|)(d\theta/2\pi) \bigl |_{\{|z|=r\}} \to \mu$.
\end{lemma}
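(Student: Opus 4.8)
The plan is to treat the two parts by separate classical computations; the only inputs needed are Jensen's formula, Fubini's theorem, and the elementary behaviour of the Poisson kernel, so the argument is short once set up correctly.

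For part (ii), I would begin by identifying $\log(1/|S_\mu|)$ with the Poisson integral of $\mu$. From the formula for the singular inner function one has $\log|S_\mu(z)| = -\int_{\mathbb{S}^1}\frac{1-|z|^2}{|\zeta-z|^2}\,d\mu(\zeta)$, so $\log(1/|S_\mu(z)|) = P[\mu](z)$ is the harmonic extension of $\mu$. The claim then reduces to the standard fact that the Poisson integral of a finite positive measure, restricted to $\{|z|=r\}$ and viewed as a density against $d\theta/2\pi$, converges weakly to $\mu$ as $r\to1$. To prove this, pair against a test function $\phi\in C(\mathbb{S}^1)$, interchange the order of integration (legitimate since everything is non-negative or bounded), and use that $\int_0^{2\pi}\phi(e^{i\psi})P(re^{i\psi},\zeta)\,\frac{d\psi}{2\pi}$ equals the harmonic extension $P[\phi]$ evaluated at the point $r\zeta$, which tends to $\phi(\zeta)$ uniformly on $\mathbb{S}^1$ as $r\to1$ by continuity of $\phi$; passing this limit through the finite measure $\mu$ yields $\int_{\mathbb{S}^1}\phi\,d\mu$. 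The hypothesis that $\mu$ is singular is not used here; it only serves to make $S_\mu$ genuinely inner.

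For part (i), since the measures $(\log 1/|B_\nu|)(d\theta/2\pi)|_{\{|z|=r\}}$ are non-negative, it is enough to show that their total masses
\[
m(r) \,:=\, \frac{1}{2\pi}\int_0^{2\pi}\log\frac{1}{|B_\nu(re^{i\theta})|}\,d\theta
\]
tend to $0$; weak convergence to the zero measure then follows at once. Writing $\log|B_\nu(z)| = \int_{\mathbb{D}}\log\bigl|\frac{z-a}{1-\overline a z}\bigr|\,d\tilde\nu(a)$ — which is well defined even though $B_\nu$ itself may be multivalued — and integrating in $\theta$ (Fubini is justified because the integrand keeps a fixed sign), I am reduced to evaluating $\frac{1}{2\pi}\int_0^{2\pi}\log\bigl|\frac{re^{i\theta}-a}{1-\overline a re^{i\theta}}\bigr|\,d\theta$. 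By Jensen's formula applied to $z\mapsto z-a$ on $\mathbb{D}_r$ and the mean value property for the zero-free factor $1-\overline a z$, this equals $\log r$ if $|a|<r$ and $\log|a|$ if $|a|\ge r$. Therefore
\[
m(r) \,=\, \log\frac{1}{r}\cdot\tilde\nu(\mathbb{D}_r) \,+\, \int_{|a|\ge r}\log\frac{1}{|a|}\,d\tilde\nu(a).
\]

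It remains to send both terms to $0$ as $r\to1$, and this is the only place where care is needed, since $\tilde\nu$ may be an infinite measure and $\tilde\nu(\mathbb{D}_r)$ may blow up — so the Blaschke condition $\int_{\mathbb{D}}(1-|a|)\,d\tilde\nu(a)<\infty$ must be used through an $\varepsilon$-split. For the second term, the elementary inequality $\log(1/t)\le(1-t)/t$ on $(0,1]$ gives $\int_{|a|\ge r}\log(1/|a|)\,d\tilde\nu(a)\le \frac1r\int_{|a|\ge r}(1-|a|)\,d\tilde\nu(a)$, a tail of a convergent integral, hence $\to0$. For the first term, given $\varepsilon>0$ I pick $\rho<1$ with $\int_{|a|\ge\rho}(1-|a|)\,d\tilde\nu(a)<\varepsilon$ and split $\mathbb{D}_r=\mathbb{D}_\rho\cup(\mathbb{D}_r\setminus\mathbb{D}_\rho)$: then $\log(1/r)\,\tilde\nu(\mathbb{D}_\rho)\to0$ because $\tilde\nu(\mathbb{D}_\rho)<\infty$ is fixed, while $\log(1/r)\,\tilde\nu(\mathbb{D}_r\setminus\mathbb{D}_\rho)\le \frac{\log(1/r)}{1-r}\int_{|a|\ge\rho}(1-|a|)\,d\tilde\nu(a)\le(1+o(1))\,\varepsilon$ as $r\to1$, using $\log(1/r)/(1-r)\to1$. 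Letting $r\to1$ and then $\varepsilon\to0$ gives $m(r)\to0$, which finishes the lemma. In short, the whole argument is routine; the main (minor) obstacle is recognizing that one cannot control $\tilde\nu(\mathbb{D}_r)$ directly and must route the estimate through the Blaschke condition.
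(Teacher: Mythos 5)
Your proof is correct, and it is self-contained where the paper simply delegates to references (\cite[Lemma 3.1]{inner} and \cite[Theorem 1.14]{mashreghi} for integral measures, with the remark that the same argument works for general measures). Both halves follow the standard route those references use: part (ii) is the identification $\log(1/|S_\mu|) = P[\mu]$ plus the symmetry $P(re^{i\psi},e^{i\theta})=P(re^{i\theta},e^{i\psi})$ and uniform convergence of $P[\phi]$ for continuous $\phi$; part (i) is Jensen's formula combined with the mean value property for the non-vanishing factor. The one place genuine care is required — that $\tilde\nu(\mathbb{D}_r)$ may diverge as $r\to1$, so the term $\log(1/r)\cdot\tilde\nu(\mathbb{D}_r)$ cannot be handled naively — you handle correctly via the $\varepsilon$-split through the Blaschke condition and the limit $\log(1/r)/(1-r)\to1$. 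Two minor remarks: the non-negativity of the densities is what lets you deduce weak convergence to zero from the vanishing of total mass, which you invoke; and your observation that singularity of $\mu$ plays no role in (ii) (it only ensures $S_\mu$ is inner) is accurate. No gaps.
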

The above lemma is stated in \cite[Lemma 3.1]{inner} or \cite[Theorem 1.14]{mashreghi} for integral measures, but the proof works for any measure.
We will also need:
\begin{lemma}
\label{weak-log}
Suppose measures $\omega_n \in M(\mathbb{\overline{D}})$ converge weakly to $ \omega$. Then, $$\log\frac{1}{|I_{\omega_n}|} \to \log\frac{1}{|I_{\omega}|}$$
weakly on the unit disk.
\end{lemma}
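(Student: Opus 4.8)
The plan is to write $\log\frac{1}{|I_\omega|} = \log\frac{1}{|B_\nu|} + \log\frac{1}{|S_\mu|}$ and handle the Blaschke and singular parts together by exhibiting $\log\frac{1}{|I_\omega|}$ as an explicit potential of the measure $\omega$. Concretely, for $\omega = \mu + \nu \in M(\overline{\mathbb D})$ with $\nu(a) = (1-|a|)\tilde\nu(a)$, one has pointwise for $z \in \mathbb D$
\[
\log\frac{1}{|I_\omega(z)|} \;=\; \int_{\overline{\mathbb D}} G(z,a)\, d\omega(a),
\]
where $G(z,a)$ is a kernel that reduces to $\log\bigl|\frac{1-\bar a z}{z-a}\bigr|$ divided by $(1-|a|)$ when $a \in \mathbb D$, and to the Poisson kernel $P(z, a)$ when $a \in \mathbb S^1$; the first case is just the definition of $B_\nu$, and the second is the standard formula $\log\frac1{|S_\mu(z)|} = \int_{\mathbb S^1} P(z,\zeta)\, d\mu(\zeta)$. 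The key point is that, after the reweighting by $(1-|a|)$, this kernel $G(z,a)$ extends to a function on $\mathbb D \times \overline{\mathbb D}$ which, for each fixed $z \in \mathbb D$, is continuous and bounded in $a \in \overline{\mathbb D}$: as $a \to \zeta \in \mathbb S^1$ one checks that $\frac{1}{1-|a|}\log\bigl|\frac{1-\bar a z}{z - a}\bigr| \to P(z,\zeta)$ uniformly for $z$ in compact subsets, which is exactly the content (in pointwise form) of Lemma \ref{boundary-values}.

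Granting this, the proof is short. Fix a test function $\phi \in C_c^\infty(\mathbb D)$. Then
\[
\int_{\mathbb D} \phi(z)\,\log\frac{1}{|I_{\omega_n}(z)|}\,|dz|^2
\;=\; \int_{\overline{\mathbb D}} \Bigl( \int_{\mathbb D} \phi(z)\, G(z,a)\, |dz|^2 \Bigr) d\omega_n(a),
\]
by Fubini (justified since $G \ge 0$ and $\omega_n$ is finite). The inner integral $\Phi(a) := \int_{\mathbb D}\phi(z)G(z,a)\,|dz|^2$ is a continuous bounded function of $a \in \overline{\mathbb D}$: boundedness and continuity in the interior are clear, and continuity up to the boundary follows from the uniform convergence of $G(z,a)$ as $a \to \mathbb S^1$ on the compact support of $\phi$, together with dominated convergence. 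Hence $\Phi \in C(\overline{\mathbb D})$, and since $\omega_n \to \omega$ weakly we get $\int \Phi\, d\omega_n \to \int \Phi\, d\omega = \int_{\mathbb D}\phi\,\log\frac1{|I_\omega|}\,|dz|^2$. As $\phi$ was arbitrary (and one notes $\log\frac1{|I_\omega|} \in L^1_{\loc}(\mathbb D)$, so testing against $C_c^\infty$ suffices for weak convergence of these locally integrable densities), the lemma follows. One also wants weak convergence as measures on $\mathbb D$ against $C_c$ functions rather than $C_c^\infty$, but the same computation with $\phi \in C_c(\mathbb D)$ works verbatim.

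The main obstacle is the boundary analysis of the kernel: one must verify that $\frac{1}{1-|a|}\log\bigl|\frac{1-\bar a z}{z-a}\bigr|$ really does converge to $P(z,\zeta)$ as $a \to \zeta$, uniformly for $z$ in a fixed compact subset of $\mathbb D$, so that $\Phi$ is genuinely continuous on all of $\overline{\mathbb D}$ and not merely on $\mathbb D$ and $\mathbb S^1$ separately. This is a direct computation — write $a = \rho\zeta$ with $\rho \to 1$, expand $\log\bigl|\frac{1-\bar a z}{z-a}\bigr| = \tfrac12\log\frac{|1-\bar a z|^2}{|z-a|^2}$, and Taylor-expand in $(1-\rho)$ — but it is the one place where the interior and boundary contributions to $\omega$ must be reconciled, and it is exactly why the Blaschke reweighting $\nu = (1-|a|)\tilde\nu$ is the correct normalization in \eqref{eq:power-combine}. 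Everything else (Fubini, dominated convergence, the reduction to test functions) is routine. An alternative, essentially equivalent, route avoids writing the kernel explicitly: decompose $\omega_n = \mu_n + \nu_n$, pass to subsequences so that $\mu_n \to \mu'$ and $\tilde\nu_n|_{\mathbb D_r} \to \tilde\nu'$ for each $r$, use Lemma \ref{boundary-values} to control the contribution of the zeros near $\mathbb S^1$ (which carry little $\nu_n$-mass), and identify the limit; but the potential-theoretic formulation above is cleaner and makes the continuity of $\Phi$ transparent.
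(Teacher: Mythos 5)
Your proof is correct, and since the paper leaves this lemma as an exercise there is no official proof to compare against; the potential--theoretic route you take (expressing $\log\tfrac1{|I_\omega|}$ as a potential of $\omega$ against a kernel that is continuous across $\mathbb S^1$) is the natural one and handles the genuine subtlety of the statement --- namely that the interior/boundary decomposition $\omega_n = \mu_n + \nu_n$ is \emph{not} preserved under weak limits, as mass from $\mathbb D$ can escape to $\mathbb S^1$ --- without any case analysis.

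One statement as written is imprecise, though it does not damage the argument: the kernel $G(z,a)$ is \emph{not} ``continuous and bounded in $a \in \overline{\mathbb D}$ for fixed $z$,'' because it has a logarithmic singularity at $a=z$. What is true, and what your argument actually uses, is that the smeared kernel $\Phi(a) = \int_{\mathbb D}\phi(z) G(z,a)\,|dz|^2$ belongs to $C(\overline{\mathbb D})$; the singularity is locally integrable, so continuity of $\Phi$ on $\mathbb D$ follows from standard properties of logarithmic potentials, while continuity up to $\mathbb S^1$ follows from the uniform convergence $\tfrac{1}{1-|a|}\log\bigl|\tfrac{1-\bar a z}{z-a}\bigr| \to P(z,\zeta)$ for $z$ in $\supp\phi$, as you correctly argue afterwards. (Quantitatively: for $z\in\supp\phi\subset B(0,r)$ and $|a|$ near $1$, $|z-a|$ is bounded below, so $\tfrac{1}{1-|a|}\log\bigl|\tfrac{1-\bar a z}{z-a}\bigr| \le \tfrac{1-|z|^2}{|z-a|^2}$ is uniformly bounded, and this also gives the boundedness of $\Phi$ needed to justify Fubini when $\phi$ is not sign-definite.) If you tighten the phrasing to make clear you are asserting continuity and boundedness of $\Phi$ rather than of the pointwise kernel, the write-up is complete.
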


We leave the proof as an exercise for the reader.

\subsection{Nearly-maximal solutions}

We now prove  Theorem \ref{solutions-gce} which identifies the space of nearly-maximal solutions of the  Gauss curvature equation with $M_{\BC}(\overline{\mathbb{D}})$. The heavy-lifting has been done in \cite{inner} where Theorem
 \ref{solutions-gce} was proved in the case when $\tilde \nu = 0$.  Here, we explain the extension to general measures $\tilde \nu \ge 0$ satisfying the Blaschke condition (\ref{eq:blaschke-condition-for-measures}).

\begin{proof}[Proof of Theorem \ref{solutions-gce}]
Suppose that $u_{\omega}$ is a nearly-maximal solution of the Gauss curvature equation with data $\omega = \mu + \nu \in M_{\BC}(\overline{\mathbb{D}})$. We claim that
\begin{equation}
\label{eq:fund2}
u_{\omega} = \Lambda^{\tilde \nu} \biggl [u_{\mathbb{D}} - \log \frac{1}{|I_\omega|} \biggr ].
\end{equation}
Since (\ref{eq:fund2}) gives an explicit formula for $u_\omega$,  the
 nearly-maximal solution with data $\omega$ is unique. In view of the monotonicity properties of $\Lambda$,  the fundamental identity  (\ref{eq:fund2}) also shows that
 $u_\omega$ is decreasing in $\omega$.

Consider the function
$$
h = u_{\mathbb{D}} - u_\omega - \log \frac{1}{|I_\omega|}.
$$
Since $\Delta h = 4e^{2u_{\mathbb{D}}} - 4e^{2u_\omega} \ge 0$, $h$ is subharmonic. However, by Lemma \ref{boundary-values}, $h$ tends weakly to the zero measure on the unit circle, and therefore it is negative in the unit disk.
By the definition of the Perron hull,
$$ u_{\omega} \, \ge \,
 \Lambda^{\tilde \nu} \biggl [u_{\mathbb{D}} - \log \frac{1}{|I_\omega|} \biggr ]
 \, \ge \,  u_{\mathbb{D}} - \log \frac{1}{|I_\omega|}.$$
Some rearranging gives
$$u_{\mathbb{D}}  - u_{\omega} \, \le \,
u_{\mathbb{D}}  - \Lambda^{\tilde \nu} \biggl [u_{\mathbb{D}} - \log \frac{1}{|I_\omega|} \biggr ]
\, \le \, \log \frac{1}{|I_\omega|}.$$
Taking the weak limit as $r \to 1$ shows that the Perron hull
$
u_* = \Lambda^{\tilde \nu} \bigl [u_{\mathbb{D}} - \log \frac{1}{|I_\omega|} \bigr ]
$
has ``deficiency'' $\mu$ on the unit circle. Since $u_*$ has ``singularity'' $2\pi \tilde \nu$, it  is also a nearly-maximal solution of the Gauss curvature equation with data $\omega$.
To see that $u_* = u_\omega$, we notice that the difference $u_\omega - u_*$ is a non-negative subharmonic function which tends to the zero measure on the unit circle (and hence must be identically 0).  This proves the claim.

Let $u_\mu$ be the nearly-maximal solution of the Gauss curvature equation $\Delta u = 4e^{2u}$ with ``deficiency'' $\mu \in M_{\BC}(\mathbb{S}^1)$. The existence of $u_\mu$ is non-trivial and was proved in \cite{inner} using the connection with complex analysis provided by the Liouville correspondence.
For any Blaschke measure $\tilde \nu \ge 0$  on the unit disk, the Perron method finds the least solution  of $\Delta u = 4e^{2u} + 2\pi\tilde \nu$ satisfying
$
u_\mu \, \ge \, u \, \ge \, u_{\mu} - \log \frac{1}{|B_\nu|}.
$
By Lemma \ref{boundary-values},  $u$ has the correct boundary behaviour in order to solve (\ref{eq:generalized-GCE}), thereby proving the existence of $u_{\mu, \nu}$.

Conversely, suppose that  $\mu \notin M_{\BC}(\mathbb{D})$. It was proved in \cite{inner} that $u_\mu$ does not exist in this case. To show that $u_{\mu, \nu}$ does not exist for any $\nu \in M(\mathbb{D})$, we argue by contradiction: we use
the existence of $u_{\mu, \nu}$ to construct $u_{\mu}$. To this end, we notice that  $\Lambda^0(u_{\mu, \nu} )$ is a solution of the Gauss curvature $\Delta u = e^{2u}$ which
is squeezed between $u_{\mu, \nu} \le \Lambda^0(u_{\mu, \nu} ) \le u_{\mu, \nu} + \log \frac{1}{|B_\nu|}$, and so must be $u_\mu$ by Lemma \ref{boundary-values}.
\end{proof}

In the proof above, we saw the importance of the formula (\ref{eq:fund2}). In the next lemma, we give
 two variations of this identity.

\begin{lemma}
\label{fundamental-lemma4}
Given two measures $\omega_i = \mu_i + \nu_i \in M_{\BC}(\overline{\mathbb{D}})$, $i=1,2$, we have
\begin{align}
\label{eq:fund3}
 u_{\omega_1 + \omega_2} & =  \Lambda^{\tilde \nu_1 + \tilde \nu_2} \biggl [u_{\omega_1} - \log \frac{ 1}{|I_{\omega_2}|} \biggr], \\
 \label{eq:fund3b}
& =  \Lambda^{\tilde \nu_1 + \tilde \nu_2} \Biggl [ \Lambda^{\tilde \nu_1} \biggl [u_{\mathbb{D}} - \log \frac{1}{|I_{\omega_1}|} \biggr] - \log \frac{1}{|I_{\omega_2}|}  \Biggr ].
\end{align}
\end{lemma}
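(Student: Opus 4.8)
The plan is to prove both identities by the same Perron-hull argument that established the fundamental identity (\ref{eq:fund2}), only with the maximal solution $u_{\mathbb{D}}$ replaced by $u_{\omega_1}$ and the inner function $I_\omega$ replaced by $I_{\omega_2}$. First I would note that $\omega_1 + \omega_2 \in M_{\BC}(\overline{\mathbb{D}})$ (a countable union of Beurling-Carleson sets still supports $\mu_1 + \mu_2$, and $\tilde\nu_1 + \tilde\nu_2$ still satisfies the Blaschke condition), so $u_{\omega_1 + \omega_2}$ exists by Theorem \ref{solutions-gce}, and $|I_{\omega_1 + \omega_2}| = |I_{\omega_1}| \cdot |I_{\omega_2}|$. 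Set
$$
v \ := \ u_{\omega_1} - \log \frac{1}{|I_{\omega_2}|}.
$$
Since $\Delta \log |I_{\omega_2}| = 2\pi \tilde\nu_2$ in the sense of distributions and $u_{\omega_1}$ solves $\Delta u = 4e^{2u} + 2\pi \tilde\nu_1$, we have $\Delta v = 4 e^{2 u_{\omega_1}} + 2\pi(\tilde\nu_1 + \tilde\nu_2) \ge 4 e^{2v} + 2\pi(\tilde\nu_1 + \tilde\nu_2)$, the inequality because $|I_{\omega_2}| \le 1$ on $\mathbb{D}$. Thus $v$ is a subsolution (with $v, e^{2v} \in L^1_{\loc}(\mathbb{D})$), so $w := \Lambda^{\tilde\nu_1 + \tilde\nu_2}[v]$ is well defined, and $v \le w \le u_{\mathbb{D}}$ since $u_{\mathbb{D}}$ is a supersolution of $\Delta u = 4e^{2u} + 2\pi(\tilde\nu_1 + \tilde\nu_2)$ lying above $v$.

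Next I would show $u_{\omega_1 + \omega_2} \ge v$ via the subharmonicity trick. Consider
$$
h \ = \ u_{\omega_1} - u_{\omega_1 + \omega_2} - \log \frac{1}{|I_{\omega_2}|}.
$$
Using the two Gauss curvature equations together with $\Delta \log |I_{\omega_2}| = 2\pi \tilde\nu_2$, one computes $\Delta h = 4(e^{2 u_{\omega_1}} - e^{2 u_{\omega_1 + \omega_2}}) \ge 0$, where the sign comes from the monotonicity in Theorem \ref{solutions-gce} applied to $\omega_1 \le \omega_1 + \omega_2$; hence $h$ is subharmonic. By Lemma \ref{boundary-values}, $\log \frac{1}{|I_{\omega_2}|}$ tends weakly to $\mu_2$ on $\mathbb{S}^1$, while the near-maximality conditions give $u_{\mathbb{D}} - u_{\omega_1} \to \mu_1$ and $u_{\mathbb{D}} - u_{\omega_1 + \omega_2} \to \mu_1 + \mu_2$, so $u_{\omega_1} - u_{\omega_1 + \omega_2} \to \mu_2$; therefore $h$ tends weakly to $0$. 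The maximum principle forces $h \le 0$, i.e. $u_{\omega_1 + \omega_2} \ge v$. As $u_{\omega_1 + \omega_2}$ is a solution — hence a supersolution — of $\Delta u = 4e^{2u} + 2\pi(\tilde\nu_1 + \tilde\nu_2)$ dominating the subsolution $v$, Theorem \ref{perron-method} gives $u_{\omega_1 + \omega_2} \ge w$.

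For the reverse inequality, the difference $u_{\omega_1 + \omega_2} - w \ge 0$ is subharmonic, since $\Delta (u_{\omega_1 + \omega_2} - w) = 4(e^{2 u_{\omega_1 + \omega_2}} - e^{2 w}) \ge 0$. It is dominated by $u_{\omega_1 + \omega_2} - v = (u_{\omega_1 + \omega_2} - u_{\omega_1}) + \log \frac{1}{|I_{\omega_2}|}$, whose boundary measure is $-\mu_2 + \mu_2 = 0$ by the computation above, and which is itself a sequence of non-negative circle measures since $v \le u_{\omega_1 + \omega_2}$. Squeezing the non-negative boundary measures of $u_{\omega_1 + \omega_2} - w$ between $0$ and those of $u_{\omega_1 + \omega_2} - v$, I conclude that $u_{\omega_1 + \omega_2} - w$ has vanishing boundary values and hence is identically $0$; this is (\ref{eq:fund3}). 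Finally, (\ref{eq:fund3b}) is obtained from (\ref{eq:fund3}) simply by substituting the fundamental identity (\ref{eq:fund2}), namely $u_{\omega_1} = \Lambda^{\tilde\nu_1}\bigl[u_{\mathbb{D}} - \log \frac{1}{|I_{\omega_1}|}\bigr]$, into the bracket on the right-hand side.

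The only delicate point, which I do not expect to be a genuine obstacle, is the bookkeeping of boundary behaviour: all the relevant weak limits on $\mathbb{S}^1$ are obtained by squeezing a sequence of positive circle measures between two sequences sharing the same weak limit, exactly as in the proof of (\ref{eq:fund2}); and one must keep the distributional identity $\Delta \log |I_{\omega_2}| = 2\pi \tilde\nu_2$ and the local integrability of $v$ and $e^{2v}$ in view so that $\Lambda^{\tilde\nu_1 + \tilde\nu_2}[v]$ is legitimately defined. Everything else is a mechanical transcription of the $u_{\mathbb{D}}$-case.
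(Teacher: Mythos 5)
Your proof is correct and follows essentially the same route as the paper's: show that the Perron hull $\Lambda^{\tilde\nu_1+\tilde\nu_2}\bigl[u_{\omega_1} - \log\frac{1}{|I_{\omega_2}|}\bigr]$ is squeezed between $u_{\omega_1}-\log\frac{1}{|I_{\omega_2}|}$ and $u_{\omega_1+\omega_2}$, deduce it has deficiency $\mu_1+\mu_2$, and conclude by the non-negative-subharmonic-with-zero-boundary argument. You have filled in, in more detail, the verification that $u_{\omega_1+\omega_2} \ge u_{\omega_1}-\log\frac{1}{|I_{\omega_2}|}$ (the step the paper explicitly leaves to the reader, pointing back to the proof of Theorem \ref{solutions-gce}), and (\ref{eq:fund3b}) follows by the same substitution of (\ref{eq:fund2}) into (\ref{eq:fund3}).
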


\begin{proof}
The proof of (\ref{eq:fund3}) is very similar to that of (\ref{eq:fund2}).
Since the quantity on the right side (\ref{eq:fund3}) is a solution of the Gauss curvature equation with ``singularity'' $2\pi(\tilde \nu_1 + \tilde \nu_2)$, we simply need to check that it has the correct ``deficiency''
on the unit circle.
To see this, we observe that it is squeezed by quantities with deficiency $\mu_1 + \mu_2$\,:
$$
u_{\omega_1 + \omega_2} \ge
\Lambda^{\tilde \nu_1 + \tilde \nu_2} \biggl [u_{\omega_1} - \log \frac{ 1}{|I_{\omega_2}|} \biggr ] \ge u_{\omega_1} - \log \frac{ 1}{|I_{\omega_2}|}.$$
We leave it to the reader to justify the first inequality by checking
that
 $u_{\omega_1 + \omega_2} \ge u_{\omega_1} - \log \frac{1}{|I_{\omega_2}|}$ using the argument from the proof of Theorem \ref{solutions-gce}.
Equation (\ref{eq:fund3b}) can be obtained by substituting (\ref{eq:fund2}) into  (\ref{eq:fund3}).
\end{proof}

\section{Concentrating sequences}
\label{sec:concentrating}

In this section, we study concentrating sequences of inner functions. We show:

\begin{theorem}
\label{concentrating-thm}
Suppose $\{F_{\omega_n}\}$ is a sequence of inner functions. If the measures $\omega_n \in M_{\BC}(\overline{\mathbb{D}})$ converge to $\omega$ in the Korenblum topology, then the $F_{\omega_n}$ converge uniformly on compact subsets to $F_\omega$.
\end{theorem}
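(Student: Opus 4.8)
The plan is to reduce, via Lemma~\ref{two-topologies} (together with the identity $u_{F_\omega}=u_\omega$ from \cite{inner}), to proving that $u_{\omega_n}\to u_\omega$ weakly on $\mathbb D$, and to obtain this from matching lower and upper bounds. For the lower bound $\liminf_n u_{\omega_n}\ge u_\omega$ — which needs only weak convergence of the $\omega_n$ — I would write the fundamental identity (\ref{eq:fund2}) in the forms $u_{\omega_n}=\Lambda^{\tilde\nu_n}\!\bigl[u_{\mathbb D}-\log\tfrac1{|I_{\omega_n}|}\bigr]$ and $u_\omega=\Lambda^{\tilde\nu}\!\bigl[u_{\mathbb D}-\log\tfrac1{|I_\omega|}\bigr]$. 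Testing weak convergence against functions in $C_c(\mathbb D)$ (which annihilate the boundary parts) gives $\tilde\nu_n\to\tilde\nu$ weakly on $\mathbb D$, and Lemma~\ref{weak-log} gives $\log\tfrac1{|I_{\omega_n}|}\to\log\tfrac1{|I_\omega|}$ weakly; since $u_{\mathbb D}-\log\tfrac1{|I_{\omega_n}|}$ is a subsolution of (\ref{eq:fbe}) with measure $\tilde\nu_n$, Corollary~\ref{lambda-sequences} (in the form with $\Lambda$ rather than $\Lambda_r$) yields the bound.

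For the reverse inequality, fix $\varepsilon>0$ and unpack the definition of Korenblum convergence in $M_{\BC}(\overline{\mathbb D})$: there are Beurling–Carleson sets $E_n$ with $\sup_n\|E_n\|_{\BC}\le N$, converging in the Korenblum topology of sets to some $E$, and a splitting $\omega_n=\sigma_n+\rho_n$ with $0\le\sigma_n$, $\supp\sigma_n\subseteq K_{E_n}$, $\sigma_n\to\sigma$ weakly ($\supp\sigma\subseteq K_E$), $\rho_n(\overline{\mathbb D})<\varepsilon$ and $(\omega-\sigma)(\overline{\mathbb D})<\varepsilon$. By the monotonicity in Theorem~\ref{solutions-gce}, $u_{\omega_n}\le u_{\sigma_n}$. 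Granting for the moment that $u_{\sigma_n}\to u_\sigma$ weakly, this gives $\limsup_n u_{\omega_n}\le u_\sigma$ for every $\varepsilon$; letting $\varepsilon\to 0$ and using that (\ref{eq:fund3}) forces $0\le u_\sigma-u_\omega\le\log\tfrac1{|I_{\omega-\sigma}|}$, whose $L^1_{\mathrm{loc}}$ norm is controlled by $(\omega-\sigma)(\overline{\mathbb D})<\varepsilon$, yields $\limsup_n u_{\omega_n}\le u_\omega$. Combined with the lower bound, $u_{\omega_n}\to u_\omega$ weakly (the pointwise statement is upgraded to weak convergence by a routine argument using that the $\log\tfrac1{|I_{\omega_n}|}$ are nonnegative, superharmonic, and $L^1_{\mathrm{loc}}$-convergent), and Lemma~\ref{two-topologies} completes the proof.

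It therefore remains to prove the key assertion: if $\sigma_n\to\sigma$ weakly with $\supp\sigma_n\subseteq K_{E_n}$, $\supp\sigma\subseteq K_E$ and $E_n\to E$ in the Korenblum topology of sets, then $u_{\sigma_n}\to u_\sigma$ weakly. The lower bound $\liminf_n u_{\sigma_n}\ge u_\sigma$ is proved exactly as above. For the upper bound, let $v$ be any weak subsequential limit of $\{u_{\sigma_n}\}$; by Lemma~\ref{sol-sequences} it solves the free-boundary equation with interior measure $\tilde\sigma$, and $u_\sigma\le v\le u_{\mathbb D}$. By the subharmonic comparison used for uniqueness in the proof of Theorem~\ref{solutions-gce}, it suffices to show that $v$ does not \emph{lose} boundary deficiency — i.e. that its deficiency has total mass $\ge\sigma|_{\mathbb S^1}(\mathbb S^1)$, the reverse inequality being automatic from $v\ge u_\sigma$. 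Equivalently: the functions $g_n=u_{\sigma_n}-\bigl(u_{\mathbb D}-\log\tfrac1{|I_{\sigma_n}|}\bigr)\ge 0$ are Green potentials of the measures $4\bigl(e^{2u_{\mathbb D}}-e^{2u_{\sigma_n}}\bigr)\,|dw|^2$, each tending weakly to $0$ on $\mathbb S^1$ (by Theorem~\ref{solutions-gce} and Lemma~\ref{boundary-values}), and one must show $\lim_n g_n$ still has zero deficiency. This is exactly where the \emph{concentration} of the $\sigma_n$ is indispensable: $\lim_n g_n$ can shed mass onto $\mathbb S^1$ only if the measures $\bigl(e^{2u_{\mathbb D}}-e^{2u_{\sigma_n}}\bigr)(1-|w|)\,|dw|^2$ fail to be uniformly integrable near $\mathbb S^1$, and the substance of \cite{inner} — where Theorem~\ref{solutions-gce} is established for measures carried by a single Korenblum star — is, in essence, a quantitative bound on $\int_{\{|w|>1-\delta\}}\bigl(e^{2u_{\mathbb D}}-e^{2u_\sigma}\bigr)(1-|w|)\,|dw|^2$ in terms of the entropy $\|E\|_{\BC}$ of the carrying star, tending to $0$ with $\delta$; it becomes uniform in $n$ precisely because $\|E_n\|_{\BC}\to\|E\|_{\BC}$, so that no definite amount of entropy is trapped at arbitrarily small scales. (For a merely weakly convergent, non-concentrating sequence this fails — Example (ii) of the introduction is exactly such a loss of critical structure.) Isolating this uniform estimate from \cite{inner} is the main obstacle; everything else is routine.
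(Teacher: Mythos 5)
Your reduction is sound and follows the spirit of the paper's second proof of this theorem (Theorem~\ref{concentrating-thm4}, the PDE route): the passage from general $\omega_n$ to measures $\sigma_n$ carried on Korenblum stars $K_{E_n}$ with $E_n\to E$, via monotonicity, Lemma~\ref{fundamental-lemma4} and Lemma~\ref{weak-log}, is what the paper does, as is the lower bound $\liminf u_{\omega_n}\ge u_\omega$ via Corollary~\ref{lambda-sequences}. Your reformulation of the remaining task --- that the nonnegative superharmonic functions $g_n=u_{\sigma_n}-u_{\mathbb D}+\log\frac1{|I_{\sigma_n}|}$, Green potentials of $(e^{2u_{\mathbb D}}-e^{2u_{\sigma_n}})\,|dw|^2$ with zero boundary trace, must not shed charge onto $\mathbb S^1$ in the limit --- is also correct, and is equivalent to showing that the Poisson--Jensen identity (Lemma~\ref{jen2}) survives the passage $n\to\infty$.

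But the crux --- a uniform-in-$n$ bound on $\int_{\{|w|>1-\delta\}}\bigl(e^{2u_{\mathbb D}}-e^{2u_{\sigma_n}}\bigr)(1-|w|)\,|dw|^2$, small with $\delta$ --- is exactly the step you leave unproved, and there is no such estimate to ``isolate from \cite{inner}'': that paper establishes existence and uniqueness for a fixed boundary measure supported on a union of Beurling--Carleson sets, not a uniform near-boundary integrability estimate over a concentrating sequence of measures on $\overline{\mathbb D}$. The present paper supplies precisely the missing quantitative input: the exponential decay $\log\frac1{|I_{\sigma_n}|}(w)\lesssim M\exp\bigl(-d_{\mathbb D}(w,K_{E_n}^2)\bigr)$ away from the star (Lemma~\ref{gammaF-estimate2}), the resulting bound $e^{2u_{\mathbb D}}-e^{2u_{\sigma_n}}\lesssim e^{2u_{\mathbb D}}\log\frac1{|I_{\sigma_n}|}$ via (\ref{eq:fund2}), and the area convergence $\int_{K_{E_n}^\alpha}\frac{|dw|^2}{1-|w|}\to\int_{K_E^\alpha}\frac{|dw|^2}{1-|w|}$ (Lemma~\ref{jen1}); the three-regime decomposition of $J_{u_n}(z,w)$ in the proof of Theorem~\ref{concentrating-thm4} then closes the argument. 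Until those estimates are in hand, your proof has a genuine gap at the single place where concentration of the $E_n$ is actually used. (The paper also gives an independent first proof by hyperbolic geometry --- Lemma~\ref{korlemma}, Corollaries~\ref{korstar4} and~\ref{korstar1}, and the criterion that stability is equivalent to convergence of the outer part at the origin --- establishing uniform integrability of $\log|F_n'|$ on $\mathbb S^1$ directly; your proposal does not touch this route.)
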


Actually, we give two proofs of the above theorem. The first proof uses hyperbolic geometry to estimate the derivative of a Blaschke product whose critical structure is supported on a Korenblum star.
The second proof uses PDE techniques and applies to arbitrary sequences of nearly-maximal solutions.

For a Beurling-Carleson set $E \subset \mathbb{S}^1$ and parameters $\alpha \ge 1$, $0 < \theta \le 1$, we define the {\em generalized Korenblum star of order $\alpha$} as
\begin{equation}
\label{eq:gen-korstar}
K^{\alpha}_E(\theta) = \bigl \{z \in \overline{\mathbb{D}} : 1 - |z| \ge \theta \cdot \dist(\hat z, E)^\alpha \bigr \}.
\end{equation}
If $\alpha = 1$ and $\theta =1$, the above definition reduces to the one given earlier: $K_E = K^{1}_E(1)$.
By default we take $\theta = 1$, i.e.~we write $K^\alpha_E = K^\alpha_E(1)$.

\begin{lemma}
\label{korlemma}
Suppose $F(z)$ is an inner function with $F(0) = 0$ whose ``critical structure'' $\mu(\inn F')$ is supported on a Korenblum star $K_E$ of order 1 and ``critical mass'' $\mu(\inn F')(\overline{\mathbb{D}}) < M$.
Then,
\begin{equation}
\label{eq:korlemma}
\frac{1-|F(z)|}{1-|z|} \le C(M) \cdot \dist(z, E)^{-4}, \qquad z \in \mathbb{D} \setminus K_E^{4},
\end{equation}
where $\dist$ denotes Euclidean distance.
\end{lemma}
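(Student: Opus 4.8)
The plan is to split the quotient into a conformally invariant factor and a genuine derivative factor. With $v:=u_{\mathbb D}-u_F=\log(\lambda_{\mathbb D}/\lambda_F)\ge 0$ (Schwarz--Pick), the definitions of $u_{\mathbb D},u_F$ give $1-|F(z)|^2=e^{v(z)}(1-|z|^2)|F'(z)|$, hence
$$\frac{1-|F(z)|}{1-|z|}\ \le\ \frac{1-|F(z)|^2}{1-|z|}\ =\ (1+|z|)\,e^{v(z)}\,|F'(z)|\ \le\ 2\,e^{v(z)}\,|F'(z)|.$$
So it suffices to prove, for $z\in\mathbb D\setminus K_E^{4}$, that $v(z)=O_M(1)$ and that $|F'(z)|\le C(M)\dist(z,E)^{-4}$. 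The first estimate is soft; the second is the heart of the matter, and is where the hypotheses $F(0)=0$ and $\supp\mu(\inn F')\subseteq K_E$ really get used.

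For $v$: the proof of Theorem~\ref{solutions-gce} yields $v\le\log\frac1{|I_\omega|}$, where $\omega=\mu(\inn F')=\mu+\nu$ is supported on $K_E$ with $\omega(\overline{\mathbb D})<M$. The relevant geometry is that $\mathbb D\setminus K_E^{4}$ stays a definite distance from $K_E$: if $z\notin K_E^{4}$ then $1-|z|<\dist(\hat z,E)^4$, while $|1-\bar a z|\ge|z-a|\gtrsim\dist(z,E)$ for every $a\in K_E$ (in particular for $a\in\supp\nu$), and $|e^{i\theta}-z|\gtrsim\dist(z,E)$ for $e^{i\theta}\in E\supseteq\supp\mu$. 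Feeding this into $\log\frac1{|S_\mu(z)|}=\int_E\frac{1-|z|^2}{|e^{i\theta}-z|^2}\,d\mu$ and into the factorwise estimate $-\log\bigl|\frac{z-a}{1-\bar a z}\bigr|\asymp\frac{(1-|z|^2)(1-|a|^2)}{|1-\bar a z|^2}$ for the factors of $B_\nu$ gives $v(z)\le\log\frac1{|I_\omega(z)|}\lesssim\frac{(1-|z|)\,\omega(\overline{\mathbb D})}{\dist(z,E)^2}\lesssim M\,\dist(z,E)^2=O_M(1)$.

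For the derivative bound I would use the Ahern--Clark factorization $F'=I_\omega\cdot\out F'$ together with $|I_\omega|\le1$, so $|F'(z)|\le|\out F'(z)|=\exp P[\log|F'|](z)$, where $P$ is the Poisson integral and $\log|F'|$ on $\mathbb S^1$ is supplied by the classical identity
$$|F'(e^{i\theta})|\ =\ \re\!\Bigl(e^{i\theta}\,\tfrac{F'}{F}(e^{i\theta})\Bigr)\ =\ \sum_{k}\frac{1-|z_k|^2}{|e^{i\theta}-z_k|^2}\ +\ \int_{\mathbb S^1}\frac{2\,d\rho(\phi)}{|e^{i\theta}-e^{i\phi}|^2},$$
where $\{z_k\}$ are the zeros of $F$ and $\rho$ its singular measure. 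In particular $\log|F'(e^{i\theta})|\ge0$ a.e., because the zero of $F$ at the origin contributes the constant term $1$. The crucial input --- discussed below --- is a hyperbolic-geometry estimate giving $|F'(e^{i\theta})|\le C(M)\dist(e^{i\theta},E)^{-2}$ a.e. Granting it, and using that for $z\notin K_E^{4}$ the point $z$ lies so close to $\mathbb S^1$ relative to $\dist(z,E)$ that $P[\log\frac1{\dist(\cdot,E)}](z)\le\log\frac1{\dist(z,E)}+O(1)$ (a routine Poisson estimate), we obtain $\log|F'(z)|\le P[\log|F'|](z)\le 2\log\frac1{\dist(z,E)}+C(M)$, i.e.\ $|F'(z)|\le C(M)\dist(z,E)^{-2}\le C(M)\dist(z,E)^{-4}$. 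Combined with the $v$-estimate this gives (\ref{eq:korlemma}).

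The obstacle is precisely the boundary estimate $|F'(e^{i\theta})|\le C(M)\dist(e^{i\theta},E)^{-2}$. Schwarz--Pick gives only $|F'(re^{i\theta})|\le\frac{1-|F(re^{i\theta})|^2}{1-r^2}$, whose right side is essentially the quantity we are trying to bound, and neither hypothesis may be dropped: without $F(0)=0$ the zeros of $F$ can cluster near $\mathbb S^1$ far from $E$, and without $\supp\mu(\inn F')\subseteq K_E$ the critical structure may be diffuse, either of which makes $|F'|$ unboundedly large away from $E$. I would prove it first for a finite Blaschke product $F$, where the explicit formula above, combined with the rigid interdependence of the zeros of $F$, the zeros of $F'$ (which by hypothesis all lie in $K_E$), and the normalization $F(0)=0$, pins the zero configuration close enough to $E$ to force the bound; a general inner function then follows by approximation. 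This is presumably what the remark preceding the lemma means by using ``hyperbolic geometry to estimate the derivative of a Blaschke product whose critical structure is supported on a Korenblum star.''
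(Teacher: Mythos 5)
Your reduction $\frac{1-|F(z)|}{1-|z|}\le 2\,e^{v(z)}|F'(z)|$ is correct, and the bound $v(z)=O_M(1)$ on $\mathbb D\setminus K_E^4$ is fine (it also follows directly from Lemma \ref{gammaF-estimate2}, since $v\le\log\frac1{|I_\omega|}\lesssim M e^{-d_{\mathbb D}(z,K_E^2)}\le M$). But the step you yourself flag as ``the obstacle'' --- the boundary estimate $|F'(e^{i\theta})|\le C(M)\dist(e^{i\theta},E)^{-2}$ --- is not proved; it is only sketched as ``pin the zero configuration close enough to $E$.'' This is not a minor omission: it is precisely the mathematical content of the lemma. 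Your Aleksandrov-type formula for $|F'(e^{i\theta})|$ is expressed in terms of the \emph{zero structure} of $F$ (zeros and singular measure), whereas the hypotheses constrain only the \emph{critical structure}. Passing from one to the other is the whole point, and your proposal does not supply the mechanism. In fact, the assertion that the zero structure of $F$ is concentrated near $E$ appears in the paper as Corollary \ref{korstar4}, which is deduced \emph{from} Lemma \ref{korlemma}; using it to prove the lemma is circular unless you give an independent argument.

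The paper's proof avoids the boundary entirely and works directly along the radial hyperbolic geodesic $[0,z]$, split at $z_2\in\partial K_E^2$. On $[0,z_2]$ the Schwarz lemma gives $d_{\mathbb D}(F(0),F(z_2))\le d_{\mathbb D}(0,z_2)$. On $[z_2,z]$, Corollary \ref{gammaF-estimate} shows $\gamma_F$ decays exponentially (because the critical structure lives in $K_E$), so by the approximate-isometry corollary $d_{\mathbb D}(F(z_2),F(z))\ge d_{\mathbb D}(z_2,z)-O(1)$. The triangle inequality and $F(0)=0$ then give $d_{\mathbb D}(0,F(z))\ge d_{\mathbb D}(z_4,z)-O(1)$, which unwinds to the stated power $\dist(z,E)^{-4}$. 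That argument is where both hypotheses ($F(0)=0$ and $\supp\mu(\inn F')\subset K_E$) are genuinely consumed; your proposal identifies the right ingredients (McMullen's approximate-isometry philosophy, Lemma \ref{gammaF-estimate2}) but does not assemble them, so as written there is a gap at the crucial step.
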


Under the assumptions of the above lemma, we have:

\begin{corollary}
\label{korstar4}
The ``zero structure''
  $\mu(F)$ is supported on a higher-order Korenblum star $K_E^{4}(\theta)$ where $\theta$ is a parameter which depends on $M$. In particular, by Schwarz reflection, $F$ extends to an analytic function
on $\mathbb{C} \setminus {\mathbf r}(K_E^{4}(\theta))$ where ${\mathbf r}(z) = 1/\overline{z}$ denotes the reflection in the unit circle.
\end{corollary}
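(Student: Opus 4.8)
The plan is to read the corollary off the pointwise estimate of Lemma~\ref{korlemma}. Write $F = c\,B\,S_\sigma$ for the canonical factorization of the inner function $F$ into a unimodular constant, a Blaschke product $B$ with zero set $\{a_i\}$, and a singular inner function $S_\sigma$ with singular measure $\sigma = \sigma(F)$, so that $\mu(F) = \sum_i (1-|a_i|)\,\delta_{a_i} + \sigma$. Since $K_E^4(\theta)\cap\mathbb{S}^1 = E$, it suffices to produce a $\theta = \theta(M) > 0$ for which every $a_i$ lies in $K_E^4(\theta)$ and $\supp\sigma \subseteq E$.

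\emph{The zeros.} At a zero $a$ of $F$ we have $1 - |F(a)| = 1$, so Lemma~\ref{korlemma} forces $1 - |a| \ge C(M)^{-1}\dist(a, E)^4$ whenever $a \notin K_E^4$. The only subtlety is passing from $\dist(a, E)$ to $\dist(\hat a, E)$: this is immediate once $1 - |a|$ is small relative to $\dist(\hat a, E)$, which is exactly what $a \notin K_E^4$ guarantees. More precisely, $1-|a| < \dist(\hat a,E)^4$ together with $|a - \hat a| = 1-|a|$ yields $\dist(a,E) \ge \tfrac12 \dist(\hat a,E)$ once $1-|a|$ is below an absolute constant, whence $1 - |a| \gtrsim_M \dist(\hat a, E)^4$. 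Zeros with $1-|a|$ not below that constant, and all zeros already lying in $K_E^4$ itself, are automatically inside $K_E^4(\theta)$ for $\theta$ small, since $\dist(\hat a, E) \le 2$. Choosing $\theta = \theta(M)$ small enough to absorb $C(M)^{-1}$ and this constant places every zero of $F$ in $K_E^4(\theta)$.

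\emph{The singular factor, and Schwarz reflection.} Fix $\zeta \in \mathbb{S}^1 \setminus E$ and put $V = B(\zeta,\rho)\cap\mathbb{D}$ with $\rho$ small in terms of $\dist(\zeta,E)$ and $M$, so that $V \subseteq \mathbb{D}\setminus K_E^4$ and $\dist(\cdot, E)$ stays bounded below on $V$. Lemma~\ref{korlemma} then gives $1 - |F(z)| \le C'(1-|z|)$ on $V$, hence $0 \le \log\tfrac{1}{|F(z)|} \le 2C'(1-|z|) \to 0$ uniformly as $z$ approaches the arc $B(\zeta,\rho)\cap\mathbb{S}^1$. By Lemma~\ref{boundary-values}, the measures $\bigl(\log\tfrac{1}{|F|}\bigr)\,\tfrac{d\theta}{2\pi}\big|_{|z|=r}$ converge weakly to $\sigma$ (the Blaschke factor contributing nothing), so testing against continuous functions supported in that arc shows $\sigma$ puts no mass there. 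As $\zeta$ was arbitrary, $\supp\sigma \subseteq E$, which finishes the proof that $\supp\mu(F) \subseteq K_E^4(\theta)$. Now $F$ has no zeros accumulating on $\mathbb{S}^1\setminus E$ and no singular mass there, so it extends holomorphically across $\mathbb{S}^1\setminus E$; the reflection $\widetilde F(z) = 1/\overline{F(1/\overline{z})}$ is holomorphic on $\{|z|>1\}$ away from the poles ${\mathbf r}(a_i)$ and glues with $F$ along $\mathbb{S}^1\setminus E$. Since $\{a_i\}\cup E \subseteq K_E^4(\theta)$, the resulting function is analytic on $\mathbb{C}\setminus{\mathbf r}\bigl(K_E^4(\theta)\bigr)$.

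\emph{The main obstacle.} The only genuinely delicate point is the middle step: upgrading the crude modulus bound $1 - |F| \lesssim 1-|z|$ near a point $\zeta \notin E$ to the vanishing of the singular inner factor of $F$ (not of $F'$) at $\zeta$. The weak-limit argument does this cleanly because the boundary distribution of $\log 1/|F|$ is precisely $\sigma(F)$. Everything else is routine bookkeeping — tracking the harmless discrepancy between $\dist(z, E)$ and $\dist(\hat z, E)$ near $\mathbb{S}^1$, and noting that shrinking $\theta$ makes $K_E^4(\theta)$ contain a fixed disk about the origin, which accounts for the zero $F(0)=0$ and any zeros lying deep inside $\mathbb{D}$.
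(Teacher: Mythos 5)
Your proposal is correct and takes the natural route implicit in the paper (which states the corollary immediately after Lemma~\ref{korlemma} without an explicit argument): read off from the pointwise estimate that zeros of $F$ and the support of $\sigma(F)$ cannot lie too far from the Korenblum star. The two pieces both work. For the zeros, the case split handles the $\dist(a,E)$ versus $\dist(\hat a,E)$ discrepancy correctly: when $1-|a|<2^{-4/3}$ one indeed gets $\dist(a,E)\ge\tfrac12\dist(\hat a,E)$ from $1-|a|<\dist(\hat a,E)^4$, and the remaining zeros with $1-|a|\ge 2^{-4/3}$ lie in $K^4_E(\theta)$ once $\theta\le 2^{-4/3}/16$ because $\dist(\hat a,E)\le 2$. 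For the singular factor, invoking Lemma~\ref{boundary-values} to identify the weak boundary limit of $\bigl(\log\tfrac{1}{|F|}\bigr)\tfrac{d\theta}{2\pi}$ as $\sigma(F)$ is exactly the right tool, and the uniform smallness of $\log\tfrac{1}{|F|}$ near arcs of $\mathbb{S}^1\setminus E$ then kills any mass there. The concluding reflection step is the standard consequence. One stylistic remark: a slightly more streamlined version of the zero estimate is to observe directly that for $z\notin K_E^4(\theta')$ with $\theta'\le 1/16$ one has $\dist(z,E)\ge\tfrac12\dist(\hat z,E)$ and hence $1-|F(z)|\le 16C(M)\theta'$; taking $\theta'<1/(16C(M))$ immediately forces $F(z)\ne 0$ and also gives the uniform bound needed for the singular-measure step in one stroke, avoiding the separate case analysis. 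But this is cosmetic; your argument is complete.
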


\begin{corollary}
\label{korstar1}
For a point $\zeta  \in \mathbb{S}^1$ on the unit circle,
$$
|F'(\zeta)| \le C(M) \cdot \dist(\zeta, E)^{-4}.
$$
\end{corollary}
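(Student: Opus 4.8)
The plan is to read off the boundary estimate from Lemma \ref{korlemma}, evaluated along the radius through $\zeta$, combined with the analytic continuation provided by Corollary \ref{korstar4}. First I would dispose of the trivial case $\zeta \in E$: there $\dist(\zeta, E) = 0$, the right-hand side is $+\infty$, and there is nothing to prove. So assume $\zeta \in \mathbb{S}^1 \setminus E$. Since $\mu(F)$ is supported on $K_E^4(\theta)$ and $K_E^4(\theta) \cap \mathbb{S}^1 = E$, the function $F$ has no zeros and no singular mass near $\zeta$; hence, by Corollary \ref{korstar4}, $F$ extends analytically to a neighbourhood of $\zeta$, so that $F'$ is continuous there and $|F'(\zeta)| = \lim_{r \to 1^-} |F'(r\zeta)|$. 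In particular $F'(\zeta)$ is to be understood as the derivative of this analytic continuation.

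Next I would bound $|F'(r\zeta)|$ for $r$ near $1$. By the Schwarz--Pick inequality for the self-map $F \colon \mathbb{D} \to \mathbb{D}$ one has $|F'(r\zeta)| \le \frac{1 - |F(r\zeta)|^2}{1 - r^2} \le 2\,\frac{1 - |F(r\zeta)|}{1 - r}$. Once $r$ is close enough to $1$ that $1 - r < \dist(\zeta, E)^4$, the point $r\zeta$ lies in $\mathbb{D} \setminus K_E^4$, so Lemma \ref{korlemma} applies and gives $\frac{1 - |F(r\zeta)|}{1-r} \le C(M)\,\dist(r\zeta, E)^{-4}$; therefore $|F'(r\zeta)| \le 2\,C(M)\,\dist(r\zeta, E)^{-4}$.

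Finally, since $|\dist(r\zeta, E) - \dist(\zeta, E)| \le 1 - r \to 0$, letting $r \to 1^-$ gives $|F'(\zeta)| \le 2\,C(M)\,\dist(\zeta, E)^{-4}$, and the factor $2$ is absorbed into the constant. (An alternative to invoking Schwarz--Pick is to Taylor-expand $F$ at $\zeta$ and use $|F| \equiv 1$ on the arc through $\zeta$, which yields directly $|F'(\zeta)| = \lim_{r \to 1^-}\frac{1 - |F(r\zeta)|}{1-r}$, after which one applies Lemma \ref{korlemma} as above and recovers the estimate with the sharp constant $C(M)$.) I do not expect a genuine obstacle here; the one point that requires care is exactly the passage from the interior bound of Lemma \ref{korlemma} to an honest boundary derivative, which is why the analytic continuation from Corollary \ref{korstar4} --- valid precisely because $\zeta \notin E$ --- is needed.
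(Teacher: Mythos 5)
Your proof is correct and fills in exactly the argument the paper leaves implicit: since $\mu(F)$ is supported in a Korenblum star, $F$ extends analytically past $\zeta\in\mathbb{S}^1\setminus E$ (Corollary \ref{korstar4}), so $|F'(\zeta)|$ is a genuine radial limit of $|F'(r\zeta)|$, which Schwarz--Pick converts into the quantity $(1-|F(r\zeta)|)/(1-r)$ controlled by Lemma \ref{korlemma}. The passage $\dist(r\zeta,E)\to\dist(\zeta,E)$ and the trivial case $\zeta\in E$ are handled correctly.
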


With help of Lemma \ref{korlemma} and its corollaries, the proof of  Theorem \ref{concentrating-thm} runs as follows:

\begin{proof}[Proof of Theorem \ref{concentrating-thm}]

{\em Special case.}
We first prove the theorem in the special case
 when each measure ${\omega_n}$ is supported on a Korenblum star $K_{E_n}$ and the sets
  $E_n \to E$ converge in $\BC$.

   To simplify the notation, let us write $F_n$ instead of $F_{\omega_n}$.
 By a normal families argument and Corollary \ref{korstar4}, we may assume that $F_n \to F$ converge locally uniformly on $\mathbb{C} \setminus {\mathbf r}(K_E^{4}(\theta))$, where the domains of definition $\mathbb{C} \setminus {\mathbf r}(K_{E_n}^{4}(\theta))$ are changing but converge to $\mathbb{C} \setminus {\mathbf r}(K_E^{4}(\theta))$. In this case, the  derivatives $F'_n \to F'$  also converge locally uniformly.

According to \cite[Section 4]{inner}, to show that the sequence $\{F_n\}$ is stable,  it suffices to check that the outer factors converge at the origin:
$$
\lim_{n \to \infty} \int_{\mathbb{S}^1} \log|F'_n| d\theta = \int_{\mathbb{S}^1} \log|F'| d\theta.
$$
The proof will be complete if we can argue that the functions $\log |F'_n|$ are uniformly integrable on the unit circle.
This means that for any $\varepsilon > 0$,
 there exists a $\delta > 0$ so that $\int_A \log |F_n'| d\theta < \varepsilon$ for any $n$, whenever $A \subset \mathbb{S}^1$ is a measurable set with $m(A) < \delta$.
This estimate is provided by Corollary \ref{korstar1} above and the definition of a concentrating sequence
of Beurling-Carleson sets.

\medskip

{\em General case.} According to the definition, $\omega_n \to \omega$ in the Korenblum topology if for any $\varepsilon > 0$, one can find
a concentrating sequence $\omega_n^N \to \omega^N$ in $M_{\BC(N)}(\overline{\mathbb{D}})$
with $0 \le \omega^N_n \le \omega_n$,  $(\omega_n - \omega_n^N)(\overline{\mathbb{D}}) < \varepsilon$ and $(\omega - \omega^N)(\overline{\mathbb{D}}) < \varepsilon$.
According to Lemma  \ref{fundamental-lemma4},
$$
 \log \frac{1}{|I_{\omega_n - \omega_n^N}|} \, \ge \,
 u_{\omega_n^N} - u_{\omega_n}\, \ge \,  0 , \qquad n = 1,2, \dots$$
From the special case of the theorem,  we know that
$
u_{\omega_n^N}
$
converge weakly to $u_{\omega^N}$.
Since the difference $\omega_n - \omega_n^N$ can be made arbitrarily small by making $N$ large,
 Lemma \ref{weak-log} implies that $u_{\omega_n} \to u_\omega$ weakly.
Finally, by Lemma \ref{two-topologies}, this is equivalent to the uniform convergence of  $F_{\omega_n} \to F_\omega$ on compact subsets of the unit disk.
\end{proof}

\subsection{Blaschke products as approximate isometries}
\label{sec:bp-ai}

To prove Lemma \ref{korlemma}, we use the following principle: {\em away from the critical points, an inner function is close to a hyperbolic isometry}. Our discussion is inspired by the work of McMullen \cite[Section 10]{mcmullen-rtree} which deals with finite Blaschke products of fixed degree. Here, we require ``degree independent'' estimates.
To this end, given an inner function $F(z)$,  we consider the quantity
\begin{equation}
\label{eq:mob-inv}
 \gamma_F(z) =  \log \frac{1}{ |\inn F'(z)|}
\end{equation}
which measures how much $F$ deviates from a M\"obius transformation near $z$.
The quantity $\gamma_F$ satisfies the M\"obius invariance relation
\begin{equation}
\gamma_{M_1 \circ F \circ M_2}(z) = \gamma_F(M_2(z)), \qquad M_1, M_2 \in \aut(\mathbb{D}),
\end{equation}
which follows from the identity $\inn \bigl [ (M_1 \circ F \circ M_2)' \bigr ] = \inn F' \circ M_2$.
Let $G(z,w)$ denote the Green's function on the unit disk. When $w=0$, $G(z,0) = \log \frac{1}{|z|}$. If
the singular measure $\sigma(F')$ is trivial (e.g.~if $F$ is a finite Blaschke product), the above definition reduces to
\begin{equation}
\gamma_F(z) = \sum_{c \in \crit(F)} G(z, c).
\end{equation}
For two points $x, y \in \mathbb{D}$, we write $d_{\mathbb{D}}(x,y)$ for the hyperbolic distance and denote the segment of the
hyperbolic geodesic that joins $x$ and $y$ by $[x, y]$. There is a convenient way to estimate  hyperbolic distance.
Let $z \in [x,y]$ be the point closest to the origin. If $z = x$ or $z = y$, then $d_\mathbb{D}(x,y) = d_\mathbb{D}(|x|,|y|)+\mathcal O(1)$ is essentially the ``vertical distance'' from $x$ to $y$. If $z$ lies strictly between $x$ and $y$, then $d_\mathbb{D}(x,y) = d_\mathbb{D}(|x|,|z|)+d_\mathbb{D}(|z|,|y|)+\mathcal O(1)$.

\begin{lemma}[cf.~Proposition 10.9 of \cite{mcmullen-rtree}]
Suppose $F \in \mathscr J$ is an inner function with derivative in the Nevanlinna class. At a point $z \in \mathbb{D}$ which is not a critical point of $F$,
 the 2-jet of $F$ matches the 2-jet of a hyperbolic isometry with an error of
$\mathcal O(\gamma(z))$.
\end{lemma}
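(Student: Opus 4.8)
The plan is to reduce the statement to a concrete computation with the Taylor coefficients of $F$ at $z$, after first normalizing by M\"obius transformations. Using the M\"obius invariance of $\gamma_F$, I would pre- and post-compose $F$ with automorphisms $M_1, M_2 \in \aut(\mathbb{D})$ so that $z$ is sent to the origin and $F(z)$ is sent to the origin; write $G = M_1 \circ F \circ M_2$, so that $G(0) = 0$ and $\gamma_G(0) = \gamma_F(z)$. Since conjugating by automorphisms changes the $2$-jet of $F$ into the $2$-jet of $G$ precomposed and postcomposed with isometries, and an isometry composed with a hyperbolic isometry is again a hyperbolic isometry, it suffices to prove the claim at the origin: the $2$-jet of $G$ at $0$, namely $G(0) = 0$, $G'(0)$, $G''(0)$, agrees up to $\mathcal O(\gamma_G(0))$ with the $2$-jet at $0$ of a hyperbolic isometry, i.e.\ of a map of the form $z \mapsto \omega \frac{z - a}{1 - \overline{a} z}$ with $|\omega| = 1$.

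The key quantitative input is that $\gamma_G(0) = \log \frac{1}{|\inn G'(0)|}$ controls how far $|G'(0)|$ is from its ``isometric'' value. Writing $G' = \inn G' \cdot \out G'$ and noting that for an inner function with $G(0) = 0$ the outer part satisfies $\log|\out G'(0)| = \int_{\mathbb{S}^1} \log|G'| \, \frac{d\theta}{2\pi} \ge 0$ by Jensen/the Schwarz–Pick inequality (a hyperbolic isometry through $0$ has $|G'(0)| = 1 - |a|^2 \le 1$, with the full derivative $|G'(0)| = 1$ exactly when there are no critical points and no singular factor, i.e.\ $\gamma_G(0) = 0$), one sees that $\big| \log |G'(0)| \big| = \mathcal O(\gamma_G(0))$; the sign bookkeeping here requires the standard fact that $G$ maps the origin to the origin so the relevant comparison map is $z \mapsto \omega z$ with $\omega = G'(0)/|G'(0)|$. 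For the second-order term, I would expand $G(z) = G'(0) z + \tfrac12 G''(0) z^2 + \cdots$ and compare with the isometry $z \mapsto G'(0) z$ (whose $2$-jet has vanishing quadratic term); the Schwarz-lemma-type bound $|G''(0)| \lesssim 1 - |G'(0)|^2 \lesssim \gamma_G(0)$, obtained from the contractivity of $G$ on the disk together with a Cauchy estimate on $1 - |G|^2$, gives the desired $\mathcal O(\gamma_G(0))$ control on the quadratic coefficient. Throughout I would use that $\gamma_G(0)$ is small is not assumed — the statement is a genuine inequality valid for all $z$, with the $\mathcal O(\cdot)$ absorbing the regime where $\gamma$ is large.

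The main obstacle I anticipate is making precise the Schwarz-type derivative bounds $\big| \log|G'(0)| \big| \lesssim \gamma_G(0)$ and $|G''(0)| \lesssim \gamma_G(0)$ uniformly over all inner functions $G \in \mathscr J$, without any degree hypothesis — this is exactly where the excerpt flags that ``degree independent'' estimates are needed, in contrast to McMullen's finite-degree setting. The cleanest route is probably to write $\log \frac{1}{|G(z)|} = \sum_i G_{\mathbb D}(z, b_i) + (\text{singular contribution})$ over the zeros $b_i$ of $G$ and differentiate: the quantity $1 - |G(z)|^2$ near $z = 0$ is governed by $\log \frac{1}{|G|}$, whose boundary behaviour and local mass are precisely what $\gamma_G$ (via the Ahern–Clark decomposition relating $\inn G'$ to the zeros and singular set of $G$) encodes. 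One then extracts the first two Taylor coefficients by Cauchy's formula on a small circle and bounds the integrands using $|G| \le 1$ and the fact that $\log \frac{1}{|G(0)|} = \infty$ is excluded (as $G(0) = 0$ we instead use the normalization $G(0)=0$, $G'(0) \neq 0$, so the local behaviour of $\log\frac1{|G|}$ at $0$ is $\log\frac1{|z|} + \mathcal{O}(\gamma_G(0))$). I expect the remaining steps — transferring the origin estimate back to a general $z$ via the isometry-invariance of both sides, and identifying the comparison isometry explicitly — to be routine bookkeeping.
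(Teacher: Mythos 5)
Your proposal is correct and follows essentially the same route as the paper: normalize by M\"obius transformations so that $z=F(z)=0$, then bound $|F'(0)-1|$ and $|F''(0)|$ by $\mathcal O(\gamma)$. The key inequality $|F'(0)| \ge |\inn F'(0)|$ — equivalently $|\out F'(0)|\ge 1$ — is exactly what the paper extracts from \cite[Lemma 2.3]{inner}; your derivation of it from $\frac{1}{2\pi}\int_{\mathbb S^1}\log|F'|\,d\theta \ge 0$ (valid for inner $F$ with $F(0)=0$, $F'\in\mathcal N$) is equivalent. For the second coefficient the paper applies the Schwarz lemma to $F(z)/z$ on $B(0,1/2)$ to get $|F(z)-z|=\mathcal O(\delta)$ and then a Cauchy estimate, while you invoke the Schwarz--Pick inequality for $h(z)=F(z)/z$ at the origin to get $\tfrac12|F''(0)|=|h'(0)|\le 1-|h(0)|^2 = 1-|F'(0)|^2\le 2\gamma$ directly; this is a slightly cleaner and more algebraic version of the same Schwarz-lemma step. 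Your anticipated ``obstacle'' in the third paragraph and the detour through Green's functions over zeros are unnecessary: both derivative bounds are already uniform over all of $\mathscr J$, as you in fact establish. Minor imprecisions (e.g.\ describing the comparison isometry through $0$ as having $|G'(0)|=1-|a|^2$ rather than $1$, and the garbled remark about $\log\tfrac{1}{|G(0)|}$) do not affect the argument.
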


\begin{proof}
By  M\"obius invariance (\ref{eq:mob-inv}), it suffices to consider the case when $z = F(z) = 0$ and $F'(0) > 0$. Set
$\delta = \gamma_F(0)$. To prove the lemma, we need to show that
$|F'(0) - 1| = |F''(0)| = \mathcal O(\delta)$.
The definition of $\gamma_F(0)$ implies that $1 - |(\inn F')(0)| \le \delta$.
Applying  \cite[Lemma 2.3]{inner} gives the desired estimate for the first derivative:
$$F'(0) \, = \, \lambda_F(0)  \, \ge \,  |\inn F'(0)| \cdot \lambda_{\mathbb{D}}(0) \, \ge \, 1 - \delta.$$
By the Schwarz lemma applied to $F(z)/z$, we have
 $d_{\mathbb{D}} \bigl (F(z)/z, F'(0) \bigr ) = \mathcal O(1)$ for $z \in B(0,1/2).$
Taking note of the location of $F'(0) \in \mathbb{D}$, this estimate can be written as $|F(z) - z| = \mathcal O(\delta)$ for  $z \in B(0,1/2)$. Cauchy's integral formula now gives the estimate for the second derivative.
\end{proof}

\begin{corollary}[cf.~Theorem 10.11 and Corollary 10.7 of \cite{mcmullen-rtree}]
Suppose $F(z)$ is a finite Blaschke product and $[z_1,z_2]$ is a segment of a hyperbolic geodesic. If for each $z \in [z_1, z_2]$, $\gamma_F(z) < \varepsilon_0$ is sufficiently small, then
$F(z_1) \ne F(z_2).$
In fact, for any $\delta > 0$, we can choose $\varepsilon_0 > 0$ small enough to guarantee that
\begin{equation}
(1 - \delta) \cdot d_{\mathbb{D}}(z_1, z_2) \, \le \,
  d_{\mathbb{D}} \bigl (F(z_1), F(z_2) \bigr ) \, \le \, d_{\mathbb{D}}(z_1, z_2).
\end{equation}
\end{corollary}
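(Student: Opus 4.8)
The upper bound $d_{\mathbb{D}}(F(z_1),F(z_2))\le d_{\mathbb{D}}(z_1,z_2)$ is the Schwarz--Pick inequality and requires no smallness hypothesis, so the content is the lower bound. The plan is to show that the image curve $\eta:=F([z_1,z_2])$ is a \emph{near-geodesic} --- long and almost straight --- and then invoke the efficiency of near-geodesics in the hyperbolic plane. Parametrise $[z_1,z_2]$ by hyperbolic arclength, $\gamma\colon[0,T]\to\mathbb{D}$ with $T=d_{\mathbb{D}}(z_1,z_2)$, so that $\eta=F\circ\gamma$ joins $F(z_1)$ to $F(z_2)$. Observe first that, since $\gamma_F(z)=\sum_{c\in\crit F}G(z,c)$ and $G(z,c)\to\infty$ as $c\to z$, the hypothesis $\gamma_F<\varepsilon_0$ on $\gamma$ forces every critical point of $F$ to lie at hyperbolic distance $\gtrsim\log(1/\varepsilon_0)$ from $\gamma$; in particular, for $\varepsilon_0$ small, $F$ is critical-point-free, and $\gamma_F=O(\varepsilon_0)$, on every hyperbolic ball of radius $1$ centred on $\gamma$.

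Two estimates make ``near-geodesic'' precise. \emph{(Length.)} By \cite[Lemma 2.3]{inner} and M\"obius invariance, $\lambda_F(z)\ge|\inn F'(z)|\,\lambda_{\mathbb{D}}(z)$, so the hyperbolic derivative $\|DF(z)\|_{\mathrm{hyp}}:=\lambda_F(z)/\lambda_{\mathbb{D}}(z)$ satisfies $\|DF(z)\|_{\mathrm{hyp}}\ge|\inn F'(z)|=e^{-\gamma_F(z)}\ge 1-\varepsilon_0$ along $\gamma$, while Schwarz--Pick gives $\|DF\|_{\mathrm{hyp}}\le 1$; hence the hyperbolic length of $\eta$ is $L=\int_0^T\|DF(\gamma(t))\|_{\mathrm{hyp}}\,dt\in[(1-\varepsilon_0)T,\,T]$. \emph{(Curvature.)} Applying the preceding lemma at a point $z\in\gamma$, normalised so that $z=F(z)=0$ and $F'(0)>0$, the $2$-jet of $F$ at $0$ is within $O(\varepsilon_0)$ of that of the identity; since there are no critical points nearby, Cauchy's estimates upgrade this to $C^1$-closeness to the identity on a fixed ball, with error $O(\varepsilon_0)$. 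As the geodesic $\gamma$ has vanishing geodesic curvature, its image $\eta$ has geodesic curvature $O(\varepsilon_0)$ at $F(z)$, and $z$ being arbitrary, $\eta$ has geodesic curvature bounded by some $\kappa_0=O(\varepsilon_0)$ throughout. (Alternatively, the geodesic curvature of $\eta$ at $F(z)$ equals $\|DF(z)\|_{\mathrm{hyp}}^{-1}$ times a unit-normal derivative of $w:=\log\|DF\|_{\mathrm{hyp}}$, and near $\gamma$ one has $w=O(\varepsilon_0)$ with $\Delta_{\mathrm{hyp}}w=4(e^{2w}-1)=O(\varepsilon_0)$, so interior gradient estimates give $|\nabla w|=O(\varepsilon_0)$ on $\gamma$.)

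The remaining, and most delicate, step is to turn ``$\eta$ has length $L\ge(1-\varepsilon_0)T$ and geodesic curvature $\le\kappa_0=O(\varepsilon_0)$'' into a lower bound for $d_{\mathbb{D}}(F(z_1),F(z_2))$ that is \emph{uniform in $T$}. The crude bound --- the total turning of $\eta$ is at most $\kappa_0 L=O(\varepsilon_0)\,T$ --- is worthless once $T\gtrsim 1/\varepsilon_0$, and in that range one has to exclude the possibility that $\eta$ slowly spirals or nearly closes up; this is exactly where the negative curvature of $\mathbb{D}$ enters. Following McMullen, I would subdivide $[z_1,z_2]$ into segments of a fixed small hyperbolic length $r$ and note, by the curvature estimate, that the broken geodesic through their images is a $(1+O(\varepsilon_0),0)$-local quasigeodesic at scale $r$ with turning angle $O(\varepsilon_0)$ at every vertex, then argue that such a broken geodesic in $\mathbb{D}$ is \emph{globally} efficient: $d_{\mathbb{D}}(F(z_1),F(z_2))\ge(1-\delta)\sum(\text{segment lengths})\ge(1-\delta)(1-\varepsilon_0)T$. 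Off-the-shelf stability of quasigeodesics in a Gromov-hyperbolic space yields $F(z_1)\ne F(z_2)$ and fellow-travelling at bounded distance immediately, but only with a quasigeodesic constant bounded away from $1$; extracting the \emph{sharp} multiplicative constant $(1-\delta)$, uniformly in the number of segments, is where the real work lies. The mechanism combines the thin-triangles inequality with the Gauss--Bonnet observation that a broken geodesic in $\mathbb{D}$ with edges $\ge r$ and turning angles $O(\varepsilon_0)$ needs $\gtrsim 1/\varepsilon_0$ vertices before its accumulated turning can matter --- so that any failure of the endpoints to separate at nearly the full rate can occur only along a path so long that the additive error from fellow-travelling is negligible. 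I expect this geometric bookkeeping, rather than the complex-analytic input, to be the main obstacle. Granting it, choosing $\varepsilon_0$ small in terms of $\delta$ and the fixed scale $r$ gives $d_{\mathbb{D}}(F(z_1),F(z_2))\ge(1-\delta)\,T>0$, which in particular forces $F(z_1)\ne F(z_2)$.
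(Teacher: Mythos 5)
Your proposal follows the same route as the paper's sketch: use the near-isometry lemma to produce a geodesic-curvature bound $\kappa_0 = O(\varepsilon_0)$ on the image curve $\eta = F\circ\gamma$, then convert this together with the hyperbolic-length estimate into a lower bound on the separation of the endpoints. Both your write-up and the paper's two-sentence proof defer the geometric core to McMullen \cite[Theorem 10.11, Corollary 10.7]{mcmullen-rtree}; your identification of the two analytic inputs (the length estimate $L\ge(1-\varepsilon_0)\,d_{\mathbb{D}}(z_1,z_2)$ via $\|DF\|_{\mathrm{hyp}}\ge e^{-\gamma_F}$, and the curvature estimate via Cauchy estimates on the $2$-jet) is correct and matches the paper's.

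Where you flag the difficulty --- that Gromov-hyperbolic quasigeodesic stability only yields a multiplicative constant bounded away from $1$, so the sharp $(1-\delta)$ seems to need delicate Gauss--Bonnet bookkeeping --- the concern is overstated: in $\mathbb{H}^2$ the sharp constant comes out of the classical hypercycle comparison with no additive error, so no subdivision or case analysis in $T$ is required. A unit-speed curve with $|\kappa_g|\le\kappa_0<1$ issuing tangentially from a geodesic $\sigma$ cannot exit the $\tanh^{-1}\kappa_0$-neighbourhood of $\sigma$ (the hypercycle at distance $d$ has geodesic curvature $\tanh d$, so the curve would need curvature $>\kappa_0$ to cross the critical hypercycle outward), and its angle to the hypercycle foliation stays $O(\kappa_0)$; hence the nearest-point projection onto $\sigma$ advances monotonically at rate $\ge\sqrt{1-\kappa_0^2}-O(\kappa_0)$. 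Since that projection is distance-nonincreasing and $\eta(0)\in\sigma$, one gets $d_{\mathbb{D}}(\eta(0),\eta(L))\ge(1-O(\kappa_0))L$ uniformly in $L$, which with your length estimate and a choice of $\varepsilon_0$ small in terms of $\delta$ gives the stated inequality directly. In short, the proposal is the right argument, but the final step you defer is not the main obstacle; the constant-curvature comparison closes it cleanly.
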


\begin{proof}[Sketch of proof]
If we choose $\varepsilon_0 > 0$ small enough, then
$F|_{[z_1,z_2]}$ is so close to an isometry
that the geodesic curvature of its image is nearly $0$. But a path in hyperbolic
space with geodesic curvature less than 1 (the curvature of a
horocycle)
cannot cross itself, so $F(z_1) \ne F(z_2)$. Similar reasoning gives the second statement.
\end{proof}

\begin{remark}
If $\gamma_F(z)$ decays exponentially along $[z_1,z_2]$, i.e.~satisfies a bound of the form $\gamma_F(z) < M \exp \bigl (-d_{\mathbb{D}}(z,z_1) \bigr ),$ for some $M>0$, then McMullen's argument gives the stronger conclusion
$$d_{\mathbb{D}}(F(z_1), F(z_2)) =  d_{\mathbb{D}}(z_1, z_2) + \mathcal O(1).$$
See the proof of \cite[Theorem 10.11]{mcmullen-rtree}.
\end{remark}

 \begin{lemma}
 \label{gammaF-estimate2}
Suppose $I$ is an inner function whose zero structure $\mu(I)$ is contained in a Korenblum star $K_E$ and its zero mass
$\mu(I)(\overline{\mathbb{D}}) < M$. Then, $|I(z)| > c(M) > 0$ is bounded from below on $\mathbb{D} \setminus K_E^2$. More precisely,
$$
\log \frac{1}{|I(z)|} \lesssim  M \exp\bigl(-d_{\mathbb{D}}(z,K_E^2)\bigr), \qquad z \in \mathbb{D} \setminus K_E^2.
$$
\end{lemma}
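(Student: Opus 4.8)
The plan is to estimate $\log(1/|I(z)|)$ directly from its definition as a sum (or integral) of Green's functions, using the geometry of the Korenblum star. Since $\mu(I)$ is supported on $K_E$ and has total mass $< M$, we have $\log(1/|I(z)|) = \int_{K_E} G(z,a)\, d\mu(I)(a)$ where $G(z,a) = \log|1-\overline{a}z|/|z-a|$ is the Green's function. Thus it suffices to show that for $z \in \mathbb{D}\setminus K_E^2$ and any $a \in K_E$, the Green's function is exponentially small in $d_\mathbb{D}(z, K_E^2)$: specifically $G(z,a) \lesssim \exp(-d_\mathbb{D}(z, K_E^2))$ uniformly in $a \in K_E$. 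Integrating this bound against $d\mu(I)$ and using $\mu(I)(\overline{\mathbb{D}}) < M$ then gives the claim.

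The first key step is the geometric separation estimate: if $z \in \mathbb{D} \setminus K_E^2$ and $a \in K_E$, then $d_\mathbb{D}(z, a) \ge d_\mathbb{D}(z, K_E^2) - C$ for an absolute constant $C$, or more usefully, I want a bound on the hyperbolic distance from $z$ to the \emph{entire} star $K_E$. The point is that $K_E^2 = K_E^2(1) = \{1-|z| \ge \dist(\hat z, E)^2\}$ is a hyperbolic neighborhood of $K_E$: a short computation with the formula $1-|z| \asymp \dist(\hat z, E)$ on $\partial K_E$ versus $1-|z| \asymp \dist(\hat z, E)^2$ on $\partial K_E^2$ shows that the hyperbolic distance between the boundaries of $K_E$ and $K_E^2$ above a boundary point at distance $t$ from $E$ is comparable to $\log(1/t)$, which blows up as $t \to 0$. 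Hence for $z \notin K_E^2$ and $a \in K_E$, the geodesic $[z,a]$ must travel "down and back up," and by the vertical-distance description of hyperbolic distance recalled just before the lemma, $d_\mathbb{D}(z,a) \ge d_\mathbb{D}(z, K_E) \ge d_\mathbb{D}(z, K_E^2)$ up to additive $\mathcal{O}(1)$.

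The second key step is the elementary fact that the Green's function decays exponentially in hyperbolic distance: $G(z,a) \asymp \min(1, e^{-d_\mathbb{D}(z,a)})$, and in particular $G(z,a) \lesssim e^{-d_\mathbb{D}(z,a)}$ when $d_\mathbb{D}(z,a)$ is bounded below. Combining with step one, $G(z,a) \lesssim e^{-d_\mathbb{D}(z, K_E^2)}$ uniformly over $a \in K_E$, and integrating over the mass-$M$ measure $\mu(I)$ yields $\log(1/|I(z)|) \lesssim M\, e^{-d_\mathbb{D}(z, K_E^2)}$. The bound $|I(z)| > c(M)$ on $\mathbb{D}\setminus K_E^2$ follows since $d_\mathbb{D}(z, K_E^2) \ge 0$ there.

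I expect the main obstacle to be the geometric separation estimate (step one): one has to verify carefully, using the explicit definitions of $K_E$ and $K_E^2$ as unions of Stolz-type regions over $E$, that the hyperbolic "collar" between $K_E$ and the complement of $K_E^2$ has width tending to infinity near $E$, and that this forces any geodesic from outside $K_E^2$ to a point of $K_E$ to realize (up to $\mathcal{O}(1)$) the distance to $K_E^2$. The subtlety is that $a$ may lie very close to $E$ where both regions pinch, so one must track the interplay of the two exponents $1$ and $2$ in $\dist(\hat z, E)$ versus $\dist(\hat z, E)^2$; this is exactly where the "square" in $K_E^2$ is used. Everything else — the integral representation of $\log(1/|I|)$, the exponential decay of the Green's function, and the final integration against $\mu(I)$ — is routine.
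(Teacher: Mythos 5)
There is a genuine gap, and it concerns the relationship between $\mu(I)$ and $\log(1/|I|)$. You write $\log\frac{1}{|I(z)|} = \int_{K_E} G(z,a)\,d\mu(I)(a)$, but this is not the correct identity. Recall from (\ref{eq:inner-measure}) that $\mu(I) = \sum (1-|a_i|)\delta_{a_i} + \sigma(I)$, so each zero carries weight $1-|a_i|$, not unit weight. The correct identity (for a Blaschke product, say) is $\log\frac{1}{|I(z)|} = \sum_i G(z,a_i) = \int G(z,a)\,d\tilde\nu(a)$, where $\tilde\nu = \sum \delta_{a_i}$ is the \emph{unweighted} counting measure, whose total mass is the number of zeros and is not controlled by $M = \mu(I)(\overline{\mathbb{D}}) = \sum(1-|a_i|)$. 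Your final step --- bounding $G(z,a) \lesssim e^{-d_{\mathbb{D}}(z,K_E^2)}$ uniformly over $a \in K_E$ and then ``integrating over the mass-$M$ measure'' --- therefore does not produce the stated bound. Concretely: take $I$ to be $B_a^n$ with $(1-|a|) = M/n$ and $a \in K_E$; then $\mu(I)(\overline{\mathbb{D}}) = M$ but $\log\frac{1}{|I(z)|} = nG(z,a)$, so a uniform bound on $G(z,a)$ alone lets the right side blow up as $n\to\infty$.

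What you are missing is an extra factor of roughly $1-|a|$ in the bound on each $G(z,a)$, which is precisely what converts a sum over zeros into a bound in terms of the Blaschke mass $M$. This is the real content of the lemma, and it is what the collar geometry of $K_E \subset K_E^2$ is for. The paper's proof obtains $d_{\mathbb{D}}(z,a) \ge d_{\mathbb{D}}(z,z_2) + d_{\mathbb{D}}(0,a) - \mathcal O(1)$ (where $z_2 = [0,z]\cap\partial K_E^2$), because the geodesic from $z$ to $a$ must pass near $z_1 = [0,z]\cap\partial K_E$, and the collar between $\partial K_E$ and $\partial K_E^2$ above $\hat z$ has hyperbolic width $\approx \log\frac{1}{\dist(\hat z,E)} \approx d_{\mathbb{D}}(0,z_1)$. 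Exponentiating yields $G(z,a) \lesssim e^{-d_{\mathbb{D}}(0,a)}e^{-d_{\mathbb{D}}(z,z_2)} \asymp G^*(0,a)\,e^{-d_{\mathbb{D}}(z,K_E^2)}$ with $G^*(0,a)\asymp 1-|a|$, and then summing over zeros gives $\sum_a G^*(0,a)\asymp \mu(I)(\overline{\mathbb{D}})\le M$, which is the bound you want. Your ``step one'' as written ($K_E\subset K_E^2$ implies $d_{\mathbb{D}}(z,a)\ge d_{\mathbb{D}}(z,K_E^2)$) is trivially true but too weak; the collar must be used additively, not just for containment.
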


 For a point $z \in \mathbb{D}$ and an integer $n > 0$ such that $z \notin K_E^n$,  let $z_n$ denote the unique point of intersection of $[0,z]$ and $\partial K_E^n$.

\begin{proof} We may assume that $I$ is a finite Blaschke product as the general case follows by approximating
$I$ by finite Blaschke products whose zero sets are contained in $K_E$.

Let $a$ be a zero of $I$. Elementary hyperbolic geometry and the triangle inequality show that the hyperbolic distance
\begin{align*}
d_{\mathbb{D}}(z, a) & = d_{\mathbb{D}}(z, z_1) + d_{\mathbb{D}}(z_1, a) - \mathcal O(1), \\
& \ge d_{\mathbb{D}}(z, z_1) + d_{\mathbb{D}}(0,a) - d_{\mathbb{D}}(0,z_1) - \mathcal O(1), \\
& \ge d_{\mathbb{D}}(z, z_2) + d_{\mathbb{D}}(0,a) - \mathcal O(1).
\end{align*}
In other words, the Green's function
$$
G(z, a) \lesssim G(0, a) \exp \bigl (-d_{\mathbb{D}}(z,z_2) \bigr )
$$
decays exponentially quickly in the hyperbolic distance $d_{\mathbb{D}}(z,z_2)$.
If $a \in B(0,1/2)$, we instead use the ``trivial'' estimate $G(z,a) \lesssim \exp \bigl ( -d_{\mathbb{D}}(z,0) \bigr ) \le
\exp \bigl(-d_{\mathbb{D}}(z,z_2)\bigr)$. Combining the two inequalities, we get
$$
G(z, a) \lesssim G^*(0, a) \exp \bigl (-d_{\mathbb{D}}(z,z_2) \bigr )
$$
where $G^*(z,w) := \min \bigl (G(z,w),1 \bigr)$ is the truncated Green's function.
Summing over the zeros of $I$ gives
$$
\log \frac{1}{|I(z)|} = \sum_{a \in \zeros(I)} G(z,a) \lesssim M \exp \bigl (-d_{\mathbb{D}}(z,z_2)\bigr) \asymp M \exp \bigl (-d_{\mathbb{D}}(z,K_E^2) \bigr),
$$
where in the second step we made use of  $ \sum_{a \in \zeros(I)} G^*(0,a) \asymp \mu(I)(\overline{\mathbb{D}}) \le M$. This proves the lemma.
\end{proof}

 \begin{corollary}
 \label{gammaF-estimate}
Suppose $F$ is an inner function which satisfies the hypotheses of Lemma \ref{korlemma}.
 For $z \in \mathbb{D} \setminus K_E^2$, the characteristic
$
\gamma_F(z) \lesssim  M\exp\bigl(-d_{\mathbb{D}}(z,K_E^2)\bigr).
$
\end{corollary}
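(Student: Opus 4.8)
The plan is to obtain the corollary as an immediate consequence of Lemma \ref{gammaF-estimate2} applied to the inner function $I := \inn F'$. First I would recall the definition (\ref{eq:mob-inv}), namely $\gamma_F(z) = \log\frac{1}{|\inn F'(z)|}$, so that the quantity to be bounded is precisely $\log\frac{1}{|I(z)|}$ with $I = \inn F'$. By the Ahern--Clark theorem quoted in the introduction, $F \in \mathscr J$ guarantees that $\inn F'$ is a genuine inner function, and under the embedding $\inn/\,\mathbb{S}^1 \hookrightarrow M(\overline{\mathbb{D}})$ of (\ref{eq:inner-measure}) its zero structure is the measure $\mu(I) = \mu(\inn F')$.

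Next I would observe that the hypotheses of Lemma \ref{korlemma}, which $F$ is assumed to satisfy, say exactly that $\mu(\inn F')$ is supported on the Korenblum star $K_E$ and that $\mu(\inn F')(\overline{\mathbb{D}}) < M$. Hence $I = \inn F'$ meets the hypotheses of Lemma \ref{gammaF-estimate2} verbatim: its zero structure is contained in $K_E$ and its zero mass is $< M$. Applying that lemma then yields, for $z \in \mathbb{D}\setminus K_E^2$,
$$
\gamma_F(z) = \log\frac{1}{|\inn F'(z)|} \;\lesssim\; M\exp\bigl(-d_{\mathbb{D}}(z,K_E^2)\bigr),
$$
with an absolute implied constant, which is the assertion.

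There is essentially no obstacle here; the corollary is a translation of Lemma \ref{gammaF-estimate2} into the language of the characteristic $\gamma_F$. The only point worth a word is terminological: Lemma \ref{korlemma} speaks of the ``critical structure'' of $F$ whereas Lemma \ref{gammaF-estimate2} speaks of the ``zero structure'' of an abstract inner function, but both refer to the same measure $\mu(\inn F')$, which may carry a nontrivial singular part on $\mathbb{S}^1$ coming from $\sigma(F')$; this singular part is already accommodated in the proof of Lemma \ref{gammaF-estimate2} through approximation by finite Blaschke products with zeros in $K_E$.
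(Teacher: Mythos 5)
Your proof is correct and is exactly the intended derivation: the corollary is a direct restatement of Lemma \ref{gammaF-estimate2} applied to $I = \inn F'$, using the identity $\gamma_F = \log\frac{1}{|\inn F'|}$ from (\ref{eq:mob-inv}) and the observation that the hypotheses of Lemma \ref{korlemma} supply precisely the support and mass conditions needed in Lemma \ref{gammaF-estimate2}. The paper gives no separate proof for this corollary, so your one-line translation is the whole argument.
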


With these preparations, we can now prove Lemma \ref{korlemma}:

\begin{proof}[Proof of Lemma \ref{korlemma}]
Suppose $z \in \mathbb{D} \setminus K_E^{4}$.
 Divide $[0,z]$ into two parts: $[0,z_2]$ and $[z_2,z]$.
By the  Schwarz lemma,
$$
d_\mathbb{D}(F(0),F(z_2)) \le d_{\mathbb{D}}(0,z_2).
$$
However, since $F$ restricted to $[z_2,z]$ is close to a hyperbolic isometry,
\begin{equation}
\label{eq:can-be-improved}
d_{\mathbb{D}}(F(z_2), F(z)) \ge  d_{\mathbb{D}}(z_2, z) - \mathcal O(1).
\end{equation}
The triangle inequality gives
$$
d_{\mathbb{D}}(F(0), F(z)) \ge  d_{\mathbb{D}}(z_4, z) - \mathcal O(1),
$$
which is equivalent to (\ref{eq:korlemma}).
\end{proof}

\subsection{Concentrating sequences of solutions}

We now prove the generalization of Theorem \ref{concentrating-thm} for concentrating sequences of nearly-maximal solutions:

\begin{theorem}
\label{concentrating-thm4}
Suppose $\omega_n \to \omega$ converge in the Korenblum topology. Then, the associated nearly-maximal solutions of the Gauss curvature equation converge weakly on the unit disk: $u_{\omega_n} \to u_{\omega}$.
\end{theorem}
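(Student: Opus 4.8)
The plan is to mirror the structure of the proof of Theorem \ref{concentrating-thm}, reducing the general case to a "special case" via the definition of the Korenblum topology and then using the fundamental identities of Lemma \ref{fundamental-lemma4} together with Lemma \ref{weak-log}. The only genuinely new input needed is a PDE-side treatment of the special case, since in Theorem \ref{concentrating-thm} the special case was handled by complex-analytic arguments (normal families, Schwarz reflection, uniform integrability of $\log|F_n'|$) that are unavailable for general Blaschke measures $\tilde\nu_n$ whose solutions are not inner functions.

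\emph{Special case.} Suppose each $\omega_n = \mu_n + \nu_n$ is supported on a Korenblum star $K_{E_n}$ with $\|E_n\|_{\BC} \le N$, $E_n \to E$ in $\BC$, and $\omega_n \to \omega$ weakly with $\omega$ supported on $K_E$. I would first pass to a subsequence so that $u_{\omega_n} \to u$ weakly (using $u_{\omega_n} \le u_{\mathbb{D}}$ and local boundedness, together with the fact that the $L^1_{\loc}$ singularities coming from $\tilde\nu_n$ are controlled by the Blaschke masses $\nu_n(\overline{\mathbb D}) \le N$, which are uniformly bounded). By Lemma \ref{sol-sequences}, $u$ is a solution of (\ref{eq:fbe}) with $\tilde\nu = \lim \tilde\nu_n$. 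The work is to identify $u$ with $u_\omega$, i.e.\ to show that $u$ has the correct deficiency $\mu$ on $\mathbb{S}^1$. The upper bound $u \le u_\omega$ follows from Corollary \ref{lambda-sequences} applied to the representation $u_{\omega_n} = \Lambda^{\tilde\nu_n}[u_{\mathbb D} - \log\frac{1}{|I_{\omega_n}|}]$ from (\ref{eq:fund2}) once one checks $\limsup$-type control; more precisely $u_{\mathbb D} - u_{\omega_n} \le \log\frac1{|I_{\omega_n}|}$, and passing to the weak limit using Lemma \ref{weak-log} gives $u_{\mathbb D} - u \le \log\frac1{|I_\omega|}$, so the deficiency of $u$ is at most $\mu$. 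For the reverse inequality I would exploit the geometry of the Korenblum star: the estimate of Lemma \ref{gammaF-estimate2} (and its analogue for $\log\frac1{|I_{\omega_n}|}$, which decays exponentially in $d_{\mathbb D}(z, K_{E_n}^2)$ with constant $\lesssim N$) shows that $\log\frac1{|I_{\omega_n}|}$ is uniformly small away from a neighborhood of $K_{E_n}$, hence uniformly integrable on circles $|z| = r$ as $r \to 1$; combined with the convergence $E_n \to E$ in $\BC$ (the concentrating hypothesis, which prevents entropy from leaking into vanishingly small arcs), this forces $u_{\mathbb D} - u_{\omega_n} \to u_{\mathbb D} - u_\omega$ with no loss of deficiency. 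So $u = u_\omega$, and since every subsequence has a further subsequence converging to $u_\omega$, the whole sequence converges.

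\emph{General case.} Given $\varepsilon > 0$, by definition of the Korenblum topology choose $N$ and a concentrating sequence $\omega_n^N \to \omega^N$ in $M_{\BC(N)}(\overline{\mathbb D})$ supported on Korenblum stars $K_{E_n}$ with $E_n \to E$, $0 \le \omega_n^N \le \omega_n$, and $(\omega_n - \omega_n^N)(\overline{\mathbb D}) < \varepsilon$, $(\omega - \omega^N)(\overline{\mathbb D}) < \varepsilon$. By the special case, $u_{\omega_n^N} \to u_{\omega^N}$ weakly. By Lemma \ref{fundamental-lemma4} (\ref{eq:fund3}),
\[
0 \le u_{\omega_n^N} - u_{\omega_n} \le \log\frac{1}{|I_{\omega_n - \omega_n^N}|},
\]
and the right-hand side has total mass $\lesssim (\omega_n - \omega_n^N)(\overline{\mathbb D}) < \varepsilon$ on circles near the boundary; letting $\varepsilon \to 0$ and applying Lemma \ref{weak-log} shows $u_{\omega_n} \to u_\omega$ weakly, which is the assertion.

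\emph{Main obstacle.} The delicate point is the lower bound for the boundary deficiency in the special case — showing that no portion of $\mu_n$ is "lost" in the limit. This is exactly where the \emph{concentrating} (rather than merely weakly convergent) hypothesis is essential: one must rule out the scenario in Example (ii) of the introduction, where $\mu_n \to \mu$ weakly but the limiting solution is $u_{\mathbb D}$ itself because the mass sits on shrinking arcs whose contribution to $\log\frac1{|I_{\omega_n}|}$ concentrates and disappears. Quantifying this requires combining the exponential-decay estimate for $\log\frac1{|I_{\omega_n}|}$ off the star (Lemma \ref{gammaF-estimate2}) with the local-entropy characterization of concentrating sequences, to get a genuinely uniform modulus of integrability for $u_{\mathbb D} - u_{\omega_n}$ on boundary circles. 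I expect this uniform-integrability argument, carried out purely with the Perron machinery and the star geometry, to be the technical heart of the proof.
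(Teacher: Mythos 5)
Your reduction of the general case to the special case via (\ref{eq:fund3}), Lemma \ref{weak-log}, and the definition of the Korenblum topology mirrors the paper's argument exactly and is correct. The framework in the special case is also reasonable at a high level: pass to a weak subsequential limit $u$, invoke Lemma \ref{sol-sequences}, and then identify $u = u_\omega$ by matching the boundary deficiency. Your one-sided bound --- passing $u_{\mathbb D} - u_{\omega_n} \le \log\frac{1}{|I_{\omega_n}|}$ to the limit via Lemma \ref{weak-log} to conclude that the deficiency of $u$ is at most $\mu$ --- is sound.

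The gap is in the reverse inequality (deficiency $\ge \mu$), which you correctly flag as the technical heart but then only gesture at. Your proposed mechanism --- uniform integrability of $\log\frac{1}{|I_{\omega_n}|}$ on circles, ``combined with $E_n \to E$ in $\BC$,'' forcing ``$u_{\mathbb D} - u_{\omega_n} \to u_{\mathbb D} - u_\omega$ with no loss of deficiency'' --- restates the conclusion rather than proving it. In the inner-function case (Theorem \ref{concentrating-thm}), uniform integrability of $\log|F_n'|$ on $\mathbb S^1$ suffices \emph{because} Schwarz reflection off $\mathbf r(K_E^4(\theta))$ gives pointwise convergence $F_n' \to F'$ a.e.\ on the circle, and uniform integrability then upgrades this to $L^1$ convergence of boundary data. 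For general nearly-maximal solutions there is no such pointwise boundary convergence: weak convergence of $u_{\omega_n}$ on the disk controls nothing about boundary circle behaviour beyond the total mass, and uniform integrability alone cannot rule out deficiency escaping as the limits in $n$ and $r \to 1$ are interchanged. Something more is needed to close the argument.

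The paper's actual route is to re-encode the boundary deficiency as a statement on the open disk through the Poisson--Jensen identity of Lemma \ref{jen2},
\[
(u_{\mathbb D} - u_{\omega_n})(z) \;=\; \log\frac{1}{|I_{\omega_n}(z)|} \;-\; \frac{1}{2\pi}\int_{\mathbb D} J_{u_n}(z,w)\,|dw|^2,
\]
so that identifying $u$ with $u_\omega$ becomes the problem of showing
$\int_{\mathbb D} J_{u_n}(z,w)\,|dw|^2 \to \int_{\mathbb D} J_u(z,w)\,|dw|^2$. This is then proved by decomposing the domain of integration according to position relative to $K^2_{E_n}$, using three pointwise bounds on $J_n$ (an integrable $G(z,w)$ singularity near $z$, the coarse bound $C(z)/(1-|w|)$ on $K^2_{E_n}$, and an exponentially decaying bound off $K^2_{E_n}$ coming from Lemma \ref{gammaF-estimate2} through (\ref{eq:fund2})), together with the ``area convergence'' of Lemma \ref{jen1} and the upgrade of Lemma \ref{point-to-weak}. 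The exponential decay off the star that you invoke is indeed the right quantitative input, but it has to be deployed in this disk-integral form to avoid the boundary-limit interchange problem; the uniform-integrability-on-circles argument as stated does not do this.
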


The proof of Theorem \ref{concentrating-thm4} rests on three simple observations:

\begin{lemma}
\label{jen1}
If $E \subset \mathbb{S}^1$ is a Beurling-Carleson set and $\alpha \ge 1$ then
\begin{equation}
\label{eq:jen1a}
\int_{K_E^\alpha} \frac{|dz|^2}{1-|z|}  \asymp \| E \|_{\BC}.
\end{equation}
If the sets $E_n \to E$ in $\BC$, then
\begin{equation}
\label{eq:jen1b}
\int_{K_{E_n}^\alpha} \frac{|dz|^2}{1-|z|}  \to \int_{K_{E}^\alpha} \frac{|dz|^2}{1-|z|}.
\end{equation}
\end{lemma}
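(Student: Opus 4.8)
The plan is to integrate out the radial variable and reduce both statements to a one‑dimensional computation on $\mathbb{S}^1$. Writing $\rho(\zeta)=\dist(\zeta,E)$ for $\zeta\in\mathbb{S}^1$, the slice of $K_E^\alpha$ over the ray through $\zeta$ is $\{r:1-r\ge\rho(\zeta)^\alpha\}$, and since $\int_0^{1-\epsilon}\frac{r\,dr}{1-r}=-\log\epsilon-1+\epsilon$ behaves like $\log\frac1\epsilon$ for small $\epsilon$ and like $O(1)$ for $\epsilon$ of order $1$, integration in polar coordinates gives, up to the fixed contribution of $B(0,1/\sqrt2)$ and a bounded additive error,
\begin{equation*}
\int_{K_E^\alpha}\frac{|dz|^2}{1-|z|}\;\asymp\;\alpha\int_{\mathbb{S}^1}\log^+\frac{1}{\dist(\zeta,E)}\,|d\zeta|.
\end{equation*}
For \eqref{eq:jen1a} I would then split $\mathbb{S}^1$ into the complementary arcs $I_k=(a_k,b_k)$ of $E$. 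The elementary fact $\dist(\zeta,E)\asymp\min(|\zeta-a_k|,|\zeta-b_k|)$ for $\zeta\in I_k$ turns $\int_{I_k}\log^+\frac1{\dist(\zeta,E)}|d\zeta|$ into $\asymp\int_0^{|I_k|/2}\log\frac1t\,dt\asymp|I_k|\log\frac1{|I_k|}$ for the short arcs, while the finitely many long arcs contribute only $O(1)$; summing over $k$ gives $\asymp\|E\|_{\BC}$ (with the normalization of $\|\cdot\|_{\BC}$ under which it is bounded below, equivalently up to a bounded additive error).

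For \eqref{eq:jen1b} the first point is that Hausdorff convergence $E_n\to E$ forces $\dist(\zeta,E_n)\to\dist(\zeta,E)$ for every $\zeta$, hence $\chi_{K_{E_n}^\alpha}(z)\to\chi_{K_E^\alpha}(z)$ at every $z$ with $1-|z|\ne\dist(\hat z,E)^\alpha$, i.e.\ Lebesgue‑a.e.\ on $\mathbb{D}$ (using that $E$ has zero measure). On any truncation $\{1-|z|\ge\delta\}$ the density $\frac1{1-|z|}$ is bounded, so dominated convergence gives $\int_{K_{E_n}^\alpha\cap\{1-|z|\ge\delta\}}\frac{|dz|^2}{1-|z|}\to\int_{K_E^\alpha\cap\{1-|z|\ge\delta\}}\frac{|dz|^2}{1-|z|}$. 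Everything thus reduces to a uniform tightness statement near the circle: $\sup_n\int_{K_{E_n}^\alpha\cap\{1-|z|<\delta\}}\frac{|dz|^2}{1-|z|}\to0$ as $\delta\to0$.

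This tightness is the heart of the matter and is where the hypothesis $\|E_n\|_{\BC}\to\|E\|_{\BC}$ — rather than plain Hausdorff convergence — is used. Setting $\eta=\delta^{1/\alpha}$, the radial computation localizes the tail to $\asymp_\alpha\int_{\{\dist(\cdot,E_n)<\eta\}}\log\frac{\eta}{\dist(\zeta,E_n)}\,|d\zeta|$, which I would split over the complementary arcs of $E_n$ by length. The arcs of length $<2\eta$ contribute $\lesssim_\alpha\|E_n\|_{\BC_{2\eta}}$; writing $\|E\|_{\BC,\ge\eta}$ for the finite sum over arcs of length $\ge\eta$, Hausdorff convergence makes the long arcs of $E_n$ converge to those of $E$, so for all but countably many $\eta$ one has $\|E_n\|_{\BC,\ge2\eta}\to\|E\|_{\BC,\ge2\eta}$, hence $\|E_n\|_{\BC_{2\eta}}\to\|E\|_{\BC_{2\eta}}\to0$ as $\eta\to0$, and after shrinking $\eta$ to absorb the finitely many initial $n$ this is uniform. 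The arcs of length $\ge2\eta$ contribute $\lesssim_\alpha\eta\cdot\#\{\text{arcs of }E_n\text{ of length}\ge2\eta\}$, and the number of such arcs is controlled by the norm: each arc of length in $(2\eta,1/2)$ contributes at least $\eta\log\frac1{2\eta}$ to $\|E_n\|_{\BC}$, so there are at most $\|E_n\|_{\BC}/(\eta\log\frac1\eta)+O(1)$ of them, whence the long‑arc contribution is $\lesssim_\alpha(\sup_n\|E_n\|_{\BC})/\log\frac1\eta\to0$ uniformly. Combining the two bounds with the a.e.\ convergence of indicators proves \eqref{eq:jen1b}.

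The step I expect to be the real obstacle is precisely this tightness estimate: one must control simultaneously the ``diffuse'' part of the entropy carried by short arcs (ruled out uniformly by the norm convergence) and the ``number of peaks'' coming from long arcs (controlled by the uniform norm bound), and neither control alone suffices. Hausdorff convergence by itself is certainly not enough, since a fixed amount of entropy could be trapped in shrinking arcs accumulating at a point — exactly the scenario in which \eqref{eq:entropy-drops} is strict.
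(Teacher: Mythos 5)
The paper itself offers no proof of this lemma (``We leave the verification to the reader''), so there is nothing to compare against; on its own merits, your argument is correct and complete. The reduction to $\int_{\mathbb{S}^1}\log^+\frac{1}{\dist(\zeta,E)}\,|d\zeta|$ by radial integration, the per-arc computation giving $|I_k|\log\frac{1}{|I_k|}$ for short arcs, and especially the tightness estimate for \eqref{eq:jen1b} --- splitting the tail into a $\|E_n\|_{\BC_{2\eta}}$ contribution from arcs shorter than $2\eta$ and a $\|E_n\|_{\BC}/\log\frac{1}{\eta}$ contribution from the count of longer arcs --- are exactly what is needed, and you correctly identify that full Korenblum convergence (not just Hausdorff) is what makes the first piece go to zero uniformly. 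Your parenthetical about the ``normalization under which $\|\cdot\|_{\BC}$ is bounded below'' is the right caveat: with the circle of circumference $2\pi$, $\|E\|_{\BC}$ can be small or negative when $E$ is a near-singleton, while the left side of \eqref{eq:jen1a} always has a positive lower bound coming from $B(0,1/\sqrt2)$ (or, in the generalized star, from the $O(1)$ radial contribution where $\dist(\zeta,E)\asymp 1$), so \eqref{eq:jen1a} should be read up to a bounded additive error. One small point worth making explicit when you invoke $\|E_n\|_{\BC_{2\eta}}\to\|E\|_{\BC_{2\eta}}$: the monotonicity of $\eta\mapsto\|\cdot\|_{\BC_\eta}$ is what lets you first fix a good $\eta_0$ for $E$ and all large $n$, and then shrink to a common $\eta\le\eta_0$ to absorb the finitely many small $n$; you say this, but it deserves a line.
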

We leave the verification to the reader.

\begin{lemma}
\label{jen2}
 Suppose $u$ is a nearly-maximal solution of the Gauss curvature equation and $\omega \in M_{\BC}(\overline{\mathbb{D}})$. Then, $u = u_\omega$ if and only if
\begin{equation}
\label{eq:jen-sub3}
(u_{\mathbb{D}} - u) (z) = \log \frac{1}{|I_{\omega}(z)|} - \frac{1}{2\pi} \int_{\mathbb{D}} J_u(z,w) \, |dw|^2.\end{equation}
where
$$
J_u(z,w) = \Bigl (e^{2u_{\mathbb{D}}}(w) - e^{2u}(w)\Bigr) \, G(z,w).
$$
\end{lemma}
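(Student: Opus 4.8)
The plan is to extract both implications from the Riesz decomposition of a single subharmonic function on the disk. Introduce
\[
h \; := \; u_{\mathbb{D}} \, - \, u \, - \, \log\frac{1}{|I_\omega|}.
\]
First I would compute its distributional Laplacian: since $\Delta u_{\mathbb{D}} = 4e^{2u_{\mathbb{D}}}$, $\Delta \log\frac{1}{|B_\nu|} = -2\pi\tilde\nu$, and $\log|S_\mu|$ is harmonic on $\mathbb{D}$, one gets $\Delta h = 4e^{2u_{\mathbb{D}}} - \Delta u + 2\pi\tilde\nu$. Whenever $u$ solves the Gauss curvature equation with singular part $2\pi\tilde\nu$ — in particular when $u = u_\omega$ — this collapses to $\Delta h = 4\bigl(e^{2u_{\mathbb{D}}} - e^{2u}\bigr) \ge 0$, so $h$ is subharmonic, and $h \le 0$ on $\mathbb{D}$ because $u \le u_{\mathbb{D}}$. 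Moreover, by Lemma \ref{boundary-values} the measures $\log\frac{1}{|I_\omega|}\,\frac{d\theta}{2\pi}$ on the circles $|z| = r$ tend weakly to $\mu$, while $u_{\mathbb{D}} - u$ has deficiency $\mu$ on $\mathbb{S}^1$; hence the circle averages of $h$ tend weakly to the zero measure as $r \to 1$ — this is exactly the observation already used in the proof of Theorem \ref{solutions-gce}.

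For the forward implication I would take $u = u_\omega$ and argue that the least harmonic majorant of $h$ vanishes identically: the Poisson circle averages of the subharmonic function $h$ increase in $r$ and, by the weak convergence above, tend pointwise to $0$. The Riesz representation theorem on the disk then expresses $h$ as minus the Green potential of $\tfrac{1}{2\pi}\Delta h$, and this potential is automatically finite since it equals $-h$, a genuine real-valued function. Rearranging the resulting identity and recalling that $J_u(z,w) = \bigl(e^{2u_{\mathbb{D}}}(w) - e^{2u}(w)\bigr)\,G(z,w)$ is precisely \eqref{eq:jen-sub3}, up to the normalization constant in front of the integral (with $G(z,0) = \log\frac{1}{|z|}$ the bookkeeping has to be matched to the constant chosen in the statement). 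An alternative route to the same identity is to unwind the Perron hull in the fundamental formula \eqref{eq:fund2}.

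For the reverse implication, suppose $u$ is a nearly-maximal solution satisfying \eqref{eq:jen-sub3}. Let $2\pi\tilde\nu'$ be the singular part of $\Delta u$, let $\mu'$ be the deficiency of $u$ on $\mathbb{S}^1$, and set $\omega' = \mu' + \nu'$ as in \eqref{eq:power-combine}. Since a nearly-maximal solution with data $\omega'$ exists, Theorem \ref{solutions-gce} forces $\omega' \in M_{\BC}(\overline{\mathbb{D}})$ and $u = u_{\omega'}$. Applying the forward implication to $u_{\omega'}$ gives $(u_{\mathbb{D}} - u)(z) = \log\frac{1}{|I_{\omega'}(z)|} - \tfrac{1}{2\pi}\int_{\mathbb{D}} J_u(z,w)\,|dw|^2$. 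Subtracting this from the hypothesis \eqref{eq:jen-sub3} — whose potential term is identical, as $J_u$ depends only on $u$, $u_{\mathbb{D}}$ and $G$ — yields $|I_{\omega'}| \equiv |I_\omega|$ on $\mathbb{D}$. Since a generalized inner function is determined up to a unimodular constant by its modulus (the Riesz mass of $\log|I|$ recovers $\tilde\nu$, and the weak boundary limit of $\log\frac{1}{|I|}\,\frac{d\theta}{2\pi}$ recovers $\mu$), this forces $\omega' = \omega$, hence $u = u_\omega$.

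The computation is short, and I expect the only genuinely delicate step to be the vanishing of the least harmonic majorant of $h$, which I would pin down using the monotonicity of Poisson circle averages of subharmonic functions together with the weak boundary convergence established in the first paragraph. The remaining points are routine: keeping the constant in the Green representation consistent with the normalization of $J_u$ chosen in \eqref{eq:jen-sub3}, and noting that no a priori integrability hypothesis on the potential is needed, since it is pinned to $-h$ modulo a harmonic term.
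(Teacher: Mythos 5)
Your argument is correct and follows essentially the same route as the paper's one-line proof: applying the Poisson--Jensen / Riesz decomposition to the subharmonic function $h = u_{\mathbb{D}} - u - \log\frac{1}{|I_\omega|}$, with the harmonic part annihilated by the weak boundary convergence from Lemma \ref{boundary-values}, and reading off the converse from the uniqueness in Theorem \ref{solutions-gce}. Your remark about the normalization is also on point: since $\Delta u_{\mathbb{D}} = 4e^{2u_{\mathbb{D}}}$, one gets $\Delta h = 4\bigl(e^{2u_{\mathbb{D}}} - e^{2u}\bigr)$, so a factor of $4$ must be absorbed either into $J_u$ or into the constant in front of the Green potential.
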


The lemma follows after applying the  Poisson-Jensen formula for subharmonic functions on $\mathbb{D}_r$ and taking $r \to 1$.

\begin{lemma}
\label{point-to-weak}
Suppose  the functions $h_n: \mathbb{D} \to \mathbb{R}$ converge pointwise to $h$ and are locally uniformly bounded. Then, they converge weakly to $h$, that is, for any test function $\phi \in C_c^\infty(\mathbb{D})$, $\int_{\mathbb{D}} h_n \phi \, |dz|^2 \to \int_{\mathbb{D}} h \phi \, |dz|^2.$
\end{lemma}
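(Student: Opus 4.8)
The statement is an instance of the dominated convergence theorem, and the plan is simply to produce the dominating function. Fix a test function $\phi \in C_c^\infty(\mathbb{D})$ and let $K = \supp \phi$, a compact subset of $\mathbb{D}$. By the hypothesis of local uniform boundedness, there is a constant $M = M_K$ with $|h_n(z)| \le M$ for every $z \in K$ and every $n$; in particular $|h(z)| \le M$ on $K$ as well, since $h_n(z) \to h(z)$ pointwise.

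Next I would observe that the integrands $h_n \phi$ are supported in $K$ and satisfy the pointwise bound $|h_n(z)\phi(z)| \le M \, \|\phi\|_{L^\infty(\mathbb{D})} \, \chi_K(z)$, and the right-hand side is an integrable function on $\mathbb{D}$ because $K$ has finite area. Since $h_n \phi \to h\phi$ pointwise on $\mathbb{D}$ (trivially off $K$, and by the assumed pointwise convergence on $K$), the dominated convergence theorem applies and yields
\[
\int_{\mathbb{D}} h_n \phi \, |dz|^2 \;\longrightarrow\; \int_{\mathbb{D}} h \phi \, |dz|^2,
\]
which is precisely the asserted weak convergence. (One should also record, for later use, that the limit $h$ is itself locally bounded and measurable as a pointwise limit of measurable functions, so the right-hand integral makes sense.)

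There is no real obstacle here: the only point worth stating carefully is that ``locally uniformly bounded'' is exactly the condition that furnishes, for each test function, a fixed integrable majorant on the support of that test function, which is all that dominated convergence requires.
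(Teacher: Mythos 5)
Your argument is correct and is exactly the one the paper has in mind: the paper simply remarks that the lemma ``is immediate from the dominated convergence theorem,'' and you have spelled out the dominating function and the pointwise convergence that make that remark precise.
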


This is immediate from the dominated convergence theorem.

\begin{proof}[Proof of Theorem \ref{concentrating-thm4}]
 To prove the theorem, we show that if a sequence of solutions $u_n = u_{\omega_n}$ converges weakly to a solution $u$, then $u = u_\omega$ where $\omega$ is the weak limit of the $\omega_n$.
The reduction described in the proof of Theorem \ref{concentrating-thm} allows us to assume that each measure ${\omega_n}$ is supported on a Korenblum star $K_{E_n}$ with $E_n \to E$ in $\BC$.
The strategy is rather straightforward.
By Lemma \ref{jen2}, for each $n = 1,2, \dots$, we know that
\begin{equation}
\label{eq:jen-sub}
u_{\mathbb{D}}(z) - u_{n} (z) - \log \frac{1}{|I_{\omega_n}(z)|} = - \frac{1}{2\pi} \int_{\mathbb{D}}  J_{u_n}(z,w) \,  |dw|^2.
\end{equation}
We claim that if we take the weak limit of (\ref{eq:jen-sub}) as $n \to \infty$, we will end up with
\begin{equation}
\label{eq:jen-sub2}
u_{\mathbb{D}}(z) - u(z) - \log \frac{1}{|I_{\omega}(z)|} = -  \frac{1}{2\pi} \int_{\mathbb{D}} J_u(z,w) \,  |dw|^2,
\end{equation}
which would mean that $u = u_\omega$.

By assumption, the $u_n$ converge weakly to $u$, while by Lemma \ref{weak-log}, $\log\frac{1}{|I_{\omega_n}(z)|}$ converge weakly to $\log\frac{1}{|I_{\omega}(z)|}$. It remains to show that
\begin{equation}
\label{eq:j-convergence}
\frac{1}{2\pi} \int_{\mathbb{D}}  J_{u_n}(z,w) \,  |dw|^2 \to \frac{1}{2\pi} \int_{\mathbb{D}}  J_{u}(z,w) \,  |dw|^2
\end{equation}
also converge weakly.
For this purpose, we will use the following bounds on the integrands $J_n(z,w) = J_{u_n}(z,w)$:
 \begin{itemize}
 \item
 If $d_{\mathbb{D}}(w, z) \le 1$, we use the bound $J_n(z,w) \le C_1(z) \cdot G(z,w)$. Note that the singularity of the Green's function is integrable.
 \item
For $w \in K^2_{E_n}$ with $d_{\mathbb{D}}(w, z) > 1$, we use the coarse estimate
$$
J_n(z, w) \le e^{2u_{\mathbb{D}}}(w) \, G(z, w) \le C_2(z) \cdot \frac{1}{1-|w|}.
$$
\item
For $w \in \mathbb{D} \setminus K_{E_n}^2$ with $d_{\mathbb{D}}(w, z) > 1$, we use the fine estimate
\begin{align*}
J_n(z, w) & \le C_2(z) \cdot \frac{1}{1-|w|} \cdot \bigl (1 - e^{-2(u_{\mathbb{D}} - u_n)}(w) \bigr ) \\
 & \le C_3(z) \cdot \frac{1}{1-|w|} \cdot \log \frac{1}{|I_{\omega_n}(w)|} \\
  & \le M C_3(z) \cdot \frac{1}{1-|w|} \cdot \exp\bigl(-d_{\mathbb{D}}(w,K_{E_n}^2)\bigr),
\end{align*}
where $M = \sup_{n \ge 1} \omega_n(\overline{\mathbb{D}})$. The second inequality follows from (\ref{eq:fund2}) while the third inequality is provided by Lemma \ref{gammaF-estimate2} (which holds for generalized Blaschke products).
\end{itemize}
In view of the ``area convergence'' (\ref{eq:jen1b}), the second estimate on $J_n$ and the weak convergence of $u_n \to u$ shows
$$
\frac{1}{2\pi} \int_{K_{E_n}^2}  J_{u_n}(z,w) \,  |dw|^2 \, \to \, \frac{1}{2\pi} \int_{K_E^2}  J_{u}(z,w) \,  |dw|^2.
$$
For $0 < \theta_1 < \theta_2 \le 1$, let
$K^2_E(\theta_1, \theta_2)$ denote
 $K^2_E(\theta_2) \setminus K^2_E(\theta_1)$.
A similar argument implies that
$$
\frac{1}{2\pi} \int_{K_{E_n}^2(e^{-(k+1)}, e^{-k})}  J_{u_n}(z,w) \,  |dw|^2 \, \to \, \frac{1}{2\pi} \int_{K_E^2(e^{-(k+1)},e^{-k})}  J_{u}(z,w) \,  |dw|^2,
$$
for any $k \ge 0$.
 However, by the third estimate on $J_n$, these integrals decay exponentially in $k$, which proves the pointwise
convergence in (\ref{eq:j-convergence}).

 Since $C_1(z), C_2(z), C_3(z)$ can be taken to be continuous in $z \in \mathbb{D}$, the functions
$z \to \frac{1}{2\pi} \int_{\mathbb{D}}  J_{u_n}(z,w) \, |dw|^2$ are locally uniformly bounded, which allows us to use
Lemma \ref{point-to-weak} to upgrade pointwise convergence to weak convergence.
This completes the proof.
\end{proof}

\section{Diffuse  sequences}
\label{sec:equidiffuse}

We now turn our attention to diffuse sequences. We show:

\begin{theorem}
\label{equidiffuse-thm}
For any totally diffuse sequence of measures $\{\mu_n\} \subset M_{\BC}(\overline{\mathbb{D}})$ whose
 masses
$\mu_n(\overline{\mathbb{D}}) < M$ are uniformly bounded above, the associated nearly-maximal solutions of the Gauss curvature equation $u_{\mu_n}$ converge weakly to   $u_{\mathbb{D}}$.
\end{theorem}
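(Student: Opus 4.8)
The plan is to combine a normal-families reduction with the Poisson--Jensen identity of Lemma \ref{jen2}. First I would observe that $\{u_{\mu_n}\}$ is weakly precompact: by (\ref{eq:fund2}) one has $0 \le u_{\mathbb{D}} - u_{\mu_n} \le \log\frac{1}{|I_{\mu_n}|}$, and the right-hand side is locally uniformly bounded once $\mu_n(\overline{\mathbb{D}}) < M$ (Harnack for the Poisson part, Green-function bounds for the Blaschke part). So it suffices to show that every weak subsequential limit of $u_{\mu_n}$ is $u_{\mathbb{D}}$. Fix such a subsequence, with $u_{\mu_n} \to u$ and $\mu_n \to \mu$ weakly in $M(\overline{\mathbb{D}})$. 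I would first check $\tilde\nu_n \to 0$ weakly on the open disk: for each $r<1$, $B(0,r)$ lies in the Korenblum star of an $(1-r)$-net $E_r\subset\mathbb{S}^1$ with $\|E_r\|_{\BC}=O(\log\frac1{1-r})$, so $(1-r)\tilde\nu_n(B(0,r)) \le \nu_n(B(0,r)) \le \mu_n(K_{E_r}) \to 0$ by total diffuseness. Hence $u$ solves $\Delta u=4e^{2u}$ with no singular part (Lemma \ref{sol-sequences}), and $g:=u_{\mathbb{D}}-u\ge 0$; passing to the limit in $u_{\mathbb{D}}-u_{\mu_n}\le\log\frac1{|I_{\mu_n}|}$ via Lemma \ref{weak-log} shows $g$ is bounded above by a potential with finite boundary measure, so $g$ has a well-defined boundary deficiency $\tau$. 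If $\tau=0$, then $g$ is a nonnegative subharmonic function dominated by a potential and with zero boundary measure, hence $g\equiv0$ and $u=u_{\mathbb{D}}$; and by the uniqueness statement of Theorem \ref{solutions-gce}, the only remaining possibility is $\tau\ne0$, $\tau\in M_{\BC}(\mathbb{S}^1)$, $u=u_\tau$. So the whole statement reduces to ruling this out.

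To rule out $\tau\ne0$ I would return to the identity of Lemma \ref{jen2}, written for $u_{\mu_n}$ as
\begin{equation*}
g_n(z) \;=\; \log\frac{1}{|I_{\mu_n}(z)|} \;-\; \frac{1}{2\pi}\int_{\mathbb{D}} e^{2u_{\mathbb{D}}}(w)\bigl(1-e^{-2g_n(w)}\bigr)G(z,w)\,|dw|^2 \;\ge\; 0 ,
\end{equation*}
where $g_n=u_{\mathbb{D}}-u_{\mu_n}$. Since the ``defect integral'' is automatically $\le\log\frac1{|I_{\mu_n}(z)|}$, the task is to show it is in fact $\ge \log\frac1{|I_{\mu_n}(z)|}-o(1)$ at each fixed $z$, which forces $g_n(z)\to0$. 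The point is that $g_n$ is comparable to its majorant $\log\frac1{|I_{\mu_n}|}$ only in hyperbolically bounded neighborhoods of the mass of $\mu_n$, and for a totally diffuse sequence this mass sits at scales so fine that the defect integral --- which weights by the boundary-concentrated measure $e^{2u_{\mathbb{D}}}(w)G(z,w)\,|dw|^2$ --- recovers a full copy of $\log\frac1{|I_{\mu_n}(z)|}$ from these shrinking neighborhoods. I would make this precise by decomposing $\mu_n$ into pieces supported in Carleson boxes at the local scale, estimating for each piece its contribution to the defect integral from below by its contribution to $\log\frac1{|I_{\mu_n}(z)|}$, up to an error governed by that piece's ``Korenblum content.'' Total diffuseness makes the sum of these errors $o(1)$; contrapositively, if the errors did not sum to $o(1)$, then some Korenblum star $K_{E_n}$ with $\sup_n\|E_n\|_{\BC}<\infty$ would carry $\mu_n$-mass bounded below, contradicting total diffuseness (here one uses that higher-order stars $K^{\alpha}_E(\theta)$ are contained in ordinary Korenblum stars of comparable norm, cf.~Lemma \ref{jen1}).

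The step I expect to be the real obstacle is precisely this quantitative ``chunk-by-chunk recapture'': one must control $\log\frac1{|I|}$ of each piece of $\mu_n$ at the point $z$ and relate it to the corresponding contribution to the defect integral. I would organize the argument by a dyadic decomposition of $\mu_n$ according to hyperbolic distance to a fixed small boundary scale near $\supp\tau$, bound $\log\frac1{|I|}$ of each dyadic piece by the exponential decay of the Green's function as in the proof of Lemma \ref{gammaF-estimate2}, estimate the weighted areas involved using Lemma \ref{jen1}, and sum the resulting geometric series --- exactly the mechanism of the three-bullet estimate in the proof of Theorem \ref{concentrating-thm4}, with the uniform mass bound $\mu_n(\overline{\mathbb{D}})<M$ controlling the series. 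Once $g_n(z)\to0$ for every $z\in\mathbb{D}$, the local uniform boundedness noted above together with Lemma \ref{point-to-weak} upgrades this to $u_{\mu_n}\to u_{\mathbb{D}}$ weakly, which is the assertion of the theorem.
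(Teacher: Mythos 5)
Your preliminary reductions are sound: the weak precompactness of $\{u_{\mu_n}\}$, the $(1-r)$-net argument showing $\tilde\nu_n \to 0$ weakly on the open disk (so every subsequential limit $u$ solves $\Delta u = 4e^{2u}$ with no singular part), and the reduction to ruling out a nonzero boundary deficiency $\tau$. But the core of the proof --- what you call the ``chunk-by-chunk recapture'' --- is not carried out, and in the form you describe I do not see how it can close. The Poisson--Jensen identity
$g_n = \log\frac{1}{|I_{\mu_n}|} - \frac{1}{2\pi}\int e^{2u_{\mathbb D}}(1 - e^{-2g_n})G(z,\cdot)\,|dw|^2$
is a fixed-point relation in $g_n$, and without an external structural input it is consistent both with $g_n$ small (defect integral small, so $g_n \approx \log\frac{1}{|I_{\mu_n}|}$, which need not vanish) and with $g_n$ large. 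Nothing in the identity alone forces the recapture, and your proposal to organize a dyadic decomposition ``near $\supp\tau$'' is circular, since $\tau$ is the unknown subsequential quantity you are trying to show vanishes. You also invoke the three-bullet estimate from the proof of Theorem \ref{concentrating-thm4}, but that estimate is driven by the hypothesis that $\mu_n$ is supported in a Korenblum star $K_{E_n}$ with $E_n$ convergent in $\BC$ --- exactly the structure a totally diffuse sequence lacks.

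The missing external input is the Roberts decomposition (Theorem \ref{roberts-thm}): it splits $\mu_n$ into a cone part $\nu_{\cone}$ supported in a Korenblum star of norm $\le N(c, j_0, M)$ --- whose mass is small precisely \emph{because} the sequence is totally diffuse --- plus pieces $\mu_j$ living at the dyadic scales $n_j = 2^{2^{j+j_0}}$, each satisfying the Carleson-box bound (\ref{eq:second-estimate}). The quantitative recapture is then Lemma \ref{comparison}, which upgrades the box bound to the pointwise estimate (\ref{eq:third-estimate}), $|I_{\mu_j}(z)| > (1-|z|^2)^{-1/10}$ on the smaller disk $\mathbb{D}_{r_{j-2}}$: the potential of each piece is at most a small fraction of $u_{\mathbb{D}}$, so the nested Perron hulls in Theorem \ref{equidiffuse-thm2} recover, step by step, the loss incurred by subtracting $\log\frac{1}{|I_{\mu_j}|}$. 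Your sketch contains neither a decomposition adapted to these scales nor a substitute for the comparison lemma; supplying one would essentially mean re-deriving Roberts' construction, which is the real content of the theorem.
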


The proof is similar to that of \cite[Theorem 1.10]{inner}, but requires a slightly more intricate argument since we need to decompose measures supported on the closed unit disk.

 For an arc $I \subset \mathbb{S}^1$ of the unit circle, we write  $$\square_{I, r, R} := \{z \in \overline{\mathbb{D}}: z/|z| \in I, \, r \le |z| \le R \},$$ with the convention that we include the left edge into $I_{r,R}$ but not the right edge.

\subsection{Roberts decompositions}
\label{sec:roberts}

Similarly to the original Roberts decomposition for measures supported on the unit circle \cite{roberts}, our decomposition will depend on two parameters: a real number $c > 0$  and an integer $j_0 \ge 1$. Set $n_j := 2^{2^{(j+j_0)}}$ and $r_j := 1 - 1/n_j$.

\begin{theorem}
\label{roberts-thm}
Given a finite measure $\mu \in M(\overline{\mathbb{D}})$
on the closed unit disk, one can write it as
\begin{equation}
\label{eq:roberts}
\mu = (\mu_2 + \mu_3 + \mu_4 + \dots) + \nu_{\cone}
\end{equation}
where each measure  $\mu_j$, $j \ge 2$, satisfies
\begin{equation}
\label{eq:first-estimate}
\supp \mu_j \subset \square_{\mathbb{S}^1,r_{j-1},1}
\end{equation}
 and
\begin{equation}
\label{eq:second-estimate}
\mu_j(\square_{I, r_{j-1},1}) \le 2(c/n_j) \log n_j, \qquad \forall I \subset \mathbb{S}^1, \quad |I| = 2\pi/n_j;
\end{equation}
while the cone measure $\nu_{\cone}$ is supported on a Korenblum star $K_{E_{\cone}}$ of norm $\|E_{\cone}\|_{\BC} \le N \bigl (c,j_0, \mu(\overline{\mathbb{D}}) \bigr )$.
\end{theorem}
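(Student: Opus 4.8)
The plan is to run the classical Roberts decomposition scale by scale, using the radial cutoffs $r_j = 1-1/n_j$ as a clock: any clump of mass that refuses to spread out over arcs of length $2\pi/n_j$ is either absorbed into some $\mu_k$ or pushed outward toward $\mathbb{S}^1$, where the ordinary one–variable Roberts argument finishes the job. First I would put $\mu|_{B(0,r_1)}$ directly into $\nu_{\cone}$. Since $K_E \supseteq B(0,1/\sqrt2)$ for every $E$, the only thing to check is that the shell $\{1/\sqrt2 \le |z| < r_1\}$ lies in $K_{E_{\cone}}$, and this holds as soon as $E_{\cone}$ is $(1/n_1)$-dense in $\mathbb{S}^1$, at an entropy cost of $\asymp \log n_1 \asymp 2^{j_0}$.

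For $j = 2,3,\dots$ I would carry a measure $\mu^{(j)}$, starting from $\mu^{(2)} = \mu|_{\{|z|\ge r_1\}}$, maintained to be supported on $\{|z|\ge r_{j-1}\}$. Partition $\mathbb{S}^1$ into the $n_j$ arcs of length $2\pi/n_j$ and call an arc $I$ \emph{good} if $\mu^{(j)}(\square_{I,r_{j-1},1}) \le (c/n_j)\log n_j$ and \emph{bad} otherwise. On a good arc put the whole column $\mu^{(j)}|_{\square_{I,r_{j-1},1}}$ into $\mu_j$; on a bad arc put the annular slice $\mu^{(j)}|_{\square_{I,r_{j-1},r_j}}$ into $\nu_{\cone}$, put a sub-measure of the cap $\mu^{(j)}|_{\square_{I,r_j,1}}$ of total mass $\min\{(c/n_j)\log n_j,\ \mu^{(j)}(\square_{I,r_j,1})\}$ into $\mu_j$, and carry the rest of the cap to $\mu^{(j+1)}$. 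Writing $B_j$ for the union of the bad arcs, the carried mass lives on $\square_{B_j,r_j,1}\subseteq\{|z|\ge r_j\}$, so the induction continues; interior mass is retired after finitely many stages, so only boundary mass survives to the limit, and that limit is a measure on $\bigcap_j B_j\subseteq\mathbb{S}^1$, which is also swept into $\nu_{\cone}$. Properties (\ref{eq:first-estimate}) and (\ref{eq:second-estimate}) are then immediate: every contribution to $\mu_j$ comes from radius $\ge r_{j-1}$, and $\mu_j$ carries mass $\le (c/n_j)\log n_j$ over any one partition arc, hence $\le 2(c/n_j)\log n_j$ over an arbitrary arc of that length, which meets at most two partition arcs.

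For the cone part I would take $E_{\cone}$ to be the closure of $P_1 \cup \bigcup_{j\ge2} P_j$, where $P_1$ is a $(1/n_1)$-net of $\mathbb{S}^1$ and, for $j\ge 2$, $P_j$ is a $(1/n_j)$-dense finite subset of $B_j$ — say the division points of each bad arc into a bounded number of equal pieces. Since $1-|z| > 1/n_j$ throughout $\square_{B_j,r_{j-1},r_j}$ and $\bigcap_j B_j \subseteq E_{\cone}$, monotonicity of $E\mapsto K_E$ gives $\supp\nu_{\cone}\subseteq K_{E_{\cone}}$. To bound $\|E_{\cone}\|_{\BC}$ I would use Lemma \ref{jen1}: $\|E_{\cone}\|_{\BC}\lesssim\int_{K_{E_{\cone}}}|dz|^2/(1-|z|)$, which in polar coordinates, after peeling off the constant from $B(0,1/\sqrt2)$, reduces to $\int_0^1 |(E_{\cone})_t|\,dt/t$ with $(E_{\cone})_t$ the $t$-neighbourhood of $E_{\cone}$. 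For $t\asymp 1/n_j$ the nestedness $B_{j+1}\subseteq B_j$ collapses all the finer nets $\bigcup_{k\ge j}(P_k)_t$ into a neighbourhood of $B_j$ of radius comparable to $1/n_j$, while the square-growth $n_{j+1}=n_j^2$ renders the coarser nets' contribution negligible, so $|(E_{\cone})_t|\lesssim N_j/n_j$ where $N_j$ is the number of bad arcs at stage $j$. Integrating,
\[
\|E_{\cone}\|_{\BC} \;\lesssim\; \log n_1 + \sum_{j\ge2}\frac{N_j}{n_j}\,\log\!\frac{n_{j+1}}{n_j} \;=\; \log n_1 + \sum_{j\ge2}\frac{N_j}{n_j}\log n_j,
\]
and the last sum converges because each bad arc at stage $j$ permanently removes at least $(c/n_j)\log n_j$ of mass (as a deposit plus an annular slice, or as a whole light column), so $\sum_j N_j(c/n_j)\log n_j \le \mu(\overline{\mathbb{D}})$; hence $\|E_{\cone}\|_{\BC}\lesssim 2^{j_0} + \mu(\overline{\mathbb{D}})/c =: N(c,j_0,\mu(\overline{\mathbb{D}}))$.

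The main obstacle is exactly this last estimate: $E_{\cone}$ must be fine enough to contain a $(1/n_j)$-net of the bad set at \emph{every} scale — so that every annular slice of $\nu_{\cone}$ falls inside $K_{E_{\cone}}$ — yet coarse enough that its entropy telescopes into a convergent series. The radial clock is what reconciles the two: the bad sets are nested and the scales square, so a net at scale $j$ only has to resolve $B_j$, and the inequality $\sum_j N_j(c/n_j)\log n_j \le \mu(\overline{\mathbb{D}})$ says precisely that these nets cannot all be expensive at once. Beyond that, one must check the routine but slightly fiddly points that the various restrictions and limits of the $\mu^{(j)}$ reassemble to $\mu$ and that the surviving mass really is boundary mass on $\bigcap_j B_j$; both follow from the fact that $\mu^{(j)}|_{\mathbb{D}}$ is supported on $\{|z|\ge r_{j-1}\}$ and therefore tends to $0$.
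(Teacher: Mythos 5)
Your decomposition algorithm and the paper's are the same in spirit: process the measure scale by scale using the radial shells $[r_{j-1},r_j]$ as a clock, dump the light columns into $\mu_j$, and route whatever concentrates over the heavy (bad) arcs into $\nu_{\cone}$, with the mass bookkeeping $\sum_j N_j (c/n_j)\log n_j \le \mu(\overline{\mathbb{D}})$ driving the entropy estimate in both. There are, however, two genuine points of departure, both correct and arguably cleaner than the paper's. First, you treat every bad column uniformly — the annular slice always goes to $\nu_{\cone}$ and at most $(c/n_j)\log n_j$ of the cap goes to $\mu_j$ — whereas the paper splits the heavy case into H1 (heavy slice, slice to $\nu_{\cone}$) and H2 (light slice, slice to $\mu_j$ with a top-up from the cap); the two-case split is unnecessary, since what ultimately pays for a $(1/n_j)$-fine net over a bad arc is the total column mass, not the slice mass alone, and your Case 2 computation makes this explicit. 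Second, and more substantively, the way you bound $\|E_{\cone}\|_{\BC}$ is different. The paper defines $E^*_{\cone}$ as the complement of the maximal light intervals, proves $\supp\nu_{\cone}|_{\mathbb{D}}\subset K_{E^*_{\cone}}(1/10)$, upgrades to a genuine star by planting eight auxiliary points per heavy arc to form $E_{\cone}$, and then bounds the entropy combinatorially via the subadditivity Lemma \ref{simple-entropy-lemma2} and the closure Lemma \ref{simple-normal-families}, arriving at \eqref{eq:roberts-estimate} and \eqref{eq:econe-estimate}. You instead take $E_{\cone}$ to be the closure of a union of $(1/n_j)$-nets of the bad sets and estimate the entropy by the area integral $\|E\|_{\BC}\asymp\int_0^1|E_t|\,dt/t$, i.e.\ Lemma \ref{jen1}. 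That route bypasses the $K_{E^*}(1/10)\to K_{E}$ upgrade entirely and is genuinely simpler conceptually; the price is a slightly delicate accounting of the coarser nets' contribution to $|(E_{\cone})_t|$, which you gloss as ``negligible'' but which actually contributes a term of size $O(\mu/c)$ rather than $o(N_j/n_j)$ pointwise — harmless for the conclusion, since it integrates against $dt/t$ over $[1/n_{j+1},1/n_j]$ to something summable in $j$ (using $\#P_k\lesssim N_k\lesssim \mu n_k/(c\log n_k)$ and $n_{j}=n_{j-1}^2$), but it should be said rather than waved at. Apart from that and the routine assembly check (the residual carried forward is a decreasing sequence of measures supported on $\overline{\square_{B_j,r_j,1}}$, hence converges to a measure on $\bigcap_j B_j\subset E_{\cone}$), your proof is complete and gives the same dependence $N(c,j_0,\mu(\overline{\mathbb{D}}))\lesssim 2^{j_0}+\mu(\overline{\mathbb{D}})/c$.
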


\begin{proof}
We obtain the decomposition by means of an algorithm which sorts out the mass of $\mu$ into various components.
 For each $j = 2, 3, \dots$, we consider  a partition $P_j$ of the unit circle into $n_j$ equal arcs. Since $n_j$ divides $n_{j+1}$, each next partition can be chosen to be a refinement of the previous one.

\medskip

As Step 1 of our algorithm, we move $\mu|_{B(0,r_1)}$ into $\nu_{\cone}$. (We remove this mass from $\mu$.)

In Step $j$, $j=2,3,\dots$, we consider all intervals in the partition $P_j$. Define an interval to be {\em light} if
$\mu(\square_{I, 0, 1}) \le (c/n_j) \log n_j$ and {\em heavy} otherwise. We do one of the following three operations:

\begin{itemize}
\item[L.] If $I$ is light, we move the mass $\mu |_{\square_{I,0,1}}$ into $\mu_j$.

\item[H1.] If $I$ is heavy, we look at the box $\square_{I, r_{j-1}, r_j}$. If $\mu(\square_{I, r_{j-1}, r_j}) \ge (c/n_j) \log n_j$, we move $\mu |_{\square_{I, r_{j-1}, r_j}}$ into $\nu_{\cone}$.

\item[H2.] If  $\mu(\square_{I, r_{j-1}, r_j}) < (c/n_j) \log n_j$, we move $\mu|_{\square_{I, r_{j-1}, r_j}}$ to $\mu_j$. We also move some mass from $\mu|_{\square_{I,r_j,1}}$ to $\mu_j$ so that
$ \mu_j(\square_{I,0,1}) = (c/n_j) \log n_j.$
\end{itemize}
After we followed the above instructions for  $j=2, 3, \dots$, it is possible that the measure $\mu$ has not been exhausted completely: some ``residual'' mass may remain on the unit circle. We move this remaining mass to  $\nu_{\cone}$.

\medskip

From the construction, it is clear that the conditions  (\ref{eq:first-estimate}) and  (\ref{eq:second-estimate}) are satisfied.
The  factor of 2 in (\ref{eq:second-estimate}) is due to the fact that any interval $I \subset \mathbb{S}^1$ of length $2\pi/n_j$ is contained in the union of two adjacent intervals from the partition $P_j$.

Let $\Lambda$ be the collection of light intervals (of any generation) which are maximal with respect to inclusion.
Define $E_{\cone}^* := \mathbb{S}^1 \setminus \bigcup_{I \in \Lambda} \interior I$ as the complement of the interiors of these intervals.  Since the measure $\nu_{\cone}|_{\mathbb{S}^1}$ is supported on the set of points which lie in heavy intervals at every stage, $\supp \nu_{\cone}|_{\mathbb{S}^1} \subset K_{E^*_{\cone}}$.
Observe that if $I$ is an interval of generation $j$,  the box $\square_{I, r_{j-1},r_j}$ is contained in the union of two non-standard Stolz angles emanating from the endpoints of $I$, with a sufficiently wide opening angle. If $I$ is heavy,  these endpoints are contained in $E^*_{\cone}$, from which we see that
$\supp \nu_{\cone}|_{\mathbb{D}}$ is contained in the generalized Korenblum star $K_{E^*_{\cone}}(1/10)$.

 To check that $E^*_{\cone}$ is a Beurling-Carleson set, we follow the computation from Roberts \cite{roberts}.
The relation $\log n_{j+1} = 2 \log n_j$ shows
\begin{equation}
\label{eq:roberts-estimate}
\sum_{I \in \Lambda} |I| \log \frac{1}{|I|} \, \lesssim \,
\sum_{I \in P_2} |I| \log \frac{1}{|I|} + \sum_{\text{heavy}} |J| \log \frac{1}{|J|} \, \lesssim \,
2^{j_0} + \mu(\overline{\mathbb{D}}),
\end{equation}
where we have used the fact that a maximal light interval of generation $j \ge 3$ is contained in a heavy interval of the previous generation.

In order to prove the theorem as stated, we must show that $\nu_{\cone}$ is contained in a genuine Korenblum star $K_{E_{\cone}}$ rather than $K_{E^*_{\cone}}(1/10)$. To achieve this, we note that a heavy box  $\square_{I, r_{j-1},r_j}$ is contained in the union of 10 genuine $90^\circ$ Stolz angles rather than 2 wide Stolz angles. This leads us to define
\begin{equation}
\label{eq:econe-def}
E_{\cone} \ = \ E_{\cone}^* \ \cup \ \bigcup_{[x,x+h] \text{ heavy}} \ \bigcup_{i=1}^8 \ \bigl \{ x + (i/10)h\bigr  \}.
\end{equation}
Using nesting properties of heavy intervals, it is not difficult to show that the set $E_{\cone}$ is closed.
By Lemmas \ref{simple-entropy-lemma2} and \ref{simple-normal-families} below,
\begin{equation}
\label{eq:econe-estimate}
\| E_{\cone}\|_{\BC} \le \| E^*_{\cone}\|_{\BC} + \sum_{\text{heavy}} |J| \log \frac{10}{|J|},
\end{equation}
which means that $E_{\cone}$ is also a Beurling-Carleson set.
\end{proof}

In the proof above, we made use of two elementary lemmas:

\begin{lemma}
\label{simple-entropy-lemma2}
Let $[a,b] \subset \mathbb{S}^1$ be an arc in the unit circle. For a finite subset $E \subset [a,b]$, let $\mathcal I(E)$ be the collection of intervals that make up the complement $[a,b] \setminus E$.
Given two finite subsets $F_1, F_2 \subset [a,b]$,
$$
\sum_{I \in \mathcal I(F_1 \cup F_2)} |I| \log \frac{1}{|I|} \  \le \  \sum_{I \in \mathcal I(F_1)} |I| \log \frac{1}{|I|}+
\sum_{I \in \mathcal I(F_2)} |I| \log \frac{1}{|I|}.
$$
\end{lemma}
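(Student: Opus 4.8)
The plan is to prove the subadditivity of the Beurling--Carleson entropy functional $E \mapsto \sum_{I \in \mathcal I(E)} |I| \log \frac{1}{|I|}$ under taking unions of finite subsets of a fixed arc $[a,b]$. First I would reduce to the case where $F_2$ consists of a single additional point: if I can show
$$
\sum_{I \in \mathcal I(F \cup \{p\})} |I| \log \frac{1}{|I|} \ \le \ \sum_{I \in \mathcal I(F)} |I| \log \frac{1}{|I|}
$$
for any finite $F \subset [a,b]$ and any point $p \in [a,b]$, then the general inequality follows by adding the points of $F_2$ one at a time and iterating (the right-hand side only grows as we add the points of $F_1$'s contribution is absorbed). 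Actually, more carefully: starting from $F_1$, insert the points of $F_2 \setminus F_1$ one at a time; each insertion does not increase the entropy, so the entropy of $F_1 \cup F_2$ is at most that of $F_1$, and symmetrically adding the contribution from $F_2$ gives the stated two-term bound — but the clean route is just: entropy is monotone under adding points, so $\sum_{\mathcal I(F_1 \cup F_2)} \le \sum_{\mathcal I(F_1)} \le \sum_{\mathcal I(F_1)} + \sum_{\mathcal I(F_2)}$, since all summands are nonnegative (each $|I| \le |b-a|$, and for the logarithm to be nonnegative we may harmlessly assume $|b-a| \le 1$, or note the sum is bounded below trivially and the monotone step is what matters).

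So the heart of the matter is the one-point insertion step, which amounts to the following elementary fact: if an interval $J = (\alpha, \beta)$ of length $\ell = \beta - \alpha$ is split by an interior point into two subintervals of lengths $s$ and $\ell - s$ with $0 < s < \ell$, then
$$
s \log \frac{1}{s} + (\ell - s)\log\frac{1}{\ell - s} \ \le \ \ell \log \frac{1}{\ell}.
$$
Equivalently, writing $\phi(t) = t\log\frac{1}{t}$ for $t > 0$ and $\phi(0) = 0$, I must show $\phi(s) + \phi(\ell-s) \le \phi(\ell)$. This is immediate from the identity
$$
\phi(s) + \phi(\ell - s) - \phi(\ell) \ = \ -\,s\log\frac{s}{\ell} \ -\ (\ell - s)\log\frac{\ell - s}{\ell},
$$
since each term $-s\log(s/\ell)$ and $-(\ell-s)\log((\ell-s)/\ell)$ is nonnegative (the arguments $s/\ell$ and $(\ell-s)/\ell$ lie in $(0,1)$). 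In fact this difference equals $\ell \cdot H(s/\ell)$ where $H$ is the binary entropy, which is manifestly $\ge 0$; no calculus is needed beyond monotonicity of $\log$. When we insert the new point $p$ into $\mathcal I(F)$, it either lands inside a unique complementary interval $J$ of $[a,b]\setminus F$ (splitting it as above, which strictly decreases or keeps fixed the entropy by the display) or it coincides with an existing point of $F$ or an endpoint (in which case $\mathcal I(F \cup \{p\}) = \mathcal I(F)$ and nothing changes). Summing the per-interval inequality over all complementary intervals — only one of which is affected — gives the one-point insertion bound.

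I do not anticipate a serious obstacle here; the only point requiring a word of care is the bookkeeping that the complement $[a,b] \setminus E$ genuinely decomposes into finitely many open intervals whose lengths sum to $|b-a|$ (so that inserting a point refines exactly one of them), and the convention about whether endpoints $a, b$ are counted as belonging to $E$ — but since adding or removing the endpoints changes $\mathcal I(E)$ in a controlled way and only makes the left side no larger, this does not affect the inequality. The main conceptual content is simply the concavity-type estimate $\phi(s) + \phi(\ell - s) \le \phi(\ell)$ for $\phi(t) = t\log(1/t)$, which drives the whole argument and is the reason Beurling--Carleson entropy behaves well under the Roberts decomposition's interval-splitting operations.
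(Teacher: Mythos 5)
Your argument has a fatal sign error at its core and in fact establishes the reverse of what it needs. You reduce the lemma to the claim that inserting a single point into a finite set $F$ does not increase the entropy, which you further reduce to the one-interval inequality $\phi(s)+\phi(\ell-s)\le \phi(\ell)$ for $\phi(t)=t\log(1/t)$ and $0<s<\ell$. But the identity you then derive,
\[
\phi(s)+\phi(\ell-s)-\phi(\ell) \;=\; -\,s\log\frac{s}{\ell}\;-\;(\ell-s)\log\frac{\ell-s}{\ell},
\]
has a \emph{nonnegative} right-hand side (as you yourself observe, since $s/\ell$ and $(\ell-s)/\ell$ lie in $(0,1)$), so it proves $\phi(s)+\phi(\ell-s)\ge\phi(\ell)$ --- the opposite of what you wanted. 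This is as it must be: $\phi$ is concave with $\phi(0)=0$, hence superadditive, and splitting an interval \emph{raises} the entropy. Concretely, on $[a,b]=[0,1]$ with $F_1=\{1/2\}$ and $F_2=\{1/3\}$ one has $\sum_{\mathcal I(F_1\cup F_2)}|I|\log(1/|I|)\approx 1.01$ while $\sum_{\mathcal I(F_1)}|I|\log(1/|I|)=\log 2\approx 0.69$, so your intermediate monotonicity claim $\sum_{\mathcal I(F_1\cup F_2)}\le\sum_{\mathcal I(F_1)}$ is false, and the chain $\sum_{\mathcal I(F_1\cup F_2)}\le\sum_{\mathcal I(F_1)}\le\sum_{\mathcal I(F_1)}+\sum_{\mathcal I(F_2)}$ collapses. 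The lemma's content is precisely that, despite the increase caused by refinement, the entropy of the union still stays below the \emph{sum} of the two entropies; one cannot get this by discarding the $\mathcal I(F_2)$ contribution entirely.

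The paper's proof uses a different, combinatorial device: it builds an injection $i\colon\mathcal I(F_1\cup F_2)\to\mathcal I(F_1)\sqcup\mathcal I(F_2)$ with $I\subseteq i(I)$, sending $[x,y]\in\mathcal I(F_1\cup F_2)$ to the complementary interval of $F_1$ that contains it if the left endpoint $x$ belongs to $F_1\cup\{a\}$, and to the complementary interval of $F_2$ that contains it otherwise; injectivity holds because the left endpoint of $I$ coincides with the left endpoint of $i(I)$. Summing the pointwise bound $|I|\log(1/|I|)\le|i(I)|\log(1/|i(I)|)$ over $I$ (which uses only that $t\mapsto t\log(1/t)$ is increasing on the relevant range of lengths) and then invoking injectivity together with the nonnegativity of the unmatched terms gives the lemma. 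The superadditive splitting estimate you invoked is genuinely the wrong tool here; you would need to replace your one-point-insertion scheme with a pairing argument of this kind.
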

\begin{proof}
To prove the lemma, it suffices to construct an injective mapping
$$
i: \  \mathcal I(F_1 \cup F_2) \ \to \ \mathcal I(F_1) \cup \mathcal I(F_2)
$$
such that $J \supseteq I$ whenever  $i(I) = J$. We map $[x,y] \subset \mathcal I(F_1 \cup F_2)$ to an interval in $\mathcal I(F_1)$ containing it if the point $x$ belongs to $F_1 \cup \{a\}$ and map $[x,y]$ to an interval in $\mathcal I(F_2)$ otherwise. It is easy to see that no interval in $ \mathcal I(F_1) \cup \mathcal I(F_2)$ gets used twice.
\end{proof}

\begin{lemma}
\label{simple-normal-families}
Suppose $F_1 \subseteq F_2 \subseteq F_3 \subseteq \dots$ is an increasing sequence of finite subsets of the unit circle such that the norms $\| F_n \|_{\BC} \le M$ are bounded.
Let $F$ be the closure of their union. Then, $F$ is a Beurling-Carleson set with $\| F \|_{\BC} \le M$.
\end{lemma}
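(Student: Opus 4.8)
The plan is to realize $F$ as a Hausdorff limit of the $F_n$ and then invoke the compactness of $\BC(M)$ recalled in the introduction. First I would note that $F = \overline{\bigcup_n F_n}$ is closed by construction and that, since the $F_n$ increase, $\bigcup_n F_n$ is dense in $F$. As $F \subset \mathbb{S}^1$ is compact, a routine argument then gives that $F_n \to F$ in the Hausdorff metric: the inclusion $F_n \subseteq F$ handles one direction, while for the other, given $\varepsilon > 0$ one covers $F$ by finitely many arcs of length $\varepsilon$ centred at points of $\bigcup_n F_n$; by monotonicity all these centres lie in a single $F_N$, so $F$ lies in the $\varepsilon$-neighbourhood of $F_n$ for every $n \ge N$.

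Next, each $F_n$ is finite, hence of zero Lebesgue measure, and $\|F_n\|_{\BC} \le M$ by hypothesis, so $F_n \in \BC(M)$. By the normal families argument sketched in the introduction (see \cite[Lemma 7.6]{HKZ}), $\BC(M)$ is compact: some subsequence of $\{F_n\}$ converges, in the Hausdorff sense and with the estimate \eqref{eq:entropy-drops}, to a set $E \in \BC(M)$. Since the whole sequence $\{F_n\}$ already converges to $F$ in the Hausdorff metric and Hausdorff limits are unique, $E = F$, and hence $F \in \BC(M)$; that is, $F$ is a Beurling-Carleson set with $\|F\|_{\BC} \le M$. This is the entire argument.

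For completeness I would also sketch a self-contained version. Each complementary arc $J$ of $F$ sits inside a unique complementary arc $I_n(J)$ of $F_n$; since $F_n \subseteq F_{n+1}$ these arcs decrease in $n$, and because their endpoints lie in $F$ and so cannot enter the open arc $J$, one checks that $|I_n(J)| \downarrow |J|$. For any finite collection $J_1,\dots,J_m$ of complementary arcs of $F$, the arcs $I_n(J_1),\dots,I_n(J_m)$ are pairwise distinct once $n$ is large; using that $t \mapsto t\log(1/t)$ is nonnegative for arcs of length $\le 1$ (only boundedly many complementary arcs of $F_n$ are longer, and they contribute a controlled amount), one obtains $\sum_{k=1}^m |I_n(J_k)|\log\frac{1}{|I_n(J_k)|} \le \|F_n\|_{\BC} \le M$, and letting $n\to\infty$ then $m\to\infty$ yields $\|F\|_{\BC}\le M$. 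The hard part on this route is showing that $F$ has zero Lebesgue measure: if $|F|=\varepsilon>0$, I would pick $2^m$ points of $\bigcup_n F_n$ that cut $F$ into chunks of equal $\varepsilon$-mass, forcing consecutive gaps of length $\gtrsim \varepsilon 2^{-m}$ and hence, by concavity of $t\mapsto t\log(1/t)$, forcing $\|F_n\|_{\BC}\gtrsim \varepsilon m$ once $F_n$ contains all of them — contradicting $\|F_n\|_{\BC}\le M$ for large $m$. Since this last estimate is precisely what makes $\BC(M)$ compact, the first argument is the efficient one.
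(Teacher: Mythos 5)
Your first argument is correct and is exactly the route the paper intends: the paper proves nothing beyond citing \cite[Lemma 7.6]{HKZ}, i.e.~the compactness of $\BC(M)$ together with the lower semicontinuity (\ref{eq:entropy-drops}) sketched in the introduction, and your proof is just that argument made explicit after checking that the monotone union forces Hausdorff convergence $F_n\to F$. The self-contained sketch at the end is a legitimate alternative, but as you yourself observe it essentially re-derives the compactness estimates, so the first version is the one to keep.
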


For a more general statement, see \cite[Lemma 7.6]{HKZ}.

\subsection{Estimating nearly diffuse solutions}
Since the proof of Theorem \ref{equidiffuse-thm} is very similar to that of \cite[Theorem 1.10]{inner}, we only give a  sketch the argument and refer the reader to \cite[Section 6]{inner} for the details. We will need the following lemma:
\begin{lemma}
\label{comparison}
Suppose $\mu \in M(\overline{\mathbb{D}})$ is a finite measure on the closed unit disk which satisfies
 $
\supp \mu \subset \square_{\mathbb{S}^1, 1-1/n,1}
$
and
$$
 \mu(\square_{I,1-1/n,1}) \le 2c \cdot |I| \log \frac{1}{|I|}, \qquad \forall I \subset \mathbb{S}^1, \quad |I| = 2\pi/n.
$$
Then,
$$
|I_\mu(z)| > \frac{1}{(1-|z|^2)^{c'}}, \qquad |z| < 1 - 2/n,
$$
for some $c' \asymp c$.
\end{lemma}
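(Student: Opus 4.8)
The plan is to reduce the statement to a pointwise integral bound and then run a dyadic decomposition that exploits the entropy factor $|I|\log(1/|I|)$ in the hypothesis. Taking logarithms, what must be shown is that
\begin{equation*}
\log \frac{1}{|I_\mu(z)|} \ \lesssim \ c \, \log \frac{1}{1-|z|^2}, \qquad |z| < 1 - 2/n ,
\end{equation*}
with an absolute implied constant; this is precisely the asserted bound with $c' \asymp c$. Writing $\mu = \mu|_{\mathbb D} + \mu|_{\mathbb S^1}$ and $I_\mu = B_{\mu|_{\mathbb D}} \cdot S_{\mu|_{\mathbb S^1}}$, the first step is to record the standard representation
\begin{equation*}
\log \frac{1}{|I_\mu(z)|} \ = \ \int_{\mathbb D} \frac{G(z,a)}{1-|a|}\, d\mu|_{\mathbb D}(a) \ + \ \int_{\mathbb S^1} \frac{1-|z|^2}{|z-\zeta|^2}\, d\mu|_{\mathbb S^1}(\zeta),
\end{equation*}
valid since $1-|a| \le 1/n$ on $\supp \mu$ forces $\mu|_{\mathbb D}$ to satisfy the Blaschke condition. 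Using $G(z,a) = \tfrac12 \log\bigl(1+ \tfrac{(1-|a|^2)(1-|z|^2)}{|z-a|^2}\bigr) \le \tfrac{(1-|a|)(1-|z|^2)}{|z-a|^2}$, both kernels are dominated by $\mathcal P(z,w) := \tfrac{1-|z|^2}{|z-w|^2}$, so it suffices to bound $\int_{\overline{\mathbb D}} \mathcal P(z,w)\, d\mu(w)$.

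The second step is to estimate the kernel using the separation between $\supp \mu$ and $z$. Set $d = 1-|z|$; since $|z| < 1-2/n$ and $|w| \ge 1-1/n$ for $w \in \supp \mu$, we have $|w| - |z| \ge d - 1/n > d/2$. Writing $z = re^{i\theta_0}$ and $w = \rho e^{i(\theta_0 + \psi)}$ with $|\psi| \le \pi$, one has $|z-w|^2 = (\rho - r)^2 + 4\rho r \sin^2(\psi/2) \gtrsim d^2 + \psi^2$ provided $|z| \ge 1/2$ (the range $|z|<1/2$ is disposed of at once by the crude estimate $\log(1/|I_\mu(z)|) \lesssim \mu(\overline{\mathbb D}) \lesssim c$, the last bound being a consequence of the hypothesis applied to a fixed-size partition of $\mathbb S^1$). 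Hence $\mathcal P(z,w) \asymp d/(d^2+\psi^2)$, and denoting by $\tilde\mu$ the push-forward of $\mu$ under $w \mapsto \arg w - \theta_0 \in [-\pi,\pi]$,
\begin{equation*}
\log \frac{1}{|I_\mu(z)|} \ \lesssim \ \int_{-\pi}^{\pi} \frac{d}{\,d^2 + \psi^2\,}\, d\tilde\mu(\psi).
\end{equation*}

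To finish, split $[-\pi,\pi]$ into the central interval $\{|\psi| \le d\}$ and the dyadic annuli $A_k = \{2^k d \le |\psi| < 2^{k+1} d\}$, $0 \le k \lesssim \log_2(1/d)$. Since $\mu$ is carried by $\square_{\mathbb S^1, 1-1/n, 1}$, the hypothesis controls the $\tilde\mu$-mass of every arc $J$ (of length at least $2\pi/n$) by $\lesssim c\,|J|\log(1/|J|)$. On $\{|\psi| \le d\}$ the kernel is $\le 1/d$ and the mass is $\lesssim c\, d \log(1/d)$, for a contribution $\lesssim c\log(1/d)$; on $A_k$ the kernel is $\le 4^{-k} d^{-1}$ and the mass is $\lesssim c\, 2^k d\, \log(1/(2^k d)) \le c\, 2^k d\, \log(1/d)$, for a contribution $\lesssim 2^{-k} c\, \log(1/d)$. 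Summing the geometric series yields $\log(1/|I_\mu(z)|) \lesssim c\log(1/d) \asymp c\log(1/(1-|z|^2))$, which is the claim.

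The main obstacle is the second step: converting $\mathcal P(z,w)$ into a genuine Poisson-type kernel at scale $d$ requires that each $w \in \supp\mu$ sit radially in front of $z$ by an amount comparable to $1-|z|$, and this is exactly what the constraint $|z| < 1-2/n$ (versus support at height $1-1/n$) buys — it is the only place that hypothesis enters. The other delicate point is that the gain in the dyadic sum comes entirely from the entropy factor $\log(1/|I|)$: a flat Carleson bound of the form $\tilde\mu(J) \lesssim c|J|$ would only give a contribution $\lesssim c$ per scale and lose a logarithm, so one should apply the smallness hypothesis at each dyadic scale between $2/n$ and $1$ (when $1-|z| \asymp 1/n$ only boundedly many scales arise and the single-scale bound already suffices, so no loss occurs there).
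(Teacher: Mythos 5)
Two remarks before the substance. First, the inequality as printed in the lemma cannot be literally correct: since $I_\mu$ maps $\mathbb D$ into itself, $|I_\mu(z)|\le 1$, whereas $(1-|z|^2)^{-c'}\ge 1$. You correctly read the intended conclusion as $|I_\mu(z)|>(1-|z|^2)^{c'}$, i.e.\ $\log\frac{1}{|I_\mu(z)|}\lesssim c\log\frac{1}{1-|z|^2}$, and I will assess the argument against that. Second, the paper offers no proof (it defers the circle case to Roberts and says ``the general case is similar''), so there is no in-paper argument to compare with; your reduction of the closed-disk case to a single kernel bound via
$G(z,a)/(1-|a|)\le\mathcal P(z,a):=\frac{1-|z|^2}{|z-a|^2}$, using $\log(1+x)\le x$, is a clean and correct way to make that ``similar'' precise, and that part of the proposal is good.

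The gap is in the dyadic step and in the crude estimate, and it comes from using the Carleson entropy hypothesis at scales where it is not given. The hypothesis controls $\mu$ only on arcs of the one length $|I|=2\pi/n$. For a longer arc $J$ (say $|J|=\ell\ge 2\pi/n$), covering by $\lceil \ell n/2\pi\rceil$ arcs of length $2\pi/n$ gives
$\tilde\mu(J)\lesssim c\,\ell\log\frac{n}{2\pi}=c\,|J|\log n$,
and since $\log\frac{1}{|J|}\le\log\frac{n}{2\pi}$ for such $J$, the bound $\tilde\mu(J)\lesssim c\,|J|\log\frac{1}{|J|}$ that you invoke on each annulus $A_k$ is \emph{stronger} than what the hypothesis actually yields; it is not a consequence of it. The same covering gives $\mu(\overline{\mathbb D})\lesssim c\log n$, not $\lesssim c$ as you assert for the $|z|<1/2$ range. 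With the corrected mass bounds, your dyadic sum collapses to
$\log\frac{1}{|I_\mu(z)|}\lesssim c\log n$ uniformly in $|z|<1-2/n$, rather than $\lesssim c\log\frac{1}{1-|z|^2}$. These two are comparable only when $\log\frac{1}{1-|z|}\asymp\log n$ (i.e.\ $1-|z|$ is a fixed power of $1/n$), and that is in fact the only regime in which the lemma can hold: for $\mu$ spread over all $n$ arcs with the maximal allowed mass one has $\mu(\overline{\mathbb D})\asymp c\log n$ and hence $\log\frac{1}{|I_\mu(0)|}\asymp c\log n$, while the claimed bound at $z=0$ is $0$. So the stated lemma is false near the origin, the bound your (corrected) argument produces --- namely $\log\frac{1}{|I_\mu(z)|}\lesssim c\log n$ on $|z|<1-2/n$ --- is the correct substitute, and it does suffice where the paper uses the lemma (in (5.4) one only evaluates on $\partial\mathbb D_{r_{j-2}}$, where $\log\frac{1}{1-|z|^2}\asymp\log n_j$). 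You should therefore not try to recover the entropy factor $\log\frac{1}{|J|}$ scale by scale from the single-scale hypothesis; instead, accept the uniform $c\log n$ bound and observe that it coincides (up to constants) with $c\log\frac{1}{1-|z|^2}$ in the range of $|z|$ where the lemma is applied.
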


The lemma is well known when $\supp \mu \subseteq \mathbb{S}^1$, e.g.~see \cite[Lemma 2.2]{roberts}.
The general case is similar.
In this section, we fix the parameter $c > 0$ in the Roberts decomposition so that $c' < 1/10$, i.e.~
\begin{equation}
\label{eq:third-estimate}
  |I_{\mu_j}(z)| > \frac{1}{(1-|z|^2)^{1/10}}, \qquad \text{for } z \in \square_{\mathbb{S}^1, 0, r_{j-2}}.
\end{equation}
For $0 < r \le 1$, $C > 0$, let $u_{r,C}$ denote the unique solution of $\Delta u = 4e^{2u}$ defined on
$\mathbb{D}_r$ with constant boundary values $u|_{\partial \mathbb{D}_r} \equiv C$. Since $u_{r,C}$ is unique, it must be  radially-invariant. For an explicit formula, see \cite[Lemma 5.10]{inner}. Here, we only mention that for a fixed $z \in \mathbb{D}$,
\begin{equation}
\label{eq:tends-to-infinity}
\lim_{r \to 1^-, \, C \to \infty} u_{r,C}(z) = u_{\mathbb{D}}(z),
\end{equation}
uniformly on compact subsets of the disk \cite[Corollary 5.11]{inner}.

To prove Theorem \ref{equidiffuse-thm}, we need to show that if a  measure $\mu \in M_{\BC}(\overline{\mathbb{D}})$   gives little mass to any Korenblum star $K_E$ of norm $\le N$, then $u_\mu$ is close to $u_{\mathbb{D}}$ in the weak topology.
Consider the Roberts decomposition
$$
 \mu = (\mu_2 + \mu_3 + \mu_4 + \dots) + \nu_{\cone}
$$
 with parameter $j_0 \ge 1$ large.
For any $\delta > 0$, by asking for $N = N \bigl (j_0, \delta, \mu(\overline{\mathbb{D}}) \bigr) > 0$ to be sufficiently large, we can guarantee that
$\nu_{\cone}(\overline{\mathbb{D}}) \le \delta$.
From Lemmas \ref{weak-log} and \ref{fundamental-lemma4}, it follows that the impact of $\nu_{\cone}$ on $u_{\mu}$ is not significant:
\begin{equation}
\label{eq:u-mu}
 u_\mu \, = \, \Lambda^{\tilde \mu} \biggl [u_{\mathbb{D}} -
 \log \frac{1}{|I_\mu|} \biggr ]
 \, \approx \,
 \Lambda^{\tilde \mu_2 +  \tilde \mu_3 + \dots + \dots} \biggl [u_{\mathbb{D}} - \log \frac{1}{ |I_{\mu_2 + \mu_3 + \dots}|}  \biggr ].
\end{equation}
Thus Theorem \ref{equidiffuse-thm} reduces to showing:
\begin{theorem}
\label{equidiffuse-thm2}
Suppose $\mu \in M_{\BC}(\overline{\mathbb{D}})$ is a finite measure on the closed unit disk which can be expressed as a countable sum
 $$\mu = \mu_2 + \mu_3 + \mu_4 + \dots,$$ where each piece satisfies (\ref{eq:first-estimate}) and (\ref{eq:second-estimate}) and $c > 0$ is sufficiently small so that (\ref{eq:third-estimate}) holds. Then $u_{\mu} > u_{r_0, {(4/5)u_\mathbb{D}}}$ on $\mathbb{D}_{r_0}$.
\end{theorem}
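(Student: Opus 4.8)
The plan is to construct $u_\mu$ as a limit of explicit radial barriers, following the scheme of \cite[Section~6]{inner}. Writing $S_k = \mu_2 + \dots + \mu_k$, the solutions $u_{S_k}$ decrease to $u_\mu$ (by the monotonicity of $u_\omega$ in $\omega$, Theorem~\ref{solutions-gce}, together with $S_k \nearrow \mu$), so it suffices to bound the $u_{S_k}$ from below uniformly in $k$. Recall that $u_{r,C}$ denotes the radially symmetric solution of $\Delta u = 4e^{2u}$ on $\mathbb{D}_r$ with constant boundary value $C$; the key intermediate claim, proved by induction on $k$, is a lower bound of the form
\[
u_{S_k} \ \ge\ u_{\rho_k, C_k} \qquad \text{on } \mathbb{D}_{\rho_k},
\]
where $\rho_k \uparrow 1$ runs through (a bounded hyperbolic collar adjustment of) the radii $r_k$ and the heights $C_k$ are kept within a fixed band, $\tfrac45 u_{\mathbb{D}}(r_0) \le C_k \le u_{\mathbb{D}}(r_0)$. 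Granting this, one lets $k\to\infty$ and restricts to $\mathbb{D}_{r_0}$: since $u_{r,C}$ is increasing in $r$, the traces $u_{\rho_k,C_k}\big|_{\mathbb{D}_{r_0}}$ stay above $u_{r_0,C_k} \ge u_{r_0,(4/5)u_{\mathbb{D}}(r_0)}$, so $u_\mu \ge u_{r_0,(4/5)u_{\mathbb{D}}(r_0)}$ there; the strict inequality follows because $u_\mu$ is itself a solution of $\Delta u = 4e^{2u}$ on $\mathbb{D}_{r_1} \supset \overline{\mathbb{D}_{r_0}}$ whose boundary trace on $\partial\mathbb{D}_{r_0}$ strictly exceeds $\tfrac45 u_{\mathbb{D}}(r_0)$, so one may apply the comparison principle of Theorem~\ref{l1theorem} once more.

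Both the base case and the inductive step are organized around the fundamental identities of Lemma~\ref{fundamental-lemma4}. For $k=2$: since $\supp\mu_2 \subset \square_{\mathbb{S}^1, r_1, 1}$ lies in a thin collar of the circle, (\ref{eq:fund2}) gives $u_{\mu_2} \ge u_{\mathbb{D}} - \log(1/|I_{\mu_2}|)$, and Lemma~\ref{comparison} (with the choice of $c$ recorded in (\ref{eq:third-estimate})) bounds $\log(1/|I_{\mu_2}|) < \tfrac1{10}u_{\mathbb{D}}$ on $\mathbb{D}_{r_0}$; as no mass of $\mu_2$ lies in $\mathbb{D}_{r_0}$, $u_{\mu_2}$ is a clean solution there, and comparing boundary values on $\partial\mathbb{D}_{r_0}$ via Theorem~\ref{l1theorem} yields $u_{\mu_2} \ge u_{r_0,(9/10)u_{\mathbb{D}}(r_0)}$. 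For the step $k \to k+1$ one uses (\ref{eq:fund3}), which expresses $u_{S_{k+1}} = \Lambda^{\tilde S_k + \tilde\mu_{k+1}}\!\bigl[\, u_{S_k} - \log(1/|I_{\mu_{k+1}}|)\,\bigr]$; since all point masses of $S_k$ and of $\mu_{k+1}$ sit at radius $\ge r_1$, this is a genuine solution of $\Delta u = 4e^{2u}$ near $\overline{\mathbb{D}_{r_1}}$, and Lemma~\ref{comparison} applied at scale $n_{k+1}$ — after downgrading (\ref{eq:second-estimate}) from scale $n_{k+1}$ to scale $n_k$, which costs the factor $\log n_{k+1}/\log n_k = 2$ absorbed by shrinking $c$ — bounds the damage term by $c'\log\frac{1}{1-|z|^2}$ with $c' < \tfrac1{10}$ throughout $\mathbb{D}_{r_k}$. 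Feeding the inductive lower bound through the Perron hull and comparing with the radial profile (using Theorem~\ref{l1theorem} and the explicit formula for $u_{r,C}$ from \cite[Lemma~5.10]{inner}, whose limiting behaviour is (\ref{eq:tends-to-infinity})) produces the next barrier $u_{\rho_{k+1},C_{k+1}}$, with an explicit recursion $C_{k+1}=g(C_k)$.

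The step I expect to be the genuine obstacle is the quantitative recursion: showing that the $C_k$ never drop below $\tfrac45 u_{\mathbb{D}}(r_0)$. A naive accounting in which each $\mu_j$ costs a full $\tfrac1{10}u_{\mathbb{D}}$ would overspend the budget after a single step, so all of the gain must come from the regularizing effect of the Perron hull. Here the doubling of scales $\log n_{j+1} = 2\log n_j$ — equivalently $1-r_{j+1} = (1-r_j)^2$ — is decisive: the damage caused by $\mu_{j+1}$ is confined to a collar $\{\,1-2/n_{j+1} \le |z| < 1\,\}$ lying hyperbolically far inside $\mathbb{D}_{r_{j+1}}$, and linearizing the Gauss curvature equation near $u_{\mathbb{D}}$ gives $\Delta(u_{\mathbb{D}} - u) \approx 8e^{2u_{\mathbb{D}}}(u_{\mathbb{D}} - u)$, i.e.\ a relaxation at the rate $e^{-2 d_{\mathbb{D}}}$ in the hyperbolic metric; combined with $\sum_j \mu_j(\overline{\mathbb{D}}) = \mu(\overline{\mathbb{D}}) < \infty$, this forces the successive losses to form a convergent series whose total can be made $< \tfrac1{10}u_{\mathbb{D}}(r_0)$ by choosing $j_0$ large (the budget $\tfrac1{10}u_{\mathbb{D}}(r_0) \asymp \log n_0$ then outgrows it). This estimate is the heart of the matter and is carried out, in the circle-supported case, in \cite[Section~6]{inner} after Roberts \cite{roberts}; the only new feature is that interior mass, absent in \cite[Theorem~1.10]{inner}, enters through the Perron operators $\Lambda^{\tilde\mu_{k+1}}$ with $\tilde\mu_{k+1}$ supported in a collar of the circle, which is handled by the ``general case'' of Lemma~\ref{comparison} and accounts for the slightly more intricate argument mentioned earlier.
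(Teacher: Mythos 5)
Your plan diverges from the paper's in a structural way, and the divergence is exactly where the gap lies. You run a \emph{forward} induction on the partial sums $S_k = \mu_2 + \dots + \mu_k$, feeding each new $\mu_{k+1}$ into the exact solution $u_{S_k}$ via Lemma~\ref{fundamental-lemma4}, and you correctly diagnose that the whole difficulty is preventing the heights $C_k$ from drifting downward. But you then offer a linearization heuristic ($\Delta(u_{\mathbb{D}}-u)\approx 8e^{2u_{\mathbb{D}}}(u_{\mathbb{D}}-u)$, exponential relaxation, convergent series controlled by $\mu(\overline{\mathbb{D}})<\infty$) rather than a proof, and that heuristic is not how the paper closes the loop. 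The paper sidesteps accumulation entirely. After reducing to a finite sum $\mu_2 + \dots + \mu_j$ (via Corollary~\ref{lambda-sequences}, closely analogous to your $u_{S_k}\downarrow u_\mu$ reduction), it builds an \emph{explicit} auxiliary sub-barrier
\[
\tilde u \ = \ \Lambda_{r_0}\Bigl[\ \dots\ \Lambda_{r_{j-2}}\bigl[u_{\mathbb{D}}-\log\tfrac{1}{|I_{\mu_j}|}\bigr]\ \dots\ -\log\tfrac{1}{|I_{\mu_2}|}\ \Bigr],
\]
using only the \emph{free-boundary} hulls $\Lambda_r := \Lambda_r^0$ on a nested family of disks, processed from the innermost scale $r_{j-2}$ outward to $r_0$. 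One shows $\tilde u \le u_\mu$ by the monotonicity of $\Lambda$ and Lemma~\ref{fundamental-lemma4}, and then unwinds $\tilde u$ from the inside out. The recursion there is a two-step cycle that is \emph{exactly periodic}: starting from $u_{\mathbb{D}}-\log 2$ on $\partial\mathbb{D}_{r_{k}}$, one subtracts the damage and lands at $\ge (4/5)u_{\mathbb{D}}$ by (\ref{eq:third-estimate}); forming the radial solution $u_{r_k,(4/5)u_{\mathbb{D}}}$ and evaluating at $\partial\mathbb{D}_{r_{k-1}}$ one is \emph{already back above} $u_{\mathbb{D}}-\log 2$. The budget is fully replenished at every step, there is no series to sum, and the finiteness of $\mu(\overline{\mathbb{D}})$ never enters the recursion. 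This is the idea your proposal is missing, and without it the crucial inductive step of your argument is not established.

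There are also two concrete technical errors in your extraction step. First, you write that the heights $C_k$ stay in a fixed band $\tfrac45 u_{\mathbb{D}}(r_0)\le C_k\le u_{\mathbb{D}}(r_0)$; for the barriers $u_{\rho_k,C_k}$ with $\rho_k\uparrow 1$ to converge toward $u_{\mathbb{D}}$ (cf.\ (\ref{eq:tends-to-infinity})) the heights must grow like $u_{\mathbb{D}}(\rho_k)\to\infty$, so the band should be $[\tfrac45 u_{\mathbb{D}}(\rho_k),\,u_{\mathbb{D}}(\rho_k)]$. Second, the claim ``$u_{r,C}$ is increasing in $r$'' is backwards for fixed $C$: since $u_{\rho,C}$ is radial, subharmonic and attains its maximum $C$ on $\partial\mathbb{D}_\rho$, its trace on $\partial\mathbb{D}_{r_0}$ is strictly less than $C$ when $\rho>r_0$, so $u_{\rho,C}\big|_{\mathbb{D}_{r_0}} = u_{r_0,\,u_{\rho,C}(r_0)} < u_{r_0,C}$. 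Thus the inequality $u_{\rho_k,C_k}\big|_{\mathbb{D}_{r_0}}\ge u_{r_0,C_k}$ as you wrote it fails; you get the right conclusion only if $C_k$ grows with $\rho_k$, which is again the content of the paper's periodic recursion.
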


\begin{proof}[Sketch of proof]
To simplify notation, let us write $\Lambda_r := \Lambda_r^0$. By Collorary \ref{lambda-sequences}, it suffices to prove the theorem when $\mu = \mu_2 + \mu_3 + \mu_4 + \dots + \mu_j$ is a finite sum.
Consider the non-singular
solution of the Gauss curvature equation
\begin{equation}
 \label{eq:clever-strategy}
\tilde u \, := \, \Lambda_{r_{0}} \Biggl [  \dots \Lambda_{r_{j-3}} \biggl [  \Lambda_{r_{j-2}}
 \biggl [  u_{\mathbb{D}} - \log \frac{1}{|I_{\mu_j}|} \biggr ] - \frac{1}{ |I_{\mu_{j-1}}|} \biggr ] \dots - \log \frac{1}{|I_{\mu_2}|} \Biggr ]
\end{equation}
defined on the disk $\mathbb{D}_{r_0}$. By the monotonicity properties of $\Lambda$ and the repeated use of
 Lemma \ref{fundamental-lemma4}, we have
\begin{align*}
\tilde u & \, \le \, \Lambda^{\tilde \mu_2 +  \tilde \mu_3 + \dots + \tilde \mu_j} \Biggl [  \dots \Lambda^{\tilde \mu_{j-1} + \tilde \mu_j} \biggl [  \Lambda^{\tilde \mu_j}
 \biggl [  u_{\mathbb{D}} - \log \frac{1}{|I_{\mu_j}|} \biggr ] - \frac{1}{ |I_{\mu_{j-1}}|} \biggr ] \dots - \log \frac{1}{|I_{\mu_2}|} \Biggr ], \\
& \, = \, \Lambda^{\tilde \mu_2 +  \tilde \mu_3 + \dots + \tilde \mu_j} \biggl [u_{\mathbb{D}} - \log \frac{1}{ |I_{\mu_2 + \mu_3 + \dots + \mu_j}|}  \biggr ], \\
& = u_\mu
\end{align*}
on $\mathbb{D}_{r_0}$, where we made use of the fact that $\supp \mu_j \cap \mathbb{D}_{r_{j-2}} = \emptyset$. To show that $\tilde u > u_{r_0, u_{\mathbb{D}} - \log 2}$ on $\mathbb{D}_{r_0}$,
we  estimate  $\tilde u$ by recursively unwinding the definition (\ref{eq:clever-strategy}):
\begin{itemize}
\item[0.]
We begin with $u_{\mathbb{D}} - \log 2$.

\item[1.]  We subtract $\log \frac{1}{|I_{\mu_{j}}|}$. By the estimate
(\ref{eq:third-estimate}), $$u_{\mathbb{D}} - \log 2 - \log \frac{1}{|I_{\mu_j}|} \ge  (4/5)  u_{\mathbb{D}}, \qquad \text{on } \partial \mathbb{D}_{r_{j-2}}.$$

\item[2.] We form the solution $u_{r_{j-2}, (4/5)  u_{\mathbb{D}}}$. By the computation in \cite[Section 6]{inner},  $u_{r_{j-2}, (4/5)  u_{\mathbb{D}}} > u_{\mathbb{D}} - \log 2$ on $\partial \mathbb{D}_{r_{j-3}}$.
\end{itemize}
Repeating this process gives the desired estimate.
\end{proof}

\subsection{Diffuse sequences lose mass}
\label{sec:diffuse-lose-mass}

To complete the proof of Theorem \ref{main-thm4}, we need to show that if a sequence of measures $\mu_n \to \mu$ does not converge in the Korenblum topology, then the associated nearly-maximal solutions
 $u_{\mu_n}$ do not converge weakly to $u_\mu$.
We first make a simple observation:

\begin{lemma}
\label{some-mass-is-lost}
Suppose $\mu_n \to \mu$ and $\nu_n \to \nu$ are two weakly convergent sequences of measures in $M_{\BC}(\overline{\mathbb{D}})$.
If $u_{\mu_n + \nu_n} \to u_{\mu+\nu}$ then $u_{\mu_n} \to u_{\mu}$.
\end{lemma}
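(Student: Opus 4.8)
The plan is to sandwich $u_{\mu_n}$ between $u_{\mu_n+\nu_n}$ and $u_{\mu_n+\nu_n}+\log\frac{1}{|I_{\nu_n}|}$, extract a subsequential limit $u_*$ of the $u_{\mu_n}$, and then recognise $u_*$ as $u_\mu$ by checking that it solves the Gauss curvature equation with singularity $2\pi\widetilde{\mu|_{\mathbb D}}$ on $\mathbb D$ and with boundary deficiency $\mu|_{\mathbb S^1}$ on $\mathbb S^1$. (Note first that $\mu\le\mu+\nu\in M_{\BC}(\overline{\mathbb D})$ forces $\mu\in M_{\BC}(\overline{\mathbb D})$, so $u_\mu$ is defined.) Since $\mu_n\le\mu_n+\nu_n$, the monotonicity in Theorem \ref{solutions-gce} gives $u_{\mu_n}\ge u_{\mu_n+\nu_n}$, while (\ref{eq:fund3}) of Lemma \ref{fundamental-lemma4} gives $u_{\mu_n}\le u_{\mu_n+\nu_n}+\log\frac{1}{|I_{\nu_n}|}$; in particular $0\le u_{\mathbb D}-u_{\mu_n}\le\log\frac{1}{|I_{\mu_n}|}$, the last bound being the inequality ``$h\le 0$'' from the proof of Theorem \ref{solutions-gce}. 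Combined with $u_{\mu_n}\le u_{\mathbb D}$ and Lemma \ref{weak-log}, this bounds $\{u_{\mu_n}\}$ in $L^1_{\loc}(\mathbb D)$; since $\Delta u_{\mu_n}=4e^{2u_{\mu_n}}+2\pi\widetilde{\mu_n|_{\mathbb D}}$ is locally uniformly bounded in mass (using $e^{2u_{\mu_n}}\le e^{2u_{\mathbb D}}$ and $\widetilde{\mu_n|_{\mathbb D}}(K)\le\mu_n(\overline{\mathbb D})/\dist(K,\partial\mathbb D)$), standard elliptic estimates make $\{u_{\mu_n}\}$ precompact in $L^1_{\loc}$. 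Hence it suffices to show: along any subsequence with $u_{\mu_n}\to u_*$ in $L^1_{\loc}$ and a.e., one has $u_*=u_\mu$; the weak convergence of the full sequence then follows by the usual subsequence argument applied to $\int u_{\mu_n}\phi$, $\phi\in C_c^\infty(\mathbb D)$.

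So fix such a subsequence. Because $\mu_n\to\mu$ weakly on $\overline{\mathbb D}$, testing against $z\mapsto\phi(z)/(1-|z|)$ shows $\widetilde{\mu_n|_{\mathbb D}}\to\widetilde{\mu|_{\mathbb D}}=:\widetilde\mu$ weakly on $\mathbb D$, so by Lemma \ref{sol-sequences} the limit $u_*$ solves $\Delta u=4e^{2u}+2\pi\widetilde\mu$ with free boundary. For a lower bound I would use lower semicontinuity of the Perron operator: by (\ref{eq:fund2}), $u_{\mu_n}=\Lambda^{\widetilde{\mu_n|_{\mathbb D}}}\bigl[u_{\mathbb D}-\log\frac{1}{|I_{\mu_n}|}\bigr]$, the bracketed functions are subsolutions converging weakly to $u_{\mathbb D}-\log\frac{1}{|I_\mu|}$ (Lemma \ref{weak-log}), so Corollary \ref{lambda-sequences} gives $\liminf_n u_{\mu_n}\ge\Lambda^{\widetilde\mu}\bigl[u_{\mathbb D}-\log\frac{1}{|I_\mu|}\bigr]=u_\mu$ pointwise, whence $u_*\ge u_\mu$ a.e. Passing to the a.e. limit in the sandwich also gives $u_{\mu+\nu}\le u_*\le u_{\mu+\nu}+\log\frac{1}{|I_\nu|}$. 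Now $g_*:=u_{\mathbb D}-u_*-\log\frac{1}{|I_\mu|}$ has $\Delta g_*=4e^{2u_{\mathbb D}}-4e^{2u_*}\ge 0$ and is $\le 0$ (an a.e. limit of the $g_n\le 0$), hence is subharmonic with a boundary measure, so $(u_{\mathbb D}-u_*)\,(d\theta/2\pi)|_{\{|z|=r\}}$ converges weakly to a finite measure $m_*$. From $u_*\le u_{\mu+\nu}+\log\frac{1}{|I_\nu|}$ and Lemma \ref{boundary-values} we get $m_*\ge(\mu+\nu)|_{\mathbb S^1}-\nu|_{\mathbb S^1}=\mu|_{\mathbb S^1}$, and from $u_*\ge u_\mu$ we get $m_*\le\mu|_{\mathbb S^1}$. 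Therefore $m_*=\mu|_{\mathbb S^1}$, so $u_*$ is the nearly-maximal solution with data $\mu$, and $u_*=u_\mu$ by the uniqueness in Theorem \ref{solutions-gce}.

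The main obstacle is the asymmetry of the sandwich. Monotonicity makes $u_{\mu_n}\ge u_{\mu_n+\nu_n}$ (hence $u_*\ge u_{\mu+\nu}$) essentially free, but this alone only traps the boundary deficiency $m_*$ of $u_*$ between $\mu|_{\mathbb S^1}$ and $(\mu+\nu)|_{\mathbb S^1}$; to pin it down one must feed in the sharper bound $u_*\ge u_\mu$, which is where the real content lies — it rests on the lower semicontinuity of the Perron operator (Corollary \ref{lambda-sequences}) together with the fundamental identity (\ref{eq:fund2}). A secondary technical point, needed so that the pointwise $\liminf$ bound of Corollary \ref{lambda-sequences} can be transferred to $u_*$, is the upgrade from weak convergence of the $u_{\mu_n}$ to $L^1_{\loc}$ (and a.e.) convergence; this is routine elliptic regularity, using only that the $u_{\mu_n}$ are uniformly bounded above by $u_{\mathbb D}$ and have locally bounded Laplacian mass.
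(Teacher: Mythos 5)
Your proof is correct and runs along essentially the same lines as the paper's, but it fills in considerably more detail. The paper's version is terse: it passes to a subsequence and simply \emph{asserts} that the weak limit of $u_{\mu_n}$ is a nearly-maximal solution $u_{\mu^*}$ with $\mu^* \le \mu$, then plays this off against the same sandwich $u_{\mu_n} - u_{\mu_n+\nu_n} \le \log\frac{1}{|I_{\nu_n}|}$ coming from Lemma~\ref{fundamental-lemma4} to obtain $(\mu+\nu) - \mu^* \le \nu$, hence $\mu^* = \mu$. You prove what the paper asserts: the interior part $\widetilde{\mu^*|_{\mathbb D}} = \widetilde{\mu|_{\mathbb D}}$ follows from Lemma~\ref{sol-sequences} plus the observation that $\mu_n \to \mu$ weakly on $\overline{\mathbb D}$ implies $\widetilde{\mu_n|_{\mathbb D}} \to \widetilde{\mu|_{\mathbb D}}$ weakly on $\mathbb D$, and the bound $\mu^*|_{\mathbb S^1} \le \mu|_{\mathbb S^1}$ (equivalently $u_* \ge u_\mu$) is derived from the lower semicontinuity of the Perron operator (Corollary~\ref{lambda-sequences}) applied to the fundamental identity (\ref{eq:fund2}). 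The second inequality $m_* \ge \mu|_{\mathbb S^1}$ from the sandwich is exactly the paper's step ``taking $n \to \infty$ and examining the boundary data.'' The extra technical work you do — upgrading weak to $L^1_{\loc}$/a.e.\ convergence via local mass bounds on $\Delta u_{\mu_n}$ so that the pointwise $\liminf$ bound can be transferred to the subsequential limit — is a genuine gap that the paper sweeps under the rug, and your treatment of it is sound.
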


\begin{proof}
Passing to a subsequence, we may assume that $u_{\mu_n}$ converges weakly to a nearly-maximal solution $u_{\mu^*}$ with $\mu^* \le \mu$.
By Lemma \ref{fundamental-lemma4},
$$ (u_{\mathbb{D}} - u_{\mu_n + \nu_n}) - (u_{\mathbb{D}} - u_{\mu_n}) \, = \, u_{\mu_n} - u_{\mu_n + \nu_n}\, \le \, \log \frac{1}{|I_{\nu_n}|}.$$
Taking $n \to \infty$ and examining the boundary data, we arrive at $(\mu + \nu) - \mu^* \le \nu$
which implies that $\mu^* = \mu$.
\end{proof}

Suppose $\mu_n \to \mu$ is a sequence of measures and
$$
\mu_n =  {\overline \mu}_n + \nu_{n, \cone} =  (\mu_{n,2} + \mu_{n,3} + \mu_{n,4} + \dots) + \nu_{n, \cone},
$$
are their Roberts decompositions (with the same parameters $c, j_0$). Recall that $E_{n, \cone}^* := \mathbb{S}^1 \setminus \bigcup_{I \in \Lambda} \interior I$,
where $\Lambda$ is the collection of light intervals which are maximal with respect to inclusion. Take
the threshold $\eta = 1/(2n_2)$.
 Since the maximal light intervals in $P_2$ are longer than $\eta$,
\begin{equation}
\label{eq:econe1}
\| E_{n, \cone}^* \|_{\BC_\eta} \, = \, \sum_{I \in \Lambda, \, |I| < \eta} |I| \log \frac{1}{|I|} \, \lesssim \,
 \sum_{\text{heavy}} |J| \log \frac{1}{|J|} \, \lesssim \,
 \frac{1}{c} \cdot \mu(\overline{\mathbb{D}}).
\end{equation}
Recall that $\nu_{n,\cone}$ is supported on a Korenblum star $K_{E_{n, \cone}}$ where $E_{n, \cone}$ is given
  by (\ref{eq:econe-def}). Its local entropy is only slightly larger:
  \begin{equation}
\label{eq:econe2}
\| E_{n,\cone}\|_{\BC_\eta} \, \le \, \| E^*_{n,\cone}\|_{\BC_\eta} + \sum_{\text{heavy}} |J| \log \frac{10}{|J|} \, \lesssim \,
 \frac{1}{c} \cdot \mu(\overline{\mathbb{D}}).
\end{equation}

If $\{ \mu_n \}$ does not converge in the Korenblum topology, then the diffuse part of $\{\mu_n\}$ is non-trivial, that is, one may decompose
$\mu_n = \nu_n + \tau_n$ such that $\nu_n \to \nu$ is concentrating, $\tau_n \to \tau$ is diffuse and $\tau \ne 0$.
By asking for the parameter $c = C > 0$ to be large, we can make the right hand side of (\ref{eq:econe2}) as
 small as we wish.
From the definition of
a diffuse sequence, it follows that for $n$ large, a definite chunk of $\tau_n$ must fall into $\overline{\mu}_n$.
If $c$ is the constant from Theorem \ref{equidiffuse-thm2}, then $u_{(c/C)  \overline{\mu}_n} \to u_{\mathbb{D}}$.
By Lemma \ref{some-mass-is-lost}, $u_{\mu_n}$ cannot converge to $u_{\mu}$.

 \subsection{An instructive example}

For a fixed $M > 0$, consider the sequence of measures $\mu_{n,M} = \sum_{k=1}^n\delta_{e^{ik\theta_n}}/n$,
where $\theta_n$ is chosen so that $n \theta_n \log \frac{1}{\theta_n} = M$.
We show:
\begin{lemma}
For  $M > 0$  sufficiently large, the nearly-maximal solutions $u_{n,M} = u_{\mu_{n,M}}$ converge to $u_{\mathbb{D}}$.
For $M > 0$ sufficiently small, the $u_{n,M}$ do not converge to $u_{\mathbb{D}}$.
\end{lemma}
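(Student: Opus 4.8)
Before seeing the author's proof, here is how I would approach it.

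First, a few preliminary observations that organize both halves. Since $\mu_{n,M}(\overline{\mathbb{D}})=1$, $\mu_{n,M}\rightharpoonup\delta_1$, and $\log\frac{1}{|I_{\mu_{n,M}}|}=\int_{\mathbb{S}^1}\frac{1-|z|^2}{|z-\zeta|^2}\,d\mu_{n,M}(\zeta)\to\log\frac{1}{|S_{\delta_1}|}$ locally uniformly on $\mathbb{D}$, formula (\ref{eq:fund2}) gives $u_{n,M}=\Lambda^{0}\bigl[u_{\mathbb{D}}-\log\frac{1}{|I_{\mu_{n,M}}|}\bigr]$, and Corollary \ref{lambda-sequences} yields $\liminf_n u_{n,M}\ge u_{\delta_1}$. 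Combined with $u_{n,M}\le u_{\mathbb{D}}$ and the fact that a weak subsequential limit of the $u_{n,M}$ is a nearly-maximal solution of $\Delta u=4e^{2u}$ (Lemma \ref{sol-sequences}, Theorem \ref{solutions-gce}), every such limit is of the form $u_{c^{*}\delta_1}$ with $c^{*}\in[0,1]$; so it suffices to show $c^{*}=0$ always when $M$ is large and $c^{*}>0$ along some subsequence when $M$ is small. The common tool is the elementary bound
\[
\mu_{n,M}(K_E)=\mu_{n,M}(E)\ \le\ \frac1n+\frac{\|E\|_{\BC}+2\pi\log 2\pi}{M},\qquad E\in\BC,
\]
proved as follows: if $E$ contains $m$ of the equally spaced points $e^{ik\theta_n}$, then $\mathbb{S}^1\setminus E$ has at least $m-1$ arcs separating consecutive captured points, each of length $\ge\theta_n$ and $<1/e$ (since $n\theta_n\to0$), hence each contributing at least $\theta_n\log\frac1{\theta_n}=M/n$ to $\|E\|_{\BC}$ — because $t\mapsto t\log\frac1t$ is increasing on $(0,1/e)$ and subdividing an arc only increases its entropy contribution — together with one remaining arc of length $\le 2\pi$ contributing at least $2\pi\log\frac1{2\pi}$.

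For the first assertion I would run the nearly-diffuse machinery of Section \ref{sec:equidiffuse}. Fix the small Roberts parameter $c$ from Theorem \ref{equidiffuse-thm2} and a parameter $j_0$, and take the Roberts decomposition $\mu_{n,M}=\overline\mu_n+\nu_{n,\cone}$. The cone measure is supported on a Korenblum star of norm $\le N(c,j_0,1)$, so by the displayed estimate $\nu_{n,\cone}(\overline{\mathbb{D}})\le\frac1n+\bigl(N(c,j_0,1)+2\pi\log 2\pi\bigr)/M$, which is as small as we wish once $M$ is large relative to $j_0$. Then (\ref{eq:u-mu}) and Theorem \ref{equidiffuse-thm2} give $u_{n,M}\ge u_{r_0,\,(4/5)u_{\mathbb{D}}}$ on $\mathbb{D}_{r_0}$ (with $r_0=1-2^{-2^{j_0}}$), up to a weak error governed by $\log\frac1{|I_{\nu_{n,\cone}}|}$; letting $j_0\to\infty$ (so $r_0\to1$) and invoking (\ref{eq:tends-to-infinity}) shows $\liminf_n u_{n,M}\ge u_{\mathbb{D}}$ weakly, modulo an error $\varepsilon_M\to0$ as $M\to\infty$. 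To upgrade this to $u_{n,M}\to u_{\mathbb{D}}$ for a fixed large $M$, I would use the combinatorial bound once more: if $u_{n,M}\to u_{c^{*}\delta_1}$ along a subsequence with $c^{*}>0$, then — decomposing $\mu_{n,M}$ into its concentrating and diffuse parts, or appealing to Theorem \ref{main-thm4} — a mass-$c^{*}$ concentrating piece of $\mu_{n,M}$ would be carried by Beurling–Carleson sets $E_n$ with $\|E_n\|_{\BC}\gtrsim c^{*}M-2\pi\log 2\pi$, while, their supports collapsing to $\{1\}$, the $E_n$ would converge in the Korenblum topology to $\{1\}$; these are incompatible once $M$ exceeds a threshold depending only on absolute constants, forcing $c^{*}=0$.

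For the second assertion I would produce a fixed, positive lower bound for the deficiency $d_n:=u_{\mathbb{D}}-u_{n,M}\ge0$ that survives the Perron correction. By Lemma \ref{jen2} (with $\tilde\nu=0$),
\[
d_n(z)=\log\frac1{|I_{\mu_{n,M}}(z)|}-\frac1{2\pi}\int_{\mathbb{D}}\bigl(e^{2u_{\mathbb{D}}}(w)-e^{2u_{n,M}}(w)\bigr)G(z,w)\,|dw|^2,
\]
and the first term tends locally uniformly to $\frac{1-|z|^2}{|1-z|^2}$. The task is thus to bound the correction integral by an absolute constant, uniformly in $n$, when $M$ is small. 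Away from $1$ one has $d_n(w)\le\int\frac{1-|w|^2}{|w-\zeta|^2}\,d\mu_{n,M}(\zeta)$, uniformly small there, so $e^{2u_{\mathbb{D}}}(w)-e^{2u_{n,M}}(w)\le 2d_n(w)e^{2u_{\mathbb{D}}}(w)$ is integrably small against $G(z,\cdot)$; near $1$ only $e^{2u_{\mathbb{D}}}(w)-e^{2u_{n,M}}(w)\le e^{2u_{\mathbb{D}}}(w)$ is available, but since the total singular mass is $1$ and its carrier has Korenblum norm $\asymp M$, the region where $d_n$ is of order $1$ lies in a parabolic approach region at $1$ of size controlled by $M$, over which $\int e^{2u_{\mathbb{D}}}(w)G(z,w)\,|dw|^2$ is a constant $C$ that stays bounded as $M\to0$. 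This would give $d_n(z)\ge\frac{1-|z|^2}{|1-z|^2}-C-o(1)$, strictly positive for $z$ near $1$; passing to the weak limit, every subsequential limit $u$ of the $u_{n,M}$ would satisfy $u_{\mathbb{D}}-u\not\equiv0$, i.e.\ $c^{*}>0$.

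The step I expect to be the real obstacle is the uniform-in-$n$ control of that correction integral near $1$: this is a quantitative concentrating-type estimate — of the same flavour as Lemma \ref{korlemma} and Corollary \ref{gammaF-estimate}, but giving a lower bound on the deficiency rather than an upper bound — and making the dependence of the constant $C$ on the Korenblum norm of the carrier (hence on $M$) explicit is exactly what produces the threshold between the small-$M$ and large-$M$ behaviour.
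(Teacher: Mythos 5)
Your plan for the first assertion is in the same spirit as the paper's, which checks directly that every arc $I$ of length $\theta_n/2$ carries at most one atom, hence $\mu_{n,M}(I)\le 1/n\le (3/M)\,|I|\log\frac{1}{|I|}$, and then applies Theorem \ref{equidiffuse-thm2} once $3/M<c$; you instead route through the Roberts decomposition and a cone--mass bound, which is more elaborate but morally the same. Your combinatorial estimate $\mu_{n,M}(E)\le \frac1n+\frac{\|E\|_{\BC}+2\pi\log 2\pi}{M}$ is a nice quantitative observation that the paper does not isolate, but the justification as written is slightly off: it is not true that between consecutive captured points there is a \emph{single} complementary arc of length $\ge\theta_n$. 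What is true is that the complementary arcs lying in the gap between two consecutive captured points have \emph{total} length $\ge\theta_n$ (since $E$ has zero measure), and then the subadditivity of $t\mapsto t\log\frac1t$ gives the lower bound $\theta_n\log\frac1{\theta_n}$ on the sum of their entropy contributions. You in fact gesture at this with the remark about subdividing, but the sentence before it misstates the combinatorics.

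For the second assertion you are working much harder than necessary, and the difficulty you flag at the end --- getting a uniform-in-$n$ bound on the Poisson--Jensen correction integral near $1$ --- is a genuine gap, not a routine estimate. The paper avoids all of it with a one-line diagonalization: if the second assertion were false, one could choose $M_j\to0$ and then $n_j\to\infty$ so that $u_{n_j,M_j}\to u_{\mathbb{D}}$. But the supports $E_j=\{e^{ik\theta_{n_j}}:k=1,\dots,n_j\}$ converge to $\{1\}$ in the Hausdorff sense (since $n_j\theta_{n_j}\to0$), and their entropies $\|E_j\|_{\BC}=M_j+o(1)+\|\{1\}\|_{\BC}\to\|\{1\}\|_{\BC}$, so $E_j\to\{1\}$ in $\BC$ and $\mu_{n_j,M_j}\to\delta_1$ in the Korenblum topology; Theorem \ref{concentrating-thm4} then forces $u_{n_j,M_j}\to u_{\delta_1}\ne u_{\mathbb{D}}$, a contradiction. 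Notice that you already laid out the needed ingredients in your preliminaries --- that $\mu_{n,M}\rightharpoonup\delta_1$ and that every subsequential limit is $u_{c^{*}\delta_1}$ --- and even invoke Theorem \ref{main-thm4}/concentration in your ``upgrade'' step for the first assertion; the missing move for the second assertion is simply to diagonalize in $M$ and then apply Theorem \ref{concentrating-thm4}, eliminating any need to estimate $J_{u_n}$.
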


\begin{proof}
For any arc $I \subset \mathbb{S}^1$ of length $\theta_n/2$,
$$\mu_{n,M}(I) \le 1/n = (1/M) \cdot \theta_n \log\frac{1}{\theta_n} \le (3/M)  \cdot |I| \log\frac{1}{|I|}.$$
For the first assertion, it is enough to request that $3/M < c$ where $c$ is the constant from
 Theorem \ref {equidiffuse-thm2}.
If the second assertion were false, a diagonalization argument would produce a sequence $u_{n_j, M_j} \to u_{\mathbb{D}}$ with $n_j \to \infty$ and $M_j \to 0$. But this diagonal sequence is concentrating, so by Theorem \ref{concentrating-thm4}, its limit should be $u_{\delta_1}$, which is a contradiction.
\end{proof}

\begin{corollary}
There exists a diffuse sequence of measures $\mu_n \in M_{\BC}(\overline{\mathbb{D}})$ such that  $u_{\mu_n}$ do not converge to 
$u_{\mathbb{D}}$ but $u_{k \cdot \mu_n} \to u_{\mathbb{D}}$ for some $k < 1$.
\end{corollary}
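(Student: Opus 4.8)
The plan is to take the $\mu_n$ to be the measures $\mu_{n,M_0}$ of the preceding Lemma for a suitably small $M_0>0$, using the observation that scaling a measure by a constant $k$ affects the relevant density bounds in exactly the same way as replacing $M_0$ by $M_0/k$; since large $M$ forces the solutions to $u_{\mathbb{D}}$, a small $k$ will do the same.

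First I would fix $M_0>0$ small enough that the second assertion of the Lemma applies, so that $u_{\mu_n}=u_{\mu_{n,M_0}}$ does not converge to $u_{\mathbb{D}}$, and set $\mu_n:=\mu_{n,M_0}$. I would then verify that $\{\mu_n\}$ is diffuse. Since $n\theta_n\to 0$, all $n$ atoms of $\mu_{n,M_0}$ lie in an arc about $1$ of length $o(1)$, so $\mu_n\to\delta_1$ weakly. For the entropy condition, suppose a Beurling-Carleson set $E$ captures $m$ of the atoms $e^{ij\theta_n}$; restricting $E$ to the cluster arc only decreases $\|E\|_{\BC_\eta}$ (by subadditivity of $t\mapsto t\log(1/t)$) while keeping $\mu_n(E)=m/n$, so we may assume $E$ lies inside the cluster. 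As consecutive atoms are $\theta_n$ apart and $E$ has Lebesgue measure zero, the complement of $E$ then contains at least $m-1$ arcs, each of length between $\theta_n$ and the cluster length $o(1)$, hence shorter than any fixed threshold $\eta$ once $n$ is large; by monotonicity of $t\mapsto t\log(1/t)$ on small arguments these arcs contribute at least $(m-1)\,\theta_n\log(1/\theta_n)=(1-o(1))\,mM_0/n$ to $\|E\|_{\BC_\eta}$. Hence $\mu_n(K_E)=\mu_n(E)\le(1+o(1))\,\|E\|_{\BC_\eta}/M_0$ for $n$ large, which is precisely the statement that $\{\mu_n\}$ is diffuse (the ``diffuse'' case of the trichotomy in the introduction).

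For the scaled sequence I would observe that any arc $I$ of length $\theta_n/2$ meets at most one atom, so $(k\mu_n)(I)\le k/n=(k/M_0)\,\theta_n\log(1/\theta_n)\le(3k/M_0)\,|I|\log(1/|I|)$; thus $k\mu_n$ obeys, at the scale of its atom spacing, the density bound underlying Theorem \ref{equidiffuse-thm2}, now with constant $3k/M_0$. Choosing any $0<k<\min(cM_0/3,\,1)$, where $c$ is the constant of Theorem \ref{equidiffuse-thm2}, makes $3k/M_0<c$, so the proof of the first assertion of the Lemma --- which used only the density bound with constant $<c$ --- applies verbatim to $\{k\mu_n\}$ and yields $u_{k\mu_n}\to u_{\mathbb{D}}$ weakly. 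This proves the corollary.

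I expect the only delicate point to be the entropy estimate in the diffuseness check: one must ensure that a Beurling-Carleson set carrying a definite fraction of the tightly clustered atoms is forced to possess a definite amount of \emph{local} entropy --- i.e.\ that the entropy cannot escape to scales above $\eta$ --- which is exactly what the inequality (arc length)$\le$(cluster length)$=o(1)$ supplies. Everything else is a routine rescaling of the argument already carried out for the instructive Lemma.
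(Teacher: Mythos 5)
Your argument is correct and follows the paper's route exactly: fix $M_0>0$ small enough that the second assertion of the Lemma yields $u_{\mu_{n,M_0}}\not\to u_{\mathbb{D}}$, then observe that scaling by $k<cM_0/3$ (the paper takes $k=(M_0/3)c$) restores the density bound $(k\mu_n)(I)\le c\,|I|\log\frac{1}{|I|}$ on arcs of length $\theta_n/2$, so Theorem \ref{equidiffuse-thm2} gives $u_{k\mu_n}\to u_{\mathbb{D}}$. Your explicit verification that $\{\mu_n\}$ is diffuse --- via the lower bound $\|E\|_{\BC_\eta}\gtrsim M_0\,\mu_n(E)$ forced by the tightly spaced atoms --- is a useful supplement that the paper leaves implicit.
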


\begin{proof}
By the second statement of the lemma, we can choose
 $M > 0$ so that the $u_{\mu_{n,M}}$ do not converge to  $u_{\mathbb{D}}$.
If $k = (M/3)c$, then
for any arc  $I \subset \mathbb{S}^1$ of length $\theta_n/2$, $k \cdot \mu_{n,M}(I) \le c \cdot |I|\log\frac{1}{|I|}$, which implies
 that $u_{k \cdot \mu_{n,M}}$ tends to $u_{\mathbb{D}}$ as $n \to \infty$.
\end{proof}

\begin{remark}
Let $k_0 = \sup \{k > 0 : u_{k \cdot \mu_n} \to u_{\mathbb{D}} \text{ as }n\to\infty \}$. One can show that
$u_{k \cdot \mu_n} \to u_{(k-k_0) \cdot \mu_n}$ for $k \ge k_0$. For the $\le$ direction, one may write
$$(u_{\mathbb{D}} - u_{k \cdot \mu_n}) - (u_{\mathbb{D}} - u_{k_0 \cdot \mu_n}) \le  \log \frac{1}{|I_{(k-k_0) \cdot \mu_n}|},
$$
take $n \to \infty$ and examine the boundary data like in the proof of Lemma \ref{some-mass-is-lost}.
The $\ge$ direction is harder and relies on the Solynin-type inequality $ u_{\mu} + u_{\nu} \ge u_{\mu + \nu} + u_{\mathbb{D}}$ for
any  $\mu, \nu \in M_{\BC}(\overline{\mathbb{D}})$. While we will not prove Solynin's inequality here, we can refer the reader to
\cite[Equation 5.1]{inner} for a special case.
\end{remark}

\section{Invariant subspaces of Bergman spaces}

For a fixed $\alpha > -1$ and $1 \le p < \infty$, consider the weighted Bergman space $A_\alpha ^p(\mathbb{D})$ of holomorphic functions satisfying the norm boundedness condition
(\ref{eq:beurling-space-def}).
Let $\{I_n\}$ be a sequence of inner functions which converge uniformly on compact subsets of the disk to an inner function $I$. Assume that the measures $\mu(I_n)$ and $\mu(I)$ are in $M_{\BC}(\overline{\mathbb{D}})$.
Let $[I_n] \subset A^p_\alpha$ be the $z$-invariant subspace generated by $I_n$. In this section, we prove Theorem \ref{main-thm3} which says that
 $\lim_{n \to \infty} [I_n] = [I]$ if and only if the measures $\mu(I_n)$ converge to $\mu(I)$ in the Korenblum topology.

In general, one has semicontinuity in one direction:
\begin{equation}
\label{eq:automatic-inclusion}
[I] \subseteq \liminf_{n \to \infty} \, [I_n].
\end{equation}
To see this, note that if $f \in [I]$, then it may be approximated in norm by $p_k I$ for some polynomials $\{p_k\}_{k=1}^\infty$. Diagonalization allows us to express $f$ as the limit of $p_{k(n)} I_{n} \in [I_n]$.

\subsection{Concentrating sequences: special case}

\begin{theorem}
\label{bergman-concentration}
 Suppose $I_n \to I$ is a sequence of inner functions which converges uniformly on compact subsets of the unit disk. If the zero structure of $I_n$ belongs to a Korenblum star $K_{E_n}$ and the $E_n \to E$ converge in $\BC$ then $[I_n] \to [I]$.
\end{theorem}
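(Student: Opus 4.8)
The plan is to prove the two half-inclusions that together amount to strong convergence $[I_n]\to[I]$. One of them is free: $[I]\subseteq\liminf_n[I_n]$ is exactly (\ref{eq:automatic-inclusion}). So the whole content is the opposite inclusion --- that $f_n\in[I_n]$ and $f_n\to f$ in $A^p_\alpha$ imply $f\in[I]$ --- and since $[I_n]\to[I]$ may be checked along subsequences, I will pass to subsequences freely. The first step is the analytic continuation already used in the proof of Theorem \ref{concentrating-thm}. The zero masses $\mu(I_n)(\overline{\mathbb D})$ are bounded, say by $M$ (otherwise a subsequence would force a definite amount of zero mass onto $\mathbb S^1$, contradicting local uniform convergence of $I_n$ to an inner function); then Lemma \ref{gammaF-estimate2} gives $|I_n|\ge c(M)>0$ on $\mathbb D\setminus K_{E_n}^2$, and Schwarz reflection continues each $I_n$ analytically past $\mathbb S^1\setminus E_n$, onto $\mathbb C\setminus{\mathbf r}(K_{E_n}^N(\theta))$ for a high-order star, with $|I_n|\le 1/c(M)$ outside $\overline{\mathbb D}$ (compare Corollary \ref{korstar4}). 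Since $E_n\to E$ in $\BC$, these domains exhaust $\mathbb C\setminus{\mathbf r}(K_E^N(\theta))$, so a normal-families argument gives, along a further subsequence, $I_n\to I$ and $1/I_n\to 1/I$ locally uniformly there; in particular $|I-I_n|\to 0$ uniformly up to $\mathbb S^1\setminus E$ and $|I|$ stays bounded below off $K_E^N(\theta)$.

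Next I would read off the structural consequences of $f_n\in[I_n]$. Each $pI_n$ is divisible in $\hol(\mathbb D)$ by the Blaschke factor $B_n$ of $I_n$, and this survives passage to $A^p_\alpha$-limits, so $B_n\mid f_n$ and $h_n:=f_n/I_n$ is holomorphic on $\mathbb D$ ($S_{\sigma(I_n)}$ being zero-free). Taking limits --- local uniform convergence of $f_n$ and $I_n$, together with Hurwitz at the zeros of $I$ --- shows that $h:=f/I$ is holomorphic on $\mathbb D$, that $h_n\to h$ locally uniformly, and that $B\mid f$. Moreover $\supp\mu(I_n)\subseteq K_{E_n}$ forces $\supp\sigma(I_n)\subseteq E_n$, a Beurling--Carleson set; and since $I_n\to I$ gives $\mu(I_n)\to\mu(I)$ weakly (from $I_n\to I$, using Lemma \ref{weak-log} and the mass bound) while $K_{E_n}\to K_E$, the singular measure $\sigma(I)=\mu(I)|_{\mathbb S^1}$ is supported on $E$, with $\sigma(I_n)\to\sigma(I)$ weakly.

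To conclude, recall $[I]=\overline{I\cdot A^p_\alpha}$, so it is enough to exhibit polynomials $p_n$ with $p_nI\to f$ in $A^p_\alpha$. I would take $p_n$ with $\|f_n-p_nI_n\|_{A^p_\alpha}<1/n$; then $p_nI_n\to f$, and interior estimates combined with holomorphy of $h_n$ give $p_n\to h$ locally uniformly, so the $p_n$ are locally uniformly bounded. The claim now reduces to $\|p_n(I-I_n)\|_{A^p_\alpha}\to 0$, which I would prove by splitting the $A^p_\alpha$-integral over $\overline{\mathbb D}\setminus K_E^N(\theta)$, over $K_E^N(\theta)\cap B(0,\rho)$, and over the thin part $K_E^N(\theta)\setminus B(0,\rho)$ of the Korenblum star. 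On the first region $|I-I_n|\to 0$ uniformly while $\int_{\overline{\mathbb D}\setminus K_E^N(\theta)}|p_n|^p(1-|z|)^\alpha\,|dz|^2\le c(M)^{-p}\|p_nI_n\|^p_{A^p_\alpha}$ stays bounded; on the second, local uniform convergence of both $p_n$ and $I-I_n$ suffices.

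\emph{The main obstacle} is the last region, the shrinking neighbourhood of $E$ inside the star, where $I$ and $I_n$ are both genuinely small (their singular factors being carried by $E$, $E_n$) and a priori incomparable. There I would bound $|I-I_n|\le|I|+|I_n|$, use $|p_n||I_n|\le|p_nI_n|\lesssim|f_n|+o(1)$, compare $|I|$ with $|I_n|$ near $E$ through the weak convergence of the potentials $\log1/|I_n|\to\log1/|I|$ (i.e.\ $\sigma(I_n)\to\sigma(I)$), and control the tail with the area estimate $\int_{K_E^N(\theta)}(1-|z|)^\alpha\,|dz|^2\le\int_{K_E^N(\theta)}\frac{|dz|^2}{1-|z|}\asymp\|E\|_{\BC}<\infty$ from Lemma \ref{jen1}. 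This is the same package of estimates behind Theorem \ref{concentrating-thm4}, and it is precisely here that the Korenblum convergence $E_n\to E$ in $\BC$ --- not mere Hausdorff convergence --- is indispensable: it forbids the singular mass from fragmenting and escaping along $E$ in the limit. Once $\|p_n(I-I_n)\|_{A^p_\alpha}\to0$ is established one gets $p_nI\to f$, hence $f\in[I]$. Alternatively one could skip the polynomial approximation and invoke the Korenblum--Roberts description of $\kappa$-Beurling-type subspaces directly --- $f\in[I]$ as soon as $B\mid f$ and the singular inner factor of $f$ dominates $S_{\sigma(I)}$, the latter following from $\sigma(f_n)\ge\sigma(I_n)$ by the same non-loss principle --- but the analytic heart of the matter, and the obstacle, are the same.
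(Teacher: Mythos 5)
Your overall plan --- one free inclusion from (\ref{eq:automatic-inclusion}), the other via a direct approximation $p_nI_n\to f$ with a three-region estimate of $\|p_n(I-I_n)\|_{A^p_\alpha}$ --- is a genuinely different route from the paper's, which instead applies Korenblum's division principle (Theorem~\ref{korenblum-division}): from $f_n\in[I_n]$ it forms $g_n=(\Phi_{E_n}^\delta/I_n)f_n$, shows via the uniform norm bound and Fatou that $g=(\Phi_E^\delta/I)f\in A^p_\alpha$, hence $\Phi_E^\delta f=Ig\in[I]$, and lets $\delta\to 0$. The essential device the paper uses and you do not is the Carleson outer function $\Phi_E\in C^\infty(\overline{\mathbb D})$, which vanishes to infinite order on $E$ and therefore absorbs any polynomial blow-up of $1/I$ as one approaches $E$ inside $K_E^2$.

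That device is precisely what is missing where you call the third region ``the main obstacle.'' The step there does not close. You want $\|p_n(I-I_n)\|^p_{A^p_\alpha}\to 0$ on the thin part of the star, and to that end you invoke $|p_n||I_n|\lesssim|f_n|+o(1)$ and a comparison of $|I|$ with $|I_n|$ near $E$ ``through weak convergence of the potentials.'' Neither of these is available. The convergence $\|p_nI_n-f_n\|_{A^p_\alpha}\to 0$ gives $L^p$ and locally uniform control, not a pointwise bound valid up to $E$; and weak convergence $\log 1/|I_n|\to\log 1/|I|$ (equivalently $\sigma(I_n)\to\sigma(I)$) says nothing about the pointwise ratio $|I(z)/I_n(z)|$ as $z\to E$ --- one can have $\sigma(I_n)\equiv\sigma(I)$ while the Blaschke zeros of $I_n$ crowd a point of $E$ in a way those of $I$ do not, making $I/I_n$ unbounded there. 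The difficulty is not ``the same package of estimates behind Theorem~\ref{concentrating-thm4}'': that theorem controls weak convergence of $u_{\omega_n}$, not an $A^p_\alpha$-norm of a quotient that a priori blows up along $E$. Without something playing the role of $\Phi_E^\delta$ to cancel that blow-up, the estimate on the thin part of $K_E^N(\theta)$ does not go through, and the area bound from Lemma~\ref{jen1} cannot rescue it because the integrand itself is unbounded.

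The closing alternative is also not a valid shortcut. The Korenblum--Roberts theorem as quoted says $[BS_{\mu_1}]=[BS_{\mu_2}]$ iff $\mu_1-\mu_2$ does not charge Beurling--Carleson sets; it does not assert that $f\in[I]$ whenever $B\mid f$ and $\sigma(f)\ge\sigma(I)$, nor is it clear that a general $f\in A^p_\alpha$ has a well-defined singular factor $\sigma(f)$, nor that the inequality $\sigma(f_n)\ge\sigma(I_n)$ persists in the $A^p_\alpha$-limit. That last claim is in fact exactly the ``non-loss'' phenomenon the whole theorem is about, so appealing to it here is circular. To repair your argument you would need to reintroduce the outer function $\Phi_E$ (or an equivalent), at which point you are back to the paper's proof.
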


The above theorem is essentially due to Korenblum \cite{korenblum}, although our version has some extra uniformity.
  For a Beurling-Carleson set $E$, one can construct an outer function $\Phi_E(z) \in C^\infty(\overline{\mathbb{D}})$  which vanishes precisely on $E$ and does so to infinite order.
  Examining the construction in \cite[Proposition 7.11]{HKZ}, we may assume that $\Phi_E$ enjoys two nice properties:

 \begin{enumerate}
 \item
  The function $\Phi_E(z)$ varies continuously with the Beurling-Carleson set $E$, in the sense that
   $\Phi_{E_n} \to \Phi_E$ uniformly on compact subsets of the disk if $E_n \to E$ and  $\|E_n\|_{\BC} \to \|E\|_{\BC}$.

 \item
For each $N \ge 0$,
$$
|\Phi_E(z)| \cdot \dist(z, E)^{-N} \le C_E(N)
$$ is bounded by a constant which depends continuously on $E$. It is convenient to take $C_0 = 1$ so that
$|\Phi_E(z)| \le 1$ on the disk.
 \end{enumerate}

A brief sketch of the construction will be provided in Appendix \ref{sec:carleson-outer}.
 The central idea in Korenblum's vision is the following division principle:
 \begin{theorem}[Korenblum's division principle]
 \label{korenblum-division}
 Suppose $I$ is an inner function with zero structure $\supp \mu(I) \subset K_E$ and $f \in [I]$. For any $\delta > 0$,
\begin{equation}
 \label{eq:korenblum-division}
f^\delta(z) := (\Phi^\delta_E/I) f(z) \in A^p_\alpha,
 \end{equation}
 with the norm estimate $\|f^\delta\|_{A^p_\alpha} \le C_E \| f \|_{A^p_\alpha}$ where $C_E = C_E \bigl ( \delta, \mu(I)(\overline{\mathbb{D}}) \bigr )$ depends continuously  on $E$, $\delta$ and $\mu(I)(\overline{\mathbb{D}})$.
 \end{theorem}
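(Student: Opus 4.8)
The plan is to reduce Theorem~\ref{korenblum-division} to a norm inequality for polynomials and then run Korenblum's argument \cite{korenblum-cyc} (see also \cite[Chapter~7]{HKZ} and Roberts \cite{roberts}), keeping careful track of all the constants so as to obtain the continuous dependence claimed in the statement.

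\emph{Reduction to polynomials.} First I would check that it suffices to prove
\begin{equation}
\label{eq:poly-division}
\| \Phi_E^\delta \, p \|_{A^p_\alpha} \, \le \, C_E \, \| p\, I \|_{A^p_\alpha}, \qquad C_E = C_E\bigl(\delta, \mu(I)(\overline{\mathbb{D}})\bigr),
\end{equation}
for every polynomial $p$. Indeed, if $f \in [I]$, choose polynomials $p_k$ with $p_k I \to f$ in $A^p_\alpha$. As $f$ belongs to the subspace generated by the inner function $I$, the quotient $f/I$ is holomorphic on $\mathbb{D}$ and $p_k \to f/I$ uniformly on compact subsets. Applying \eqref{eq:poly-division} to $p_k - p_\ell$ shows that $\{\Phi_E^\delta p_k\}$ is Cauchy in $A^p_\alpha$, hence converges in norm to some $h \in A^p_\alpha$ with $\|h\|_{A^p_\alpha} \le C_E \|f\|_{A^p_\alpha}$; comparing norm convergence with the locally uniform limit identifies $h = \Phi_E^\delta\, f/I = f^\delta$.

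\emph{The polynomial estimate.} To establish \eqref{eq:poly-division} I would split the disk into $\mathbb{D} \setminus K_E^2$ and the Korenblum star $K_E^2$. On $\mathbb{D} \setminus K_E^2$, Lemma~\ref{gammaF-estimate2}, applied to $I$ with $M = \mu(I)(\overline{\mathbb{D}})$, gives $|I(z)| \ge c(M) > 0$; since $|\Phi_E| \le 1$, this produces the pointwise bound $|\Phi_E^\delta(z) p(z)| \le c(M)^{-1} |p(z) I(z)|$ and hence \eqref{eq:poly-division} on this part, with constant $c(M)^{-1}$. The substance lies in $K_E^2$, where $I$ may vanish. The mechanism is that $\Phi_E$ vanishes to infinite order on $E$: for $z \in K_E^2$ one has $\dist(z,E) \le 2(1-|z|)^{1/2}$, so property~(2) of $\Phi_E$ shows that $|\Phi_E^\delta(z)|$ decays faster than any fixed power of $1-|z|$ as $|z| \to 1$ (the part of $K_E^2$ in a fixed compact subset of $\mathbb{D}$ is harmless). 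One cannot simply divide pointwise, because near the zeros of $I$ one has $|I(z)| < |\Phi_E^\delta(z)|$. Here I would follow Korenblum: since the zeros of $I$ inside $K_E^2$ carry total mass at most $M$, the ``exceptional set'' on which $|I|$ dips below, say, $|\Phi_E^\delta|^{1/2}$ occupies only a small proportion of each dyadic annulus near $\mathbb{S}^1$; one then combines the pointwise division bound off the exceptional set with a crude area estimate on it, using the sub-mean-value property of the subharmonic function $|pI|^p$ to transfer estimates from $p$ to $pI$. This is precisely the computation of \cite[Chapter~7]{HKZ}; the point of the present version is that every constant entering it --- the constant $c(M)$ of Lemma~\ref{gammaF-estimate2}, the constants $C_E(N)$ of property~(2), and the combinatorial and measure-theoretic constants --- depends continuously on $E$ (in the Korenblum topology), on $\delta$ and on $M = \mu(I)(\overline{\mathbb{D}})$, which yields the stated continuous dependence of $C_E$.

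I expect the main obstacle to be exactly this estimate inside the Korenblum star: one must balance the infinite-order vanishing of $\Phi_E$ against the possibly severe vanishing of $I$ near its zeros, a phenomenon invisible at the level of pointwise bounds and forcing the measure-theoretic argument above. By contrast, the reduction to polynomials and the region $\mathbb{D} \setminus K_E^2$ are routine.
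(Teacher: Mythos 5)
Your proposal takes a genuinely different route from the paper's proof, though the overall strategy (reduce to a dense class, then split $\mathbb{D}$ into $\mathbb{D}\setminus K_E^2$ and $K_E^2$) is shared. Where you diverge is precisely in the region $K_E^2$. The paper does not attempt any exceptional-set or measure-theoretic division argument there. Instead, it first reduces to the case where $I$ is a finite Blaschke product (so $f^\delta \in A^p_\alpha$ is automatic and the only issue is a uniform norm bound), and then it (a) invokes the standard pointwise growth estimate $|f(z)| \le C_2 \|f\|_{A^p_\alpha} (1-|z|)^{-\beta}$, (b) observes that on $\partial K_E^2 \cap \mathbb{D}$ the function $|1/I|$ is bounded by Lemma~\ref{gammaF-estimate2} and the infinite-order decay of $\Phi_E^\delta$ (since $\dist(z,E) \asymp (1-|z|)^{1/2}$ on $\partial K_E^2$) absorbs the $(1-|z|)^{-\beta}$ growth, yielding an $L^\infty$ bound $|f^\delta| \le C_3 \|f\|_{A^p_\alpha}$ on $\partial K_E^2$, and then (c) extends this $L^\infty$ bound into the interior of $K_E^2$ by the Phragm\'en--Lindel\"of principle. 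The general case then follows by approximating $I$ by finite Blaschke products and applying Fatou.

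This means your assertion that the difficulty inside the star is ``a phenomenon invisible at the level of pointwise bounds and forcing the measure-theoretic argument'' is not correct as a matter of necessity: the Phragm\'en--Lindel\"of route is exactly a pointwise-bound argument, and it sidesteps the zeros of $I$ entirely by only evaluating $|1/I|$ on $\partial K_E^2$, where Lemma~\ref{gammaF-estimate2} applies. Your exceptional-set scheme is in the spirit of Korenblum's original work and can plausibly be pushed through, but as written it leaves the crucial step vague: on the exceptional set you need a quantitative estimate on $|\Phi_E^\delta p|$ given only information on $|pI|$, and the appeal to ``sub-mean-value property'' needs to be implemented carefully (one would want the exceptional set to consist of pseudohyperbolic disks of small enough radius that averaging over slightly larger disks reaches the non-exceptional region), with all constants tracked to produce the required continuity in $E$, $\delta$, and $M$. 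By comparison, the paper's argument localizes all the delicate work to $\partial K_E^2$, where $|I|$ is uniformly bounded below, and the continuity of $C_E$ comes directly from the continuity built into $\Phi_E$ and the constant in Lemma~\ref{gammaF-estimate2}. If you pursue your version, be aware that your estimate $\dist(z,E) \le 2(1-|z|)^{1/2}$ holds throughout $K_E^2$ (not just on its boundary), which is what makes $\Phi_E^\delta$ useful there, but the heavy lifting around zeros of $I$ still has to be done explicitly rather than swept away by a maximum principle.
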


Assuming Theorem \ref{korenblum-division}, the proof of Theorem \ref{bergman-concentration} runs as follows:

\begin{proof}[Proof of Theorem \ref{bergman-concentration}]

Suppose that a sequence of functions $f_n \in [I_n]$ converges to $f$ in $A^p_\alpha$. Norm convergence implies that the $f_n$ converge to $f$ uniformly on compact subsets of $\mathbb{D}$. By Korenblum's division principle, for a fixed $\delta > 0$, the functions
  $g_n = (\Phi_{n}^\delta/I_n) \cdot f_n(z)$ have bounded
  $A^p_\alpha$ norms  and converge uniformly on compact subsets to
  $$g = (\Phi^\delta/I) \cdot f(z).$$ Fatou's lemma implies that $g \in A^p_\alpha$ and therefore $\Phi^\delta \cdot f = I g \in [I]$.
Taking $\delta \to 0$ shows that $f \in [I]$ and therefore
$[I] \supseteq \limsup_{n \to \infty} \, [I_n]$. By (\ref{eq:automatic-inclusion}), the other inclusion is automatic.
\end{proof}

Since the exact statement of  Theorem \ref{korenblum-division} is not present in Korenblum's work \cite{korenblum}, we give a proof below.

\begin{proof}[Proof of Korenblum's division principle (Theorem \ref{korenblum-division})]
We first consider the case when $I$ is a finite Blaschke product and $E$ is a finite set. Afterwards, we will deduce the general case by a limiting argument.
If $I$ is a finite Blaschke product, it is clear that $f^\delta \in A^p_\alpha$. We need to give a uniform estimate on its norm.

Recall that $K_E^2$ denotes the generalized Korenblum star of order 2, see (\ref{eq:gen-korstar}) for the definition.
According to Lemma \ref{gammaF-estimate2},  $|1/I(z)| \le C \bigl (\mu(I)(\overline{\mathbb{D}}) \bigr )$ is uniformly bounded on $\mathbb{D} \setminus K_E^2$ so that
$|f^\delta(z)| \le C |f(z)|$ there.

To estimate $f^\delta$ on $K_E^2$, we examine its values on the boundary $\partial K_E^2$. It is well known that a  function in Bergman space does not grow too rapidly:
\begin{equation}
\label{eq:simple-bergman-bound}
|f(z)| \le C_2\|f\|_{A^p_\alpha}(1-|z|)^{-\beta}, \qquad z \in \mathbb{D},
\end{equation}
for some $\beta = \beta(p,\alpha) > 0$.
However, the $C^\infty$ decay of the outer function $\Phi^\delta$ cancels out this grows rate on $\partial K_E^2$ and we end up with
$$
|f^\delta(z)| \le C_3 \| f \|_{A^p_\alpha}, \qquad z \in \partial K_E^2.
$$
Since $f^\delta  \in A^p_\alpha$, we can use the Phragm\'en-Lindel\"of principle to conclude that this bound extends to the interior of $K_E^2$. Putting the above estimates together completes the proof when $I$ is a finite Blaschke product.

For the general case, we approximate $I$ uniformly on compact subsets by finite Blaschke products $I_n$ whose zeros are contained in $K_E \supset \mu(I)$. Using the semicontinuity property (\ref{eq:automatic-inclusion}), we may then approximate $f \in [I]$ by $f_n \in [I_n]$ in the $A^p_\alpha$-norm.
By the finite case of the lemma, $f^\delta_n = (\Phi^\delta_E/I_n) f_n(z) \in {A^p_\alpha}$ with $\|f^\delta_n\|_{A^p_\alpha}$ bounded above. By Fatou's lemma, $\|f^\delta\|_{A^p_\alpha} \le \liminf_{n \to \infty} \|f^\delta_n\|_{A^p_\alpha}$ as desired.
\end{proof}

\subsection{Concentrating sequences: general case}

Suppose $I$ is an inner function with $\mu(I) \in M_{\BC}(\overline{\mathbb{D}})$. Suppose $I^N \to I$ is an approximating  sequence of inner functions such that $\mu(I^N) \le \mu(I)$ is supported on a Korenblum star of norm $\le N$. We claim that $[I^N] \to [I]$. The inclusion $\liminf_{N \to \infty} [I^N] \supseteq [I]$ is trivial. Conversely, given a sequence $f^N \in [I^N]$ converging to $f$,  the sequence $f^N(I/I_N) \in [I]$ will also converge to $f$.
Since $[I]$ is closed,  $f \in [I]$ and $\limsup_{N \to \infty} [I^N] \subseteq [I]$, which proves the claim.

\begin{theorem}
Suppose $I_n \to I$ is a  sequence of inner functions which converges uniformly on compact subsets of the disk. If the associated measures $\mu(I_n) \in M_{\BC}(\overline{\mathbb{D}})$ converge in the Korenblum topology, then $[I_n] \to [I]$.
\end{theorem}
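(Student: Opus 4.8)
The plan is to reduce to the special case already proved in Theorem~\ref{bergman-concentration}, together with the elementary divisor-approximation observation recorded at the start of this subsection. As always, the inclusion $[I]\subseteq\liminf_{n}[I_n]$ is automatic from the uniform convergence on compact subsets, by (\ref{eq:automatic-inclusion}); so the real task is $\limsup_n[I_n]\subseteq[I]$. Concretely, given $f_n\in[I_n]$ with $f_n\to f$ in $A^p_\alpha$, I must show that $f\in[I]$.

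First I would peel off a concentrating chunk of bounded Korenblum norm. Fix $\varepsilon>0$. By the definition of Korenblum convergence $\mu(I_n)\to\mu(I)$, there are an $N<\infty$, Beurling--Carleson sets $E_n\to E$ in $\BC$ with $\|E_n\|_{\BC}\le N$, and measures $\omega_n$ with $0\le\omega_n\le\mu(I_n)$, $\supp\omega_n\subseteq K_{E_n}$, $\omega_n\to\omega$ weakly, $(\mu(I_n)-\omega_n)(\overline{\mathbb D})<\varepsilon$ and $(\mu(I)-\omega)(\overline{\mathbb D})<\varepsilon$. Factor $I_n=J_nR_n$ into inner functions whose zero structures are $\omega_n$ and $\mu(I_n)-\omega_n$ respectively, choosing the unimodular constants so that the product is exactly $I_n$. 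The $J_n$ form a normal family, so after passing to a subsequence $J_n\to J$ and $R_n\to R$ uniformly on compact subsets, with $I=JR$; combining Lemma~\ref{weak-log} with the normal-families argument gives $\mu(J)=\omega$, so $\supp\mu(J)\subseteq K_E$, while $\mu(R)=\mu(I)-\omega$ has total mass $<\varepsilon$ (in particular $J$ divides $I$, since $\omega\le\mu(I)$ as weak limits respect order).

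Next I would invoke the special case. Since $R_n$ is inner, $[I_n]=[J_nR_n]\subseteq[J_n]$, so $f_n\in[J_n]$. Now $J_n\to J$ uniformly on compact subsets, $\mu(J_n)=\omega_n$ is supported on $K_{E_n}$, and $E_n\to E$ in $\BC$, so Theorem~\ref{bergman-concentration} applies and gives $[J_n]\to[J]$; hence $f=\lim_n f_n\in[J]$. We have thus produced, for every $\varepsilon>0$, an inner function $J^{(\varepsilon)}$ dividing $I$ with $\mu\bigl(I/J^{(\varepsilon)}\bigr)(\overline{\mathbb D})<\varepsilon$ and $f\in[J^{(\varepsilon)}]$.

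Finally, let $\varepsilon=\varepsilon_k\downarrow0$ and set $J^{(k)}:=J^{(\varepsilon_k)}$. Since the zero structure of $I/J^{(k)}$ has total mass tending to $0$, one checks that $\log\bigl(1/|I/J^{(k)}|\bigr)\to0$ uniformly on compact subsets, so $I/J^{(k)}$ converges to a unimodular constant; absorbing this constant into $J^{(k)}$, which changes neither $[J^{(k)}]$ nor divisibility by $I$, we may assume $J^{(k)}\to I$ uniformly on compact subsets with $J^{(k)}\mid I$. By the divisor-approximation reduction stated at the beginning of this subsection, $[J^{(k)}]\to[I]$ as $k\to\infty$. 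Since $f\in[J^{(k)}]$ for every $k$, the constant sequence $f$ exhibits $f$ as a limit of elements of the $[J^{(k)}]$, so $f\in[I]$ by the definition of the strong topology. Combined with the automatic inclusion, this proves $[I_n]\to[I]$. The main obstacle is structural rather than computational: dividing out the bounded-norm concentrating piece only places $f$ in $[J^{(\varepsilon)}]\supseteq[I]$, never directly in $[I]$, so one is forced into the second limit $\varepsilon\to0$, and making that step work is precisely what the divisor-approximation reduction provides. A secondary technical point is that the zero structures of the factors must pass correctly to the limit — that $\mu(J)=\omega$ with no mass of $\omega_n$ escaping — which is where the Hausdorff convergence $E_n\to E$, the uniform entropy bound $\|E_n\|_{\BC}\le N$, and Lemma~\ref{weak-log} are used.
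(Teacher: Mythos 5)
Your proof is correct and follows essentially the same route as the paper's: peel off a concentrating factor of $I_n$ supported on a Korenblum star, apply Theorem~\ref{bergman-concentration} to that factor, and then pass $\varepsilon\to 0$ via the divisor-approximation observation at the start of the subsection. The paper compresses this into a single displayed chain of inclusions; you have simply spelled out the factorization $I_n=J_nR_n$, the normal-families bookkeeping showing $\mu(J)=\omega$, and the final absorption-of-constants step, all of which are implicit in the paper's one-line argument.
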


\begin{proof}
By the definition of the Korenblum topology, there exist ``approximations''
$I_n^N \to I^N$  supported on  Korenblum stars of norm $\le N$. By Thereom \ref{bergman-concentration},
$$
\limsup_{n \to \infty}\, [I_n] \, \subseteq \, \limsup_{n \to \infty}\, [I_n^N] \, = \, [I^N] \, \to_{N \to \infty} \, [I].
$$
The other inclusion follows from (\ref{eq:automatic-inclusion}).
\end{proof}

\subsection{Diffuse sequences}

\begin{theorem}
\label{bergman-diffuse}
Suppose $I_n \to I$ is a convergent sequence of inner functions such that the associated measures $\mu(I_n)$ are totally diffuse. Then, $[I_n] \to [1]$.
\end{theorem}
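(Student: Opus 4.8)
The plan is to reduce the theorem to the assertion that, after normalising, $I_n\to 1$ uniformly on compact subsets of $\mathbb D$; granting this, both halves of the strong topology are elementary. Two preliminary remarks set this up. First, since $I$ is inner we may fix $z_0\in\mathbb D$ with $I(z_0)\neq 0$, so that $\log\tfrac1{|I_n(z_0)|}\to\log\tfrac1{|I(z_0)|}$; splitting this quantity according to the zeros and the singular measure of $I_n$ and invoking the elementary lower bounds $\log\tfrac1{|b_a(z_0)|}\gtrsim_{z_0}1-|a|$ and $\inf P_{z_0}>0$, we see that the total masses $\mu(I_n)(\overline{\mathbb D})$ are bounded independently of $n$. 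Second, by this mass bound every weak subsequential limit $\omega$ of $\{\mu(I_n)\}$ satisfies, through Lemma \ref{weak-log}, $\log\tfrac1{|I_\omega|}=\lim_n\log\tfrac1{|I_n|}=\log\tfrac1{|I|}$, so that $I_\omega$ coincides with $I$ up to a unimodular constant and $\omega=\mu(I_\omega)=\mu(I)$; hence $\mu(I_n)\to\mu(I)$ weakly on $\overline{\mathbb D}$.

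The core of the argument is to show that total diffuseness forces $\mu(I)=0$. The interior part of $\mu(I)$ is dispatched with Hurwitz's theorem: an interior zero of $I$ at $z^{*}$ would force a fixed positive amount of $\mu(I_n)$-mass into a small ball $B\subset\mathbb D_\rho$ for all large $n$, and since $\mathbb D_\rho\subset K_E$ for a finite $(1-\rho)$-net $E\subset\mathbb S^1$ with $\|E\|_{\BC}\lesssim\log\tfrac1{1-\rho}$, this contradicts $\mu(I_n)(K_E)\to 0$ (uniformly over $\|E\|_{\BC}\le N$). Thus $I=S_\sigma$ with $\sigma=\mu(I)|_{\mathbb S^1}\in M_{\BC}(\mathbb S^1)$, and it remains to rule out $\sigma\neq 0$. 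Write $\mu(I_n)=\nu_n+\sigma_n$ (Blaschke plus singular parts) and let $\pi$ be radial projection onto $\mathbb S^1$. Using the near-boundary expansion $G(z,a)=(1-|a|)P_z(\hat a)+o(1-|a|)$ of the Green's function, one checks that $\log\tfrac1{|I_n(z)|}-\int_{\mathbb S^1}P_z\,d\bigl(\sigma_n+\pi_{*}\nu_n\bigr)\to 0$ for every $z$, whence, by density of Poisson kernels in $C(\mathbb S^1)$, $\sigma_n+\pi_{*}\nu_n\to\sigma$ weakly on $\mathbb S^1$. This sequence is again totally diffuse, because the closed cone over a Beurling--Carleson set $E$ lies inside $K_E$, so $\pi_{*}\nu_n(E)\le\mu(I_n)(K_E)$ and likewise $\sigma_n(E)\le\mu(I_n)(K_E)$. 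Now pick a finite sub-union $E^{*}$ of the Beurling--Carleson sets whose union supports $\sigma$, with $\sigma(E^{*})>0$, and for $\delta>0$ let $U_\delta$ be its open $\delta$-neighbourhood in $\mathbb S^1$: the closure of $U_\delta$ is a Beurling--Carleson set of finite (though $\delta$-dependent) norm, so total diffuseness gives $(\sigma_n+\pi_{*}\nu_n)(\overline{U_\delta})\to 0$, while weak convergence gives $\liminf_n(\sigma_n+\pi_{*}\nu_n)(U_\delta)\ge\sigma(U_\delta)>0$ --- a contradiction. Hence $\mu(I)=0$, $I$ is a unimodular constant, and, replacing each $I_n$ by $\overline c\,I_n$ (which changes neither $[I_n]$ nor $\mu(I_n)$), we may assume $I_n\to 1$ locally uniformly.

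It remains to prove $[I_n]\to[1]=A^p_\alpha$. The inclusion $\limsup_n[I_n]\subseteq A^p_\alpha$ is automatic, each $[I_n]$ being a subspace of $A^p_\alpha$. For $A^p_\alpha\subseteq\liminf_n[I_n]$, fix $f\in A^p_\alpha$; approximating $f$ in norm by polynomials and using $|I_n|\le 1$ shows $fI_n\in[I_n]$, while $\|fI_n-f\|_{A^p_\alpha}^p=\int_{\mathbb D}|f|^p\,|I_n-1|^p\,(1-|z|)^\alpha\,|dz|^2\to 0$ by dominated convergence (the integrand is bounded by $2^p|f|^p(1-|z|)^\alpha\in L^1$ and tends pointwise to $0$). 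Thus $f\in\liminf_n[I_n]$, and $[I_n]\to[1]$.

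The step I expect to be the main obstacle is the vanishing of the singular part $\sigma$: one must upgrade weak convergence of the $\mu(I_n)$ on $\overline{\mathbb D}$ to weak convergence of the ``boundary projections'' $\sigma_n+\pi_{*}\nu_n$ on $\mathbb S^1$ (this is where the boundary expansion of the Green's function enters) and then exploit that a $\delta$-neighbourhood of a finite union of Beurling--Carleson sets is still Beurling--Carleson. In contrast with the merely diffuse case, no Roberts-type division argument is needed here, since the hypotheses already force the masses $\mu(I_n)(\overline{\mathbb D})$ to tend to $0$.
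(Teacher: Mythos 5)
Your reduction collapses at the step where you claim $\sigma=\mu(I)=0$: the statement that ``the closure of $U_\delta$ is a Beurling--Carleson set of finite norm'' is false. By definition a Beurling--Carleson set has zero Lebesgue measure, whereas the closure of an open $\delta$-neighbourhood has positive measure, so $\overline{U_\delta}\notin\BC$ and total diffuseness gives you no control over $(\sigma_n+\pi_{*}\nu_n)(\overline{U_\delta})$. This is not a repairable slip: the conclusion you are trying to reach, that $I$ is a unimodular constant, is genuinely wrong. Example (ii) of the introduction exhibits a totally diffuse sequence with $\mu(I_n)(\overline{\mathbb D})\equiv 1$, converging weakly to $\delta_1$, with $I_n\to I=S_{\delta_1}\neq 1$ uniformly on compact sets. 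In that example $[I_n]\to[1]$ holds even though $I\neq 1$, which is precisely the content of the theorem. Your closing remark that the hypotheses force $\mu(I_n)(\overline{\mathbb D})\to 0$ (so that no Roberts-type division argument is needed) is therefore also incorrect.

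Since $I_n\not\to 1$ in general, the last paragraph of your proposal --- the dominated-convergence argument showing $\|fI_n-f\|_{A^p_\alpha}\to 0$ --- does not apply, and you really do need the quantitative Roberts machinery. The paper proceeds by applying the Roberts decomposition (Theorem \ref{roberts-thm}) to $\mu(I_n)$, using total diffuseness to make the cone part $\nu_{\cone}$ small, and then running Roberts' iterative corona-theorem estimates (\cite[Lemmas 2.3 and 2.4]{roberts}) to bound $d(1,[I_{\overline{\mu}}])$ directly; the point is that the \emph{distance} to $1$ can be made small even when the inner functions themselves stay bounded away from $1$. The preliminary parts of your proposal (uniform mass bound, escape of mass to the boundary, identification of the weak limit of $\mu(I_n)$ with $\mu(I)$ via Lemma \ref{weak-log}) are fine but are not enough by themselves.
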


To prove the above theorem, we closely follow the work of Roberts \cite{roberts}.
Suppose $\mu \in M(\overline{\mathbb{D}})$ is a measure on the closed unit disk which is very close to being diffuse, that is, gives $\le \varepsilon$ mass to any Korenblum star of order $\le N$. We need to show that the
distance $d(1, [I_\mu])$ from the $z$-invariant subspace $[I_\mu] \subset  A^p_\alpha$ to the constant function 1 is small.
We consider the $(c,j_0)$ Roberts decomposition
$$
\mu = (\mu_2 + \mu_3 + \mu_4 + \dots) + \nu_{\cone}
$$
from Section \ref{sec:equidiffuse},
where the parameter $c$ is small and $j_0$ is large. If $N(c, j_0)$ is large, then the assumption on $\mu$ guarantees that
 $\nu_{\cone}(\overline{\mathbb{D}}) \le \varepsilon$.
Set $\overline{\mu} = \mu_2 + \mu_3 + \mu_4 + \dots$.

We may instead show that $d(1, [I_{\overline{\mu}}])$ is small since  the triangle inequality would imply that $d(1, [I_\mu])$ is also small.
In \cite{roberts}, Roberts proved such an estimate for singular inner functions (in which case, the measures $\mu_j$ are supported on the unit circle).
 Roberts' argument is a clever iterative scheme which is quite similar to the one employed in
  Section \ref{sec:equidiffuse}. Actually, the techniques of Section \ref{sec:equidiffuse} are adapted from Roberts' work where  the use of the corona theorem is replaced with estimates on solutions of the Gauss curvature equation.
 In our setting, the function $I_\mu$ might have zeros and therefore $\mu_j$ are measures on the closed unit disk.
 Nevertheless, Roberts' argument (\cite[Lemmas 2.3 and 2.4]{roberts}) extends to this more general case almost verbatim.

\begin{lemma}
[cf.~Lemma 2.3 of \cite{roberts}]
Fix  $\beta>0$ so that $\|z^n\|_{A^p_\alpha} \le n^{-\beta}$ for $n \ge 2$.
Suppose $I$ is an inner function which enjoys the estimate
\begin{equation}
\label{eq:roberts-argument}
|I(z)| \ge n^{-\gamma}, \qquad |z| \le 1 - 1/n.
\end{equation}
If $0 < \gamma < (\beta/3)K$ where $K$ is the constant from the corona theorem and $n \ge N(\gamma)$ is sufficiently large, then there exists a function $g \in H^\infty(\mathbb{D})$ with
\begin{equation}
\|g \|_\infty \le n^{\beta/3}, \qquad
\|1 - gI \|_{A^p_\alpha} \le n^{-2\beta/3}.
\end{equation}
\end{lemma}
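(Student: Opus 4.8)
The plan is to obtain $g$ as one component of a corona solution for a carefully chosen pair of $H^\infty$ functions. Write $r = 1 - 1/n$ and $\delta = n^{-\gamma}$. The hypothesis \eqref{eq:roberts-argument} says precisely that $|I(z)| \ge \delta$ on the disk $\mathbb{D}_r$; to control the complementary annulus $\{\, r < |z| < 1 \,\}$ I would pair $I$ with the function $z^n$, observing that there $|z|^n > r^n = (1-1/n)^n \ge 1/4$, which exceeds $\delta = n^{-\gamma}$ once $n$ is large. Consequently $\max\bigl(|I(z)|,|z|^n\bigr) \ge \delta$ for every $z \in \mathbb{D}$ (interior points near $0$ are handled by the bound on $I$, points near $\mathbb{S}^1$ by $z^n$), and both $I$ and $z^n$ lie in the closed unit ball of $H^\infty$.

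Next I would invoke the quantitative corona theorem on the disk for the pair $(I, z^n)$: there exist $g_1, g_2 \in H^\infty$ with $g_1 I + g_2 z^n \equiv 1$ and $\|g_1\|_\infty, \|g_2\|_\infty \le C_K\, \delta^{-1/K}$, where $K$ is the corona exponent appearing in the statement. Setting $g := g_1$ gives $\|g\|_\infty \le C_K n^{\gamma/K}$, and since $\gamma/K < \beta/3$ this is $\le n^{\beta/3}$ for all $n \ge N(\gamma)$. Moreover $1 - gI = 1 - g_1 I = g_2 z^n$, so
\[
\|1 - gI\|_{A^p_\alpha} \;\le\; \|g_2\|_\infty \,\|z^n\|_{A^p_\alpha} \;\le\; C_K\, n^{\gamma/K} \cdot n^{-\beta},
\]
using the normalization $\|z^n\|_{A^p_\alpha} \le n^{-\beta}$. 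Because $\gamma/K < \beta/3$, the exponent $\gamma/K - \beta$ is strictly less than $-2\beta/3$, so the right-hand side is $\le n^{-2\beta/3}$ once $n$ is large, which is the asserted estimate.

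The only genuinely delicate point is the bookkeeping with the corona exponent: one must use the form of the corona theorem in which the solution of the Bézout equation for two $H^\infty$ functions bounded below by $\delta$ obeys a polynomial bound $\|g_i\|_\infty \le C_K \delta^{-1/K}$, and then verify that the strict inequality $\gamma < (\beta/3)K$ leaves just enough room to absorb both $C_K$ and the stray $n^{\gamma/K}$ factor into the target bounds $n^{\beta/3}$ and $n^{-2\beta/3}$ for all sufficiently large $n$. Everything else — the elementary estimate $(1-1/n)^n \ge 1/4$ for $n \ge 2$, the membership $z^n \in H^\infty$, and the covering of $\mathbb{D}$ by $\mathbb{D}_r$ and the outer annulus with no gap — is routine or built into the hypotheses. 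This is exactly Roberts' argument from \cite[Lemma 2.3]{roberts}, with $z^n$ playing the role of the auxiliary function that ``fills in'' the boundary collar where $I$ is permitted to be small.
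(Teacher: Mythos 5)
Your argument is correct and is exactly the proof the paper has in mind: the paper itself does not reprove this lemma but cites Roberts and asserts that his argument extends ``almost verbatim'' to inner functions with zeros, and what you have written out is precisely Roberts' corona-pair argument with $z^n$ as the auxiliary function, including the correct bookkeeping with the exponent $K$ (so that $\gamma/K < \beta/3$ leaves room to absorb $C_K$ into both target bounds for $n$ large) and the elementary bound $(1-1/n)^n \ge 1/4$ that makes the pair $(I, z^n)$ uniformly bounded below on all of $\mathbb{D}$.
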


Roberts introduced the function $D[\{n_1, n_2, \dots, n_k\}]$ which is defined recursively by $D[\emptyset] = 0$
and
$D[\{n_1, n_2, \dots, n_k\}] = n_1^{\beta/3}  D[\{n_2, n_3, \dots, n_k\}] + n_1^{-2\beta/3}.$
 In view of monotonicity, this definition naturally extends to infinite sequences.

\begin{lemma}
[cf.~Lemma 2.4 of \cite{roberts}]
  Suppose
   $I_{0}, I_{1}, \dots, I_{k-1}$ are inner functions such that
\begin{equation}
\label{eq:roberts-argument2}
|I_{j}(z)| \ge n_j^{-\gamma}, \qquad |z| \le 1 - 1/{n_j}, \qquad  j=0,1,\dots,k-1.
\end{equation}
Assume that $\min(n_0, n_1, \dots, n_{k-1}) \ge N(\gamma)$.
If $I = \prod_{j=0}^{k-1} I_{j}$ then $d(1,[I]) \le D[\{n_j\}]$.
\end{lemma}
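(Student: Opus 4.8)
The plan is to induct on the number of factors $k$, with the preceding lemma (the analogue of Roberts' Lemma 2.3) serving as the single-step input. The base case $k=1$ is that lemma itself: applying it to $I = I_0$ — legitimate since $n_0 \ge N(\gamma)$ and $0 < \gamma < (\beta/3)K$ — yields $g_0 \in H^\infty(\mathbb{D})$ with $\|g_0\|_\infty \le n_0^{\beta/3}$ and $\|1 - g_0 I_0\|_{A^p_\alpha} \le n_0^{-2\beta/3}$; since $g_0 I_0 \in [I_0]$ (as $[I_0]$ is $H^\infty$-invariant, see below) and $D[\{n_0\}] = n_0^{\beta/3} D[\emptyset] + n_0^{-2\beta/3} = n_0^{-2\beta/3}$, we conclude $d(1,[I_0]) \le D[\{n_0\}]$.

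For the inductive step, write $I = I_0 \cdot J$ with $J = \prod_{j=1}^{k-1} I_j$. The factors of $J$ satisfy (\ref{eq:roberts-argument2}) and $\min(n_1, \dots, n_{k-1}) \ge N(\gamma)$, so the inductive hypothesis gives $d(1,[J]) \le D[\{n_1, \dots, n_{k-1}\}]$; choose $h \in [J]$ with $\|1 - h\|_{A^p_\alpha}$ arbitrarily close to this value. Apply the preceding lemma to $I_0$ to obtain $g_0$ as above. The candidate approximant to $1$ inside $[I]$ is $g_0 I_0 h$. That it lies in $[I]$ rests on two routine facts: multiplication by the inner function $I_0$ is a contraction on $A^p_\alpha$ sending the generators $pJ$ of $[J]$ to the generators $pI$ of $[I]$, hence $I_0[J] \subseteq [I]$; and $[I]$, being closed and $z$-invariant, is invariant under multiplication by any $H^\infty$ function, which one checks by approximating $g_0$ in $A^p_\alpha$-norm by the disk-algebra functions $z \mapsto g_0(rz)$ as $r \to 1$, each a uniform limit of polynomials. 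Then the identity
$$
1 - g_0 I_0 h \;=\; (1 - g_0 I_0) \;+\; g_0 I_0 \,(1 - h)
$$
together with $\|g_0 I_0\|_\infty \le \|g_0\|_\infty \le n_0^{\beta/3}$ and $\|1 - g_0 I_0\|_{A^p_\alpha} \le n_0^{-2\beta/3}$ gives
$$
d(1,[I]) \;\le\; \|1 - g_0 I_0 h\|_{A^p_\alpha} \;\le\; n_0^{-2\beta/3} + n_0^{\beta/3}\,\|1 - h\|_{A^p_\alpha},
$$
and letting $\|1 - h\|_{A^p_\alpha} \to D[\{n_1, \dots, n_{k-1}\}]$ the right side tends to $n_0^{\beta/3} D[\{n_1, \dots, n_{k-1}\}] + n_0^{-2\beta/3} = D[\{n_0, n_1, \dots, n_{k-1}\}]$, as desired. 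Equivalently, unwinding the recursion shows $g_0 I_0 \cdots g_{k-1} I_{k-1} \in [I]$, that $1 - \prod_j g_j I_j$ telescopes as $\sum_j \bigl(\prod_{i<j} g_i I_i\bigr)(1 - g_j I_j)$, that the $j$-th summand has $A^p_\alpha$-norm at most $\bigl(\prod_{i<j} n_i^{\beta/3}\bigr) n_j^{-2\beta/3}$, and that summing these reproduces $D[\{n_j\}]$.

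The substantive content — the corona-theoretic construction of the multipliers $g_j$ — has already been packaged in the preceding lemma, so the only points needing care are the functional-analytic facts that multiplication by an inner function is contractive on $A^p_\alpha$ and that $[I]$ is $H^\infty$-invariant (which keep $g_0 I_0 h$ inside $[I]$ with the correct norm bound), together with the harmless $\varepsilon$-bookkeeping in replacing the infimum $d(1,[J])$ by an actual element $h$. I do not anticipate any genuine obstacle beyond organizing the recursion so that the constants reassemble into $D[\{n_j\}]$.
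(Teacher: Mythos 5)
Your proof is correct and is essentially a reconstruction of Roberts' original argument (Lemma 2.4 of \cite{roberts}), which the paper does not reprove but simply cites, noting that it "extends to this more general case almost verbatim." The single-step input (the corona-based construction of $g_0$), the recursive peeling $I = I_0 \cdot J$, the telescoping identity $1 - g_0I_0h = (1-g_0I_0) + g_0I_0(1-h)$, and the $H^\infty$-invariance of $[I]$ (via dilation $g_0(rz)$) are all present in Roberts and correctly handled here; the only point the paper flags as requiring checking — that the argument survives replacing purely singular inner factors by generalized inner functions with zeros — is automatic in your write-up, since you use the preceding lemma as a black box and the remaining steps are purely functional-analytic.
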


Roberts noticed that if $j_0(\beta) \ge 1$ is large, then the sequence of integers $n_j = 2^{2^{(j+j_0)}}$ in the Roberts decomposition (Theorem \ref{roberts-thm}) is sufficiently sparse to ensure that $D[\{n_j\}]$ is small.
By Lemma  \ref{comparison},  $I_{n_j} := I_{\mu_{j+2}}$ verifies the condition
(\ref{eq:roberts-argument2}) for some $\gamma > 0$, which shows that $d(1,[I_{\overline{\mu}}])$ is small.
 This completes our sketch of
Theorem \ref{bergman-diffuse}. We leave the details to the reader.

\begin{remark}
In the special case of the weighted Bergman space $A^2_1$, we can give an alternative
argument based on the methods of this paper. By the Korenblum-Roberts theorem, we may assume that $\mu(I_n) \in M_{\BC}(\overline{\mathbb{D}})$. For each $I_n$, we may
form an inner function $F_n$ with $F_n(0) = 0$ and $\inn F_n' = I_n$. According to Theorem \ref{equidiffuse-thm}, $F_n \to z$
 uniformly on compact subsets. However, the bound $\|F_n\|_{H^\infty} \le 1$ implies that $\|F_n\|_{H^2} \le \|z\|_{H^2}$ which forces $F_n \to z$ to converge in the $H^2$-norm. The Littlewood-Paley formula
$$
\| F_n \|_{H^2} = \frac{1}{\pi} \int_{\mathbb{D}} |F'_n|^2 \log \frac{1}{|z|^2} \, |dz|^2 \asymp \|F'_n\|_{A^2_1}
$$
then shows that $F'_n \to 1$ in the $A^2_1$-norm.
Since $F'_n \in [I_n]$,
$$
\lim_{n \to \infty} [I_n] \supset \lim_{n \to \infty} [F'_n] \supset [1] = A^2_1.
$$
\end{remark}

To complete the proof of Theorem \ref{main-thm3}, we need to show that if $I_{\mu_n} \to I_\mu$ is a sequence
of inner functions whose zero structures $\mu(I_n) \in M_{\BC}(\overline{\mathbb{D}})$ do not converge in the Korenblum topology
to $\mu(I) \in M_{\BC}(\overline{\mathbb{D}})$, then the invariant subspaces $[I_{\mu_n}]$ do not converge to $[I_\mu]$. In view of the strategy outlined in Section \ref{sec:diffuse-lose-mass}, we only need to show:

\begin{lemma}
Suppose $I_{\tau_n} \to I_{\tau}$ and $I_{\nu_n} \to I_{\nu}$ are two sequences of inner functions with
$\tau \in M_{\BC}(\overline{\mathbb{D}})$.
If $[I_{\tau_n}]$ do not converge to $[I_\tau]$, then $[I_{\nu_n + \tau_n}]$ cannot converge to $[I_{\nu+\tau}]$.
\end{lemma}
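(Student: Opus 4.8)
The plan is to prove the Bergman-space counterpart of Lemma~\ref{some-mass-is-lost}, with the inequality $u_{\omega_1}-u_{\omega_1+\omega_2}\le\log(1/|I_{\omega_2}|)$ used there replaced by the inclusion $I_{\omega_2}\cdot[I_{\omega_1}]\subseteq[I_{\omega_1+\omega_2}]$. Arguing by contradiction, assume $[I_{\nu_n+\tau_n}]\to[I_{\nu+\tau}]$; we must derive $[I_{\tau_n}]\to[I_\tau]$. By the automatic inclusion~(\ref{eq:automatic-inclusion}), $\liminf_n[I_{\tau_n}]\supseteq[I_\tau]$, so it suffices to check $\limsup_n[I_{\tau_n}]\subseteq[I_\tau]$. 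Let $f_n\in[I_{\tau_n}]$ with $f_n\to f$ in $A^p_\alpha$. Since $I_{\nu_n}\in H^\infty$, multiplication by $I_{\nu_n}$ is bounded on $A^p_\alpha$, and since $I_{\tau_n}I_{\nu_n}$ is an inner function with zero structure $\nu_n+\tau_n$, we get $f_nI_{\nu_n}\in[I_{\tau_n}I_{\nu_n}]=[I_{\nu_n+\tau_n}]$; moreover $f_nI_{\nu_n}\to fI_\nu$ in $A^p_\alpha$, because $f_n\to f$ in norm and $I_{\nu_n}\to I_\nu$ uniformly on compact subsets (conclude by dominated convergence, the relevant integrands being bounded by $2|f|$). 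Hence $fI_\nu\in\lim_n[I_{\nu_n+\tau_n}]=[I_{\nu+\tau}]$. The lemma will therefore follow once we establish the division statement $(\star)$: \emph{if $g\in A^p_\alpha$ and $I_\nu g\in[I_{\nu+\tau}]$, then $g\in[I_\tau]$}.

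The easy half of $(\star)$ — that $g\in[I_\tau]$ implies $I_\nu g\in[I_{\nu+\tau}]$ — is immediate from norm-approximating $g$ by $p_kI_\tau$. The content is to cancel the factor $I_\nu$, and this is the main obstacle: multiplication by $I_\nu$ is not bounded below on $A^p_\alpha$ (its modulus decays near $\mathbb{S}^1$), so one cannot simply divide after passing to the limit. The way around it is Korenblum's division principle (Theorem~\ref{korenblum-division}), which, however, requires the zero structure to sit on a single Korenblum star. Since $\tau,\nu\in M_{\BC}(\overline{\mathbb{D}})$ — for $\nu$ this is the standing assumption of the section, valid in the application of Section~\ref{sec:diffuse-lose-mass}, where $\nu_n$ is the concentrating part of a Roberts decomposition — one can choose $\nu^M\le\nu$ and $\tau^N\le\tau$ supported on Korenblum stars $K_{F_M}$, $K_{E_N}$ with $(\nu-\nu^M)(\overline{\mathbb{D}})\to0$ and $(\tau-\tau^N)(\overline{\mathbb{D}})\to0$: the boundary parts live on countable unions of Beurling--Carleson sets, and the interior parts are captured by also adjoining to $F_M$ and $E_N$ sufficiently fine finite nets of points of $\mathbb{S}^1$. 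Writing $I_\nu g=I_{\nu^M}\cdot(I_{\nu-\nu^M}g)$ and using $\nu^M+\tau^N\le\nu+\tau$, we get $I_\nu g\in[I_{\nu+\tau}]\subseteq[I_{\nu^M+\tau^N}]$, whose zero structure is supported on the single Korenblum star $K_{F_M\cup E_N}$.

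Now for every $\delta>0$, Korenblum's division principle gives
\[
\Phi^\delta_{F_M\cup E_N}\cdot\frac{I_{\nu-\nu^M}\,g}{I_{\tau^N}}\;=\;\Phi^\delta_{F_M\cup E_N}\cdot\frac{I_\nu g}{I_{\nu^M+\tau^N}}\;\in\;A^p_\alpha,
\]
and combining this with the elementary fact (already used in the proof of Theorem~\ref{bergman-concentration}) that $I\cdot A^p_\alpha\subseteq[I]$ for any inner function $I$ yields $\Phi^\delta_{F_M\cup E_N}\,I_{\nu-\nu^M}\,g\in[I_{\tau^N}]$. Letting $\delta\to0$ — with $\Phi^\delta_{F_M\cup E_N}\to1$ pointwise and $\|\Phi^\delta\|_\infty\le1$ — dominated convergence gives $I_{\nu-\nu^M}\,g\in[I_{\tau^N}]$. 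Letting $N\to\infty$: since $\mu(I_\tau/I_{\tau^N})=\tau-\tau^N\to0$, one has $I_\tau/I_{\tau^N}\to1$ uniformly on compact subsets (the modulus tends to $1$ and, by the product formula, so does the argument), and the argument used in the general concentrating case above upgrades $I_{\nu-\nu^M}\,g\in[I_{\tau^N}]$ for all $N$ to $I_{\nu-\nu^M}\,g\in[I_\tau]$. Finally, letting $M\to\infty$: $I_{\nu-\nu^M}\to1$ uniformly on compact subsets since $\mu(I_{\nu-\nu^M})=\nu-\nu^M\to0$, so $I_{\nu-\nu^M}g\to g$ in $A^p_\alpha$ by dominated convergence, and as $[I_\tau]$ is closed we conclude $g\in[I_\tau]$. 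This proves $(\star)$ and with it the lemma.

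In summary, the routine ingredients are the product formula $I_{\nu_n+\tau_n}=I_{\nu_n}I_{\tau_n}$, the inclusion $I\cdot A^p_\alpha\subseteq[I]$, and the boundedness and dominated continuity of multiplication by $H^\infty$ functions; the entire difficulty is concentrated in the division step $(\star)$, where the combination of the Korenblum-star approximation with the infinite-order vanishing of the outer functions $\Phi_E$ is what makes the cancellation of $I_\nu$ legitimate.
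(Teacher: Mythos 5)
Your proof is correct (for the application at hand), but it takes a genuinely different and considerably heavier route than the paper's. You argue by contraposition: assuming $[I_{\nu_n+\tau_n}]\to[I_{\nu+\tau}]$, you push a convergent sequence $f_n\in[I_{\tau_n}]$ forward by multiplying with $I_{\nu_n}$ (easy, since multiplication by inner functions is bounded on $A^p_\alpha$ and dominated convergence handles the limit), and then face the real difficulty: the division statement $(\star)$, that $I_\nu g\in[I_{\nu+\tau}]$ forces $g\in[I_\tau]$. Your proof of $(\star)$ via Korenblum's division principle (approximating both $\nu$ and $\tau$ by parts supported on Korenblum stars, dividing by the outer function $\Phi^\delta$, then letting $\delta\to 0$, $N\to\infty$, $M\to\infty$) is carefully executed and correct. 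However, it needs $\nu\in M_{\BC}(\overline{\mathbb D})$ — which the lemma as stated does not assume, only $\tau\in M_{\BC}(\overline{\mathbb D})$. You flag this and note it holds in the intended application (the concentrating/diffuse decomposition of Section~\ref{sec:diffuse-lose-mass}; to be precise it is that decomposition, not the Roberts decomposition, which supplies $\nu_n\to\nu$ concentrating), so this is a restriction rather than an error, but it does mean you are proving a slightly weaker statement.

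The paper's proof is shorter precisely because it avoids division altogether. It works \emph{forward}: passing to a subsequence so that $[I_{\tau_n}]\to[I_{\tau^*}]$ for some $\tau^*<\tau$, it extracts polynomials $p_n$ with $I_{\tau_n}p_n\to I_{\tau^*}$ in norm, and simply multiplies by $I_{\nu_n}$ to get $I_{\nu_n+\tau_n}p_n\to I_{\nu+\tau^*}$. The automatic inclusion~(\ref{eq:automatic-inclusion}) then gives $\liminf_n[I_{\nu_n+\tau_n}]\supseteq[I_{\nu+\tau^*}]\supsetneq[I_{\nu+\tau}]$, the strictness coming from $\tau-\tau^*\in M_{\BC}(\overline{\mathbb D})$ being nonzero, hence charging a Beurling--Carleson set. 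This multiplicative argument requires no Korenblum division, no outer function $\Phi_E$, and no hypothesis on $\nu$ at all. The trade-off is that the paper's proof quietly appeals to the fact that the subsequential limit of the $[I_{\tau_n}]$ is again of $\kappa$-Beurling type (i.e.\ equals $[I_{\tau^*}]$ for some measure $\tau^*\le\tau$), which your approach does not need. In short: your route is self-contained modulo Korenblum division but lengthier and slightly less general; the paper's is a two-line multiplication argument that offloads the structural work onto the Korenblum--Roberts classification.
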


\begin{proof}
Passing to a subsequence, we may assume that $[I_{\tau_n}] \to [I_{\tau^*}]$ for some $\tau^* < \tau$.
This means that there exists a sequence of polynomials $\{p_n\}$ such that $I_{\tau_n} p_n \to I_{\tau^*}$ in $A^p_\alpha$. Since $I_{\nu_n + \tau_n}p_n \to I_{\nu+\tau^*}$ in $A^p_\alpha$,  $\liminf_{n \to \infty} [I_{\nu_n + \tau_n}] \supseteq [I_{\nu+ \tau^*}]$. However, $[I_{\nu+ \tau^*}] \supsetneq [I_{\nu+ \tau}]$ as $\tau \in M_{\BC}(\overline{\mathbb{D}})$. The proof is complete.
\end{proof}

\appendix
\section[Appendix A. Entropy of universal covering maps]{Entropy of universal covering maps}

Let $m$ be the Lebesgue measure on the unit circle, normalized to have unit mass.
It is well known that if $F$ is an inner function with $F(0) = 0$, then $m$ is $F$-invariant, i.e.~$m(E) = m(F^{-1}(E))$ for any measurable set $E \subset \mathbb{S}^1$. In the work  \cite{craizer}, M.~Craizer showed that if $F \in \mathscr J$, then the integral $$\int_{|z|=1} \log|F'(z)| dm$$ has the dynamical interpretation as the measure-theoretic entropy of $m$. It is therefore of interest to compute it in special cases.
For finite Blaschke products, one may easily compute the entropy using Jensen's formula:
\begin{theorem}
Suppose $F$ is a finite Blaschke product with $F(0)=0$ and $F'(0) \ne 0$. We have
\begin{equation}
\label{eq:maxjensen}
\frac{1}{2\pi} \int_{|z|=1} \log|F'(z)| d\theta \, = \, \sum_{\crit} \log \frac{1}{|c_i|} -  \sum_{\zeros} \log \frac{1}{|z_i|},
\end{equation}
where in the sum over the zeros of $F$, we omit the trivial zero at the origin.
\end{theorem}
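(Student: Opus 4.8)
The statement is elementary: it follows from Jensen's formula applied to the rational function $F'$, and none of the machinery developed above is needed. First I would record the relevant properties of $F'$. Since $F$ is a finite Blaschke product of degree $d$, it is a rational function whose only poles lie at the reflected points $1/\overline{a_j}$ outside $\overline{\mathbb D}$, so $F'$ is holomorphic on a neighbourhood of $\overline{\mathbb D}$. Moreover $F'$ is zero-free on $\mathbb S^1$: the logarithmic derivative satisfies $\tfrac{F'(z)}{F(z)} = \sum_j \tfrac{1-|a_j|^2}{(z-a_j)(1-\overline{a_j}z)}$, and using $1-\overline{a_j}z = z\,\overline{(z-a_j)}$ for $|z|=1$ one gets $|F'(e^{i\theta})| = \sum_j \tfrac{1-|a_j|^2}{|e^{i\theta}-a_j|^2} > 0$. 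Finally, the zeros of $F'$ inside $\mathbb D$ are, by definition, the critical points $c_1,\dots,c_{d-1}$ of $F$, of which a degree-$d$ Blaschke product has exactly $d-1$, counted with multiplicity.

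Next I would apply Jensen's formula to $F'$ on the unit disk. This is legitimate because $F'(0)\neq 0$ by hypothesis and $F'$ has no zeros on $\mathbb S^1$, so
\begin{equation*}
\frac{1}{2\pi}\int_{0}^{2\pi}\log|F'(e^{i\theta})|\,d\theta \;=\; \log|F'(0)| \;+\; \sum_{i}\log\frac{1}{|c_i|}.
\end{equation*}
It remains to evaluate the constant term. Writing the zeros of $F$ as $0, z_1, \dots, z_{d-1}$ — singling out the simple zero at the origin, which is exactly what $F'(0)\neq 0$ permits — we factor $F(z) = z\,H(z)$, where $H$ is a finite Blaschke product with zeros $z_1,\dots,z_{d-1}$ and a unimodular leading constant. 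Then $F'(0) = H(0)$ and $|H(0)| = \prod_i |z_i|$, so $\log|F'(0)| = -\sum_i \log\frac{1}{|z_i|}$. Substituting this into the displayed identity yields (\ref{eq:maxjensen}).

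There is no genuine obstacle here; the only points that merit a little care are the observation that $F'$ extends holomorphically across $\mathbb S^1$ without acquiring zeros there (so that Jensen's formula needs no boundary correction) and the elementary identity $|F'(0)| = \prod_i |z_i|$. Equivalently, one may apply Jensen's formula directly to $F$ and read off $\log|F'(0)| = \log\bigl|(F/z)(0)\bigr|$ from the fact that $F$ vanishes to first order at the origin.
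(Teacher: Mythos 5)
Your proof is correct and is exactly the argument the paper has in mind; the paper simply remarks that the identity follows from Jensen's formula and leaves the computation to the reader. You supply the two needed ingredients cleanly: Jensen's formula applied to $F'$ (legitimate since $F'$ is holomorphic across $\mathbb{S}^1$, zero-free there, and nonvanishing at $0$), together with the evaluation $\log|F'(0)| = -\sum_i \log\frac{1}{|z_i|}$ from the factorization $F(z)=zH(z)$.
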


In this appendix, we discuss a complementary example:

\begin{theorem}[Pommerenke]
\label{pomm}
Let $P$ be a relatively closed subset of the unit disk not containing 0. Let $\mathcal U_{P}: \mathbb{D} \to \mathbb{D} \setminus P$ be the universal covering map, normalized so that $\mathcal U_P(0) = 0$ and $\mathcal U'_P(0) >  0$. Then $\mathcal U_P \in \mathscr J$ if and only if $P$ is a Blaschke sequence, in which case
\begin{equation}
\label{eq:pomm}
\frac{1}{2\pi} \int_{|z|=1} \log|\mathcal U'_P(z)| d\theta \, = \, \sum_{p_i \in P} \log \frac{1}{|p_i|} -  \sum_{\zeros} \log \frac{1}{|z_i|}.
\end{equation}
\end{theorem}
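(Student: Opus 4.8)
The plan is to mimic the derivation of (\ref{eq:maxjensen}): combine the canonical factorizations of $\mathcal U_P$ and of $\mathcal U_P'$ with the general entropy identity. That identity states that for any $F \in \mathscr J$ with $F(0) = 0$ and $F'(0) > 0$,
\begin{equation*}
\frac{1}{2\pi} \int_{|z|=1} \log|F'|\, d\theta \; = \; \log F'(0) \; + \; \log \frac{1}{|\inn F'(0)|},
\end{equation*}
and is proved exactly as for finite Blaschke products: $\log|F'| = \log|\inn F'| + \log|\out F'|$, the first term vanishes a.e.\ on the circle, and $\log|\out F'|$ has the mean-value property. Since the covering map $\mathcal U_P$ has no critical points, $\inn \mathcal U_P' = S_\sigma$ is a \emph{purely singular} inner function (consistent with Theorem~\ref{main-thm}), so the last term equals $\sigma(\mathbb{S}^1)$. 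On the other side, $\mathcal U_P$ is a bounded holomorphic function whose zero set $Z = \mathcal U_P^{-1}(0)$ --- the orbit of $0$ under the deck group --- is a Blaschke sequence; moreover $\mathcal U_P$ has no singular inner factor, since at each boundary point its radial limit is either unimodular or one of the points of $P \subset \mathbb{D}\setminus\{0\}$, never an infinite-order boundary zero, so $\mathcal U_P$ is a Blaschke product and $\log \mathcal U_P'(0) = -\sum_{z_i \in Z,\, z_i \ne 0} \log \frac{1}{|z_i|}$. Granting these two reductions, (\ref{eq:pomm}) becomes the single assertion
\begin{equation*}
\sigma(\mathbb{S}^1) \; = \; \sum_{p \in P} \log \frac{1}{|p|},
\end{equation*}
which in particular is finite if and only if $P$ satisfies the Blaschke condition.

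To pin down $\sigma$, I would pass through the composition $B_P \circ \mathcal U_P$, where $B_P$ is the Blaschke product with zero set $P$. As $\mathcal U_P$ omits $P$, this composition is zero-free; it is bounded and has unimodular radial limits a.e., so it is a zero-free inner function $c\,S_\rho$ with $\rho$ singular and $\rho(\mathbb{S}^1) = \log\frac{1}{|B_P(0)|} = \sum_{p \in P}\log\frac{1}{|p|}$. The heart of the matter is then the identity
\begin{equation*}
\inn(\mathcal U_P') \; = \; c' \cdot (B_P \circ \mathcal U_P), \qquad |c'| = 1,
\end{equation*}
which yields $\sigma = \rho$ and hence (\ref{eq:pomm}). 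To establish it I would analyze $\mathcal U_P$ near each parabolic fixed point $\xi$ of the deck group, where it is conformally modelled on the universal covering of a once-punctured disk, i.e.\ on $S_{\delta_1}(z) = \exp\bigl(\frac{z+1}{z-1}\bigr)$, whose derivative has inner factor precisely $S_{\delta_1}$; the single-puncture case can in fact be computed outright --- one finds $\mathcal U_{\{p\}}(z) = \frac{p - S_{\mu\delta_1}(z)}{1 - p\,S_{\mu\delta_1}(z)}$ with $\mu = \log\frac{1}{|p|}$, for which $\inn \mathcal U_{\{p\}}' = S_{\mu\delta_1} = -\,B_{\{p\}}\!\circ\mathcal U_{\{p\}}$, verifying both the identity and (\ref{eq:pomm}) directly. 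The total masses are then controlled by the L\"owner relation $(\mathcal U_P)_* m = m$ (valid since $\mathcal U_P(0)=0$) applied to the Frostman shifts $\frac{\mathcal U_P - w}{1 - \bar w \mathcal U_P}$ via the Poisson--Jensen formula, and the general case is obtained by approximating $P$ by its finite truncations $P_n \uparrow P$: the covering maps converge, $\mathcal U_{P_n} \to \mathcal U_P$, locally uniformly, and one transfers the identity to the limit using the stable-convergence machinery of Section~\ref{sec:concentrating}.

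I expect this last transfer, together with the Blaschke-or-not dichotomy, to be the main obstacle. When $P$ is Blaschke one must preclude loss of singular mass as $n \to \infty$, which is exactly the ``no loss'' phenomenon controlled by the Korenblum topology, so I would aim to invoke Theorems~\ref{concentrating-thm} and~\ref{concentrating-thm4} after checking that the finitely-atomic measures $\rho_n$ (supported on the finitely many parabolic fixed points of the truncated groups, hence on Beurling--Carleson sets) converge in that topology. When $P$ fails the Blaschke condition, $B_P$ degenerates and $\rho(\mathbb{S}^1) = \sum_p \log\frac{1}{|p|} = \infty$; the same local analysis then forces the deficiency $u_{\mathbb{D}} - u_{\mathcal U_P}$ to carry infinite boundary mass, so $\mathcal U_P' \notin$ Nevanlinna and $\mathcal U_P \notin \mathscr J$ --- making this precise requires a monotonicity statement, namely that the deficiency grows as punctures are added (which follows by writing $\mathcal U_P = \mathcal U_{P_n} \circ h$), and this is the remaining technical point.
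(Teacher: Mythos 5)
Your proposal takes a genuinely different route from the paper. The paper deliberately bypasses Pommerenke's pointwise identity $\inn \mathcal U_P' = c\,(B_P \circ \mathcal U_P)$ and instead proves the entropy formula directly: for finite $P$, it uses Stephenson's gluing/stacking construction to manufacture finite Blaschke products $F_n \to \mathcal U_P$ with critical values in $P$; the entropy semicontinuity of \cite[Theorem 4.2]{inner} gives the upper bound, while the lower bound follows from (\ref{eq:sing-comparison}) together with (\ref{eq:ncf}) applied with $F^{-1}(p_i) = \emptyset$, which pins down $\sigma(F_{p_i}) = \log\frac{1}{|p_i|}$. The infinite case and the ``only if'' direction then follow from the monotonicity Lemma A.6. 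You instead reduce everything to identifying $\inn \mathcal U_P'$ with $\sing(B_P \circ \mathcal U_P)$, which is Pommerenke's actual theorem and strictly stronger than the integrated statement. If completed, your route would recover more, but it is also harder --- the paper introduces the gluing construction precisely to avoid it.

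There are two genuine gaps. First, the heart of the matter --- the upper bound $\sigma(\mathbb S^1) \le \sum_{p} \log\frac{1}{|p|}$, equivalently that $\inn \mathcal U_P'$ carries no singular mass beyond $\prod_i \sing \mathcal U_{P,p_i}$ --- is not established by your argument. The one-puncture computation and the local parabolic model at a single fixed point $\xi$ give only local information; they do not bound the total mass of $\sigma$, nor do they rule out singular mass at non-parabolic points or over-counting at accumulating parabolic points. The proposed transfer ``via the stable-convergence machinery'' presupposes the very conclusion you need: to invoke Theorem \ref{concentrating-thm} you must first show that the measures $\rho_n$ associated to the truncations $P_n$ concentrate in the Korenblum topology, and the parabolic fixed points of the $P_n$ accumulate on the full limit set, which in general is not a Beurling--Carleson set. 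So there is no obvious Korenblum star into which the $\rho_n$ are trapped; verifying concentration is essentially equivalent to the upper bound itself. The paper's gluing construction is exactly the mechanism that produces the needed control for free. Second, the claim that $\mathcal U_P$ is a Blaschke product (no singular inner factor), which you need to write $\log \mathcal U_P'(0) = -\sum_{\zeros}\log\frac{1}{|z_i|}$, is asserted with invalid reasoning: ``radial limits are unimodular or lie in $P$'' does not distinguish Blaschke products from inner functions with singular factor, since every inner function has unimodular radial limits a.e. This Blaschke-ness is implicitly used in the paper's final formula as well, but you present an argument for it that does not work. The $\ge$ direction (divisibility $\sing(B_P \circ \mathcal U_P) \mid \inn \mathcal U_P'$) is fine and your mention of the L\"owner relation and Poisson--Jensen points in the right direction, as does the factorization $\mathcal U_P = \mathcal U_{P_n} \circ h$ for the ``only if'' part; those pieces agree with the paper.
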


A theorem of Frostman says that $\mathcal U_P$ is an inner function if and only if the set $P$ has logarithmic capacity 0, see \cite[Chapter 2.8]{collingwood-lohwater}. In particular, $\mathcal U_P$ is inner if
$P$ is countable.

For brevity, we will write $F = \mathcal U_P$.
While Pommerenke did not explicitly state (\ref{eq:pomm}),  in the work \cite{pommerenke}, he proved
the equivalent statement
\begin{equation}
\inn F'(z) \, = \, \prod_{i=1}^k F_{p_i}(z) \, = \,  \prod_{i=1}^k  \frac{F(z)- p_i}{1-\overline{p_i}F(z)},
\end{equation} so we feel that it is appropriate to name the above theorem after him.
Actually, Pommerenke worked in the significantly greater generality of Green's functions for Fuchsian groups of Widom type, so this is only a special case of his result.
Below, we give a  direct proof of Theorem \ref{pomm} which may be of independent interest.

 \subsection{Preliminaries}

We first recall a well known property of Nevanlinna averages:
 \begin{lemma}
 \label{nevanlinna-gap}
If $f \in \mathcal N$ is a function in the Nevanlinna class and is not identically 0, then
 \begin{equation}
\label{eq:nevanlinna-gap}
 \frac{1}{2\pi} \int_{|z|=1}  \log|f(z)| d\theta -
\lim_{r \to 1} \biggl \{ \frac{1}{2\pi} \int_{|z|=r}  \log|f(z)| d\theta \biggr \} = \sigma(f)(\mathbb{S}^1).
\end{equation}
 \end{lemma}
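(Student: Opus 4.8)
The plan is to reduce the identity to a factor-by-factor computation on the canonical factorization of a Nevanlinna function. Since $f\in\mathcal N$ is not identically zero, it has radial limits $f^*$ a.e.\ on $\mathbb S^1$ with $\log|f^*|\in L^1(\mathbb S^1)$, and it admits the classical factorization
\[
f \;=\; c\,B\,\frac{S_{\mu_1}}{S_{\mu_2}}\,O_f,
\]
where $|c|=1$, $B$ is the Blaschke product over the zeros $\{a_i\}$ of $f$ (which satisfy the Blaschke condition), $\mu_1\perp\mu_2$ are mutually singular positive measures on $\mathbb S^1$, and $O_f$ is outer; here $\sigma(f)$ is the measure $\mu_1-\mu_2$ attached to the singular inner part of $f$ (in the Smirnov case relevant to the appendix, $\mu_2=0$ and $\sigma(f)$ is genuinely positive). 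Because the boundary moduli of $B$, $S_{\mu_1}$ and $S_{\mu_2}$ are all $1$ a.e., we have $\log|f^*|=\log|O_f^*|$ a.e., so
\[
\frac{1}{2\pi}\int_{|z|=1}\log|f|\,d\theta \;=\; \frac{1}{2\pi}\int_{\mathbb S^1}\log|O_f^*|\,d\theta \;=\; \log|O_f(0)|,
\]
which in particular is finite. It remains to evaluate the limit of the interior averages.

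For this I would apply Jensen's formula to the holomorphic function $f$ directly. Dividing out a possible zero of $f$ at the origin changes neither side (a factor $z^m$ contributes $m\log r\to0$ to the averages and $0$ to the boundary integral), so we may assume $f(0)\neq0$ and write
\[
\frac{1}{2\pi}\int_{|z|=r}\log|f(re^{i\theta})|\,d\theta \;=\; \log|f(0)| + \sum_{|a_i|<r}\log\frac{r}{|a_i|}.
\]
As $r\to1$, the partial sums $\sum_{|a_i|<r}\log\frac{1}{|a_i|}$ increase to the finite limit $\sum_i\log\frac1{|a_i|}\asymp\sum_i(1-|a_i|)$, while the remainder $N(r)\log r$ (with $N(r)$ the number of zeros in $|z|<r$) tends to $0$ since $N(r)(1-r)\to0$, a routine consequence of the Blaschke condition. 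Hence the interior averages converge, to $\log|f(0)|+\sum_i\log\frac1{|a_i|}$.

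Finally, I would read off $\log|f(0)|$ from the factorization: $\log|B(0)|=-\sum_i\log\frac1{|a_i|}$, $\log|S_{\mu_k}(0)|=-\mu_k(\mathbb S^1)$ since the Poisson kernel has unit mass, and $\log|c|=0$. Thus the interior limit equals $-\mu_1(\mathbb S^1)+\mu_2(\mathbb S^1)+\log|O_f(0)|$, and subtracting it from the boundary value $\log|O_f(0)|$ yields $\mu_1(\mathbb S^1)-\mu_2(\mathbb S^1)=\sigma(f)(\mathbb S^1)$, as claimed. The only genuinely non-routine ingredients are the structural facts quoted at the outset (existence of radial limits, the canonical factorization, $\log|f^*|\in L^1$) and the elementary verification that $N(r)(1-r)\to0$; the rest is bookkeeping. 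One may equally well organize the argument by noting that the ``gap functional'' $f\mapsto\frac1{2\pi}\int_{|z|=1}\log|f|\,d\theta-\lim_{r\to1}\frac1{2\pi}\int_{|z|=r}\log|f|\,d\theta$ is additive on products and evaluates to $0$ on $c$, on $B$, and on $O_f$ (the last because $\log|O_f|$ is harmonic and its boundary values reproduce its mean value), whereas for $S_{\mu_k}^{\pm1}$ the interior average is the constant $\mp\mu_k(\mathbb S^1)$ while the boundary integral vanishes, giving the contribution $\pm\mu_k(\mathbb S^1)$.
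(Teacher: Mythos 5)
Your proof is correct. The paper itself does not prove this lemma but defers to \cite[Section 3]{inner}, so there is no in-text argument to compare against; your argument is the standard one, namely combining the canonical Nevanlinna factorization $f = cB(S_{\mu_1}/S_{\mu_2})O_f$ with Jensen's formula (or, in your alternative phrasing, with the harmonicity of $\log|S_\mu|$ and $\log|O_f|$ together with the additivity of the gap functional). The only step worth spelling out in full is the claim $N(r)(1-r)\to 0$: this follows from the Blaschke condition by first absorbing the zeros with $|a_i|\le R$ into a term $N(R)(1-r)\to 0$ and bounding the remainder by $\sum_{|a_i|>R}(1-|a_i|)$, which can be made small by choosing $R$ close to $1$; merely noting $N(r)(1-r)\le\sum(1-|a_i|)<\infty$ gives boundedness but not decay. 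You handle the zero at the origin and the normalization $\log|S_\mu(0)|=-\mu(\mathbb S^1)$ correctly, matching the paper's convention $S_{\delta_1}=\exp\bigl(\frac{z+1}{z-1}\bigr)$. The remark about signed $\sigma(f)=\mu_1-\mu_2$ in the general Nevanlinna case versus $\sigma(f)\ge0$ in the Smirnov case is a reasonable reading and is consistent with every application in the appendix, where $f$ is inner or a derivative of an inner function in $\mathscr J$.
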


See \cite[Section 3]{inner} for a proof.
For $x \in \mathbb{D}$, let $F_x = T_x \circ F$ denote the  {\em Frostman shift} of $F$ with respect to $x$, where $T_x(z) = \frac{z - x}{1 - \overline{x}z}$.
 Frostman showed that if $x$ avoids an exceptional set $\mathscr E$ of capacity zero, then $F_x$ is a Blaschke product, in which case  $\sigma(F_x) = 0$.
 We will also need:
\begin{lemma}
Let $F$ be an inner function with $F(0) = 0$. For any $x \in \mathbb{D} \setminus \{0\}$,
\begin{equation}
\label{eq:ncf}
\log\frac{1}{|x|} \, = \, \sum_{F(y) = x} \log\frac{1}{|y|} + \sigma(F_x).
\end{equation}
\end{lemma}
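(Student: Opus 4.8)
The plan is to read off \eqref{eq:ncf} from the Blaschke--singular factorization of the Frostman shift $F_x$, evaluated at the origin. The underlying fact is simply that for an inner function which does not vanish at $0$, the number $\log|G(0)|$ splits as the value at $0$ of the logarithm of its Blaschke factor plus that of its singular factor, and each of these two contributions has a transparent closed form.

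First I would observe that $F_x = T_x \circ F$ is again an inner function: $F$ maps $\mathbb{D}$ into $\mathbb{D}$ with unimodular radial limits almost everywhere, while $T_x \in \aut(\mathbb{D})$ extends continuously to $\overline{\mathbb{D}}$ and carries $\mathbb{S}^1$ onto $\mathbb{S}^1$, so the composition has unimodular radial limits a.e.~as well. Its zero set, counted with multiplicity, is precisely $\{y \in \mathbb{D} : F(y) = x\}$; since $F_x \in H^\infty$ these zeros obey the Blaschke condition, so the Blaschke product $B$ over them converges, and $F_x$ admits the canonical factorization $F_x = \zeta_0 \, B \, S_{\sigma(F_x)}$ for some unimodular constant $\zeta_0$, where $\sigma(F_x)$ is the singular measure attached to the singular inner factor of $F_x$ (the outer factor being constant of modulus one, as $F_x$ is inner).

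Next I would evaluate this factorization at $z = 0$, which is legitimate since $0$ is not a zero of $F_x$ (because $x \neq 0$). On the left, the normalization $F(0) = 0$ gives $F_x(0) = T_x(F(0)) = T_x(0) = -x$, hence $\log|F_x(0)| = \log|x|$. On the right, $\log|B(0)| = \sum_{F(y)=x} \log|y| = -\sum_{F(y)=x}\log\frac{1}{|y|}$, and $\log|S_{\sigma(F_x)}(0)| = -\sigma(F_x)(\mathbb{S}^1)$, the latter because the Poisson kernel $\frac{e^{i\theta}+z}{e^{i\theta}-z}$ equals $1$ at $z = 0$. Combining the two sides,
\[ \log|x| \; = \; -\sum_{F(y)=x}\log\frac{1}{|y|} \; - \; \sigma(F_x)(\mathbb{S}^1), \]
and rearranging yields \eqref{eq:ncf}, with $\sigma(F_x)$ read as the total mass $\sigma(F_x)(\mathbb{S}^1)$.

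There is essentially no hard step: the identity is just a repackaging of Jensen's formula for $F_x$. The only points that merit a remark are that all the sums converge automatically from the boundedness of $F_x$, and that when $x$ avoids the Frostman exceptional set $\mathscr E$ the term $\sigma(F_x)$ vanishes and \eqref{eq:ncf} collapses to the familiar statement that $\log\frac{1}{|x|}$ is the sum of $\log\frac{1}{|y|}$ over the $F$-preimages of $x$; the content of the lemma is that for the remaining values of $x$ the discrepancy is accounted for exactly by the singular mass of $F_x$.
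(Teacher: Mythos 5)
Your proof is correct and is essentially the paper's argument in a slightly different packaging: the paper applies Lemma \ref{nevanlinna-gap} to $f = F_x$ together with Jensen's formula, which, after passing to the limit $r \to 1$, is precisely the statement that $\log|F_x(0)|$ equals $\log|B(0)| + \log|S_{\sigma(F_x)}(0)|$; you instead read this off directly by evaluating the canonical factorization $F_x = \zeta_0\, B\, S_{\sigma(F_x)}$ at the origin. Both routes rest on the same standard facts (the Blaschke/singular decomposition of an inner function and $F_x(0) = T_x(0) = -x$), so this is the same proof rather than a genuinely different one.
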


\begin{proof}
Taking $f = F_x$ in Lemma \ref{nevanlinna-gap} gives
$$
0 = \lim_{r \to 1} \frac{1}{2\pi} \int_{|z|=r} \log|F_x(z)| d\theta + \sigma(F_x).
$$
The lemma follows after applying Jensen's formula and taking $r \to 1$.
\end{proof}

In the case when $F \in \mathscr J$, Ahern and Clark \cite{ahern-clark} observed that the exceptional set $\mathscr E$ of $F$ is at most countable and that the singular masses of different Frostman shifts $F_x$ are mutually singular. More precisely, they showed that the measure $\sigma(F_x)$ is supported on the set of points on the unit circle at which the radial limit of $F$ is $x$.
Since the singular inner function $\sing F_x$ divides $F_x'$, it must also divide its inner part $\inn F_x' = \inn F'$. This shows that
\begin{equation}
\label{eq:sing-comparison}
\sigma(F') \ge \sum_{x \in \mathscr E} \sigma(F_x).
\end{equation}
In other words, $\inn F'$ is divisible by the product $\prod_{x \in \mathscr E} \sing F_x$.

\subsection{Proof of Theorem A.2 when $P$ is a finite set}

We first prove Theorem \ref{pomm} when $P = \{p_1, p_2, \dots, p_k\}$ is a finite set.
In the formula (\ref{eq:maxjensen}), one considers the sum $\sum_{\crit} \log \frac{1}{|c_i|}$ over critical points. It appears that the identity (\ref{eq:ncf}) allows one to sum over the  ``critical values'' $p_1, p_2, \dots, p_k$ instead.
To make this rigorous, we will construct a special approximation $F_n \to F$ by finite Blaschke products with critical values sets $\{p_1, p_2, \dots, p_k\}$.

\allowdisplaybreaks

Assuming the existence of such an approximating sequence, the argument runs as follows:
since the entropy can only decrease after taking limits \cite[Theorem 4.2]{inner},
\begin{align*}
\frac{1}{2\pi} \int_{|z|=1} \log|F'(z)| d\theta & \le \liminf_{n \to \infty} \frac{1}{2\pi} \int_{|z|=1} \log|F_n'(z)| d\theta, \\
& \le \liminf_{n \to \infty} \biggl \{ \log |F_n'(0)| + \sum_{i=1}^k \sum_{F_n(q_i) = p_i} \log \frac{1}{|q_i|} \biggr \}, \\
& =  \log|F'(0)| + \sum_{i=1}^k \log \frac{1}{|p_i|}.
\end{align*}
However, by (\ref{eq:sing-comparison}), the other direction is automatic:
\begin{align*}
\frac{1}{2\pi} \int_{|z|=1} \log|F'(z)| d\theta & = \lim_{r \to 1} \frac{1}{2\pi} \int_{|z|=r} \log|F'(z)| d\theta + \sigma(F'), \\
& \ge \log|F'(0)| +  \sum_{i=1}^k \sigma(F_{p_i}), \\
& = \log|F'(0)|  +   \sum_{i=1}^k \log \frac{1}{|p_i|}.
\end{align*}
Logic dictates that the sequence $F_n \to F$ is stable and the formula (\ref{eq:pomm}) holds.

\subsection{Construction of the approximating sequence}

For the construction of the approximating sequence, we employ the gluing technique of  Stephenson \cite{stephenson2},  also see the paper of Bishop \cite{bishop}.
For each puncture $p_i$, choose a real-analytic arc which joins $p_i$ to a point on the unit circle, so that the arcs are disjoint and do not pass through the origin.
Define a {\em tile} or {\em sheet} to be the shape $\mathbb{D} \setminus \cup_{i=1}^k \gamma_i$.
Let $\Gamma = \langle  g_1, g_2, \dots, g_k \rangle$ be the free group on $k$ generators. Consider the countable collection $\{T_g\}_{g \in \Gamma}$ of tiles indexed by elements of $\Gamma$. We form a simply-connected Riemann surface $S$ by gluing the lower side of $\gamma_i$ in $T_g$ to the upper side of $\gamma_i$ in $T_{g_i g}$. The surface $S$ comes equipped with a natural projection to the disk $\mathbb{D}$ which sends a point in a tile $T_g$ to its representative in the model $\mathbb{D} \setminus \cup_{i=1}^k \gamma_i$. We may uniformize $S \cong \mathbb{D}$ by taking $0$ in the base tile $T_e$ to $0$. In this uniformizing coordinate, the projection $F$ becomes a holomorphic self-map  of the disk. Since all the slits have been glued up, $F$ is an inner function, and a little thought shows that it is the universal covering map of $\mathbb{D} \setminus \{p_1,p_2,\dots,p_k\}$.

We now give a slightly different description of the above construction. For this purpose, we need the notion of an
{\em $\infty$-stack}\,: a countable collection of tiles $\{T_j\}_{j \in \mathbb{Z}}$, where the lower side of $\gamma_i$ in $T_j$ is identified with the upper side of $\gamma_{i}$ in $T_{j+1}$. To highlight the dependence on the curve $\gamma_i$, we say that the $\infty$-stack is glued over $\gamma_i$.
Similarly, by an {\em $n$-stack}\,, we mean a set of $n$ tiles with the above identifications made modulo $n$.
Now, to construct $S$, we begin with the base tile $T_e \cong \mathbb{D} \setminus \cup_{i=1}^k \gamma_i$, and at each slit $\gamma_i \subset T_e$, we glue an $\infty$-stack (i.e.~we add the tiles $\{T_j\}_{j \in \mathbb{Z} \setminus \{0\}}$ and treat $T_e$ as $T_0$).
 We refer to the tiles that were just added as the tiles of generation 1. To each of the $k-1$ unglued slits in each tile of generation 1, we glue a further $\infty$-stack of tiles, which we call tiles of generation 2. Repeating this construction infinitely many times gives the  Riemann surface $S$ from before. 

For the finite approximations, we slightly modify the above procedure. We begin with a base tile $T_e \cong \mathbb{D} \setminus \cup_{i=1}^k \gamma_i$ with $k$ slits. At each of these $k$ slits, we glue in an $n$-stack of sheets (sheets of generation 1). At each of the $k-1$ unresolved slits of sheet of generation 1, we  glue in a further $n$-stack (sheets of generation 2). We repeat for $n$ generations. Finally, at sheets of generation $n$, we resolve the slits by simply sowing their edges together. This gives us a Riemann surface $S_n$ and a finite Blaschke product $F_n$ with critical values
$p_1,p_2,\dots,p_k$.

Since the Riemann surfaces $S_n \to S$ converge in the Carath\'eodory topology, the maps $F_n \to F$ converge uniformly on compact sets. With the construction of the special approximating sequence, the proof of Theorem \ref{pomm} is complete (when the number of punctures is finite).

\subsection{Proof of Theorem A.2 when $P$ is infinite}

We handle the infinite case by reducing it to the finite case. This is achieved by the following lemma:

\begin{lemma}
Suppose that  $\mathcal U_{ P}$ is an inner function. Then,
\begin{equation}
\label{eq:mono}
\frac{1}{2\pi} \int_{|z|=1} \log|\mathcal U'_P(z)| d\theta
\ge
\frac{1}{2\pi} \int_{|z|=1} \log|\mathcal U'_Q(z)| d\theta,
\end{equation}
for any  $Q \subseteq P$.
\end{lemma}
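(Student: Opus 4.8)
The plan is to factor the covering map $\mathcal U_P$ through $\mathcal U_Q$. Throughout we may assume $\mathcal U_P\in\mathscr J$, since otherwise the left-hand side of (\ref{eq:mono}) is $+\infty$ and there is nothing to prove; then $P$, and hence $Q\subseteq P$, is a Blaschke sequence, so $\mathcal U_Q\in\mathscr J$ as well (for finite $Q$ this is the case of Theorem \ref{pomm} already established, and for infinite $Q$ it follows by exhausting $Q$ with finite subsets and using the semicontinuity $\frac1{2\pi}\int_{|z|=1}\log|F'|\,d\theta\le\liminf_n\frac1{2\pi}\int_{|z|=1}\log|F_n'|\,d\theta$ from \cite[Theorem 4.2]{inner}). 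In particular $\log|\mathcal U'_P|$ and $\log|\mathcal U'_Q|$ are genuine $L^1(\mathbb S^1)$ functions. The case $Q=P$ is trivial, so assume $Q\subsetneq P$.

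Since $\mathbb D\setminus P\subseteq\mathbb D\setminus Q$ and $\mathbb D$ is simply connected, the composition of $\mathcal U_P\colon\mathbb D\to\mathbb D\setminus P$ with the inclusion $\mathbb D\setminus P\hookrightarrow\mathbb D\setminus Q$ lifts through the universal covering $\mathcal U_Q\colon\mathbb D\to\mathbb D\setminus Q$: there is a holomorphic $\pi\colon\mathbb D\to\mathbb D$ with $\mathcal U_Q\circ\pi=\mathcal U_P$, normalized so that $\pi(0)=0$; then $\pi'(0)=\mathcal U'_P(0)/\mathcal U'_Q(0)>0$. I would first note that $\pi$ is an inner function: if the radial limit $\pi^*(e^{i\theta})$ were to lie in the open disk on a set of positive measure, we would have $|\mathcal U_Q(\pi^*(e^{i\theta}))|<1$ there, contradicting $|\mathcal U_P(re^{i\theta})|\to1$ for a.e.\ $\theta$. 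Next I would verify that $\pi\in\mathscr J$: writing $\mathcal U'_Q=g/h$ with $g,h\in H^\infty$, $h$ zero-free, we get $\mathcal U'_Q\circ\pi=(g\circ\pi)/(h\circ\pi)\in\mathcal N$, and then $\pi'=\mathcal U'_P/(\mathcal U'_Q\circ\pi)\in\mathcal N$ because the Nevanlinna class is closed under division.

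Differentiating $\mathcal U_Q\circ\pi=\mathcal U_P$ and passing to radial limits gives, for a.e.\ $e^{i\theta}$,
\[
\log|\mathcal U'_P(e^{i\theta})|=\log|\mathcal U'_Q(\pi^*(e^{i\theta}))|+\log|\pi'(e^{i\theta})|,
\]
where the radial limit of $\mathcal U'_Q\circ\pi$ is obtained by composing the radial limits of $\mathcal U'_Q$ and $\pi$. All three terms lie in $L^1(\mathbb S^1)$, so integrating is legitimate. Since $\pi$ is inner with $\pi(0)=0$, Lebesgue measure $m$ is $\pi$-invariant, whence $\frac1{2\pi}\int_{|z|=1}\log|\mathcal U'_Q(\pi^*(e^{i\theta}))|\,d\theta=\frac1{2\pi}\int_{|z|=1}\log|\mathcal U'_Q|\,d\theta$; and since $\pi\in\mathscr J$ with $\pi(0)=0$, Craizer's theorem \cite{craizer} identifies $\frac1{2\pi}\int_{|z|=1}\log|\pi'|\,d\theta$ with the measure-theoretic entropy $h_m(\pi)\ge0$. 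Combining these yields the sharper identity
\[
\frac1{2\pi}\int_{|z|=1}\log|\mathcal U'_P|\,d\theta=\frac1{2\pi}\int_{|z|=1}\log|\mathcal U'_Q|\,d\theta+h_m(\pi),
\]
which gives (\ref{eq:mono}); the entropy gain $h_m(\pi)$ quantifies how much is lost when one passes from $P$ to $Q$.

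The step I expect to require the most care is the boundary chain rule: justifying that the a.e.\ radial limit of $\mathcal U'_Q\circ\pi$ equals $\mathcal U'_Q$ (radial limit) evaluated at $\pi^*$, and that this boundary function is integrable. This is handled by the classical non-tangential limit theory for inner functions --- the $\pi^*$-preimage of the null set where $\mathcal U'_Q$ fails to have a radial limit is null by $\pi$-invariance of $m$, and $\pi(re^{i\theta})\to\pi^*(e^{i\theta})$ non-tangentially for a.e.\ $\theta$, so $\mathcal U'_Q\circ\pi$ inherits the limit $\mathcal U'_Q(\pi^*(e^{i\theta}))$ --- while integrability follows from the a.e.\ identity above together with $\mathcal U'_P,\pi'\in\mathcal N\setminus\{0\}$. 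Everything else is bookkeeping in the Nevanlinna class and the elementary topology of covering spaces.
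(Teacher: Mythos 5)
Your argument follows the paper's almost exactly: factor $\mathcal U_P=\mathcal U_Q\circ\pi$ through the universal covering, apply the chain rule together with the $\pi$-invariance of Lebesgue measure, and conclude by showing the residual term $\frac1{2\pi}\int_{|z|=1}\log|\pi'|\,d\theta$ is non-negative. The only deviation is in that last step, where the paper simply cites the elementary pointwise bound $|\pi'(e^{i\theta})|\ge 1$ a.e.\ for inner functions fixing the origin (\cite[Theorem 4.15]{mashreghi}) and thereby sidesteps the need to verify $\pi\in\mathscr J$, whereas you reach the same conclusion via Craizer's identification of the integral with the entropy $h_m(\pi)\ge 0$; both are valid, but yours is the heavier tool for the job.
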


\begin{proof}
 Topological considerations allow us to factor $\mathcal U_P = \mathcal U_Q \circ h$, where
$h$ is a holomorphic map of the disk. The normalizations $\mathcal U_P(0) = \mathcal U_Q(0) = 0$
imply that  $h(0) = 0$. Since $\mathcal U_P$ is inner, $h$ must also be inner.
The chain rule and the $h$-invariance of Lebesgue measure give
$$
\frac{1}{2\pi} \int_{|z|=1} \log|\mathcal U'_P(z)| d\theta
=
\frac{1}{2\pi} \int_{|z|=1} \log|\mathcal U'_Q(z)| d\theta
+
\frac{1}{2\pi} \int_{|z|=1} \log| h'(z)| d\theta
$$
Since $h$ is inner and $h(0) = 0$, $|h'(z)| \ge 1$ for $z \in \mathbb{S}^1$, see e.g.~\cite[Theorem 4.15]{mashreghi}.
Dropping second term gives (\ref{eq:mono}).
\end{proof}

\begin{proof}[Proof of Theorem \ref{pomm} when $P$ is infinite]
The above lemma shows that if $P$ is not a Blaschke sequence, then $\mathcal U_P$ cannot be an inner function of finite entropy. Conversely, if $P = \{p_1, p_2, \dots\}$ is a Blaschke sequence, then the integrals
$$
\frac{1}{2\pi} \int_{|z|=1} \log|\mathcal U'_{P_k}(z)| d\theta, \qquad P_k = \{p_1, p_2, \dots, p_k\},
$$
are increasing in $k$ and
\begin{equation}
\label{eq:mono1}
\frac{1}{2\pi} \int_{|z|=1} \log|\mathcal U'_{P}(z)| d\theta \ge
\lim_{k \to \infty} \frac{1}{2\pi} \int_{|z|=1} \log|\mathcal U'_{P_k}(z)| d\theta.
\end{equation}
Since the entropy can only decrease in the limit \cite[Theorem 4.2]{inner},  we must have equality in (\ref{eq:mono1}). This completes the proof.
 \end{proof}

\section[Appendix B. Existence of Perron hulls]{Existence of Perron hulls}
\label{sec:perron}

We now prove the existence statement in Theorem \ref{l1theorem}. The proof is a standard application of Schauder's fixed point theorem. Our exposition is inspired by \cite[Appendix]{conf-metrics}.

Recall that $G(z, \zeta) =  \log  \bigl | \frac{1-z\overline{\zeta}}{z-\zeta} \bigr |$ denotes the Green's function of the unit disk. Below, we will make use of two properties of the Green's function:
\begin{enumerate}
\item If $\mu$ is a finite measure on the unit disk, then
$$
G_\mu(z) = \frac{1}{2\pi}  \int_{\mathbb{D}} G(z, \zeta) d\mu
$$
solves the linear Dirichlet problem
\begin{equation}
\label{eq:LDP}
   \left\{\begin{array}{lr}
        \Delta u =  \mu, &  \text{in } \mathbb{D}, \\
       u = 0, &  \text{on } \mathbb{S}^1,
        \end{array}\right.
\end{equation}
where the boundary condition is understood in terms of weak limits.

\item The function
$$
z \ \to \ \int_{\mathbb{D}} G(z, \zeta) |d\zeta|^2
$$
is uniformly bounded on $\mathbb{D}$ and tends to 0 as $|z| \to 1$.
\end{enumerate}

Property 1 follows from the fact that $\int_{|z| = r} G(z, \zeta) d\theta \to 0$ as $r \to 1^-$ uniformly in $\zeta \in \mathbb{D}$.
An easy way to check Property 2 is to use the interpretation of the  Green's function as the occupation density of Brownian motion.

\begin{proof}[Proof of Theorem \ref{l1theorem}: existence]
Let $P_h$ denote the harmonic extension of $h$ to the unit disk. Consider the closed convex set
$$\mathscr K_h \, = \, \Bigl \{v \in L^1(\mathbb{D}, |dz|^2), \  v \le P_h \Bigr \} \, \subset \,  L^1(\mathbb{D}, |dz|^2)$$
and the operator
\begin{equation}
\label{eq:shauder-operator}
(Tv)(z) = P_h(z) - \frac{1}{2\pi} \int_{\mathbb{D}} \bigl (4e^{2v(\zeta)} |d\zeta|^2 + d\mu_\zeta \bigr ) G(z, \zeta).
\end{equation}
If $v \in \mathscr K_h$, then $\nu_\zeta = \bigl (4e^{2v(\zeta)} |d\zeta|^2 + d\mu_\zeta \bigr )$ is a finite measure. By Property 2,
$G_\nu(z)$
is a positive  function  in $L^1(\mathbb{D}, |dz|^2)$, and therefore,
$T$ maps $\mathscr K_h$ into itself.
By Property 1, $G_\nu$ has zero boundary values which means that every function in the image of $T$ has boundary data $h$. In other words, $Tv$ is the unique solution of the linear Dirichlet problem
\begin{equation}
\label{eq:generalized-GCE2c}
   \left\{\begin{array}{lr}
        \Delta u = 4 e^{2v} + \mu, &  \text{in } \mathbb{D}, \\
       u = h, &  \text{on } \mathbb{S}^1.
        \end{array}\right.
\end{equation}
In particular, $u \in \mathscr K_h$ is a fixed point of $T$ if and only if $u$ solves the Gauss curvature equation with data $(\mu, h)$.

To see that the image $T(\mathscr K_h)$ is compact,  note that by Property 2, the functions
$$
\int_{\mathbb{D}} e^{2v(\zeta)} G(z, \zeta) |d\zeta|^2, \qquad v \in \mathscr K_h,
$$
are uniformly continuous on the closed unit disk $\overline{\mathbb{D}}$.
 Since we have verified the assumptions of Schauder's fixed point theorem, $T$ has a fixed point $u \in \mathscr K_h$. The proof is complete.
\end{proof}

The reader who wishes to learn more about non-linear elliptic PDEs involving measures can consult \cite{ponce-book,marcus-veron}.

\section[Appendix C. Carleson's theorem on outer functions]{Carleson's theorem on outer functions}
\label{sec:carleson-outer}

We now briefly outline the construction of an outer function $\Phi_E \in C^\infty(\overline{\mathbb{D}})$ which vanishes on a Beurling-Carleson set $E$ to infinite order. In the literature, this fact is known as Carleson's theorem, even though the original construction due to Carleson \cite{carleson}
 only gave  $\Phi_E \in C^N(\overline{\mathbb{D}})$, where $N \ge 1$ could be any positive integer.
Here, we follow the exposition from \cite[Proposition 7.11]{HKZ}.
 Recall that if $K$ is a closed subset of the circle, then $\mathcal I(K)$ denotes the collection of open arcs that make up $\mathbb{S}^1 \setminus K$.

 The construction begins by subdividing each interval $I_n \in \mathcal I(E)$ into countably many pieces $\{J_{n,k}\}_{k \in \mathbb{Z}}$ such that $J_{n,0}$ is just the middle third interval in $I_n$ and $$|J_{n,k}| = \dist(E,J_{n,k}) = \frac{1}{3 \cdot 2^{|k|}} \cdot |I_n|.$$
Inspection shows that $F = \mathbb{S}^1 \setminus \bigcup J_{n,k}$ is a Beurling-Carleson set
and that the map $\Delta: \BC \to \BC$ which sends $E$ to $F$ is continuous, that is, if $E_n \to E$ and $\|E_n\|_{\BC} \to \|E\|_{\BC}$ then $F_n \to F$ and $\|F_n\|_{\BC} \to \|F\|_{\BC}$.
It is not difficult to see that there exists a function
$
\lambda_F : \mathcal I(F) \to [1,\infty)
$
which satisfies
\begin{equation}
\lambda_F(J) \to \infty, \qquad |J| \to 0,
\end{equation}
and
\begin{equation}
\label{eq:lambda-sum}
\sum \lambda_F(J) \cdot |J| \log \frac{1}{|J|} < \infty.
\end{equation}
With help of $\lambda_F$, one can define
$$
\Phi_E(z) = \exp \biggl [ - \sum_{J \in \mathcal I(F)} \frac{\lambda_F(J) \cdot |J| \log\frac{1}{|J|} \cdot e^{i\theta_J}}{a_J - z} \biggr ],
$$
where $e^{i\theta_J}$ is the midpoint of $J$ and $a_J = r_J e^{i\theta_J}$ is the point in $\mathbb{C}$ from which $J$ is seen from a right angle. We refer the reader to \cite{HKZ} to see that $\Phi_E$ has the desired properties.
Here, we explain that one can choose $\lambda_F$ so that $\Phi_E$ depends continuously on $F$, and thus, on $E$ if $F = \Delta(E)$.

For an interval $J \in \mathcal I(F)$, it is tempting to take
$$
\lambda_F(J) = \max \biggl \{1, \, \log \frac{1}{h_F(J)} \biggr\}
$$
where
$$
h_F(J) = \sum_{J' \in \mathcal I(F):\, |J'| \le |J|} |J'| \log \frac{1}{|J'|}.
$$
With this definition, the sum (\ref{eq:lambda-sum}) is finite and its tails converge to zero uniformly:
\begin{equation}
\label{eq:estimate-on-tails}
\sum_{J \in \mathcal I(F):\, h_F(J) \le e^{-k}} \lambda_F(J) \cdot |J| \log\frac{1}{|J|} \, \le \,
 \sum_{j=k}^\infty (k+1) e^{-k},
 \end{equation}
however, the $h_F(J), \lambda_F(J)$ will not depend continuously with respect to the Beurling-Carleson set $F$, because they are sensitive to small changes in the lengths of the intervals and the entropy of $F$.

To rectify this, we smoothen out the definitions of $h(J)$ and $\lambda(J)$, that is, we define
$$
h_F(J) = \sum_{J' \in \mathcal I(F):\, |J'| < 2|J|} \psi \biggl (\frac{|J'|}{|J|}\biggr) \cdot |J'| \log \frac{1}{|J'|},
$$
where $\psi: (0,\infty) \to [0,1]$ is a smooth function such that $\psi(t) = 1$ for $t < 1$ and $\psi(t) = 0$ for $t > 2$;
$$
\lambda_F(J) = \phi \biggl (  \log  \frac{1}{|J|} \biggr ),
$$
where $\phi$ is an increasing smooth function which satisfies $\phi(t) = t$ for $t > 2$ and $\phi(t) = 1$ for $t < 1$.

In the Korenblum topology on Beurling-Carleson sets, a neighbourhood $U_{\varepsilon,\delta}$ of $F$ consists of
all Beurling-Carleson sets $F^*$ which satisfy: (i) All intervals of length $> \varepsilon$ in $F^*$ are within $\delta$ of their counterparts in $F$ and vice versa, (ii) $\bigl | \|F^* \|_{\BC} - \| F \|_{\BC} \bigr |< \varepsilon$.
From this description, it follows that $\lambda_F$ and $h_F$ are continuous in $F$. An analogue of the estimate
(\ref{eq:estimate-on-tails})
shows that $\sum \lambda_F(J) \cdot |J| \log \frac{1}{|J|}$ also varies continuously with $F$, and thus $\Phi_E$ is continuous in $E$.

\bibliographystyle{amsplain}

\end{document}